\theoremstyle{plain}
    \newtheorem{theorem}{Theorem}
    \newtheorem{proposition}{Proposition}[section]
    \newtheorem{corollary}[proposition]{Corollary}
    \newtheorem{lemma}[proposition]{Lemma}
    \newtheorem{claim}[proposition]{Claim}
\theoremstyle{definition}
    \newtheorem{definition}[proposition]{Definition}
    \newtheorem{algorithm}[proposition]{Algorithm}
    \newtheorem{example}[proposition]{Example}
\theoremstyle{remark}
	\newtheorem{remark}[proposition]{Remark}%
\crefname{claim}{Claim}{Claims}
\newtheoremstyle{restate-theorem}
    {\topsep}{\topsep} 
    {\itshape}         
    {}                 
    {\bfseries}        
    {.}                
    { }                
    {\thmname{#1} \ref{#3} {\normalfont(Restated)}}
\theoremstyle{restate-theorem}
    \newtheorem{restate-theorem}{Theorem}
    \newtheorem{restate-proposition}{Proposition}
    \newtheorem{restate-corollary}{Corollary}
\numberwithin{equation}{section}
\numberwithin{table}{section}
\newcommand{\ZZ}{\mathbb{Z}}
\newcommand{\QQ}{\mathbb{Q}}
\newcommand{\RR}{\mathbb{R}}
\newcommand{\CC}{\mathbb{C}}
\newcommand{\FF}{\mathbb{F}}
\newcommand{\id}{\mathit{id}}
\newcommand{\isom}{\cong}
\newcommand{\htpy}{\simeq}
\newcommand{\del}{\partial}
\renewcommand{\emptyset}{\varnothing}
\renewcommand{\epsilon}{\varepsilon}
\DeclareMathOperator{\Hom}{Hom}
\DeclareMathOperator{\End}{End}
\DeclareMathOperator{\Tor}{Tor}
\DeclareMathOperator{\fchar}{char}
\DeclareMathOperator{\rank}{rank}
\DeclareMathOperator{\gr}{gr}
\DeclareMathOperator{\Cone}{Cone}
\DeclareMathOperator{\Fix}{Fix}
\DeclareMathOperator{\Cob}{Cob}
\newcommand{\CKh}{\mathit{CKh}}
\newcommand{\CKhI}{\mathit{CKhI}}
\newcommand{\Kh}{\mathit{Kh}}
\newcommand{\KhI}{\mathit{KhI}}
\newcommand{\rCKh}{\CKh_r}
\newcommand{\rCKhI}{\CKhI_r}
\newcommand{\rKh}{\Kh_r}
\newcommand{\rKhI}{\KhI_r}
\newcommand{\BN}{\mathit{BN}}
\newcommand{\BNI}{\mathit{BNI}}
\newcommand{\rBN}{\BN_r}
\newcommand{\rBNI}{\BNI_r}
\newcommand{\ca}{\alpha}
\newcommand{\cb}{\beta}
\newcommand{\uca}{\overline{\ca}{}}
\newcommand{\dca}{\underline{\ca}{}}
\newcommand{\ucb}{\overline{\cb}{}}
\newcommand{\dcb}{\underline{\cb}{}}
\newcommand{\cz}{\zeta}
\newcommand{\ucz}{\overline{\cz}{}}
\newcommand{\dcz}{\underline{\cz}{}}
\newcommand{\cx}{\xi}
\newcommand{\ucx}{\overline{\cx}{}}
\newcommand{\dcx}{\underline{\cx}{}}
\newcommand{\ud}{\overline{d}}
\newcommand{\dd}{\underline{d}}
\newcommand{\ds}{\underline{s}}
\newcommand{\us}{\overline{s}}
\newcommand{\dual}{\mathsf{D}}
\title{
    Involutive Khovanov homology and equivariant knots
}
\author{Taketo Sano}
\newcommand{\addresses}{{
  \bigskip
  \noindent
  Taketo Sano, \textsc{RIKEN iTHEMS, Wako, Saitama 351-0198, Japan }\par\nopagebreak
  \noindent
  \textit{E-mail address}: \url{taketo.sano@riken.jp}
}}
\date{August 7, 2026}
\begin{document}
\pagenumbering{roman}
\providecommand{\Ot}{O^\tau}
\providecommand{\Otb}{\overline{O}{}^\tau}

\begin{center}
    {\LARGE Erratum\par}
    \vspace{.5em}
    {\Large\itshape Involutive Khovanov homology and equivariant knots\par}
    \vspace{1.5em}
    {\large Taketo Sano\par}
    \vspace{1.5em}
    {\large \today \par}
\end{center}

\vspace{1.5em}

This note concerns the published version of this paper [1], which contains an error: Proposition~3.3 therein, stated below. The error is fixed in the present version [2], in which Section~3.1 is largely rewritten. The main theorems of [1] remain valid as stated. This note explains the error and lists the changes made.

\begin{quote}
    \textbf{Proposition} (Proposition~3.3 of [1]). \itshape
    For a strongly invertible link diagram $D$, the Lee cycles are invariant under $\tau$.
\end{quote}

\noindent
Here, $\tau$ denotes the involution on the Khovanov complex $\CKh(D)$ induced by the involution of $D$. For example, let $D$ be the strongly invertible $3$-component unlink diagram, consisting of one symmetric circle intersecting the axis twice and two circles disjoint from the axis that are interchanged by $\tau$. The orientation $o$ that orients the left and the center circles counterclockwise and the right one clockwise has Lee cycle $\ca(D, o) = X \otimes X \otimes Y$ but $\tau \ca(D, o) = Y \otimes X \otimes X$, hence $\ca(D, o)$ is not $\tau$-invariant, contradicting Proposition~3.3 of [1].

To fix the error, Section~3.1 has been largely rewritten in the present version [2]. Let $O(D)$ denote the set of all orientations on the underlying unoriented diagram of $D$. Let $\tau(o) \in O(D)$ denote the orientation obtained by applying $\tau$ to $D$ reoriented by $o$, and $-o$ the reversal of $o$. Define 
\begin{align*}
    \Ot(D) &= \{\ o \in O(D) \mid \tau(o) = -o \ \}, \\
    \Otb(D) &= (O(D) \setminus \Ot(D)) \,/\, (\tau(o) \sim -o).
\end{align*}
The substantive changes are the following.
\begin{itemize}
    \item Proposition~3.3 of [1] is corrected as Proposition~3.5 of [2], which follows from the new Lemma~3.3.
    \item Definition~3.4 of [1] is replaced by Proposition~3.6 and Definition~3.7 of [2]: for $[o'] \in \Otb(D)$, the equivariant Lee cycle $\dca(D, o')$ is replaced by $(1 + \tau)\dca(D, o')$, which is now indeed a cycle in $\CKhI(D)$.
    \item Proposition~3.7 of [1] is corrected as Proposition~3.8 of [2], which gives a basis of $\KhI(D)$ indexed by $\Ot(D)$ and $\Otb(D)$. See also Remark~3.9 of [2] for the corrected rank formula.
    \item Proposition~3.6 of [1] is restated as Proposition~3.10 of [2], in terms of the induced long exact sequence.
    \item Propositions~3.8, 3.9, 3.10, 3.13, 4.1 and Definition~3.15 of [1] are restated for the relevant orientations only (Propositions~3.12, 3.26, 3.27, 3.30, 4.1 and Definition~3.14 of [2]).
\end{itemize}
Now, all results concerning strongly invertible knots and links, and in particular the main theorems of [1], remain valid as stated. The same correction applies to the $2$-periodic setting of Section~6 of [1]: Section~6 of [2] is revised accordingly.

\bigskip
\noindent
\textbf{Acknowledgements.}
A separate erratum has been submitted to the journal. The author thanks Yonghan Xiao for pointing out the error.

\bigskip
\noindent
\textbf{References}
\begin{list}{}{%
    \setlength{\labelwidth}{2em}%
    \setlength{\labelsep}{0.5em}%
    \setlength{\leftmargin}{2.5em}%
    \setlength{\itemindent}{0pt}%
}
    \item[{[1]}] T. Sano,
    \textit{Involutive Khovanov homology and equivariant knots},
    Algebr. Geom. Topol. \textbf{25} (2025), no.~8, 5059--5111.
    \url{https://doi.org/10.2140/agt.2025.25.5059}.

    \item[{[2]}] T. Sano,
    \textit{Involutive Khovanov homology and equivariant knots} (v5),
    \url{https://arxiv.org/abs/2404.08568v5}.
\end{list}

\clearpage
\pagenumbering{arabic}

    \maketitle
    \begin{abstract}
    For strongly invertible knots, we define an involutive version of Khovanov homology, and from it derive a pair of integer-valued invariants $(\underline{s}, \bar{s})$, which is an equivariant version of Rasmussen's $s$-invariant. Using these invariants, we reprove that the infinite family of knots $J_n$ introduced by Hayden each admits exotic pairs of slice disks. Our construction is intended to give a Khovanov-theoretic analogue of the formalism given by Dai, Mallick and Stoffregen in involutive knot Floer theory. 
\end{abstract}

    \setcounter{tocdepth}{2}

    \section{Introduction}
\label{sec:intro}

A \textit{strongly invertible knot} is a knot $K$ in $S^3$ equipped with an involution $\tau$ of $S^3$ that reverses the orientation of $K$. While strongly invertible knots have been studied for decades (see \cite{Sakuma:86}), recent developments in knot theory and low-dimensional topology gave rise to new directions in its research and its applications, as in  \cite{Watson:2017,Snape:2018,Hayden:2021,Hayden-Sundberg:2021,Alfieri-Boyle:2021,Boyle-Issa:2022,LS:2022,DMS:2023}. 

In \cite{DMS:2023}, Dai, Mallick and Stoffregen use \textit{involutive knot Floer homology} to define integer-valued invariants $\underline{V}{}_0^\tau, \overline{V}{}_0^\tau, \underline{V}{}_0^{\iota\tau}, \overline{V}{}_0^{\iota\tau}$ of strongly invertible knots, and show that there is an infinite family of knots $J_n$, each admitting exotic pairs of slice disks. Previously, the result for the special case $J_0 = 17nh_{73}$ (also known as the \textit{positron knot}) has been proved by Hayden and Sundberg \cite{Hayden-Sundberg:2021} using Khovanov homology, by distinguishing the cobordism maps induced from the two slice disks. In this paper, we adapt the formalism of involutive knot Floer homology to the Khovanov side, and recover the general result systematically.

For strongly invertible knots, we define an involutive version of Khovanov homology, called the \textit{involutive Khovanov homology}, and from it derive a pair of integer-valued invariants $(\underline{s}, \bar{s})$, which is an equivariant version of Rasmussen's \textit{$s$-invariant} for ordinary knots \cite{Rasmussen:2010}.

\begin{theorem}
\label{thm:1}
    For each $n \geq 0$, the strongly invertible knot $J_n$ of \Cref{fig:knotJn} has
    \[
        \ds(J_n) = 0 < 2 \leq \us(J_n).
    \]
\end{theorem}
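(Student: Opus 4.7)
The plan is to separate the two bounds $\ds(J_n) = 0$ and $\us(J_n) \geq 2$, reduce the $n$-dependence to the base case $n = 0$, and then handle that base case by an explicit chain-level computation.

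First I would address $\ds(J_n) \leq 0$. This ought to follow from a general inequality $\ds \leq s$ built into the definition of the involutive $s$-invariants as filtration levels of canonical classes, together with the fact that each $J_n$ is smoothly slice and hence has $s(J_n) = 0$. For the matching lower bound $\ds(J_n) \geq 0$, the plan is either (i) to exhibit an equivariant slice disk for $J_n$ realizing equality in the cobordism inequality for $\ds$, making use of whichever piece of the construction of $J_n$ visibly extends equivariantly, or (ii) to argue directly that the Lee or Bar-Natan canonical generator defining $\ds$ survives at quantum filtration $\geq -1$ in $\KhI(J_n)$.

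Second, for the bound $\us(J_n) \geq 2$, the plan is to first establish a subadditivity statement for $(\ds, \us)$ under the equivariant connected-sum-like operation used to build $J_n$ from $J_0$, giving $\us(J_n) \geq \us(J_0)$, and then to compute $\us(J_0) \geq 2$ directly for the positron knot $17nh_{73}$. For the base case I would work with an explicit $\tau$-symmetric diagram of $J_0$, identify the involution it induces on the Bar-Natan (or Lee) deformation of $\CKh$, and track the canonical generators through the mapping cone defining $\CKhI$. The two non-equivariant slice disks of $J_0$ analyzed by Hayden-Sundberg induce genuinely different Khovanov cobordism maps; refined by the involution, this discrepancy should push the filtration level of one canonical class up to $2$.

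The main obstacle I anticipate is the connected-sum/iteration formula in the second step. Equivariant connected sums of strongly invertible knots are subtle because there are inequivalent versions depending on how basepoints are chosen along the fixed axis, and one must pick the version that matches the construction of $J_n$ and under which $\us$ is known to behave well. Obtaining a clean chain-level description of $\CKhI(J_n)$ in terms of $\CKhI(J_0)$, compatible with the quantum filtration defining $(\ds, \us)$ and with the involutive signs, is where the real technical work lies; once it is in place, the theorem collapses to a direct, finite computation on the single knot $J_0$.
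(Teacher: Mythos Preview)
Your plan contains two genuine gaps.

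\textbf{The iteration step is based on the wrong operation.} The knots $J_n$ are not built from $J_0$ by any equivariant connected-sum operation; the parameter $n$ is the number of full twists inserted into two symmetric $4$-strand boxes in a fixed diagram. Consequently the connected-sum inequality (\Cref{prop:s-conn-sum}) is irrelevant here, and your anticipated ``main obstacle'' is directed at the wrong tool. What the paper actually uses is the behavior of $(\ds,\us)$ under equivariant $4$-strand generalized crossing changes (\Cref{prop:neg-gxch-knot}): one observes that $J_n$ is obtained from $J_{n+1}$ by such a change, which yields
\[
\ds(J_n)\leq \ds(J_{n+1}),\qquad \us(J_n)\leq \us(J_{n+1}),
\]
so both invariants are monotone in $n$. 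Combined with the base case $\ds(J_0)=0$, $\us(J_0)=2$, and with $\ds(J_n)\leq s(J_n)=0$ (since $J_n$ is slice), this immediately gives $\ds(J_n)=0$ and $\us(J_n)\geq 2$.

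\textbf{Your option (i) for $\ds(J_n)\geq 0$ is self-defeating.} If $J_n$ bounded a simple (or simple isotopy-) equivariant slice disk, then by \Cref{cor:s-bound-g} one would have $|\us(J_n)|\leq 2\widetilde{sig}_4(J_n)=0$ as well, contradicting the very statement you are trying to prove. Indeed, the content of \Cref{thm:1} is precisely that no such equivariant slice disk exists. The lower bound $\ds(J_n)\geq 0$ must therefore come from elsewhere; in the paper it drops out of the monotonicity above together with $\ds(J_0)=0$.

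Finally, the base case in the paper is handled by direct computer calculation of $\rKhI_\tau(J_0)$ over $\FF_2[H]$ and reading off the $H$-divisibilities of the equivariant Lee classes (\Cref{prop:calc}), rather than by leveraging the Hayden--Sundberg cobordism maps. Your idea of tracking the canonical classes through the mapping cone is essentially what the algorithm does, but there is no conceptual shortcut via the two slice disks.
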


\begin{figure}[t]
    \centering
    \resizebox{.24\textwidth}{!}{
        \tikzset{every picture/.style={line width=1pt}} 

\begin{tikzpicture}[x=1pt,y=1pt,yscale=-1,xscale=1]

\draw [thin, color=red ,draw opacity=1 ]   (75.88,147.75) .. controls (75.98,144.54) and (75.76,143.93) .. (75.63,139.5) .. controls (75.49,135.07) and (74.63,122) .. (75.88,117.25) .. controls (77.13,112.5) and (91.13,94.5) .. (93.63,89.5) .. controls (96.13,84.5) and (93.38,63.5) .. (96.63,60.25) .. controls (99.88,57) and (107.53,51.83) .. (114.38,48.25) .. controls (121.22,44.67) and (122.13,39.5) .. (121.88,34) .. controls (121.63,28.5) and (112.13,20.75) .. (107.88,20.25) .. controls (103.63,19.75) and (84.13,21.75) .. (78.63,30) .. controls (73.13,38.25) and (73.63,64.75) .. (73.63,73.5) .. controls (73.63,82.25) and (74.13,95) .. (76.13,101.5) .. controls (78.13,108) and (114.13,125.75) .. (118.63,128.5) .. controls (123.13,131.25) and (125.13,137.25) .. (124.63,147) .. controls (124.13,156.75) and (110.88,161.25) .. (98.88,160.75) .. controls (86.88,160.25) and (75.75,153.26) .. (75.88,147.75) -- cycle ;
\draw [thin, color=blue ,draw opacity=1 ]   (144.74,147.75) .. controls (144.63,144.54) and (144.85,143.93) .. (144.99,139.5) .. controls (145.12,135.07) and (145.99,122) .. (144.74,117.25) .. controls (143.49,112.5) and (129.48,94.5) .. (126.99,89.5) .. controls (124.49,84.5) and (127.24,63.5) .. (123.99,60.25) .. controls (120.74,57) and (113.09,51.83) .. (106.24,48.25) .. controls (99.39,44.67) and (98.49,39.5) .. (98.74,34) .. controls (98.99,28.5) and (108.49,20.75) .. (112.74,20.25) .. controls (116.99,19.75) and (136.49,21.75) .. (141.99,30) .. controls (147.49,38.25) and (146.99,64.75) .. (146.99,73.5) .. controls (146.99,82.25) and (146.49,95) .. (144.49,101.5) .. controls (142.49,108) and (106.49,125.75) .. (101.99,128.5) .. controls (97.49,131.25) and (95.49,137.25) .. (95.99,147) .. controls (96.49,156.75) and (109.74,161.25) .. (121.74,160.75) .. controls (133.74,160.25) and (144.87,153.26) .. (144.74,147.75) -- cycle ;

\draw  [thin, dash pattern={on 4.5pt off 4.5pt}]  (110.88,4.5) -- (110.38,193.25) ;

\begin{knot}[
  end tolerance=1pt,
  clip width=2.5pt,
]

\strand    (70.88,147.25) .. controls (69.63,133.75) and (80.13,132) .. (80.38,127.75) .. controls (80.63,123.5) and (70.2,119.37) .. (71.88,116.25) .. controls (73.55,113.13) and (76.17,108.14) .. (79.63,104.5) .. controls (83.08,100.86) and (87.63,92) .. (90.13,88.25) .. controls (92.63,84.5) and (90.38,60.5) .. (92.88,56) .. controls (95.38,51.5) and (114.88,45) .. (117.38,40.25) .. controls (119.88,35.5) and (115.08,27.99) .. (111.13,26.25) .. controls (107.17,24.51) and (96,25.03) .. (91.88,26.5) .. controls (87.75,27.97) and (86.38,27.75) .. (81.88,33.75) .. controls (77.38,39.75) and (80.38,83.75) .. (80.13,90.5) ;
\strand    (80.63,147.75) .. controls (80.73,144.54) and (80.51,143.93) .. (80.38,139.5) .. controls (80.24,135.07) and (71.13,131.5) .. (70.63,127.5) .. controls (70.13,123.5) and (80.2,119.12) .. (81.88,116) .. controls (83.55,112.88) and (86.17,107.89) .. (89.63,104.25) .. controls (93.08,100.61) and (95.63,93) .. (98.38,89.5) .. controls (101.13,86) and (98.13,63.5) .. (101.38,60.25) .. controls (104.63,57) and (114.03,52.83) .. (120.88,49.25) .. controls (127.72,45.67) and (127.63,35.75) .. (125.88,30.5) .. controls (124.13,25.25) and (118.38,21) .. (112.13,18.25) .. controls (105.88,15.5) and (81.36,16.35) .. (74.38,27.5) .. controls (67.39,38.65) and (69.13,71.5) .. (70.13,90.75) ;
\strand    (91.09,147.75) .. controls (91.09,141.45) and (90.77,134.85) .. (93.15,131.25) .. controls (95.53,127.65) and (116.61,117.21) .. (126.88,111.25) .. controls (137.14,105.29) and (139.88,102.5) .. (140.38,90.5) ;
\strand    (101.34,147.5) .. controls (101.34,142.95) and (100.21,137.89) .. (101.65,134.75) .. controls (103.1,131.61) and (108.33,128.95) .. (110.59,127.25) .. controls (112.86,125.55) and (110.96,126.99) .. (111.17,126.87) .. controls (111.38,126.74) and (132.82,116.92) .. (137.88,113) .. controls (142.93,109.08) and (151.13,101.75) .. (150.38,90.75) ;
\strand    (101.34,147.5) -- (120.68,147.5) ;
\strand    (80.63,147.75) -- (91.09,147.75) ;
\strand    (151.15,147.25) .. controls (164.22,158.06) and (151.63,168.5) .. (143.88,172.75) .. controls (136.13,177) and (117.34,177.03) .. (111.63,177.25) .. controls (105.91,177.47) and (85.63,176.75) .. (77.38,172.25) .. controls (69.13,167.75) and (57.63,158.75) .. (70.88,147.25) ;
\strand    (149.63,147.25) .. controls (150.88,133.75) and (140.38,132) .. (140.13,127.75) .. controls (139.88,123.5) and (150.3,119.37) .. (148.63,116.25) .. controls (146.95,113.13) and (144.33,108.14) .. (140.88,104.5) .. controls (137.43,100.86) and (132.88,92) .. (130.38,88.25) .. controls (127.88,84.5) and (130.13,60.5) .. (127.63,56) .. controls (125.13,51.5) and (105.63,45) .. (103.13,40.25) .. controls (100.63,35.5) and (105.42,27.99) .. (109.38,26.25) .. controls (113.33,24.51) and (124.5,25.03) .. (128.63,26.5) .. controls (132.75,27.97) and (134.13,27.75) .. (138.63,33.75) .. controls (143.13,39.75) and (140.13,83.75) .. (140.38,90.5) ;
\strand    (139.88,147.75) .. controls (139.78,144.54) and (139.99,143.93) .. (140.13,139.5) .. controls (140.26,135.07) and (149.38,131.5) .. (149.88,127.5) .. controls (150.38,123.5) and (140.3,119.12) .. (138.63,116) .. controls (136.95,112.88) and (134.33,107.89) .. (130.88,104.25) .. controls (127.43,100.61) and (124.88,93) .. (122.13,89.5) .. controls (119.38,86) and (122.38,63.5) .. (119.13,60.25) .. controls (115.88,57) and (106.47,52.83) .. (99.63,49.25) .. controls (92.78,45.67) and (92.88,35.75) .. (94.63,30.5) .. controls (96.38,25.25) and (102.13,21) .. (108.38,18.25) .. controls (114.63,15.5) and (139.14,16.35) .. (146.13,27.5) .. controls (153.12,38.65) and (151.13,79.75) .. (150.38,90.75) ;
\strand    (129.41,147.75) .. controls (129.41,141.45) and (129.73,134.85) .. (127.35,131.25) .. controls (124.97,127.65) and (103.89,117.21) .. (93.63,111.25) .. controls (83.37,105.29) and (80.63,102.5) .. (80.13,90.5) ;
\strand    (119.16,147.5) .. controls (119.16,142.95) and (120.29,137.89) .. (118.85,134.75) .. controls (117.4,131.61) and (112.17,128.95) .. (109.91,127.25) .. controls (107.64,125.55) and (109.54,126.99) .. (109.33,126.87) .. controls (109.12,126.74) and (87.68,116.92) .. (82.63,113) .. controls (77.57,109.08) and (69.38,101.75) .. (70.13,90.75) ;
\strand    (139.88,147.75) -- (129.41,147.75) ;

\flipcrossings{1,7,8,13,14,3,5,9,11,15,16,19,20,24}

\end{knot}

\draw  [fill={rgb, 255:red, 255; green, 255; blue, 255 }  ,fill opacity=1 ] (64.88,64) -- (106.13,64) -- (106.13,81.25) -- (64.88,81.25) -- cycle ;
\draw  [fill={rgb, 255:red, 255; green, 255; blue, 255 }  ,fill opacity=1 ] (116.13,64) -- (157.38,64) -- (157.38,81.25) -- (116.13,81.25) -- cycle ;

\draw (78,68) node [anchor=north west][inner sep=0.75pt]    {$-n$};
\draw (130,68) node [anchor=north west][inner sep=0.75pt]    {$-n$};

\end{tikzpicture}
    }
    \caption{The knot $J_n$ and the two slice disks $D_n, D'_n$ obtained by compressing along the two colored circles.}
    \label{fig:knotJn}
\end{figure}

As we shall see later, \Cref{thm:1} implies that each $J_n$ never admits a \textit{simple isotopy-equivariant} slice disk. In particular, the two slice symmetric disks $D_n, D'_n$ of $J_n$ depicted in \Cref{fig:knotJn} are not smoothly isotopic rel $J_n$. Combining \Cref{thm:1} with the fact that $D_n$ and $D'_n$ are topologically isotopic, which is proved in \cite{Hayden:2021} using the result of Conway and Powell \cite{Conway-Powell:2021}, we may conclude that these disks form an exotic pair of slice disks of $J_n$. This argument is completely analogous to the proof of \cite[Theorem 7.11]{DMS:2023}. 

The definition of involutive Khovanov homology follows the formalism of involutive knot Floer homology \cite{Hendricks-Manolescu:2017,Zemke:2019,Alfieri-Kang-Stipsicz:2019,DMS:2023}. Given a strongly invertible knot diagram $(D, \tau)$, there is an induced involution $\tau$ on the Khovanov complex $\CKh(D)$ over $\FF_2$. Using this we define,

\begin{definition}
\label{def:1-1}
    The \textit{involutive Khovanov complex} of $(D, \tau)$ is defined by
    \[
        \CKhI(D, \tau) = \Cone(\ \CKh(D) \xrightarrow{Q(1 + \tau)} Q\CKh(D)\ )
    \]
    where $Q$ is a formal variable of $Q^2 = 0$. The homology of $\CKhI(D, \tau)$ is denoted $\KhI(D, \tau)$ and is called the \textit{involutive Khovanov homology} of $(D, \tau)$.
\end{definition}

\begin{theorem}
\label{thm:kh-inv}
    The isomorphism class of $\KhI(D, \tau)$ is an invariant of the strongly invertible knot $(K, \tau)$. 
\end{theorem}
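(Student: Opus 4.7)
The plan is to follow the standard invariance paradigm: establish a symmetric Reidemeister theorem for involutive diagrams, verify invariance under each such equivariant move, and combine this with an abstract mapping-cone lemma.

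First, I would invoke an equivariant Reidemeister theorem for involutive link diagrams (in the spirit of Kawauchi-type results on symmetric knots and links): any two involutive diagrams $(D, \tau)$, $(D', \tau')$ of the same involutive link are related by a finite sequence of \emph{equivariant Reidemeister moves}, each of which is either (i) a pair of ordinary Reidemeister moves performed simultaneously at $\tau$-symmetric locations in the complement of the axis, or (ii) a single Reidemeister move performed in a $\tau$-invariant neighborhood of the axis, itself symmetric under $\tau$. This reduces invariance to checking that each such equivariant move induces a quasi-isomorphism on $\CKhI_\tau$.

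The main algebraic input is a mapping-cone lemma: if $f: C \to C'$ is a chain homotopy equivalence of $\FF_2$-chain complexes equipped with chain involutions $\tau_C, \tau_{C'}$, and $f\tau_C + \tau_{C'} f = d_{C'} H + H d_C$ for some map $H: C \to C'$, then the block formula $\tilde f(x, Qy) = (fx, Q(fy + Hx))$ defines a chain map
\[
\tilde f: \Cone(C \xrightarrow{Q(1+\tau_C)} QC) \to \Cone(C' \xrightarrow{Q(1+\tau_{C'})} QC'),
\]
and $\tilde f$ is itself a chain homotopy equivalence. A direct computation in characteristic two verifies compatibility with differentials, with the hypothesis $f\tau_C + \tau_{C'} f = dH + Hd$ used precisely to cancel the cross-terms. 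A homotopy inverse is then constructed symmetrically from a homotopy inverse $g$ of $f$ together with a compatible choice of $H'$, and $\tilde g \tilde f \simeq \id$, $\tilde f \tilde g \simeq \id$ follow by the standard two-out-of-three argument for mapping cones.

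It then remains to verify, for each equivariant Reidemeister move, that the classical Khovanov chain homotopy equivalence $f: \CKh(D) \to \CKh(D')$ satisfies $f\tau \simeq \tau' f$. For moves of type (i), the chain map factors as a composition of two classical Reidemeister maps supported on disjoint $\tau$-related regions; by naturality of Bar-Natan's cobordism construction it commutes with $\tau$ on the nose, so one may take $H = 0$. For moves of type (ii), the move is itself symmetric and the cap/cup/saddle cobordisms used in Bar-Natan's proof of invariance can be chosen $\tau$-equivariantly, again yielding a $\tau$-equivariant chain map. The main technical obstacle is this last verification: controlling the interaction of the involution with the specific cobordisms used in the Reidemeister maps, especially near the fixed axis where the move is genuinely symmetric rather than split into two halves. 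Once this naturality is in hand, the mapping-cone lemma immediately yields $\KhI_\tau(D) \cong \KhI_{\tau'}(D')$.
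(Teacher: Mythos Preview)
Your overall strategy matches the paper's: reduce to an equivariant Reidemeister calculus and promote the classical chain homotopy equivalences to maps of mapping cones. The precise move set is the eight involutive Reidemeister moves of Lobb--Watson (three off-axis pairs IR1--IR3 and five on-axis moves R1, R2, M1--M3), and your mapping-cone lemma is exactly how the paper builds the map on $\CKhI_\tau$ from $f$ together with a homotopy witnessing $f\tau\simeq\tau f$. Your two-out-of-three/five-lemma argument also suffices for the theorem as stated (isomorphism on homology), so you do not strictly need the secondary homotopies the paper constructs to exhibit an explicit chain homotopy inverse.

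The genuine gap is in producing the homotopy $H$ for the on-axis moves. For the off-axis moves your claim that $f$ commutes with $\tau$ on the nose is correct \emph{provided} you choose the local map on one side to be the $\tau$-conjugate of the one on the other; this deserves to be said but is fine. For the on-axis moves, however, you assert that Bar-Natan's cobordisms ``can be chosen $\tau$-equivariantly'' and leave it there. For the M-moves, which are R3-type moves on three or four strands straddling the axis, the standard chain homotopy equivalences involve asymmetric choices (which crossing to resolve first, which branch of the cube to single out), and there is no manifest $\tau$-equivariant representative. You flag this as ``the main technical obstacle'' but do not resolve it; as written, the argument is incomplete precisely at the point you identify as hardest.

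The paper's fix avoids any explicit equivariance check. It invokes Bar-Natan's notion of \emph{$\Kh$-simplicity}: each local tangle appearing in an on-axis move is $\Kh$-simple (being Reidemeister-equivalent to a crossingless tangle with no closed components), and for a $\Kh$-simple complex any two degree-$0$ chain homotopy autoequivalences are homotopic. Since both $f$ and its conjugate $\tau f\tau$ are such equivalences, $f\simeq\tau f\tau$ follows abstractly, supplying the homotopy $H$ without ever writing it down. The same simplicity argument, combined with a composition lemma, also handles the off-axis moves uniformly, independent of how the two local maps are chosen.
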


Hereafter, we make $\tau$ implicit and omit it from $\CKhI$ and $\KhI$. We note that \Cref{def:1-1} is also valid for the deformed versions of Khovanov homology obtained by replacing the defining Frobenius algebra, and \Cref{thm:kh-inv} still holds. There are also reduced versions, denoted $\rCKhI(D)$ and $\rKhI(D)$, which are also invariants of strongly invertible knots. 

Our equivariant invariant $(\ds, \us)$ is defined using the Bar-Natan's deformation, given by the Frobenius algebra $A = R[X]/(X(X + H))$ over the ring $R = \FF_2[H]$ with $\deg(H) = -2$. Let us first recall the definition of the \textit{$\FF_2$-Rasmussen invariant} $s = s^{\FF_2}$, as characterized in \cite[Proposition 3.8]{KWZ:2019}. For any knot $K$, it is known that the reduced Bar-Natan homology $\rBN(K)$ has a single $\FF_2[H]$-tower in homological grading $0$. Then $s(K)$ is defined as the quantum grading of its generator:
\[
    \rBN(K) \isom h^0q^{s(K)}\FF_2[H] \oplus (\Tor). 
\]

For a strongly invertible knot $K$, it is proved that the \textit{reduced involutive Bar-Natan homology} $\rBNI(K)$ has two $\FF_2[H]$-towers, one in homological grading $0$ and another in homolgical grading $1$. Thus we may define $\ds(K), \us(K)$ as the quantum gradings of their generators:
\[
    \rBNI(K) \isom h^0q^{\ds(K)}\FF_2[H] \oplus h^1q^{\us(K)}\FF_2[H] \oplus (\Tor). 
\]
The pair $(\ds(K), \us(K))$ is called the \textit{equivariant Rasmussen invariant} of $K$. 

\begin{theorem}
\label{thm:s-inv}
    The equivariant Rasmussen invariant $(\ds(K), \us(K))$ is an invariant of the strongly invertible knot $K$, satisfying
    \[
        \ds(K) \leq s(K) \leq \us(K).
    \]
\end{theorem}

\begin{table}[t]
    \resizebox{\textwidth}{!}{
    \centering
\begin{tabular}{r|llllllllllllll}
$18$ & $.$ & $.$ & $.$ & $.$ & $.$ & $.$ & $.$ & $.$ & $.$ & $.$ & $.$ & $.$ & $\FF$ & $\FF$ \\
$16$ & $.$ & $.$ & $.$ & $.$ & $.$ & $.$ & $.$ & $.$ & $.$ & $.$ & $.$ & $.$ & $.$ & $.$ \\
$14$ & $.$ & $.$ & $.$ & $.$ & $.$ & $.$ & $.$ & $.$ & $.$ & $\FF$ & $\FF^{2}$ & $\FF$ & $.$ & $.$ \\
$12$ & $.$ & $.$ & $.$ & $.$ & $.$ & $.$ & $.$ & $.$ & $\FF$ & $\FF^{2}$ & $\FF$ & $.$ & $.$ & $.$ \\
$10$ & $.$ & $.$ & $.$ & $.$ & $.$ & $.$ & $.$ & $\FF$ & $\FF$ & $.$ & $.$ & $.$ & $.$ & $.$ \\
$8$ & $.$ & $.$ & $.$ & $.$ & $.$ & $\FF$ & $\FF^{3}$ & $\FF^{3}$ & $\FF$ & $.$ & $.$ & $.$ & $.$ & $.$ \\
$6$ & $.$ & $.$ & $.$ & $.$ & $.$ & $\FF$ & $\FF$ & $.$ & $.$ & $.$ & $.$ & $.$ & $.$ & $.$ \\
$4$ & $.$ & $.$ & $.$ & $\FF$ & $\FF^{2}$ & $\FF$ & $.$ & $.$ & $.$ & $.$ & $.$ & $.$ & $.$ & $.$ \\
${\color{orange} 2}$ & $.$ & $.$ & $\FF$ & ${\color{orange} \FF[H]} \oplus \FF$ & $\FF$ & $.$ & $.$ & $.$ & $.$ & $.$ & $.$ & $.$ & $.$ & $.$ \\
${\color{orange} 0}$ & $.$ & $.$ & ${\color{orange} \FF[H]}$ & $.$ & $.$ & $.$ & $.$ & $.$ & $.$ & $.$ & $.$ & $.$ & $.$ & $.$ \\
$-2$ & $\FF$ & $\FF$ & $.$ & $.$ & $.$ & $.$ & $.$ & $.$ & $.$ & $.$ & $.$ & $.$ & $.$ & $.$ \\
\hline
$ $ & $-2$ & $-1$ & ${\color{orange} 0}$ & ${\color{orange} 1}$ & $2$ & $3$ & $4$ & $5$ & $6$ & $7$ & $8$ & $9$ & $10$ & $11$ \\
\end{tabular}
    }
    \caption{$\rBNI(J_0)$}
    \label{tab:knotJ}
\end{table}

As is true for the ordinary $s$-invariant, our equivariant $s$-invariant is directly computable using computers. In particular, we have computed the invariants for the three strongly invertible knots given in \cite{Hayden-Sundberg:2021}, which are proved therein to admit pairs of non-smoothly isotopic slice disks. 

\begin{proposition}
\label{prop:calc}
    The three strongly invertible slice knots $K = m(9_{46})$, $15n_{103488}$, $17nh_{73}$ have 
    \[
        \ds(K) = 0 < 2 = \us(K).
    \]
\end{proposition}

\Cref{tab:knotJ} shows the computed $\rBNI$ for $J_0 = 17nh_{73}$, from which we can see that there are indeed two $\FF_2[H]$ summands, one in bigrading $(0, 0)$ and another in $(1, 2)$, and the remaining summands are copies of $\mathbb{F}_2[H]/(H) = \FF_2$. The computed $\rBNI$ for the other two knots will be given in \Cref{sec:5}. Combined with the later stated \Cref{cor:intro-s-obst}, we recover that these knots admit pairs of non-smoothly isotopic slice disks. 

We further study properties of the equivariant invariant $(\ds, \us)$, that are analogous to that of the ordinary $s$. 

\begin{proposition}
    For the mirror $K^*$ of $K$, we have 
    \[
        \ds(K^*) = -\us(K)
    \]
\end{proposition}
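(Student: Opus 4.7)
The plan is to promote the standard mirror duality for Khovanov homology to the involutive setting, and then track how it interchanges the equivariant Lee cycles. Fix a strongly invertible diagram $D$ for $K$; its mirror $D^*$ is naturally a strongly invertible diagram for $K^*$ with the same involution $\tau$, and it satisfies $w(D^*) = -w(D)$ and $r(D^*) = r(D)$.

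In the non-involutive setting there is a perfect $R$-bilinear pairing
\[
\langle \cdot, \cdot \rangle \colon \CKh(D) \otimes_R \CKh(D^*) \to R,
\]
built cube-wise from the Frobenius form on the Bar-Natan algebra $A$ together with the identification of a state of $D$ with the complementary state of $D^*$. This pairing underlies the Sano--Sato proof that $s(K^*) = -s(K)$; specifically, the Lee cycles $\ca(D)$ and $\ca(D^*)$ pair to a unit multiple of $H^{r(D) - 1}$, which forces the non-equivariant divisibilities to satisfy $d(D) + d(D^*) = r(D) - 1$.

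The key step is to upgrade this to an equivariant statement. Because $\tau$ permutes the resolutions of $D$ and $D^*$ in compatible ways, the pairing is $\tau$-invariant: $\langle \tau x, \tau y \rangle = \langle x, y \rangle$. Dualizing the cone that defines $\CKhI_\tau(D)$ then yields a chain equivalence
\[
\CKhI_\tau(D^*) \htpy \Hom_R\bigl(\CKhI_\tau(D), R\bigr),
\]
up to an appropriate grading shift, under which the downstairs copy $\CKh(D^*)$ of the cone for $D^*$ is paired with the upstairs copy $Q\CKh(D)$ of the cone for $D$, and vice versa. In particular, $\dca(D^*)$ pairs to a unit multiple of $H^{r(D) - 1}$ with $\uca(D)$, while pairing trivially with $\dca(D)$.

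From here the rest is formal: a class of $H$-divisibility $d$ in the free part must map into $H^d \cdot R$ under any $R$-linear functional, so the above non-degenerate pairing forces $\dd(D^*) + \ud(D) = r(D) - 1$. Plugging into
\begin{align*}
\ds(K^*) &= 2\dd(D^*) + w(D^*) - r(D^*) + 1, \\
\us(K) &= 2\ud(D) + w(D) - r(D) + 1,
\end{align*}
and using the mirror identities $w(D^*) = -w(D)$ and $r(D^*) = r(D)$, yields $\ds(K^*) + \us(K) = 0$; the symmetric identity $\us(K^*) = -\ds(K)$ follows by the same argument with the roles of $K$ and $K^*$ reversed. The principal obstacle is the equivariant duality step: verifying that the cube-level pairing is $\tau$-equivariant, and carefully identifying the dual of the cone defining $\CKhI_\tau(D)$ with the cone defining $\CKhI_\tau(D^*)$ in a way that swaps the roles of the two copies, and hence of $\dca$ and $\uca$.
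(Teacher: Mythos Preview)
Your strategy coincides with the paper's: build a perfect pairing between $\CKhI_\tau(D)$ and $\CKhI_\tau(D^*)$ that swaps the lower and upper copies of the cone, evaluate it on Lee cycles, and read off the divisibility identity $\dd(D^*)+\ud(D)=r(D)-1$. The $\tau$-equivariance of the cube-level pairing and the identification $\CKhI_\tau(D^*)\cong\CKhI_\tau(D)^*[1]$ are exactly what the paper records as \Cref{prop:CKhI-dual} and \Cref{prop:perf-pairing}, so the part you flag as the ``principal obstacle'' is in fact routine.

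There is, however, a real gap in the step you treat as formal. First, the value $H^{r(D)-1}$ you quote is the \emph{reduced} pairing of the Lee cycles; the unreduced pairing $\langle\dca(D^*),\uca(D)\rangle$ equals $H^{r(D)}$ (see \Cref{prop:ca-pairing}). Second, and more importantly, the observation ``a class of $H$-divisibility $d$ maps into $H^d\cdot R$ under any functional'' only gives the inequality $\dd(D^*)+\ud(D)\le d_h\bigl(\langle\dca(D^*),\uca(D)\rangle\bigr)$. To get equality you must know that the \emph{primitive parts} of the two Lee classes pair to a unit, and in the unreduced setting they do not: the free part of $\KhI_\tau(K)$ has rank $2$ in each homological degree, and one checks that the primitives pair to $H$ times a unit. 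The paper handles this by passing to the reduced complex $\rCKhI_\tau$ (\Cref{prop:d-mirror}), where the free part has rank~$1$ in degrees $0$ and $1$; a perfect pairing between rank-$1$ modules forces generators to pair to a unit, and then \Cref{lem:div-ca-red} transports the resulting identity back to the unreduced divisibilities. You should make this reduction explicit, or else supply an independent argument controlling the unreduced pairing of primitives.
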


\begin{proposition}
    For another strongly invertible knot $K'$, 
    \[
        \ds(K) + \ds(K') 
        \leq \ds(K \# K') 
        \leq \ds(K) + \us(K') 
        \leq \us(K \# K') 
        \leq \us(K) + \us(K') 
    \]
    Here $\#$ denotes the equivariant connected sum. 
\end{proposition}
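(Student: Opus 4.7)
The plan is to reduce the five inequalities on $(\ds, \us)$ to analogous inequalities on the equivariant $H$-divisibilities $(\dd, \ud)$, and then prove the latter at the chain level via an equivariant connected sum formula for involutive Khovanov homology.

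First, the writhe is additive under the equivariant connected sum, $w(D \# D') = w(D) + w(D')$, and the connecting band merges two Seifert circles, so $r(D \# D') = r(D) + r(D') - 1$. Hence the correction term $w - r + 1$ is additive, and a direct computation reduces the proposition to the chain
\[
    \dd(D) + \dd(D') \leq \dd(D \# D') \leq \dd(D) + \ud(D') \leq \ud(D \# D') \leq \ud(D) + \ud(D').
\]

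Next, I would establish a chain-level equivariant connected sum formula. Under Bar-Natan's Frobenius structure there is an identification $\CKh(D \# D') \simeq \CKh(D) \otimes_A \CKh(D')$; I would argue that the induced involution is $\tau \otimes \tau'$ and that the ordinary Lee cycles factor as $\ca(D \# D') = \ca(D) \otimes \ca(D')$ and similarly for $\cb$. In the involutive complex the key ingredient is the $\FF_2$-identity
\[
    1 + \tau \otimes \tau' = (1 + \tau) \otimes 1 + \tau \otimes (1 + \tau'),
\]
which exhibits $\CKhI(D \# D')$ as an iterated mapping cone built from $\CKhI_\tau(D)$ and $\CKhI_{\tau'}(D')$, modulo the relation $Q^2 = 0$.

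From this iterated cone description the four chain-level inequalities follow by direct bookkeeping. The outer inequalities $\dd(D) + \dd(D') \leq \dd(D \# D')$ and $\ud(D \# D') \leq \ud(D) + \ud(D')$ come from the elementary fact that tensor products of $H$-divisible cycles remain $H$-divisible, applied to the factored equivariant Lee cycles $\dca \otimes \dca'$ and $\uca \otimes \uca'$. The two intermediate inequalities $\dd(D \# D') \leq \dd(D) + \ud(D')$ and $\dd(D) + \ud(D') \leq \ud(D \# D')$ exploit the asymmetric role of the two summands of $1 + \tau \otimes \tau'$ above: a bounding chain realising the divisibility $\ud$ on one factor, combined through the $(1 + \tau) \otimes 1$ term with a chain realising $\dd$ on the other factor, yields a bounding chain in $\CKhI(D \# D')$ witnessing the stated bound on divisibility of the appropriate equivariant Lee class.

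The main obstacle is the equivariant connected sum formula itself. Identifying $\CKh(D \# D')$ with $\CKh(D) \otimes_A \CKh(D')$ is standard, but one must verify that the two involutions assemble into $\tau \otimes \tau'$ under this identification, up to a chain homotopy compatible with the mapping cone formulation of $\CKhI$, and track what happens to the equivariant Lee cycles under this identification. This is a Khovanov-theoretic counterpart of the involutive connected sum formula of Hendricks-Manolescu-Zemke underlying \cite{DMS:2023}, and the combinatorial nature of Khovanov homology should make the required chain homotopies quite explicit.
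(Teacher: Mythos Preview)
Your reduction to divisibility inequalities and the additivity of $w-r+1$ is fine, and the identity $1+\tau\otimes\tau'=(1+\tau)\otimes 1+\tau\otimes(1+\tau')$ is exactly what underlies the paper's argument. But there is a genuine gap in your plan for the second and fourth inequalities.

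The elementary principle ``tensor products of $H$-divisible cycles remain $H$-divisible'' only produces \emph{lower} bounds on divisibility: if $\dca(D)$ is divisible by $h^{\dd(D)}$ and $\dca(D')$ by $h^{\dd(D')}$, then their product is divisible by $h^{\dd(D)+\dd(D')}$, which gives $\dd(D\#D')\geq\dd(D)+\dd(D')$. That is the first inequality. The same reasoning applied to $\uca$-classes would again give a lower bound $\ud(D\#D')\geq\ud(D)+\ud(D')$, not the upper bound $\ud(D\#D')\leq\ud(D)+\ud(D')$ you need for the fourth. (There is also a formal problem: $\uca\otimes\uca'$ would carry a $Q^2=0$, so the factored description of $\uca(D\#D')$ is not as naive as for $\dca$.) Likewise, ``exhibiting a bounding chain'' in the cone gives a lower bound on divisibility; it cannot by itself give the upper bounds in the second and fourth inequalities. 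So as written, your argument establishes only the first and third inequalities.

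The paper resolves this asymmetry not by proving upper bounds directly, but by proving only the first and third inequalities and then invoking the mirror formula $\ds(K^*)=-\us(K)$: applying the first inequality to $K^*,K'^*$ yields the fourth, and the third applied to mirrors yields the second. There are also two smaller differences worth noting. First, the paper does not use $\CKh(D)\otimes_A\CKh(D')$; instead it works with the disjoint union $\CKh(D\sqcup D')\cong\CKh(D)\otimes_R\CKh(D')$, where the involutive complex is literally $\Cone(1\otimes 1+\tau\otimes\tau')$, and then pushes forward along the $\tau$-equivariant merge map $m$ to $D\#D'$; this sidesteps the need to identify the involution on the $\otimes_A$ model. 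Second, rather than working with $\ca$ directly, the paper uses the $\sigma$-symmetrised class $z=\ca+\cb$ together with an explicit chain-level lemma (an involutive analogue of the Hendricks--Manolescu--Zemke computation) producing the required homologies in $\Cone(1\otimes 1+\tau\otimes\tau')$.
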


\begin{proposition}
    Let $K^+, K^-$ be strongly invertible knots such that $K^-$ is obtained by applying an `equivariant negative crossing change' to $K^+$ (see \Cref{def:equiv-x-ch}). Then
    \[
        \ds(K^-) \leq \ds(K^+) \leq \ds(K^-) + 2a
    \]
    where $a = 1$ if the move is performed on-axis, and $a = 2$ if performed off-axis. The same statement holds for $\us$. 
\end{proposition}

\begin{proposition}
\label{prop:intro-neg-gxch}
    Let $K^+, K^-$ be strongly invertible knots such that $K^-$ is obtained by applying a `$4$-strand equivariant generalized negative crossing change' to $K^+$ (see \Cref{def:equiv-gen-x-ch}). Then
    \[
        \ds(K^-) \leq \ds(K^+) \leq \ds(K^-) + 4a
    \]
    where $a = 1$ if the move is performed on-axis, and $a = 2$ if performed off-axis. The same statement holds for $\us$. 
\end{proposition}

\begin{proposition}
\label{prop:torus-knot}
    The positive $(p, q)$-torus knot $T_{p, q}$ has 
    \[
        \ds(T_{p, q}) 
        = \us(T_{p, q})
        = (p - 1)(q - 1)
    \]
    with respect to the unique inverting involution.
\end{proposition}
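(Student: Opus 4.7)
The plan is to work with the standard positive braid diagram $D = D_{p,q}$ of $T_{p,q}$ equipped with its strong inversion $\tau$ (realized as a $\pi$-rotation about an axis perpendicular to the braid axis, which gives the unique inverting involution of $T_{p,q}$). Direct counting yields $w(D) = q(p-1)$ and $r(D) = p$, so $w(D) - r(D) + 1 = (p-1)(q-1)$. By the formulas in the preceding theorem, it suffices to show $\dd(D) = \ud(D) = 0$, where $\dd, \ud$ denote the $H$-divisibilities modulo torsion of $[\dca]$ and $[\uca]$ in $\KhI_\tau(D)$.

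The identity $\dd(D) = 0$ is immediate from the sandwich inequality $\ds \le s$ combined with Rasmussen's $s(T_{p,q}) = (p-1)(q-1)$: together these force $\dd(D) \le 0$, and non-negativity of divisibility gives equality.

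For $\ud(D) = 0$, I would exploit the short exact sequence of chain complexes
\[
    0 \longrightarrow Q\CKh(D) \xrightarrow{\ i\ } \CKhI_\tau(D) \xrightarrow{\ p\ } \CKh(D) \longrightarrow 0
\]
and examine the induced long exact sequence, whose connecting homomorphism is $Q(1+\tau_*): \Kh(D) \to Q\Kh(D)$. Strong invertibility gives $(1+\tau)\ca = 0$ at the chain level (this is exactly what makes $\dca$ a cycle in the cone), so $(1+\tau_*)[\ca] = 0$ in $\Kh(D)$. Combined with $\FF_2[H]$-linearity of $(1+\tau_*)$ and the fact that $[\ca], [\cb]$ generate $\Kh(D) \otimes \FF_2[H^{\pm}]$ by Lee's theorem, the map $(1+\tau_*)$ vanishes on the localization, and hence—via the injection $\Kh(D)/\mathrm{tor} \hookrightarrow \Kh(D) \otimes \FF_2[H^\pm]$—on $\Kh(D)/\mathrm{tor}$. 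Passing to torsion-free quotients, the LES therefore collapses into a split short exact sequence of free $\FF_2[H]$-modules
\[
    0 \to Q\Kh(D)/\mathrm{tor} \xrightarrow{i_*} \KhI_\tau(D)/\mathrm{tor} \xrightarrow{p_*} \Kh(D)/\mathrm{tor} \to 0
\]
under which $[\dca] \leftrightarrow [\ca]$ and $[\uca] \leftrightarrow Q[\ca]$. Since $d(D) = 0$ says precisely that $[\ca]$ is not $H$-divisible modulo torsion, neither is $[\uca]$, which gives $\ud(D) = 0$.

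The main technical obstacle is rigorously justifying that the LES descends to a well-defined split short exact sequence on torsion-free quotients with the claimed identification of the equivariant Lee classes; this requires controlling the interaction of $(1+\tau_*)$ with the $H$-torsion of $\Kh(D)$. Granting this, the combined vanishing $\dd(D) = \ud(D) = 0$ yields $\ds(T_{p,q}) = \us(T_{p,q}) = (p-1)(q-1)$, as claimed.
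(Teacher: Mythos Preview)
Your reduction to showing $\dd(D)=\ud(D)=0$ via $w(D)-r(D)+1=(p-1)(q-1)$ is correct, and your argument for $\dd(D)=0$ (sandwich with $s$ plus nonnegativity) is fine, if slightly roundabout.

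The argument for $\ud(D)=0$ has a genuine gap, and it is not the merely ``technical obstacle'' you flag. Your reasoning that $(1+\tau_*)$ vanishes on $\Kh(D)/\mathrm{tor}$ uses only that $\tau$ fixes the Lee cycles, which holds for \emph{every} strongly invertible knot diagram. If the passage from ``$(1+\tau_*)=0$ on the torsion-free quotient'' to ``the long exact sequence collapses to a split short exact sequence of free modules identifying $[\uca]$ with $Q[\ca]$'' were valid, you would conclude $\ud(D)=d(D)$, hence $\us(K)=s(K)$, for \emph{all} strongly invertible knots $K$. This is false: the paper computes $\us(m(9_{46}))=2>0=s(m(9_{46}))$. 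Concretely, the failure is that knowing $(1+\tau_*)$ lands in torsion does not force the induced map $i_*\colon Q\Kh/\mathrm{tor}\to\KhI_\tau/\mathrm{tor}$ to have torsion-free cokernel, so $H$-divisibility of $[\uca]$ in $\KhI_\tau/\mathrm{tor}$ can exceed that of $[\ca]$ in $\Kh/\mathrm{tor}$. The interaction with $H$-torsion is exactly where the invariant $\us$ acquires content beyond $s$; it cannot be ``granted''.

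The paper's proof is quite different and uses positivity directly: one symmetrically $0$-resolves the positive crossings of a positive involutive diagram $D$ to obtain a sequence of chain-level \emph{quotient} maps $\CKhI_\tau(D)\to\CKhI_\tau(D_1)\to\cdots\to\CKhI_\tau(D_N)$ ending at a crossingless diagram. These maps send $\uca(D)$ to $\uca(D_N)$ and can only increase $H$-divisibility, so $0\le\ud_h(D)\le\ud_h(D_N)=0$ (and likewise for $\dd_h$). Plugging into the defining formula then gives $\ds=\us=n(D)-r(D)+1$, which for the standard torus-knot diagram is $(p-1)(q-1)$.
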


The lower bound for the $4$-genus
\[
    |s(K)| \leq 2 g_4(K)
\]
is one of the significant properties of $s$, that led to the reproof of the \textit{Milnor conjecture} \cite{Milnor:1968}. This is an implication of its behavior under cobordisms. In order to prove an analogous result for the equivariant invariant, we impose the following condition on cobordisms. 

\begin{definition}   
    Let $L, L'$ be two strongly invertible links sharing the same involution $\tau$ of $S^3$. A \textit{simple equivariant cobordism} between $L$ and $L'$ is an oriented cobordism $S$ in $S^3 \times I$ between $L, L'$ satisfying $(\tau \times \id)(S) = S$. A \textit{simple isotopy-equivariant cobordism} between $L, L'$ is defined similarly, except that $(\tau \times \id)(S)$ is only required to be isotopic to $S$ rel boundary. 
\end{definition}

\begin{proposition}
\label{prop:intro-s-cobordism}
    Suppose there is a connected, simple isotopy-equivariant cobordism $S$ between two strongly invertible knots $K, K'$. Then,
    \begin{align*}
        |\ds(K) - \ds(K')| &\leq -\chi(S), \\
        |\us(K) - \us(K')| &\leq -\chi(S).
    \end{align*}
    In particular, $\ds$ and $\us$ are invariant under simple isotopy-equivariant concordances. 
\end{proposition}

We may naturally define the \textit{simple equivariant genus} $\widetilde{sg}_4(K)$ and the \textit{simple isotopy-equivariant genus} $\widetilde{sig}_4(K)$ of a strongly invertible knot $K$. Obviously, we have inequalities
\[
    \begin{tikzcd}[row sep = 0.7em, column sep = 1em]
    & \tilde{g}_4(K) \arrow[dr, "\leq"{rotate = -30, description}, phantom] & \\
    g_4(K) \arrow[ur, "\leq"{rotate = 30} description, phantom] \arrow[dr, "\leq"{rotate = -30} description, phantom] & & \widetilde{sg}_4(K) \arrow[r, "\leq" description, phantom] & \tilde{g}_3(K) \\
    & \widetilde{sig}_4(K) \arrow[ur, "\leq"{rotate = 30} description, phantom]&                          
    \end{tikzcd}
\]
where $g_4$ is the ordinary $4$-genus, and $\tilde{g}_3$, $\tilde{g}_4$ are the equivariant $3$- and $4$-genera respectively. \Cref{prop:intro-s-cobordism} implies,

\begin{corollary}
\label{cor:intro-s-bound-g}
    Both $|\ds(K)|$ and $|\us(K)|$ bound $2\widetilde{sig}_4(K)$ from below. 
\end{corollary}

\begin{corollary}
\label{cor:intro-s-obst}
    If either $|\ds(K)|$ or $|\us(K)|$ is greater than $2g_4(K)$, then no slice surface $S$ of $K$ realizing $g(S) = g_4(K)$ are simple isotopy-equivariant. In particular, $S$ and $(\tau \times \id)(S)$ are not smoothly isotopic rel $K$.  
\end{corollary}

Now \Cref{thm:1} and the implication that the two slice disks $D_n, D'_n$ of \Cref{fig:knotJn} are not smoothly isotopic rel $J_n$ follows from \Cref{thm:s-inv}, \Cref{prop:calc,prop:intro-neg-gxch} and \Cref{cor:intro-s-obst}, for we have 
\[
    0 = \ds(J_0) \leq \ds(J_1) \leq \cdots \leq \ds(J_n) \leq s(J_n) = 0
\]
and
\[
    2 = \us(J_0) \leq \us(J_1) \leq \cdots \leq \us(J_n).
\]

We finally remark that the restriction to \textit{simple} equivariant cobordisms is due to the way the cobordism maps on Khovanov homology are defined. However, this restriction may lead to a deeper understanding of the \textit{equivariant concordance group} $\tilde{\mathcal{C}}$, defined by Sakuma in \cite[Section 4]{Sakuma:86}. It is known that the Smith conjecture fails in higher dimensions, in particular, there is an involution $\bar{\tau}$ on $S^4$ whose fixed point set is a knotted sphere \cite{Gordon:1974}. This implies that non-simple equivariant cobordisms do exist. We may define the \textit{simple equivariant concordance group} $\widetilde{s\mathcal{C}}$, whose elements are simple equivariant concordance classes of directed strongly invertible knots, with the operation given by the equivariant connected sum. We question whether the surjective homomorphism $\widetilde{s\mathcal{C}} \rightarrow \tilde{\mathcal{C}}$ is injective or not, and also the existence of knots such that $\tilde{g}_4(K) < \widetilde{sg}_4(K)$. 

\medskip
\noindent 
\textbf{Organization.}
This paper is organized as follows. In \Cref{sec:2}, we define the \textit{involutive Khovanov complex} $\CKhI$ for \textit{involutive link diagrams}, and prove its invariance up to chain homotopy under the \textit{involutive Reidemeister moves}. The reduced version is also introduced therein. In \Cref{sec:3}, we define the \textit{equivariant Rasmussen invariant} $(\ds, \us)$ for \textit{strongly invertible links}, not in the way presented above, but instead using the divisibility of the \textit{equivariant Lee classes}. In \Cref{sec:4}, the above-stated properties of $(\ds, \us)$ are proved, including the proof that the two definitions of $(\ds, \us)$ coincide. In \Cref{sec:5}, the proof of \Cref{thm:1} will be restated, followed by an observation of the result. In \Cref{sec:6}, we briefly state that an analogous construction is possible for $2$-periodic links by considering a modified involution. In \Cref{appendix}, we give a list of $\KhI$ and $\BNI$ for prime knots with up to $7$ crossings, obtained by direct computations. 
    
\medskip
\noindent 
\textbf{Acknowledgements.}
The author thanks Masaki Taniguchi, Kouki Sato and Makoto Sakuma for the helpful discussions, and the anonymous referee for their helpful comments. The author thanks Yonghan Xiao for pointing out an error in Section 3.1 of the previous version. The author was supported by JSPS KAKENHI Grant Numbers 23K12982, RIKEN iTHEMS Program and academist crowdfunding.

    \section{Involutive Khovanov homology}
\label{sec:2}

Throughout this paper, we work in the smooth category and assume all objects and maps to be smooth. We assume that the reader is familiar with the construction of Khovanov homology \cite{Khovanov:2000}, in particular Bar-Natan's reformulation given in \cite{BarNatan:2004}. Let $R$ be a commutative ring with unity of $\fchar{R} = 2$, and $A$ a Frobenius algebra of the form $A = R[X]/(X(X + h))$ with $h \in R$, determined by $\epsilon(1) = 0,\ \epsilon(X) = 1$. For a link diagram $D$, let $\CKh(D; R, h)$ denote the Khovanov chain complex of $D$ obtained from the above Frobenius algebra $A$, and $\Kh(D; R, h)$ its homology. Typically we consider the following cases:
\[
    (R, h) = (\FF_2, 0),\ (\FF_2, 1),\ (\FF_2[H], H),
\]
each corresponding to the original Khovanov homology \cite{Khovanov:2000}, the filtered and the bigraded Bar-Natan homology \cite{BarNatan:2004} over $\FF_2$. For the third case, we assume that $H$ has $\deg(H) = -2$, so that the chain complex admits a bigrading. We usually make $(R, h)$ implicit and omit it from the notations. 

\subsection{Definition}
\label{subsec:1-1}

Hereafter, we assume that any involution $\tau$ on $S^3$ has fixed point set $S^1$. (From the resolution of the Smith conjecture \cite{Waldhausen:1969,Morgan-Bass:1979}, the fixed point set of an involution of $S^3$ with non-empty fixed-point set is necessarily an unknotted circle.) The fixed-point set $\Fix(\tau)$ is called the \textit{axis} of $\tau$. We follow \cite{Lobb-Watson:2021} for the definitions of involutive links and their equivalence. 

\begin{definition}
    An \textit{involutive link} $(L, \tau)$ in $S^3$ is an oriented link $L$ equipped with an involution $\tau$ on $S^3$ such that $\tau(L) = L$, possibly altering the orientations on some of its components. Two involutive links are \textit{equivalent} if they are isotopic through involutive links. 
    An involutive link $(L, \tau)$ is \textit{strongly invertible} if $\tau$ reverses the orientation of $L$, \textit{$2$-periodic} if $\tau$ preserves the orientation of $L$. Strongly invertible links and $2$-periodic links are together called \textit{equivariant links}. 
\end{definition}

We will draw involutive link diagrams so that the axis is projected as a straight vertical line, and the diagram is symmetric with respect to the axis, as in \Cref{fig:knotJn}. For an involutive link diagram $(D, \tau)$, the induced involution $\tau$ on $\CKh(D)$ is defined as follows. For each state $s$ for $D$, there is a unique state $s'$ such that $D(s)$ and $D(s')$ are symmetric with respect to the axis $\Fix(\tau)$. For each standard generator $x$ in state $s$, its image $\tau(x)$ is defined to be the generator in the state $s'$ that corresponds to $x$ under the symmetry. It is obvious that $\tau$ commutes with $d$ and is an involution on $\CKh(D)$ with bidegree $(0, 0)$. 

\begin{definition}
    The \textit{involutive Khovanov complex} of $(D, \tau)$ is defined by
    \[
        \CKhI(D, \tau) = \Cone(\ \CKh(D) \xrightarrow{Q(1 + \tau)} Q\CKh(D)\ )
    \]
    where $Q$ is a formal variable of $Q^2 = 0$. The homology of $\CKhI(D, \tau)$ is denoted $\KhI(D, \tau)$ and is called the \textit{involutive Khovanov homology} of $(D, \tau)$. 
\end{definition}

\begin{remark}
    Our $\CKhI(D, \tau)$ is different from the triply graded complex $\CKh_\tau(D)$ given in \cite{Lobb-Watson:2021}, where $\CKh_\tau(D) = \CKh(D; \FF_2, 0)$ as $\FF_2$-modules, but the differential is defined as $\del = d + 1 + \tau$.
\end{remark}

Hereafter we make $\tau$ implicit and omit it from $\CKhI$ and $\KhI$. When explicitly describing elements and maps, we often regard $\CKhI(D)$ as the direct sum of two copies of $\CKh(D)$, 
\[
    \CKhI(D) = \CKh(D) \oplus \CKh(D)[1]
\]
with the differential given by 
\[
    \begin{pmatrix}
        d & \\
        1 + \tau & d
    \end{pmatrix}.
\]
As in the non-involutive case, the differential preserves the quantum grading if $h = 0$ or $\deg(h) = -2$; or is quantum grading non-decreasing if $h \neq 0$ and $\deg(h) = 0$. In the former case, we regard $\CKhI(D)$ as a bigraded complex, and in the latter as a filtered complex. 

Formally, a chain complex over $\FF_2$ equipped with an involution $\tau$ is called a \textit{$\tau$-complex}. For $\tau$-complex $C$, let $C_\tau$ denote the complex
\[
    C_\tau = \Cone(1 + \tau). 
\]
This correspondence is functorial in the following sense. For homotopic chain maps $f, g$ (of any degree) with homotopy $h$, we write
\[
    f \htpy_h g
\]
to mean 
\[
    f + g = dh + hd.
\]
A chain map $f$ between $\tau$-complexes $C, C'$ is \textit{homotopy $\tau$-equivariant} if $\tau f \htpy f \tau$. A \textit{$\tau$-conjugate} of $f$ is defined by 
\[
    f^\tau = \tau f \tau.
\]
Obviously $f$ is homotopy $\tau$-equivariant if and only if $f \htpy f^\tau$. 
Two homotopy $\tau$-equivariant chain maps $f, g$ with homotopies
\[
    \tau f \htpy_{h_f} f \tau, \ 
    \tau g \htpy_{h_g} g \tau
\]
are \textit{coherently homotopic} with homotopy $h$ if  $f \htpy_h g$ and
\[
    \tau h + h \tau \htpy h_g + h_f.
\]
The following lemmas are easy to verify and will be used throughout this paper.

\begin{lemma}
\label{lem:induced-ch-map}
    A homotopy $\tau$-equivariant chain map
        $f\colon C \rightarrow C'$ 
    with homotopy
        $\tau f \htpy_{h_f} f \tau$
    induces a chain map
    \[
        f_\tau \colon C_\tau \rightarrow C'_\tau
    \]
    given by 
    \[
        f_\tau = \begin{pmatrix}
            f & \\
            h_f & f
        \end{pmatrix}.
    \]
    In particular we have 
    \[
        1_\tau = 1.
    \]
    For another homotopy $\tau$-equivariant chain map
        $g\colon C' \rightarrow C''$ 
    with homotopy
        $\tau g \htpy_{h_g} g \tau$, 
    the composition $gf$ is homotopy $\tau$-equivariant with homotopy
    \[
        h_{gf} = h_g f + g h_f
    \]
    and the above correspondence give
    \[
        (gf)_\tau = g_\tau f_\tau.
    \]
\end{lemma}

\begin{lemma}
\label{lem:induced-ch-htpy}
    Suppose 
        $f, g\colon C \rightarrow C'$
    are homotopy $\tau$-equivariant chain maps with homotopies
        $\tau f \htpy_{h_f} f \tau$,\
        $\tau g \htpy_{h_g} g \tau$
    and are coherently homotopic with homotopies $h, k$ satisfying
        $f \htpy_h g$,\ 
        $\tau h + h \tau \htpy_k h_g + h_f$.
    Then the induced maps
    \[
        f_\tau, g_\tau
        \colon C_\tau \rightarrow C'_\tau
    \]
    are homotopic with homotopy 
    \[
        \begin{pmatrix}
            h & \\
            k & h
        \end{pmatrix}. 
    \]
\end{lemma}

\begin{lemma}
\label{lem:induced-ch-htpy-eq}
    Let $f$ be a homotopy $\tau$-equivariant homotopy equivalence with a homotopy $\tau$-equivariant homotopy inverse $g$ and coherent homotopies
    \[
        g f \htpy 1,\ 
        f g \htpy 1.
    \]
    Then the induced map 
    \[
        f_\tau
        \colon C_\tau \rightarrow C'_\tau
    \]
    is a homotopy equivalence with homotopy inverse $g_\tau$. 
\end{lemma}

\subsection{Invariance}

\begin{figure}[t]
    \centering
    \resizebox{\textwidth}{!}{
    \input{tikzpictures/IR-moves}
    }
    \caption{Involutive Reidemeister moves}
    \label{fig:inv-reidemeister}
\end{figure}

Next, we prove the invariance of $\KhI$. In \cite{Lobb-Watson:2021}, Lobb and Watson introduced the eight \textit{involutive Reidemeister moves} (see \Cref{fig:inv-reidemeister}) and proved that two involutive links are equivalent if and only if those diagrams are related by a sequence of involutive Reidemeister moves and equivariant planar isotopies. \Cref{thm:kh-inv} follows from the following stronger proposition.

\begin{proposition}
\label{prop:invariance}
    Let $D, D'$ be two involutive link diagrams related by one of the involutive Reidemeister moves. Then there is a chain homotopy equivalence
    \[
        \rho\colon\CKhI(D) \rightarrow \CKhI(D')
    \]
    such that the following diagram commutes
    \[ 
\begin{tikzcd}
{Q\CKh(D)[1]} \arrow[r, hook] \arrow[d, "\rho"] & \CKhI(D) \arrow[r, two heads] \arrow[d, "\rho"] & \CKh(D) \arrow[d, "\rho"] \\
{Q\CKh(D')[1]} \arrow[r, hook]                   & \CKhI(D') \arrow[r, two heads]                            & \CKh(D').
\end{tikzcd} 
    \]
    Here the two vertical arrows on the left and the right are given by the composition of the standard chain homotopy equivalences given in \cite{BarNatan:2004} that corresponds to some decomposition of the involutive move into a sequence of ordinary Reidemeister moves. 
\end{proposition}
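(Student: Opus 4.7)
The key structural observation is that $\CKhI_\tau(D) = \CKh(D) \oplus Q\CKh(D)[1]$, with differential $\begin{pmatrix} d & 0 \\ 1+\tau & d\end{pmatrix}$, is a mapping cone over $Q(1+\tau)$. Any map $\rho$ that fits into the commutative diagram of the statement is forced to be block-diagonal, $\rho = \begin{pmatrix} f & 0 \\ 0 & f\end{pmatrix}$, for a single map $f\colon\CKh(D)\to\CKh(D')$; and such a $\rho$ is a chain map if and only if $f$ is a chain map satisfying $f\tau = \tau f$. Hence the proposition reduces to exhibiting, for each of the eight involutive Reidemeister moves, an ordinary Bar-Natan chain homotopy equivalence $f$ that strictly intertwines the two involutions on source and target.

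My plan is to decompose each involutive move into an explicit $\tau$-equivariant sequence of ordinary Reidemeister moves, take $f$ to be the composite of the corresponding chain homotopy equivalences from \cite{BarNatan:2004}, and then verify $f\tau = \tau f$ directly. Geometrically the moves split into two families: those supported in a disc disjoint from the axis (\textit{IR1, IR2, IR3}), and those supported in a $\tau$-invariant neighborhood of the axis (\textit{R1, R2, M1, M2, M3}). For the first family, the decomposition consists of two mirror-image ordinary moves, one on each side of the axis; the associated $f$ is a tensor product of two identical local Bar-Natan maps that $\tau$ simply swaps, so equivariance is automatic. For the second family, a single ordinary move is performed inside a $\tau$-invariant disc, and the associated local Bar-Natan map is built from an elementary cobordism (birth, death, or saddle) that is setwise $\tau$-invariant; the induced chain-level map then commutes with the local action of $\tau$ on standard generators, hence with $\tau$ globally after tensoring with the identity on the complement.

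Once $f\tau=\tau f$ is established, $\rho = \begin{pmatrix} f & 0 \\ 0 & f\end{pmatrix}$ is a chain map making the stated diagram commute on the nose. To upgrade $\rho$ to a chain homotopy equivalence I invoke the standard mapping-cone principle: a strictly commutative square of chain complexes whose vertical maps are chain homotopy equivalences induces a chain homotopy equivalence on the cones. A Bar-Natan inverse $f'$ to $f$ and homotopies $f'f\simeq\id$, $ff'\simeq\id$ can be chosen $\tau$-equivariant by the same symmetry argument (or, failing that, replaced by their $\tau$-averages using $\fchar R=2$), and then assemble block-diagonally into an inverse and homotopies for $\rho$. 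The main technical obstacle is the move-by-move verification that the local Bar-Natan equivalences associated to the axial moves commute with $\tau$ on the nose rather than merely up to homotopy; for \textit{R1} and \textit{R2} this is a direct inspection of Bar-Natan's formulas, while for the mixed moves \textit{M1--M3}, which involve strands crossing the axis, one must carefully track the induced bijection of resolutions. Should strict equivariance fail for some move, one may fall back on $\rho = \begin{pmatrix} f & 0 \\ g & f\end{pmatrix}$ where $g$ is a chain homotopy between $f\tau$ and $\tau f$; the cone lemma then still applies, at the cost of somewhat more involved bookkeeping.
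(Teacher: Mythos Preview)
Your proposal has a genuine gap at its central step. First, a minor point: the commutative diagram does not force $\rho$ to be block-diagonal, only lower-triangular of the form $\begin{pmatrix} f & 0 \\ * & f\end{pmatrix}$; you seem to realise this by the final paragraph, but it invalidates the clean reduction stated at the outset. The substantive problem is the assertion that $f\tau=\tau f$ strictly. The paper notes explicitly that the Bar-Natan map $F$ and its conjugate $F'=\tau F\tau$ are \emph{not} equal in general. Your argument for IR1--IR3 (that $\tau F_1\tau=F_2$ for the two mirror-image local maps) is plausible but unverified, and for the axial family it breaks down: the moves M1--M3 are not single elementary moves but composites of ordinary Reidemeister moves passing through intermediate diagrams that are \emph{not} $\tau$-symmetric, so there is no mechanism forcing the composite to commute with $\tau$ on the nose. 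Your averaging fallback also fails in characteristic $2$: if $H$ and $\tau H\tau$ both witness $f'f\simeq 1$, then $d(H+\tau H\tau)+(H+\tau H\tau)d=2(f'f+1)=0$, so the sum is a chain map rather than a homotopy; one cannot divide by $2$ to fix this.

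The lower-triangular fallback you sketch at the end, $\rho=\begin{pmatrix} f & 0\\ g & f\end{pmatrix}$ with $g$ a homotopy $f\tau\simeq\tau f$, is exactly the paper's construction, but two ingredients are missing. One must first \emph{produce} the homotopy $g$, and then one must show $\rho$ is a homotopy equivalence rather than merely a chain map --- building $\rho^{-1}$ and homotopies $\rho^{-1}\rho\simeq 1$ in block form requires a second-order compatibility among the various homotopies (the paper's Claim~\ref{cl:htpy-2}). The paper supplies both uniformly via the notion of a \emph{$\Kh$-simple} tangle: because every local tangle in an involutive move is $\Kh$-simple, any two degree-$0$ homotopy equivalences between the relevant complexes are homotopic (yielding $g$), and any degree-$(-1)$ self-chain-map is null-homotopic (yielding the second-order homotopy). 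Without this structural tool or an equivalent, the ``more involved bookkeeping'' you allude to cannot be completed.
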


In order to prove \Cref{prop:invariance}, instead of explicitly constructing chain homotopy equivalences and chain homotopies for each of the moves, we prove the existences of the desired maps in a uniform way. 

\subsubsection{General Strategy}

Any involutive Reidemeister move can be decomposed into a sequence of ordinary Reidemeister moves. By composing the corresponding maps given in \cite{BarNatan:2004} we get explicit homotopy equivalences $F, G$ between $\CKh(D)$ and $\CKh(D')$, together with homotopies $GF \htpy_H I$ and $FG \htpy_{H'} I$. In general, these maps are not strictly $\tau$-equivariant. Nevertheless, we can prove the following.

\begin{claim}
\label{cl:htpy-1}
    $F$ and $G$ are homotopy $\tau$-equivariant, i.e.\ there exists homotopies $\tau F \htpy_{h_F} F \tau$ and $\tau G \htpy_{h_G} G \tau$. 
\end{claim}

\begin{claim}
\label{cl:htpy-2}
    $H$ and $H'$ are coherent homotopies, i.e.\ there exists homotopies $\tau H + H \tau \htpy h_G F + G h_F$ and $\tau H' + H' \tau \htpy h_F G + F h_G$. 
\end{claim}

Then the invariance follows from \Cref{lem:induced-ch-htpy-eq}. Thus the proof is reduced to proving \Cref{cl:htpy-1,cl:htpy-2} for each of the moves. 
Recall from \cite[Definition 8.5]{BarNatan:2004} that a tangle diagram $T$ is \textit{$\Kh$-simple} if any degree $0$ automorphism (up to homotopy) of $C(T)$ is homotopic to $\pm I$. Here $C(T)$ is the \textit{formal Khovanov complex} introduced in \cite{BarNatan:2004}, which is a complex in the additive closure of the category $\Cob^3(\del T)$. Here we strengthen the condition as follows.
\begin{definition}
    A chain complex $C$ (in any additive category) is \textit{simple} if any degree $0$ automorphism (up to homotopy) of $C$ is homotopic to $\pm I$, and any degree $n \neq 0$ self-chain map $C \rightarrow C$ is null homotopic. A tangle diagram $T$ is \textit{$\Kh$-simple} if the complex $C(T)$ is simple.
\end{definition}

A complex $C$ being simple is equivalent to the condition that the homology of the End-complex $\End(C) = \Hom(C, C)$ has only $\pm I$ as the degree $0$ multiplicative units (with respect to the composition) and its homology is supported only on degree $0$. One can easily prove that \cite[Lemmas 8.6 -- 8.9]{BarNatan:2004} also holds for our stronger definition, namely,
\begin{lemma}
    Simplicity is preserved under chain homotopy equivalences.
\end{lemma}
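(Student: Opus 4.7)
The plan is to transport self-maps of $C'$ to self-maps of $C$ by conjugation with a chain homotopy equivalence, apply the simplicity hypothesis on $C$, and then transport back. Concretely, suppose $C$ is simple and $f\colon C \to C'$, $g\colon C' \to C$ are mutually inverse chain homotopy equivalences, i.e.\ $gf \htpy I_C$ and $fg \htpy I_{C'}$.

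First I would verify the degree $n \neq 0$ condition. Given any chain map $\psi\colon C' \to C'$ of degree $n \neq 0$, the composition $g\psi f\colon C \to C$ is a self-chain map of degree $n$, so by simplicity of $C$ it is null-homotopic. Then
\[
\psi \htpy (fg)\,\psi\,(fg) = f(g\psi f)g \htpy f \cdot 0 \cdot g = 0,
\]
which gives the null-homotopy of $\psi$.

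Next I would handle the degree $0$ condition. Let $\phi\colon C' \to C'$ be a chain automorphism up to homotopy, with homotopy inverse $\phi^{-1}$. A short calculation shows that $g\phi f$ and $g\phi^{-1}f$ are mutually inverse up to homotopy:
\[
(g\phi f)(g\phi^{-1}f) = g\phi (fg) \phi^{-1} f \htpy g\phi\phi^{-1}f \htpy gf \htpy I_C,
\]
and similarly for the other composition. Thus $g\phi f$ is a degree $0$ automorphism of $C$ up to homotopy, so by simplicity of $C$ we have $g\phi f \htpy \pm I_C$. Conjugating back,
\[
\phi \htpy (fg)\,\phi\,(fg) = f(g\phi f)g \htpy f(\pm I_C)g = \pm fg \htpy \pm I_{C'},
\]
which gives the required property. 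Combining the two cases shows $C'$ is simple.

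The argument is essentially formal and the only subtle point is the compatibility check that conjugation preserves the property of being an automorphism up to homotopy, which is handled by using $\phi^{-1}$ to build the homotopy inverse of $g\phi f$. No explicit identification of the homotopies between $\phi^{\pm 1}$ is needed beyond standard manipulations in the homotopy category, so no serious obstacle is anticipated.
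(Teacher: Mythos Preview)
Your argument is correct and is the standard conjugation proof; the paper itself omits the proof, simply asserting that \cite[Lemmas 8.6--8.9]{BarNatan:2004} extend to the strengthened definition. The paper does note just before the lemma that simplicity can be rephrased as a condition on $H^*(\End(C))$---namely that the homology is supported in degree $0$ with only $\pm I$ as units---which gives a slightly more conceptual alternative (a homotopy equivalence $C \htpy C'$ induces one on the $\End$-complexes), but your direct approach is precisely what Bar-Natan's original argument amounts to.
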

\begin{lemma}
    Parings are $\Kh$-simple. Here, a \textit{pairing} is a tangle diagram that has no crossings and no closed components.
\end{lemma}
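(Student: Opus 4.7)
The plan is to observe that the formal Khovanov complex of a pairing is concentrated in a single homological degree, which makes one half of the simplicity condition trivial and reduces the other half to Bar-Natan's original argument.

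Since a pairing $T$ has no crossings to resolve, $C(T)$ consists of the single object $T$ placed in homological degree $0$, with zero differential. For the second clause of simplicity, any chain map $f \colon C(T) \to C(T)$ of homological degree $n \neq 0$ must send the only non-zero piece $C(T)^0$ into the zero piece $C(T)^n$, so $f$ is itself the zero map and is trivially null-homotopic. This disposes of the clause by which our definition of simplicity strengthens \cite[Definition 8.5]{BarNatan:2004}.

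For the remaining clause, a degree-$0$ self-chain map of $C(T)$ is just an endomorphism of $T$ in the Bar-Natan category (i.e.\ $\Cob^3(\del T)$ modulo the $S$, $T$, and $4Tu$ relations), and the homotopy relation on such maps is trivial since $C(T)$ is concentrated in one degree. Thus we must show that every automorphism of $T$ equals $\pm I$. This is the content of \cite[Lemma 8.9]{BarNatan:2004}, and the argument carries over to our coefficient ring and Frobenius algebra without change. The key step is the identification $\End(T) \isom A^{\otimes n}$ as bigraded rings, where $n$ is the number of arcs of $T$: via the neck-cutting formula together with the $S$, $T$, and $4Tu$ relations, any cobordism $T \to T$ reduces to an $R$-linear combination of disjoint dotted cylinders, one on each arc. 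The bidegree $(0,0)$ part of $A^{\otimes n}$ is spanned by $I = 1^{\otimes n}$, and the only quantum-degree-$0$ units of $R$ are $\pm 1$, so every degree-$0$ automorphism of $T$ is $\pm I$.

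The main obstacle, if any, is the identification of $\End(T)$ with $A^{\otimes n}$, but this is a standard computation from \cite{BarNatan:2004} that adapts without change to our Frobenius algebra. Everything else is bookkeeping following from the fact that $C(T)$ is concentrated in a single homological degree.
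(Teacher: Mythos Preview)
Your proof is correct and takes essentially the same approach as the paper, which gives no proof at all and simply asserts that Bar-Natan's Lemmas 8.6--8.9 carry over to the strengthened definition. You supply precisely the missing observation: since $C(T)$ is concentrated in homological degree $0$, the new clause about degree-$n\neq 0$ self-maps is vacuous, and the degree-$0$ clause is Bar-Natan's original Lemma~8.9.
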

\begin{lemma}
    A tangle diagram $T$ is $\Kh$-simple if and only if $TX$ is $\Kh$-simple. Here $TX$ is a tangle diagram obtained by adding one extra crossing $X$ somewhere along the boundary of $T$.
\end{lemma}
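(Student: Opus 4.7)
The plan is to follow the template of Bar-Natan's proof of \cite[Lemma 8.9]{BarNatan:2004}, while verifying that the strengthened condition---null-homotopy of degree $n \neq 0$ self-chain maps---is also preserved when adding a boundary crossing. Since $X$ sits at the boundary of $T$, the formal complex of $TX$ admits a mapping cone decomposition
\[
    C(TX) \htpy \Cone(\sigma\colon C(T_0) \to C(T_1)),
\]
where $T_i$ is $T$ with $X$ replaced by its $i$-smoothing and $\sigma$ is the saddle morphism. The $0$-smoothing is planar-isotopic to $T$, so $C(T_0) \htpy C(T)$, while $T_1$ is obtained from $T$ by a local re-wiring of boundary arcs together with an external cap; standard delooping and pairing arguments, combined with the preceding two lemmas, reduce the simplicity of $C(T_1)$ to that of $C(T)$.

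For the forward implication, suppose $C(T)$ is simple and let $\phi \in \End^n(C(TX))$. Decompose $\phi$ as the block matrix
\[
    \phi = \begin{pmatrix} \alpha & \gamma \\ \delta & \beta \end{pmatrix}
\]
with respect to the cone decomposition. The chain map condition yields standard compatibility relations among the diagonal entries $\alpha, \beta$, the off-diagonal entries $\gamma, \delta$, and the saddle $\sigma$. Simplicity of $C(T_0), C(T_1)$ forces $\alpha, \beta$ to be homotopic to $I$ when $n = 0$ (no signs arise since $\fchar R = 2$) and null-homotopic when $n \neq 0$. Modifying $\phi$ by a block-diagonal chain homotopy $K$ witnessing these homotopies, one reduces to the case where $\alpha, \beta$ equal $I$ or $0$ on the nose; the remaining off-diagonal entries $\gamma, \delta$ are then killed by a further upper-triangular homotopy whose entries are obtained by invoking simplicity of $\Hom(C(T_1), C(T_0)) \htpy \End(C(T))$.

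For the reverse implication, given $\psi \in \End^n(C(T))$, extend it to a block-diagonal chain map $\Psi$ on $\Cone(\sigma)$ via the equivalences $C(T) \htpy C(T_i)$; naturality of $\sigma$ under these equivalences ensures that $\Psi$ is itself a chain map. Applying simplicity of $C(TX)$ and projecting the resulting chain homotopy back to the $C(T_0)$-summand yields the desired conclusion for $\psi$. The main obstacle is verifying that $C(T_1)$ inherits simplicity from $C(T)$ when the $1$-smoothing produces a non-trivial re-routing of boundary arcs; this requires either an inductive argument on the number of crossings of $T$ or a careful application of Bar-Natan's delooping formalism, both of which are standard but require careful bookkeeping.
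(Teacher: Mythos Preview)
Your block-matrix approach has a genuine gap that cannot be repaired by ``careful bookkeeping''. The crucial claim---that $C(T_1)$ inherits simplicity from $C(T)$---is simply false. Take $T$ to be a single cup (a pairing, hence simple): then $T_1$ is a closed circle together with a cup. After delooping, $C(T_1)\cong \text{cup}\{+1\}\oplus\text{cup}\{-1\}$, and the lower-triangular automorphism $\bigl(\begin{smallmatrix}1&0\\ \bullet&1\end{smallmatrix}\bigr)$ (with $\bullet$ the dotted identity) is a degree-$0$ automorphism not homotopic to $I$, since the complex sits in a single homological degree and admits no homotopies. So $C(T_1)$ is \emph{not} simple, and your reduction of $\beta$ collapses. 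The asserted equivalence $\Hom(C(T_1),C(T_0))\htpy\End(C(T))$ is likewise unjustified: $T_1$ and $T_0=T$ are genuinely different tangles with no Reidemeister or delooping relation between them. There is also a subtler problem with the order of your reduction: in the cone, the $(1,1)$-block equation reads $d_0\alpha+\alpha d_0=\gamma\sigma$, so $\alpha$ is \emph{not} a chain map until $\gamma$ has been killed, and you cannot invoke simplicity of $C(T_0)$ on it.

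The paper gives no proof of its own here; it defers to Bar-Natan's argument for \cite[Lemma~8.9]{BarNatan:2004}, observing only that it goes through for the strengthened definition. That argument avoids all of the above by exploiting invertibility rather than a cone decomposition: adding the mirror crossing $\bar X$ undoes $X$ via an R2 move, so the planar-algebra operation $-\otimes C(X)$ is an auto-equivalence of the homotopy category $\Kom_{/h}(\Mat(\Cob^3_{/l}(\partial T)))$, with inverse $-\otimes C(\bar X)$. Any equivalence induces a bijection $\End(C(T))/{\htpy}\;\cong\;\End(C(TX))/{\htpy}$ respecting degree and composition, so simplicity of one side is equivalent to simplicity of the other. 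This argument needs nothing about $C(T_1)$ individually. If you want to salvage a direct cone argument, you would need to control the full $\Hom$-complex of the cone---which, unwound, is exactly the R2 cancellation in disguise.
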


The following lemmas will also be useful. Here $C$ and $C'$ are complexes in any additive category. 

\begin{lemma}
\label{lem:htpy-1}
    Suppose there are two degree $0$ chain homotopy equivalences $F, F'\colon C \rightarrow C'$ with homotopy inverses $G, G'$ respectively. If $C$ is simple, then $F$ and $F'$, as well as $G$ and $G'$, are homotopic up to sign. 
\end{lemma}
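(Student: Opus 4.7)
The plan is to turn the comparison problem into the problem of identifying a degree $0$ self-homotopy equivalence of $C$ and then invoking simplicity. Concretely, I would consider the composite $G'F \colon C \to C$. Since $F$ and $G'$ are each degree $0$ chain homotopy equivalences, so is $G'F$; one can exhibit an explicit homotopy inverse by taking $GF' \colon C \to C$ and computing
\[
    (GF')(G'F) = G(F'G')F \htpy GF \htpy I_C,
    \qquad
    (G'F)(GF') = G'(FG)F' \htpy G'F' \htpy I_C.
\]
Thus $G'F$ is a degree $0$ automorphism of $C$ up to homotopy.

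Next I would apply simplicity of $C$ to conclude that $G'F \htpy \epsilon\, I_C$ for some sign $\epsilon \in \{+1, -1\}$. From this single identity both conclusions follow by a one-step algebraic manipulation. Composing on the left with $F'$ and using $F'G' \htpy I_{C'}$ gives
\[
    F = (F'G')F + (\text{nullhomotopic}) \htpy F'(G'F) \htpy \epsilon\, F',
\]
so $F \htpy \epsilon\, F'$. Composing the same identity $G'F \htpy \epsilon\, I_C$ on the right with $G$ and using $FG \htpy I_{C'}$ gives
\[
    G' \htpy G'(FG) = (G'F)G \htpy \epsilon\, G,
\]
so $G \htpy \epsilon\, G'$ with the \emph{same} sign $\epsilon$.

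There is no real obstacle here beyond keeping the manipulations of homotopies organized; the lemma is essentially a formal consequence of the definition of simplicity and of the fact that the composition of two chain homotopy equivalences is again a chain homotopy equivalence (which in particular persists at the level of homotopy classes, so every computation above is unambiguous). The only mild care needed is to record that the same sign $\epsilon$ simultaneously governs $F$ versus $F'$ and $G$ versus $G'$, which is automatic from the derivation since both identifications are extracted from the single relation $G'F \htpy \epsilon\, I_C$.
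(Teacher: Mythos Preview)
Your proof is correct and follows essentially the same approach as the paper: the paper considers $GF'$ (rather than your $G'F$) as a degree $0$ self-equivalence of $C$, invokes simplicity to get $GF' \htpy \pm I$, and then deduces $F \htpy FGF' \htpy \pm F'$ and $G \htpy GF'G' \htpy \pm G'$. Your additional remark that the same sign governs both comparisons is a small but nice refinement not made explicit in the paper.
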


\begin{proof}
    The map $GF'$ is an automorphism on $C$ with a homotopy inverse $G'F$. Since $C$ is simple, we have $GF' \htpy \pm I$. Assuming that $GF' \htpy I$, we get $F \htpy FGF' \htpy F'$ and $G \htpy GF'G' \htpy G'$. 
\end{proof}

\begin{lemma}
\label{lem:htpy-2}
    Suppose there are degree $0$ chain maps $F, F'\colon C \rightarrow C'$ and $G, G'\colon C' \rightarrow C$, together with homotopies $F \htpy_{h_F} F'$, $G \htpy_{h_G} G'$, $GF \htpy_H I$, $G'F' \htpy_{H'} I$. If $C$ is simple, then $H - H'$ and $h_G F + G' h_F$ are homotopic. 
\end{lemma}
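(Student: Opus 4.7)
The plan is to show that both $H - H'$ and $h_G F + G' h_F$ serve as degree $-1$ null-homotopies of the \emph{same} map $GF - G'F'$, and then to invoke the simplicity of $C$ to conclude that any two such null-homotopies differ by a chain-homotopically trivial degree $-1$ chain map.

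First, I would expand $GF - G'F'$ in two different ways. On the one hand, from $GF \htpy_H I$ and $G'F' \htpy_{H'} I$ we immediately get
\[
    GF - G'F' = d(H - H') + (H - H')d.
\]
On the other hand, using the telescoping $GF - G'F' = (G - G')F + G'(F - F')$ together with the homotopies $G \htpy_{h_G} G'$ and $F \htpy_{h_F} F'$, and the fact that $F$ and $G'$ are chain maps (so $d$ passes through them), a direct computation gives
\[
    GF - G'F' = d(h_G F + G' h_F) + (h_G F + G' h_F) d.
\]

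Subtracting the two expressions shows that the degree $-1$ map
\[
    \Phi := (H - H') - (h_G F + G' h_F) \colon C \to C
\]
satisfies $d\Phi + \Phi d = 0$ (signs are irrelevant since $\fchar R = 2$). Thus $\Phi$ is a self-chain map of $C$ of degree $-1 \neq 0$. By the hypothesis that $C$ is simple, any self-chain map of nonzero degree is null-homotopic, so there exists a degree $-2$ map $K$ with $\Phi \htpy_K 0$. Unwinding the definition, this gives $H - H' \htpy_K h_G F + G' h_F$, as required.

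The argument is essentially formal manipulation of the homotopy identities; the only nontrivial input is the extra clause in the definition of simplicity asserting that higher-degree self-chain maps are null-homotopic (which is what makes \Cref{lem:htpy-2} go through). This is also the part I would be most careful about, since in the formulation of \cite{BarNatan:2004} only degree $0$ automorphisms are constrained, whereas the strengthened notion introduced just above is tailored precisely to enable this step.
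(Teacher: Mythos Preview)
Your proof is correct and follows essentially the same approach as the paper's own proof: both compute $GF - G'F'$ in two ways to show that $(H-H') - (h_G F + G' h_F)$ is a degree $-1$ self-chain map of $C$, then appeal to the strengthened simplicity hypothesis to conclude it is null-homotopic. Your closing remark about why the strengthened notion of simplicity (beyond Bar-Natan's original degree-$0$ condition) is precisely what is needed here is also exactly the point.
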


\begin{proof}
    On one hand, we have 
    \[
        GF - G'F' = d(H - H') + (H - H')d.
    \]
    On the other hand, we have 
    \begin{align*}
        GF - G'F' &= (G - G')F + G'(F - F') \\
            &= (d h_G + h_G d) F + G'(d h_F + h_F d) \\
            &= d(h_G F + G' h_F) + (h_G F + G' h_F)d.
    \end{align*}
    Thus $(H - H') - (h_G F + G' h_F)$ is a degree $-1$ chain map, which is necessarily null-homotopic since $C$ is simple. Therefore $H - H' \htpy h_G F + G' h_F$. 
\end{proof}

\subsubsection{IR1 -- IR3}
\begin{proof}[Proof of \Cref{prop:invariance}, cases IR1 -- IR3]
    \begin{figure}[t]
        \centering
        \resizebox{.7\linewidth}{!}{
            \tikzset{every picture/.style={line width=0.75pt}} 

\begin{tikzpicture}[x=1,y=1pt,yscale=-0.8,xscale=1]

\begin{knot}[
  clip width=2.5pt,
  consider self intersections
]

\strand    (41.5,11.05) .. controls (56.5,11.5) and (78.61,58.33) .. (78.5,26.5) .. controls (78.39,-5.33) and (55.5,41.5) .. (41.5,41.61) ;
\strand    (132,11.05) .. controls (117,11.5) and (94.89,58.33) .. (95,26.5) .. controls (95.11,-5.33) and (118,41.5) .. (132,41.61) ;

\strand    (211.5,10.55) .. controls (226.5,11) and (248.61,57.83) .. (248.5,26) .. controls (248.39,-5.83) and (225.5,41) .. (211.5,41.11) ;
\strand    (302,10.55) .. controls (287,11) and (264.89,57.83) .. (265,26) .. controls (265.11,-5.83) and (288,41) .. (302,41.11) ;

\strand    (131,70.55) .. controls (116,71) and (93.89,117.83) .. (94,86) .. controls (94.11,54.17) and (117,101) .. (131,101.11) ;

\strand    (211.5,70.55) .. controls (226.5,71) and (248.61,117.83) .. (248.5,86) .. controls (248.39,54.17) and (225.5,101) .. (211.5,101.11) ;

\flipcrossings{2,4,5}

\end{knot}

\draw    (42.5,71.55) .. controls (61,72) and (65,98.5) .. (42.5,102.11) ;
\draw    (301.03,71.05) .. controls (283.53,72.5) and (278.53,98) .. (301.03,101.61) ;

\draw    (42.5,131.05) .. controls (61,131.5) and (65,158) .. (42.5,161.61) ;
\draw    (131.03,131.05) .. controls (113.53,132.5) and (108.53,158) .. (131.03,161.61) ;
\draw    (212.5,130.55) .. controls (231,131) and (235,157.5) .. (212.5,161.11) ;
\draw    (301.03,130.55) .. controls (283.53,132) and (278.53,157.5) .. (301.03,161.11) ;

\draw    (153.5,26.5) -- (189.5,26.5) ;
\draw [shift={(191.5,26.5)}, rotate = 180] [color={rgb, 255:red, 0; green, 0; blue, 0 }  ][line width=0.75]    (10.93,-3.29) .. controls (6.95,-1.4) and (3.31,-0.3) .. (0,0) .. controls (3.31,0.3) and (6.95,1.4) .. (10.93,3.29)   ;
\draw [shift={(151.5,26.5)}, rotate = 0] [color={rgb, 255:red, 0; green, 0; blue, 0 }  ][line width=0.75]    (10.93,-3.29) .. controls (6.95,-1.4) and (3.31,-0.3) .. (0,0) .. controls (3.31,0.3) and (6.95,1.4) .. (10.93,3.29)   ;
\draw    (153.5,86.5) -- (189.5,86.5) ;
\draw [shift={(191.5,86.5)}, rotate = 180] [color={rgb, 255:red, 0; green, 0; blue, 0 }  ][line width=0.75]    (10.93,-3.29) .. controls (6.95,-1.4) and (3.31,-0.3) .. (0,0) .. controls (3.31,0.3) and (6.95,1.4) .. (10.93,3.29)   ;
\draw [shift={(151.5,86.5)}, rotate = 0] [color={rgb, 255:red, 0; green, 0; blue, 0 }  ][line width=0.75]    (10.93,-3.29) .. controls (6.95,-1.4) and (3.31,-0.3) .. (0,0) .. controls (3.31,0.3) and (6.95,1.4) .. (10.93,3.29)   ;
\draw    (153.5,146.5) -- (189.5,146.5) ;
\draw [shift={(191.5,146.5)}, rotate = 180] [color={rgb, 255:red, 0; green, 0; blue, 0 }  ][line width=0.75]    (10.93,-3.29) .. controls (6.95,-1.4) and (3.31,-0.3) .. (0,0) .. controls (3.31,0.3) and (6.95,1.4) .. (10.93,3.29)   ;
\draw [shift={(151.5,146.5)}, rotate = 0] [color={rgb, 255:red, 0; green, 0; blue, 0 }  ][line width=0.75]    (10.93,-3.29) .. controls (6.95,-1.4) and (3.31,-0.3) .. (0,0) .. controls (3.31,0.3) and (6.95,1.4) .. (10.93,3.29)   ;
\draw    (61,43.73) -- (61,62.73) ;
\draw [shift={(61,64.73)}, rotate = 270] [color={rgb, 255:red, 0; green, 0; blue, 0 }  ][line width=0.75]    (10.93,-3.29) .. controls (6.95,-1.4) and (3.31,-0.3) .. (0,0) .. controls (3.31,0.3) and (6.95,1.4) .. (10.93,3.29)   ;
\draw    (282.5,44.23) -- (282.5,63.23) ;
\draw [shift={(282.5,65.23)}, rotate = 270] [color={rgb, 255:red, 0; green, 0; blue, 0 }  ][line width=0.75]    (10.93,-3.29) .. controls (6.95,-1.4) and (3.31,-0.3) .. (0,0) .. controls (3.31,0.3) and (6.95,1.4) .. (10.93,3.29)   ;
\draw    (111.5,106.23) -- (111.5,125.23) ;
\draw [shift={(111.5,127.23)}, rotate = 270] [color={rgb, 255:red, 0; green, 0; blue, 0 }  ][line width=0.75]    (10.93,-3.29) .. controls (6.95,-1.4) and (3.31,-0.3) .. (0,0) .. controls (3.31,0.3) and (6.95,1.4) .. (10.93,3.29)   ;
\draw    (230,104.23) -- (230,123.23) ;
\draw [shift={(230,125.23)}, rotate = 270] [color={rgb, 255:red, 0; green, 0; blue, 0 }  ][line width=0.75]    (10.93,-3.29) .. controls (6.95,-1.4) and (3.31,-0.3) .. (0,0) .. controls (3.31,0.3) and (6.95,1.4) .. (10.93,3.29)   ;

\draw (66.5,44.9) node [anchor=north west][inner sep=0.75pt]  [font=\small]  {$F_{1}$};
\draw (289.5,47.63) node [anchor=north west][inner sep=0.75pt]  [font=\small]  {$F^\tau_{1}$};
\draw (118.5,107.9) node [anchor=north west][inner sep=0.75pt]  [font=\small]  {$F_{2}$};
\draw (236,105.9) node [anchor=north west][inner sep=0.75pt]  [font=\small]  {$F^\tau_{2}$};
\draw (165,5.4) node [anchor=north west][inner sep=0.75pt]    {$\tau $};
\draw (165,66.9) node [anchor=north west][inner sep=0.75pt]    {$\tau $};
\draw (167,125.4) node [anchor=north west][inner sep=0.75pt]    {$\tau $};
\draw (10,17.4) node [anchor=north west][inner sep=0.75pt]    {$D$};
\draw (10,137.9) node [anchor=north west][inner sep=0.75pt]    {$D'$};

\end{tikzpicture}
        }
        \vspace{1em}
        \caption{$F$ and $F^\tau$.}
        \label{fig:IR-map}
    \end{figure}
    We define maps $F, G$ by the compositions
    \[
        F = F_2 F_1,\ 
        G = G_1 G_2
    \]
    where $F_1, F_2$ are the standard maps corresponding to the Reidemsiter moves performed on the left and the right part of the diagram respectively (see \Cref{fig:IR-map}), and $G_1, G_2$ are those homotopy inverses. 
    The conjugate maps are given by
    \[
        F^\tau = F^\tau_2 F^\tau_1,\ 
        G^\tau = G^\tau_1 G^\tau_2.
    \]
    Since $F_1$ and $F_2$ (resp.\ $G_1$ and $G_2$) are commutative, we may write
    \[
        F^\tau = F^\tau_1 F^\tau_2,\ 
        G^\tau = G^\tau_2 G^\tau_1.
    \]
    Since the tangle parts appearing in the move are $\Kh$-simple, using \Cref{lem:htpy-1,lem:htpy-2} we obtain homotopies with the desired properties such as $F_1 \htpy F^\tau_2$ and $F_2 \htpy F^\tau_1$. Thus we obtain the desired properties stated in \Cref{cl:htpy-1,cl:htpy-2}, such as $F \htpy F^\tau$. Here we are implicitly extending the maps defined locally for tangle diagrams to maps defined globally for the link diagrams, using the planar algebra structures explained in \cite[Section 5]{BarNatan:2004}.
\end{proof}

\subsubsection{R1, R2 and M1 -- M3}

\begin{proof}[Proof of \Cref{prop:invariance}, cases R1, R2 and M1 -- M3]
    Observe that each of these moves occurs locally on a disk that intersects the axis, and the tangles appearing in the move are $\Kh$-simple. Thus \Cref{cl:htpy-1,cl:htpy-2} immediately follows from \Cref{lem:htpy-1,lem:htpy-2}.
\end{proof}

\begin{remark}
    By using the explicit description of the maps, for some of the moves we may take the desired homotopies in a much simpler form. For R1 and R2, the homotopy equivalences $F, G$ and the homotopies $H, H'$ are strictly $\tau$-equivariant. For R3, may take $\tau F \htpy_{h_F} F \tau$ and $\tau G \htpy_{h_G} G \tau$ so that $G h_F = 0$, $F h_G = 0,$ and $\tau H + H \tau = h_G F$, $\tau H' + H' \tau = h_F G$.
\end{remark}

\subsection{Reduced version}
\label{subsec:reduced}

Next, we define a reduced version of the involutive Khovanov homology. Let us first recall the definition of the reduced complex in the non-involutive setting. For a pointed link diagram $D$, the \textit{reduced Khovanov complex}\footnote{
    The conventional notation for the reduced Khovanov complex is $\widetilde{\CKh}$. Here we changed the notation to avoid putting too much decorations on the letters. 
} $\rCKh(D)$ is defined as the subcomplex of $\CKh(D)$ generated by the standard generators each labeled $X$ on the pointed circle. The \textit{coreduced complex} $\rCKh'(D)$ is defined as the quotient $\CKh(D) / \rCKh(D)$. 

\begin{definition}
    A \textit{pointed involutive link} $(L, \tau)$ is an involutive link equipped with a basepoint on $\Fix(\tau) \cap L$. 
\end{definition}

Note that $2$-periodic links cannot be pointed, since $\Fix(\tau) \cap L = \emptyset$. For a pointed involutive link diagram $D$, it is obvious that the (co)reduced complexes are invariant under the involution $\tau$. Thus we may define,

\begin{definition}
    Let $(D, \tau)$ be a pointed involutive link diagram. The \textit{reduced involutive Khovanov complex} is defined as
    \[
        \rCKhI(D, \tau) = \Cone(\rCKh(D) \xrightarrow{Q(1 + \tau)} Q\rCKh(D)). 
    \]
    Similarly, the \textit{coreduced involutive Khovanov complex} is defined as 
    \[
        \rCKhI'(D, \tau) = \Cone(\rCKh'(D) \xrightarrow{Q(1 + \tau)} Q\rCKh'(D)). 
    \]
\end{definition}

Again, we usually omit $\tau$ from the notations of $\rCKhI$. In the non-involutive setting, the reduced and the coreduced complexes can be interchanged by the following automorphism $\sigma$. 

\begin{definition}
\label{def:sigma}
    The Frobenius algebra automorphism $\sigma$ on $A$ is defined by 
    \[
        1 \mapsto 1,\quad
        X \mapsto X + h.
    \]
    Its induced automorphism on $\CKh(D)$ is also denoted $\sigma$. 
\end{definition}

\begin{proposition}
    $\sigma$ is an involution that commutes with $\tau$. 
\end{proposition}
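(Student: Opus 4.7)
The plan is to verify the two claims directly from the definitions of $\sigma$ and $\tau$ on $\CKh(D)$. First I would check that $\sigma$ is an involution at the Frobenius-algebra level: on the generators we have $\sigma^2(1) = 1$ and
\[
    \sigma^2(X) = \sigma(X + h) = (X + h) + h = X + 2h = X,
\]
using $\fchar(R) = 2$. Since $\sigma$ is defined on $\CKh(D)$ by extending the algebra automorphism to each tensor factor (one for each Seifert circle of each resolution), the induced map on $\CKh(D)$ also satisfies $\sigma^2 = \id$.

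For the commutation with $\tau$, I would recall that $\tau$ acts on each standard generator $x$ in a state $s$ by sending it to the standard generator $\tau(x)$ in the mirror state $s'$, where the label on a circle $C$ of $D(s)$ is copied to the corresponding circle $\tau(C)$ of $D(s')$. In other words, after identifying the tensor factors of $A^{\otimes |D(s)|}$ with those of $A^{\otimes |D(s')|}$ via the symmetry, $\tau$ is simply the identity on each tensor factor. On the other hand $\sigma$ acts on $\CKh(D)$ as $\sigma^{\otimes n}$ on each tensor summand indexed by the circles. Since $\sigma$ acts identically on each factor of $A$, it commutes with the factor-by-factor identification induced by $\tau$. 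Concretely, if $x = a_1 \otimes \cdots \otimes a_n$ and the symmetry sends the $i$-th circle of $D(s)$ to the $\pi(i)$-th circle of $D(s')$, then both $\sigma\tau(x)$ and $\tau\sigma(x)$ equal the element of $D(s')$ whose $\pi(i)$-th tensor factor is $\sigma(a_i)$.

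Since there is really nothing to obstruct the argument, the only thing to be slightly careful about is confirming that $\tau$ is genuinely defined tensor-factor-wise (as opposed to involving extra signs or permutations beyond what is already encoded in matching up the symmetric circles). This is immediate from the definition of $\tau$ on $\CKh(D)$ given just before \Cref{def:1-1}, where each standard generator is sent to the standard generator obtained by transporting labels along the axial symmetry. Thus both claims reduce to straightforward bookkeeping, and no deeper input is needed beyond $\fchar(R) = 2$ and the tensor-factorwise description of both $\sigma$ and $\tau$.
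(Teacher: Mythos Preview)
Your proof is correct. The paper states this proposition without proof, treating it as evident; your direct verification—checking $\sigma^2 = \id$ on the algebra generators using $\fchar(R) = 2$, and observing that $\tau$ merely relabels tensor factors while $\sigma$ acts identically on each factor—is exactly the routine check one would supply.
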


Thus $\sigma$ induces an involution on $\CKhI(D)$, which is again denoted $\sigma$. Analogous to the non-involutive case we have,

\begin{proposition}
\label{prop:sigma-swap}
    There are isomorphisms
    \begin{align*}
        \rCKhI'(D) 
        &\isom \sigma(\rCKhI(D)),\\
        \rCKhI(D) 
        &\isom \CKhI(D) / \sigma(\rCKhI(D)).
    \end{align*}
\end{proposition}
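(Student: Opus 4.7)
The plan is to promote the Frobenius algebra automorphism $\sigma$ to a chain automorphism of $\CKhI_\tau(D)$ and then use it to transport the defining short exact sequence of the reduced and coreduced complexes.

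First I would verify that $\sigma$ extends to a chain automorphism of $\CKhI_\tau(D)$. By \Cref{def:sigma}, $\sigma$ is a Frobenius algebra automorphism, so it commutes with the Khovanov differential $d$ on $\CKh(D)$; by the preceding proposition it also commutes with $\tau$. Acting diagonally on the direct sum decomposition $\CKh(D) \oplus Q\CKh(D) = \CKhI_\tau(D)$, it therefore commutes with both the diagonal $d$'s and the off-diagonal $Q(1+\tau)$ term, which makes it a chain automorphism.

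Next I would apply $\sigma$ to the short exact sequence
\[
    0 \to \rCKhI_\tau(D) \hookrightarrow \CKhI_\tau(D) \twoheadrightarrow \rCKhI'_\tau(D) \to 0,
\]
which arises by applying the mapping cone construction to the $\tau$-equivariant SES $0 \to \rCKh(D) \to \CKh(D) \to \rCKh'(D) \to 0$ (the equivariance ensures $\tau$ restricts to the reduced subcomplex and descends to the coreduced quotient). Since $\sigma$ is a chain automorphism of $\CKhI_\tau(D)$, it restricts to a chain isomorphism from $\rCKhI_\tau(D)$ onto the subcomplex $\sigma(\rCKhI_\tau(D))$ and descends to a chain isomorphism from $\rCKhI'_\tau(D)$ onto the quotient $\CKhI_\tau(D)/\sigma(\rCKhI_\tau(D))$. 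Transporting the two endpoints of the SES along these $\sigma$-isomorphisms yields the asserted identifications.

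The only point requiring care is the compatibility of $\sigma$ with $d$, $\tau$, and the cone construction: commutation with $d$ is built into $\sigma$ being a Frobenius algebra automorphism, commutation with $\tau$ is recorded in the preceding proposition, and the induced action on the cone respects the $Q$-shift and the $(1+\tau)$ term. All of these amount to routine checks on standard generators, after which the proposition follows from the functoriality of the mapping cone under the automorphism $\sigma$.
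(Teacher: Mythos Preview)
Your argument establishes the wrong pair of isomorphisms. Applying the automorphism $\sigma$ to the short exact sequence
\[
0 \to \rCKhI_\tau(D) \hookrightarrow \CKhI_\tau(D) \twoheadrightarrow \rCKhI'_\tau(D) \to 0
\]
indeed yields the tautological identifications
\[
\rCKhI_\tau(D)\;\xrightarrow{\ \sigma\ }\;\sigma(\rCKhI_\tau(D)),
\qquad
\rCKhI'_\tau(D)\;\xrightarrow{\ \sigma\ }\;\CKhI_\tau(D)/\sigma(\rCKhI_\tau(D)).
\]
But the proposition asserts the \emph{swapped} statements
\[
\rCKhI'_\tau(D)\cong\sigma(\rCKhI_\tau(D)),
\qquad
\rCKhI_\tau(D)\cong\CKhI_\tau(D)/\sigma(\rCKhI_\tau(D)),
\]
i.e.\ that $\sigma$ exchanges the reduced and coreduced pieces, not merely that it carries each to an isomorphic copy of itself. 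This interchange is the actual content of the result, and it does not follow from the formal fact that $\sigma$ is a chain automorphism.

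What is missing is the non-involutive input the paper invokes from \cite[Propositions 3.12, 3.14]{Sano-Sato:2023}: an explicit isomorphism between the quotient $\rCKh'(D)$ and the subcomplex $\sigma(\rCKh(D))$ of $\CKh(D)$. Note that the obvious candidate, the composite $\sigma(\rCKh(D))\hookrightarrow\CKh(D)\twoheadrightarrow\rCKh'(D)$, is multiplication by $h$ on each generator (since $Y=X+h$ projects to $h\cdot 1$), hence is \emph{not} an isomorphism when $h$ is not a unit; already for the pointed unknot with $(R,h)=(\FF_2[H],H)$ one has $\rCKh(D)+\sigma(\rCKh(D))=R\langle X,H\cdot 1\rangle\subsetneq A$. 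So a genuinely different chain isomorphism is needed. Once that non-involutive identification is in hand and seen to be $\tau$-equivariant, your cone argument finishes the job; but as written, the proof skips precisely the step that carries all the weight.
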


\begin{proof}
    Immediate from \cite[Propositions 3.12, 3.14]{Sano-Sato:2023}. 
\end{proof}

\begin{proposition}
\label{prop:CKhI-ses}
    There is a short exact sequence,
    \[
\begin{tikzcd}
\rCKhI(D) \arrow[r, hook] & \CKhI(D) \arrow[r, two heads] & \rCKhI'(D).
\end{tikzcd}
    \]
\end{proposition}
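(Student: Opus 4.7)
The plan is to deduce the sequence from its non-involutive analogue $\rCKh(D) \hookrightarrow \CKh(D) \twoheadrightarrow \rCKh'(D)$ by applying the mapping cone construction row-wise. Each of the three involutive complexes is by definition the cone of $Q(1+\tau)$ on the corresponding (co)reduced or full complex, so once the non-involutive sequence is upgraded to a morphism of short exact sequences with horizontal arrows $Q(1+\tau)$, taking cones will give the result immediately.

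First I would verify that the non-involutive short exact sequence is $\tau$-equivariant. Because $D$ is pointed at a point on $\Fix(\tau)$, in every complete resolution of $D$ the circle through the basepoint is itself $\tau$-invariant, so $\tau$ permutes among themselves the standard generators carrying an $X$-label at that circle. Hence $\tau$ restricts to $\rCKh(D)$ and descends to $\rCKh'(D)$, making both the inclusion and the projection into $\tau$-equivariant chain maps. In particular these maps commute with $Q(1+\tau)$, yielding the commutative diagram
\[
\begin{tikzcd}
\rCKh(D) \arrow[r, "Q(1+\tau)"] \arrow[d, hook] & Q\rCKh(D) \arrow[d, hook] \\
\CKh(D) \arrow[r, "Q(1+\tau)"] \arrow[d, two heads] & Q\CKh(D) \arrow[d, two heads] \\
\rCKh'(D) \arrow[r, "Q(1+\tau)"] & Q\rCKh'(D)
\end{tikzcd}
\]
whose columns are short exact.

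Next I would take the cone of each row: the three cones are by definition $\rCKhI_\tau(D)$, $\CKhI_\tau(D)$, and $\rCKhI'_\tau(D)$, each of whose underlying module is the direct sum of the source and shifted target of the row map. The vertical inclusion and projection extend componentwise to chain maps between these cones, and degreewise direct sums preserve short exactness. The resulting sequence $\rCKhI_\tau(D) \hookrightarrow \CKhI_\tau(D) \twoheadrightarrow \rCKhI'_\tau(D)$ is then exact in every degree, which is the claim. No serious obstacle is anticipated: the argument is a formal consequence of the exactness of cone formation applied to a morphism of short exact sequences, and the only non-formal input is the $\tau$-equivariance of the (co)reduced subcomplex, which in turn is guaranteed by the basepoint lying on the axis of the involution.
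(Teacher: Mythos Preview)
Your proposal is correct and follows essentially the same approach as the paper: both rely on the $\tau$-equivariance of the non-involutive short exact sequence $\rCKh(D) \hookrightarrow \CKh(D) \twoheadrightarrow \rCKh'(D)$ and then pass to cones. The only cosmetic difference is that the paper first forms the quotient $\CKhI_\tau(D)/\rCKhI_\tau(D)$ and invokes the five lemma to identify it with $\rCKhI'_\tau(D)$, whereas you verify exactness of the cone sequence directly from componentwise exactness; these are equivalent packagings of the same argument.
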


\begin{proof}
    The following short exact sequence is $\tau$-equivariant,
    \[
\begin{tikzcd}
\rCKh(D) \arrow[r, hook] & \CKh(D) \arrow[r, two heads] & \rCKh'(D).
\end{tikzcd}
    \]
    and hence we obtain maps between exact sequences,
    \[
\begin{tikzcd}[scale cd=0.8]
\rCKhI \arrow[r, hook] \arrow[d, equal]       & \CKhI \arrow[r, two heads] \arrow[d, equal]      & \CKhI/\rCKhI \arrow[d, dashed]                \\
{ \Cone(\rCKh \rightarrow Q\rCKh)} \arrow[r, hook] & { \Cone(\CKh \rightarrow Q\CKh)} \arrow[r, two heads] & { \Cone(\rCKh' \rightarrow Q\rCKh')}.
\end{tikzcd}
    \]
    The right dashed arrow is an isomorphism from the five lemma. 
\end{proof}

\begin{proposition}
\label{prop:CKhI-split}
    The short exact sequence of \Cref{prop:CKhI-ses} splits.
\end{proposition}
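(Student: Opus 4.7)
The plan is to construct an explicit chain-map section $s: \rCKhI'_\tau(D) \to \CKhI_\tau(D)$ of the quotient in Proposition \ref{prop:CKhI-ses}. First I would unpack the cone structure: writing $\CKhI_\tau(D) = \CKh(D) \oplus Q\CKh(D)$ with differential $\begin{pmatrix} d & 0 \\ 1+\tau & d \end{pmatrix}$, and similarly for $\rCKhI_\tau(D)$ and $\rCKhI'_\tau(D)$, any $R$-linear section must have the form $s = \begin{pmatrix} s_0 & 0 \\ s_{01} & s_1 \end{pmatrix}$, where $s_0, s_1 : \rCKh'(D) \to \CKh(D)$ are $R$-linear sections of the underlying non-involutive quotient and $s_{01} : \rCKh'(D) \to \rCKh(D)$ is an auxiliary map. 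A direct expansion of $D \circ s = s \circ D'$ then shows that $s$ is a chain map precisely when (i) $s_0$ and $s_1$ are both chain-map sections of the non-involutive short exact sequence
\[
    0 \to \rCKh(D) \to \CKh(D) \to \rCKh'(D) \to 0,
\]
and (ii) $(1+\tau) s_0 - s_1 (1+\tau) = d\, s_{01} + s_{01}\, d$ as maps $\rCKh'(D) \to \rCKh(D)$.

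Next I would invoke the non-involutive splitting. By \cite[Propositions 3.12, 3.14]{Sano-Sato:2023}, which underlie the isomorphisms of Proposition \ref{prop:sigma-swap}, the non-involutive SES admits a chain-map section $s_*$ constructed from the Frobenius algebra automorphism $\sigma$ of Definition \ref{def:sigma}. Because $\sigma$ commutes with $\tau$ on $\CKh(D)$, this $s_*$ is $\tau$-equivariant, i.e.\ $\tau s_* = s_* \tau$. Taking $s_0 = s_1 = s_*$ and $s_{01} = 0$ then satisfies both (i) and (ii) tautologically, since $(1+\tau) s_* - s_* (1+\tau) = 0$ by equivariance. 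Assembling these maps into $s$ yields the required splitting of the involutive SES.

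The hard part will be verifying the $\tau$-equivariance of the non-involutive section. If the $\sigma$-based construction of $s_*$ is not manifestly $\tau$-equivariant on the nose, then the fallback is to set $s_0 = s_*$ and $s_1 = \tau s_* \tau$; the failure $(1+\tau) s_0 - s_1 (1+\tau)$ is then a chain map landing in $\rCKh(D)$, and one must produce a null-homotopy $s_{01}$ by a $\Kh$-simplicity argument in the spirit of Lemma \ref{lem:htpy-2}. Either way, the combination of the non-involutive splitting from \cite{Sano-Sato:2023} with the cone-level assembly above produces the required chain-map section and establishes the proposition.
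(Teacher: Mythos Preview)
Your reduction is correct: a section of the involutive SES amounts to a $\tau$-equivariant chain-map section of the non-involutive SES (your conditions (i) and (ii) with $s_{01}=0$). The gap is in where you locate that section. The isomorphisms of Proposition~\ref{prop:sigma-swap} (equivalently \cite[Propositions 3.12, 3.14]{Sano-Sato:2023}) do \emph{not} furnish a section of $\CKh(D)\twoheadrightarrow\rCKh'(D)$. They identify $\rCKh'(D)$ with the subcomplex $\sigma(\rCKh(D))\subset\CKh(D)$, but the composite
\[
\sigma(\rCKh(D))\;\hookrightarrow\;\CKh(D)\;\twoheadrightarrow\;\rCKh'(D)
\]
is multiplication by $h$ (send $Y\otimes x$ to its class modulo $X$-labelled generators, namely $h\,[1\otimes x]$), which is not an isomorphism when $h$ is not a unit---precisely the case of interest $(R,h)=(\FF_2[H],H)$. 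Equivalently, $\rCKh(D)+\sigma(\rCKh(D))\subsetneq\CKh(D)$ in general, so the two subcomplexes do not form an internal direct sum. Your fallback via $\Kh$-simplicity does not rescue this: the simplicity lemmas apply to the formal complex of a local tangle, not to the global complex $\CKh(D)$ of an arbitrary link diagram, and there is no reason for the failure term to be null-homotopic.

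The paper instead uses the Shumakovitch--Wigderson null-homotopy $\kappa=\sum_{i\ge 0}h^i\kappa_i\colon\rCKh'(D)\to\rCKh(D)$, described explicitly just before the proposition. Because each $\kappa_i$ is a symmetric sum over all choices of $i+1$ unpointed circles and the basepoint lies on $\Fix(\tau)$, each $\kappa_i$ commutes with $\tau$ on the nose, and hence so does the section $1+\kappa$. Plugging $s_0=s_1=1+\kappa$ and $s_{01}=0$ into your matrix then finishes the proof. So your framework is sound; you only need to replace the $\sigma$-based section (which does not exist as a genuine section) by the $\kappa$-based one.
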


Before proceeding to the proof, let us review the corresponding results in the non-involutive setting. Shumakovitch \cite{Shumakovitch:2014} proved that the $\FF_2$-Khovanov homology splits (i.e.\ when $(R, h) = (\FF_2, 0)$), and lately Wigderson \cite{Wigderson:2016} extended this result to the $\FF_2$-bigraded Bar-Natan homology (i.e.\ $(R, h) = (\FF_2[H], H)$). Here we briefly review Wigderson's construction, which works whenever $\fchar{R} = 2$. First, as an $R$-module, the coreduced complex $\rCKh'(D)$ can be identified with the submodule of $\CKh(D)$ generated by the standard generators each labeled $1$ on the pointed circle. Then $\CKh(D) = \rCKh'(D) \oplus \rCKh(D)$ as $R$-modules, and the differential $d$ of $\CKh(D)$ can be described as
\[
    d = \begin{pmatrix}
        d_1 & \\
        f & d_X
    \end{pmatrix},
\]
where $d_X, d_1$ are differentials of $\rCKh(D), \rCKh'(D)$ respectively, and
\[
    f\colon\rCKh'(D) \rightarrow \rCKh(D)
\]
is the map given by restricting $d$ to $\rCKh'(D)$ and then projecting onto $\rCKh(D)$. It follows from $d^2 = 0$ that $f$ is a chain map, and thus $\CKh(D)$ may be regarded as the cone of $f$. Next, a null homotopy $\kappa$ of $f$ is constructed as follows. For each $i \geq 0$, define
\[
    \kappa_i: \rCKh'(D) \rightarrow \rCKh(D), \quad
    x = \underline{1} \otimes \cdots \mapsto \sum_{\text{\textcircled{0}}^X, \ldots, \text{\textcircled{$i$}}^X} \underline{X} \otimes \cdots.
\]
Here, the underline indicates the label for the pointed circle, and the sum runs over all choices of $i + 1$ circles $C_j$ labeled $X$ in $x$. Inside the summation, the label on the pointed circle is changed from $1$ to $X$ while the labels of $C_j$ are changed from $X$ to $1$. Then define
\[
    \kappa = \sum_{i \geq 0} h^i \kappa_i\colon\rCKh'(D) \rightarrow \rCKh(D).
\]
It is proved in \cite{Wigderson:2016} that $\kappa$ gives a null homotopy of $f$, and hence $1 + \kappa$ gives a section of the quotient map $\CKh(D) \rightarrow \rCKh'(D)$.

\begin{proof}[Proof of \Cref{prop:CKhI-split}]
    Suppose $D$ is a pointed involutive link diagram. It is obvious from the construction of $\kappa$ that it commutes with $\tau$, and hence 
    \[
        \begin{pmatrix}
            1 + \kappa & \\
            & 1 + \kappa
        \end{pmatrix}
    \]
    gives a section of the quotient map $\CKhI(D) \rightarrow \rCKhI'(D)$.
\end{proof}

Finally, we prove the invariance of the (co)reduced involutive homologies. In order to prove an analogue of \Cref{prop:invariance}, we need extra consideration for the moves that involve the basepoint, which are R1 and M1 with the basepoint placed on the horizontal strand. In fact, one can check that the corresponding maps do not restrict to the reduced complexes
\[
\begin{tikzcd}
\rCKh(D) \arrow[r, hook] \arrow[d, "\times", dotted] & \CKh(D) \arrow[d, "\rho"] \\
\rCKh(D') \arrow[r, hook]                                        & \CKh(D).                  
\end{tikzcd}
\]
Thus we restrict the diagrams and the moves that are allowed for the reduced complexes. 

\begin{definition}
\label{def:normal-diagram}
    A diagram of a pointed involutive link diagram is \textit{normal} if the basepoint of the link is placed at the bottommost on the axis, and the horizontal strand containing the basepoint is directed rightwards. 
\end{definition}

Obviously, any pointed involutive link possesses a normal diagram. Moreover, 

\begin{proposition}
\label{prop:normal-inv-move}
    Suppose $D, D'$ are normal pointed involutive link diagrams that represent the same pointed involutive link. Then there is a sequence of involutive Reidemeister moves whose intermediate diagrams are also normal.  
\end{proposition}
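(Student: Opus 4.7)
By Lobb--Watson \cite{Lobb-Watson:2021}, there exists a sequence of involutive Reidemeister moves and equivariant planar isotopies connecting $D$ to $D'$. The plan is to refine this sequence so that every intermediate diagram is normal. Recall that normality imposes two constraints: (i) the basepoint is at the bottommost point of $L \cap \Fix(\tau)$ on the projected axis, and (ii) the horizontal strand through the basepoint is oriented rightwards.

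To preserve constraint (ii) throughout the sequence, I would arrange that the small horizontal strand at the basepoint is never touched by any of the moves. Since each involutive Reidemeister move is supported in a small disk equivariant under $\tau$, and since an equivariant planar isotopy can transport the support of any such move out of a fixed open $\tau$-invariant neighborhood of the basepoint, we may prepend to each move an equivariant planar isotopy that carries the support into the upper portion of the diagram. This leaves the horizontal strand through the basepoint (and its rightward orientation) invariant throughout the refined sequence.

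To preserve constraint (i), I would track how each move affects the set $L \cap \Fix(\tau)$ of axis--link intersection points. The moves of type M1 and M3 can create or destroy pairs of such intersection points, and if any new intersection appears below the basepoint, one restores the bottommost property by appending an equivariant planar isotopy that slides the offending point upward along the axis, above the basepoint's neighborhood. Because $\Fix(\tau)$ is a circle in $S^3$, such a slide is always possible as an ambient move, and it can be realized in the diagram as long as no other intersection obstructs it.

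The main obstacle I anticipate is the case in which the slide is obstructed because the desired reordering of points on $\Fix(\tau)$ is genuinely cyclic---equivalently, the slide would require passing an intersection through the basepoint or through the point at infinity of the axis. To handle this, I would realize the cyclic reordering by an explicit short sequence of M1 and M3 moves carried out at the topmost region of the projected axis, which has the effect of pulling one strand ``around infinity'' while keeping the basepoint strand fixed. Each step of this sequence satisfies (ii) by the construction above and can be chosen to preserve (i), so every intermediate diagram is normal. Iterating this refinement over the original Lobb--Watson sequence yields the desired sequence of normal diagrams.
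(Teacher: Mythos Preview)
There is a genuine gap in your handling of constraint~(ii). Your key claim---that an equivariant planar isotopy can always transport the support of a move out of a neighborhood of the basepoint---fails precisely when the move is an on-axis R1 applied to the arc carrying the basepoint. In that case the support disk necessarily contains the basepoint, and no planar isotopy can separate them. This is not a technicality: the paper devotes its entire Step~2 to exactly this situation. The trick there is that an R1 on the bottommost strand is equivalent to half-twisting everything above it followed by a global half-rotation about the axis; for an involutive diagram the half-rotation only flips orientations. One replaces each such R1 by the half-twist alone (which is a sequence of involutive moves fixing the bottom strand). Since $D$ and $D'$ are both normal, the total number of these R1 moves is even, so the omitted half-rotations cancel. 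Your M1/M3 maneuver for cyclic reordering of axis points does not touch this issue, and without it your refined sequence may change the direction of the basepoint strand.

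Your treatment of constraint~(i) is also looser than the paper's. You propose to slide a newly created axis intersection upward by equivariant planar isotopy, but planar isotopy preserves the cyclic order of intersection points on the projected axis, so this generally fails---you note the obstruction yourself, but the fix you sketch (M1/M3 at the top to effect a cyclic reordering) is not spelled out and it is not clear it can always be done while keeping the intermediate diagrams normal. The paper's approach is different and more concrete: after each move it pulls the basepoint strand down to the bottom (via involutive moves), and then verifies case by case that the two pulled-down diagrams are still connected by involutive moves fixing the basepoint; the delicate cases are the on-axis moves occurring at or below the basepoint strand, handled as in Figure~\ref{fig:modif-R1}.
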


\begin{proof}
    Take any sequence of involutive Reidemeister moves between $D$ and $D'$.
    \[
        D = D_0 \rightarrow D_1 \rightarrow \cdots \rightarrow D_{N} = D'. 
    \]
    We modify this sequence, by increasing the number of moves if necessary, so that the intermediate diagrams are all normal. 
    
    \textbf{Step 1.}
    For each move $D_i \rightarrow D_{i+1}$, we may transform the diagrams by pulling down the horizontal strands that contain the basepoints so that they are placed at the bottommost on the axes (which can be realized by sequences of involutive Reidemeister moves). Moreover, we can show that the two transformed diagrams $D'_i, D'_{i+1}$ can be related by a sequence of involutive Reidemeister moves that fix the basepoints. 
        \[
\begin{tikzcd}
D_i \arrow[rr, "\text{move}"] \arrow[d, "\text{pull}"'] &                                 & D_{i+1} \arrow[d, "\text{pull}"] \\
D'_i \arrow[r, "\text{move}"]                                & \cdots \arrow[r, "\text{move}"] & D'_{i+1}.                    \end{tikzcd}
    \]
    The claim is clear when the original move occurs above the pointed strand, or when the moves are off-axis which are IR1 -- IR3. We must consider the case where the move occurs below the pointed strand, or contains the pointed strand itself. \Cref{fig:modif-R1} depicts the modification for the R1 move that contains the pointed strand. The claim for the remaining moves R2 and M1 -- M3 can be checked similarly. 

    \begin{figure}[t]
        \centering
        \resizebox{.7\textwidth}{!}{
            \tikzset{every picture/.style={line width=0.75pt}} 

\begin{tikzpicture}[x=0.75pt,y=0.75pt,yscale=-1,xscale=1]

\draw    (50.24,16.5) .. controls (4.71,15.83) and (66.26,40.66) .. (95.09,40.83) ;
\draw    (317.77,38.7) .. controls (328.47,39.36) and (347.77,26.86) .. (361.96,26.86) .. controls (376.15,26.86) and (397.29,38.36) .. (407.29,39.03) ;
\draw    (6.43,40.16) .. controls (35.09,39.5) and (95.76,17.16) .. (50.24,16.5) ;
\draw    (50.6,134.62) .. controls (41.74,135.02) and (41.1,131.68) .. (41.26,125.38) .. controls (41.41,119.08) and (42.54,91.52) .. (38.92,93.13) .. controls (35.29,94.74) and (34.11,96.23) .. (36.9,98.37) .. controls (39.7,100.5) and (77.18,115.81) .. (95.4,115.92) ;
\draw    (6.12,115.25) .. controls (29.42,114.7) and (75.17,97.96) .. (65.82,93.92) .. controls (56.47,89.88) and (63.16,119.32) .. (62.68,126.16) .. controls (62.2,133) and (60.27,135.82) .. (50.6,134.62) ;
\draw    (161.06,127.05) .. controls (157,127) and (155.22,125.25) .. (155.32,121.37) .. controls (155.41,117.5) and (154.2,107.8) .. (155.86,105.76) .. controls (157.52,103.72) and (177.4,115.49) .. (188.6,115.56) ;
\draw    (133.72,115.15) .. controls (148.04,114.81) and (164.77,103.53) .. (167.33,106.5) .. controls (169.88,109.47) and (168.64,118.88) .. (168.49,121.85) .. controls (168.34,124.83) and (167.8,127.4) .. (161.06,127.05) ;
\draw    (252.62,126.89) .. controls (247.6,127.12) and (247.24,125.23) .. (247.33,121.66) .. controls (247.41,118.09) and (256.27,108.87) .. (258.33,107.73) .. controls (260.38,106.59) and (267.68,116.24) .. (278,116.3) ;
\draw    (227.43,115.92) .. controls (240.63,115.61) and (246.62,105.45) .. (248.97,108.19) .. controls (251.32,110.92) and (259.6,119.36) .. (259.47,122.1) .. controls (259.33,124.84) and (258.1,127.58) .. (252.62,126.89) ;
\draw    (316.97,114.5) .. controls (329.4,114) and (347,97.2) .. (352.6,102) .. controls (358.2,106.8) and (352.2,132) .. (357,134) .. controls (361.8,136) and (363.4,136) .. (369.4,134.4) .. controls (375.4,132.8) and (369.4,106.4) .. (373.8,101.6) .. controls (378.2,96.8) and (395.8,114) .. (406.49,114.83) ;
\draw [color={rgb, 255:red, 128; green, 128; blue, 128 }  ,draw opacity=1 ]   (50.4,49) -- (50.59,83.4) ;
\draw [shift={(50.6,85.4)}, rotate = 269.69] [color={rgb, 255:red, 128; green, 128; blue, 128 }  ,draw opacity=1 ][line width=0.75]    (10.93,-3.29) .. controls (6.95,-1.4) and (3.31,-0.3) .. (0,0) .. controls (3.31,0.3) and (6.95,1.4) .. (10.93,3.29)   ;
\draw [color={rgb, 255:red, 128; green, 128; blue, 128 }  ,draw opacity=1 ]   (106,115.2) -- (117.8,115.2) ;
\draw [shift={(119.8,115.2)}, rotate = 180] [color={rgb, 255:red, 128; green, 128; blue, 128 }  ,draw opacity=1 ][line width=0.75]    (10.93,-3.29) .. controls (6.95,-1.4) and (3.31,-0.3) .. (0,0) .. controls (3.31,0.3) and (6.95,1.4) .. (10.93,3.29)   ;
\draw [color={rgb, 255:red, 0; green, 0; blue, 0 }  ,draw opacity=1 ]   (139.4,41) -- (273.6,41) ;
\draw [shift={(275.6,41)}, rotate = 180] [color={rgb, 255:red, 0; green, 0; blue, 0 }  ,draw opacity=1 ][line width=0.75]    (10.93,-3.29) .. controls (6.95,-1.4) and (3.31,-0.3) .. (0,0) .. controls (3.31,0.3) and (6.95,1.4) .. (10.93,3.29)   ;
\draw [color={rgb, 255:red, 128; green, 128; blue, 128 }  ,draw opacity=1 ]   (200.6,114.8) -- (212.4,114.8) ;
\draw [shift={(214.4,114.8)}, rotate = 180] [color={rgb, 255:red, 128; green, 128; blue, 128 }  ,draw opacity=1 ][line width=0.75]    (10.93,-3.29) .. controls (6.95,-1.4) and (3.31,-0.3) .. (0,0) .. controls (3.31,0.3) and (6.95,1.4) .. (10.93,3.29)   ;
\draw [color={rgb, 255:red, 128; green, 128; blue, 128 }  ,draw opacity=1 ]   (288.4,114.8) -- (300.2,114.8) ;
\draw [shift={(302.2,114.8)}, rotate = 180] [color={rgb, 255:red, 128; green, 128; blue, 128 }  ,draw opacity=1 ][line width=0.75]    (10.93,-3.29) .. controls (6.95,-1.4) and (3.31,-0.3) .. (0,0) .. controls (3.31,0.3) and (6.95,1.4) .. (10.93,3.29)   ;
\draw [color={rgb, 255:red, 128; green, 128; blue, 128 }  ,draw opacity=1 ]   (364,49) -- (364.19,83.4) ;
\draw [shift={(364.2,85.4)}, rotate = 269.69] [color={rgb, 255:red, 128; green, 128; blue, 128 }  ,draw opacity=1 ][line width=0.75]    (10.93,-3.29) .. controls (6.95,-1.4) and (3.31,-0.3) .. (0,0) .. controls (3.31,0.3) and (6.95,1.4) .. (10.93,3.29)   ;

\draw (192.6,21.4) node [anchor=north west][inner sep=0.75pt]   [align=left] {R1};

\end{tikzpicture}
        }
        \caption{Modifying the R1-move}
        \label{fig:modif-R1}
    \end{figure}
    
    \textbf{Step 2.} The remaining work is to undo the changes in the direction of the pointed strand due to the R1 moves. Note that applying an R1 move to the bottom strand is equivalent to half-twisting the upper parts and then applying an overall half-rotation with respect to the axis. Since the diagram is involutive, a half-rotation has the effect of only changing the orientations on the components of the link. We modify each R1 move by only twisting the upper parts (which can be represented by a sequence of involutive moves) while keeping the bottom strand fixed. Since the bottom strands of $D$ and $D'$ are both pointed rightwards, the R1 moves in total must be applied an even number of times. Thus the effects of skipping the overall half-rotations cancel, and we obtain a desired sequence of involutive Reidemeister moves between $D$ and $D'$. 
\end{proof}

\begin{proposition}
\label{prop:invariance-reduced}
    Let $D, D'$ be normal diagrams related by an involutive Reidemeister move. The chain homotopy equivalence $\rho$ and the corresponding chain homotopies given in \Cref{prop:invariance} restrict to 
    \[
        \rho\colon\rCKhI(D) \rightarrow \rCKhI(D')
    \]
    and the following diagram commutes.  
    \[
    \begin{tikzcd}
        {Q\rCKh(D)[1]} \arrow[r, hook] \arrow[d, "\rho"] & \rCKhI(D) \arrow[r, two heads] \arrow[d, "\rho"] & \rCKh(D) \arrow[d, "\rho"] \\
        {Q\rCKh(D')[1]} \arrow[r, hook]                   & \rCKhI(D') \arrow[r, two heads]                            & \rCKh(D').
    \end{tikzcd} 
    \]
    The same statement holds for the coreduced counterparts. 
\end{proposition}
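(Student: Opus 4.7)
The plan is to show that every chain map and homotopy that appears in the proof of \Cref{prop:invariance} can in fact be chosen with local support disjoint from the basepoint, and therefore automatically restricts to the reduced subcomplex. By \Cref{prop:normal-inv-move} it suffices to handle a single involutive Reidemeister move between two normal diagrams. The key geometric observation is that in a normal diagram the basepoint sits at the bottommost point of the axis, and every allowed move is supported in a local disk whose closure is disjoint from the basepoint. In every resolution, the circle containing the basepoint then appears in the local cobordism only through boundary strands --- never as a small closed circle entirely contained in the local region --- so the local cobordism restricts to a trivial cylinder on the pointed circle, and the \(X\)-label on it is preserved.

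I would then go through the moves in the two groups used in \Cref{prop:invariance}. For IR1--IR3, the equivariant map decomposes as a product \(F = F_2 F_1\) of two standard Bar-Natan chain maps on disjoint off-axis disks; each is built from Frobenius-algebra operations on local small circles, hence preserves the \(X\)-label on the pointed circle and restricts to \(\rCKh\). The same property holds for the homotopy inverses \(G_1, G_2\), the \(\tau\)-conjugates \(F'_i, G'_i\), and for the auxiliary homotopies \(h_F, h_G, H, \bar H, H', \bar H', K, \bar K\) produced by \Cref{lem:htpy-1,lem:htpy-2,lem:htpy-3}. Although these are obtained non-constructively from \(\Kh\)-simplicity, the simplicity in question already holds at the level of the local tangle in \(\Cob^3(\partial T)\), so null-homotopies can be chosen with support inside the same local planar region, and their extension via the planar-algebra structure remains local. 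The axis-preserving cases R1, R2 and M1--M3 consist of a single local move situated strictly above the basepoint and are handled by exactly the same argument.

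Next I would observe that \(\tau\) itself preserves \(\rCKh(D)\), since the basepoint lies on \(\Fix(\tau)\) and the pointed circle is mapped to itself under the induced identification of labels. Consequently the cone construction \(\Cone(-\xrightarrow{Q(1+\tau)}Q(-))\) commutes with the restriction to the reduced subcomplex, and the block-matrix maps \(\mathbf{F}, \mathbf{G}, \mathbf{H}, \bar{\mathbf{H}}\) assembled from the already-restricted ingredients descend to \(\rCKhI_\tau\); the commutativity of the square in the statement is inherited directly from \Cref{prop:invariance}. For the coreduced version I would either invoke \Cref{prop:CKhI-ses} and pass to the quotient \(\rCKhI'_\tau \isom \CKhI_\tau / \rCKhI_\tau\), or conjugate by \(\sigma\) and apply \Cref{prop:sigma-swap}. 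The step I expect to be the most delicate is the locality of the non-constructively produced homotopies such as \(h_F\) and \(K\): one must argue that the witnesses to the appropriate chain-level equalities can be chosen inside the local category rather than only globally, which essentially requires recasting \Cref{lem:htpy-1,lem:htpy-2} inside the local Bar-Natan category over \(\partial T\) before extending through the planar algebra.
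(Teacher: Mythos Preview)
Your proposal is correct and follows essentially the same approach as the paper: the key point is that for normal diagrams the basepoint lies outside the local region where the involutive Reidemeister move takes place, so every chain map and homotopy constructed in \Cref{prop:invariance} (including the non-constructively produced ones, which live in the local Bar-Natan category before being extended by the planar-algebra structure) restricts to the (co)reduced subcomplex. The paper's own proof is a single sentence to this effect; your write-up simply makes explicit the locality considerations that the paper leaves implicit. One small remark: the appeal to \Cref{prop:normal-inv-move} is not needed here, since the statement already hypothesises a single move between normal diagrams---that proposition is what justifies the global invariance of $\rKhI_\tau$, not this local step.
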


\begin{proof}
    Since the basepoints are fixed by the move, the chain maps and chain homotopies of \Cref{prop:invariance} restrict to the (co)reduced complexes. 
\end{proof}

We conclude that for a pointed involutive link $L$ with normal diagram $D$, the chain homotopy equivalence classes of the (co)reduced complexes $\rCKhI(D)$, $\rCKhI'(D)$ are invariants of $L$. Those homologies are denoted $\rKhI(L)$ and $\rKhI'(L)$ and called the \textit{(co)reduced involutive Khovanov homologies} of $L$. 

\subsection{Mirrors}

Next, we study the behavior of the involutive complexes under mirrors. The arguments are straightforward extensions of \cite[Section 3.5.2]{Sano-Sato:2023} to the involutive setting. 
Consider the standard perfect pairing on $A$
\[
    \langle \cdot, \cdot \rangle\colon A \otimes A \rightarrow R
\]
given by $\langle x, y \rangle = \epsilon(xy)$. The associated duality isomorphism $\dual\colon A \rightarrow A^*$ such that $\langle x, y \rangle = \dual(x)(y)$ is given by 
\[
    \dual(1) = X^*,\ 
    \dual(X) = 1^* + hX^* 
\]
where $\{1^*, X^*\}$ is the dual basis for $A^*$ to the basis $\{1, X\}$ for $A$. For convenience, put $Y = X + h$, and consider another basis $\{1, Y\}$ for $A$ and its dual basis $\{1^\dag, Y^\dag\}$ for $A^*$. Then we have 
\[
    \dual(1) = Y^\dag,\ 
    \dual(X) = 1^\dag
\]
and 
\[
    \langle X, X \rangle = h,\ 
    \langle X, Y \rangle = \langle Y, X \rangle = 0,\ 
    \langle Y, Y \rangle = h.
\]
Note that when $h = 0$, the duality isomorphism $\dual$ coincides with the ordinary self-dual isomorphism. 

For a link diagram $D$, the duality isomorphism $\dual$ induces a chain isomorphism 
\[
    \dual\colon\CKh(D^*) \xrightarrow{\isom} \CKh(D)^*
\]
where $D^*$ denotes the mirror of $D$, and $\CKh(D)^*$ the dual complex of $\CKh(D)$ with bigrading $(\CKh(D)^*)^{i, j} = \CKh^{-i, -j}(D)^*$. 
This gives a perfect pairing
\[
    \langle \cdot, \cdot \rangle\colon\CKh(D) \otimes \CKh(D^*) \rightarrow R.
\]

Now suppose $(D, \tau)$ is an involutive link diagram. 

\begin{lemma}
    \[
        \dual \tau = \tau^* \dual\colon\CKh(D^*) \rightarrow \CKh(D)^*.
    \]
\end{lemma}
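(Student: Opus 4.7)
The plan is to verify the identity pointwise on a basis of standard generators, by unpacking both the duality pairing and the involution explicitly and invoking the symmetry of the Frobenius form on $A$. Dually, the asserted equation $\dual\tau = \tau^*\dual$ is equivalent to the self-adjointness of $\tau$ with respect to the pairing $\langle\cdot,\cdot\rangle\colon \CKh(D)\otimes \CKh(D^*)\to R$, namely
\[
    \langle \tau(y), x \rangle \;=\; \langle y, \tau(x) \rangle
\]
for all generators $y \in \CKh(D)$ and $x \in \CKh(D^*)$, so I would aim to establish this form.

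First I would fix standard generators $y = e_{s,v}$ and $x = e_{t,w}$, where $v$ and $w$ are labelings on the circles of $D(s)$ and $D^*(t)$ respectively. Recall that $\langle y, x \rangle$ vanishes unless $t$ is the complementary state $s^c$ (so that $D(s)$ and $D^*(s^c)$ agree as collections of circles), in which case it equals the product $\prod_C \langle v_C, w_C \rangle$ of Frobenius pairings over the shared circles. The key combinatorial observation is that the complementation operation commutes with $\tau$ on states: since $\tau$ permutes crossings and complementation is applied crossing by crossing, $\tau(s^c) = \tau(s)^c$. Consequently both $\langle \tau(y), x \rangle$ and $\langle y, \tau(x) \rangle$ are nonzero precisely when $t = \tau(s)^c$.

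Next I would assume this condition and compare the two products. The involution $\tau$ on labelings transports $v_C$ to the symmetric circle $\tau(C)$, i.e.\ $(\tau_* v)_{\tau(C)} = v_C$, and analogously for $w$. Re-indexing the product over the circles of $D(\tau(s))$ by their $\tau$-preimages, together with the symmetry of the Frobenius pairing on $A$, gives
\[
    \langle \tau(y), x \rangle
    = \prod_{C} \langle v_{C}, w_{\tau(C)} \rangle
    = \langle y, \tau(x) \rangle,
\]
which establishes the lemma.

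The main obstacle is entirely notational, namely being careful about the identification between circles in $D(s)$, $D^*(s^c)$, $D(\tau(s))$ and $D^*(\tau(s)^c)$, and about how $\tau$ acts on labelings through this identification. Conceptually the result is transparent, since both the duality pairing and the involution $\tau$ are induced by the same underlying geometric data (the axis reflection together with the Frobenius algebra $A$), so their compatibility is automatic once the bookkeeping is set up cleanly.
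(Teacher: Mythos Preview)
Your proof is correct and follows essentially the same approach as the paper: both verify the equivalent identity $\dual(\tau x)(y) = \dual(x)(\tau y)$ on standard generators. The paper compresses this into a single sentence by choosing the $\{1,X\}$ basis for $\CKh(D^*)$ and the $\{1,Y\}$ basis for $\CKh(D)$, which diagonalizes the pairing (since $\langle 1, X\rangle = \langle Y, 1\rangle = 1$ and $\langle 1,1\rangle = \langle Y,X\rangle = 0$), making the check immediate; your version works in any basis and spells out the reindexing over circles explicitly, but the content is the same.
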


\begin{proof}
    By considering an $1X$-labeled generator $x$ for $D^*$ and an $1Y$-labeled generator $y$ for $D$, we easily see that $\dual(\tau x)(y) = \dual(x)(\tau y)$ holds. 
\end{proof}

\begin{lemma}
    \[
        \CKhI(D)^*[1] \isom \Cone(\CKh(D)^* \xrightarrow{1 + \tau^*} \CKh(D)^*)
    \]
\end{lemma}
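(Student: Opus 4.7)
The plan is to expand both sides of the claimed isomorphism as graded direct sums of two copies of $\CKh(D)^*$ equipped with explicit $2\times 2$ matrix differentials, and then exhibit the isomorphism as the summand-swap.

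First, I would unpack the left-hand side. Starting from the direct-sum description
\[
\CKhI_\tau(D) = \CKh(D) \oplus \CKh(D)[1], \qquad D = \begin{pmatrix} d & 0 \\ 1+\tau & d \end{pmatrix}
\]
recorded just after \Cref{def:1-1}, I would dualize using the standard identification $(M[1])^* \isom M^*[-1]$ to obtain
\[
\CKhI_\tau(D)^* \isom \CKh(D)^* \oplus \CKh(D)^*[-1], \qquad D^* = \begin{pmatrix} d^* & 1+\tau^* \\ 0 & d^* \end{pmatrix},
\]
where I am using $(1+\tau)^* = 1 + \tau^*$ and that the transpose puts the off-diagonal entry in the upper-right. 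Applying the overall shift $[1]$ then rewrites the underlying bigraded module as $\CKh(D)^*[1] \oplus \CKh(D)^*$ with the same matrix entries, any sign arising from shifting a cone vanishing in characteristic two.

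Next I would unpack the right-hand side directly using the same cone convention employed in \Cref{def:1-1}: as a bigraded module,
\[
\Cone(\CKh(D)^* \xrightarrow{1+\tau^*} \CKh(D)^*) = \CKh(D)^* \oplus \CKh(D)^*[1], \qquad \begin{pmatrix} d^* & 0 \\ 1+\tau^* & d^* \end{pmatrix}.
\]
Comparing, the two complexes differ only by an interchange of the two summands, the off-diagonal copy of $1+\tau^*$ sitting in the upper-right on the dualized side and in the lower-left on the cone side.

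Finally, I would define the isomorphism $\Phi$ as the summand-swap $(x, y) \mapsto (y, x)$. It is manifestly a bidegree-preserving $R$-module isomorphism, and a one-line matrix computation confirms that it intertwines the two differentials. The main obstacle is purely bookkeeping: verifying that the composition of dualization and the shift $[1]$ lands the $1+\tau^*$ entry in the correct matrix position, which is where the order of operations and the convention for $(M[1])^*$ must be applied carefully; once that is checked, the characteristic-two hypothesis removes any lingering sign concerns and the swap map finishes the proof.
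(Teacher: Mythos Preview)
Your argument is correct and is exactly the routine unpacking that the paper has in mind: the paper's own proof consists of the single word ``Obvious.'' You have simply written out the direct-sum and matrix-differential bookkeeping that justifies this, so there is nothing to add.
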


\begin{proof}
    Obvious. 
\end{proof}

\begin{proposition}
\label{prop:CKhI-dual}
    There is an isomorphism
    \[
        \dual\colon\CKhI(D^*) \xrightarrow{\isom} \CKhI(D)^*[1].
    \]
\end{proposition}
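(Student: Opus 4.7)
The plan is to chain the two preceding lemmas together: the second lemma rewrites the target as a mapping cone of $1+\tau^*$ on $\CKh(D)^*$, so it suffices to produce an isomorphism from $\CKhI_\tau(D^*) = \Cone(\CKh(D^*) \xrightarrow{Q(1+\tau)} Q\CKh(D^*))$ to $\Cone(\CKh(D)^* \xrightarrow{1+\tau^*} \CKh(D)^*)$.

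First, I would identify both cones with direct sums of two copies of the underlying complex (the source summand carrying the appropriate homological shift), and define the candidate isomorphism to be the known duality map $\dual$ applied in each summand. Since $\dual\colon \CKh(D^*) \xrightarrow{\isom} \CKh(D)^*$ is already a chain isomorphism (so it intertwines the diagonal entries $d$ with $d^*$), it only remains to check that the off-diagonal entry is intertwined: that is exactly the statement $\dual(1+\tau) = (1+\tau^*)\dual$ from the first of the two preceding lemmas (since $\dual$ is $R$-linear and the $Q$ is a formal variable). This verifies that the component-wise map is a chain map, and it is a chain isomorphism because $\dual$ is an isomorphism on each summand.

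The main obstacle is purely bookkeeping: making sure that the grading shift $[1]$ on $\CKhI_\tau(D)^*$ and the formal variable $Q$ on the source side are matched consistently, so that the two mapping cones have the same bidegree structure summand-by-summand. Once the shifts are aligned as dictated by the second lemma, the chain isomorphism drops out immediately from the first lemma with no further computation.
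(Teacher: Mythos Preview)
Your proposal is correct and matches the paper's approach exactly: the paper's proof is simply the terse chain of isomorphisms $\CKhI_\tau(D^*) = \Cone(1+\tau) \isom \Cone(1+\tau^*) \isom \CKhI_\tau(D)^*[1]$, invoking precisely the two preceding lemmas in the roles you describe. Your version just spells out the mechanism (componentwise $\dual$, off-diagonal compatibility from $\dual\tau=\tau^*\dual$) that the paper leaves implicit.
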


\begin{proof}
    The above results give isomorphisms,
    \begin{align*}
        \CKhI(D^*) 
            &= \Cone(\CKh(D^*) \xrightarrow{1 + \tau} \CKh(D^*)) \\
            &\isom \Cone(\CKh(D)^* \xrightarrow{1 + \tau^*} \CKh(D)^*) \\
            &\isom \CKhI(D)^*[1].
    \end{align*}    
\end{proof}

Next we see that the duality isomorphism also respects the (co)reduced complexes. Recall from \Cref{prop:sigma-swap} that the coreduced complex $\rCKhI'(D)$ may be identified with the subcomplex $\sigma(\rCKhI(D))$, and the reduced complex $\rCKhI(D)$ with the quotient $\CKhI(D) / \sigma(\rCKhI(D))$. Thus there is a short exact sequence in the reversed direction,
    \[
\begin{tikzcd}
\rCKhI'(D) \arrow[r, "i'"] & \CKhI(D) \arrow[r, "p'"] & \rCKhI(D)
\end{tikzcd}
    \]

\begin{proposition}
    The isomorphism $\dual$ of \Cref{prop:CKhI-dual} induces isomorphisms on the (co)reduced complexes, such that the following diagram commutes
    \[
\begin{tikzcd}
\rCKhI(D^*) \arrow[r, "i"] \arrow[d, "\dual", dashed]       & \CKhI(D^*) \arrow[r, "p"] \arrow[d, "\dual"]      & \rCKhI'(D^*) \arrow[d, "\dual", dashed]                \\
\rCKhI(D)^* \arrow[r, "(p')^*"] & \CKhI(D)^* \arrow[r, "(i')^*"] & \rCKhI'(D)^*.
\end{tikzcd}
    \]
\end{proposition}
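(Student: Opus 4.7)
The plan is to reduce to the non-involutive analogue and then propagate through the cone construction using the intertwining relation $\dual \tau = \tau^* \dual$ established just above. The key algebraic input is the orthogonality $\langle X, Y \rangle = \epsilon(X(X+h)) = 0$, which holds because $X(X+h) = 0$ in $A$.

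First I would verify the non-involutive version. For a standard generator $x$ of $\CKh(D^*)$ with label $X$ on the pointed circle, and any generator $y$ of $\CKh(D)$ with label $Y$ on the pointed circle, the expression $\dual(x)(y) = \prod_i \epsilon(x_i y_i)$ contains the factor $\epsilon(XY) = 0$ and therefore vanishes. Hence $\dual(\rCKh(D^*))$ is contained in the annihilator of $\sigma(\rCKh(D))$, which by \Cref{prop:sigma-swap} equals the image of $(p')^*$. A state-by-state rank count then shows that the induced map $\dual\colon \rCKh(D^*) \to \rCKh(D)^*$ is an isomorphism, and the five lemma applied to the relevant short exact sequences upgrades this to an isomorphism $\dual\colon \rCKh'(D^*) \to \rCKh'(D)^*$ on the coreduced complexes as well.

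The passage to the involutive setting is essentially formal. The complexes $\CKhI_\tau$, $\rCKhI_\tau$, and $\rCKhI'_\tau$ are all mapping cones of $Q(1 + \tau)$ on their non-involutive counterparts, and the duals on the right-hand side of the diagram are cones of $Q(1 + \tau^*)$ by the lemma preceding \Cref{prop:CKhI-dual}. Since $\dual \tau = \tau^* \dual$, the restrictions constructed in the non-involutive step intertwine these structure maps and thus assemble into chain isomorphisms of the corresponding cones; commutativity of the stated diagram follows immediately at the chain level.

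The main obstacle I anticipate is tracking the $\sigma$-twisted identifications used to define $i'$ and $p'$ in \Cref{prop:sigma-swap}, since $\dual$ is most naturally expressed in the $\{1, Y\}$-basis (via $\dual(1) = Y^\dag$ and $\dual(X) = 1^\dag$) while the reduced complex is defined in terms of $X$-labels in the $\{1, X\}$-basis. Once this bookkeeping is unwound the Frobenius-algebraic content reduces to the single identity $\epsilon(XY) = 0$, and everything else is diagram chasing.
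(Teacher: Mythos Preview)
Your proposal is correct and follows essentially the same route as the paper. The paper's proof is a one-line citation (``Combine \Cref{prop:CKhI-dual} with \cite[Proposition 3.36]{Sano-Sato:2023}''), where the cited result is precisely the non-involutive statement you sketch in your first step; your orthogonality argument via $\epsilon(XY)=0$ together with the rank count is exactly how that proposition is established, and your cone-and-intertwining passage to the involutive setting is the content of \Cref{prop:CKhI-dual} applied to the reduced subcomplexes.
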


\begin{proof}
    Combine \Cref{prop:CKhI-dual} with \cite[Proposition 3.36]{Sano-Sato:2023}. 
\end{proof}

\begin{proposition}
\label{prop:perf-pairing}
    There are perfect pairings
    \[
        \langle \cdot, \cdot \rangle\colon\CKhI(D) \otimes \CKhI(D^*) \rightarrow R
    \]
    and 
    \[
        \langle \cdot, \cdot \rangle_r\colon\rCKhI(D) \otimes \rCKhI(D^*) \rightarrow R
    \]
    such that the following diagram commutes:
    \[
    \begin{tikzcd}
    \rCKhI(D) \otimes \rCKhI(D^*) \arrow[r, "{\langle \cdot, \cdot \rangle_r}"] \arrow[d, "i \otimes i"] & R \arrow[d, "h \cdot "] \\ 
    \CKhI(D) \otimes \CKhI(D^*) \arrow[r, "{\langle \cdot, \cdot \rangle}"] & R.         
    \end{tikzcd}
    \]
\end{proposition}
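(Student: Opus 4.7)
The plan is to obtain both pairings by transporting the duality isomorphisms of \Cref{prop:CKhI-dual} and the preceding proposition, and then to reduce the commuting square to a single algebraic identity involving the Frobenius algebra structure.

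First, I would define the full pairing by setting
\[
\langle x, y \rangle = \dual(y)(x)
\]
for $x \in \CKhI_\tau(D)$ and $y \in \CKhI_\tau(D^*)$, using the chain isomorphism $\dual \colon \CKhI_\tau(D^*) \xrightarrow{\isom} \CKhI_\tau(D)^*[1]$ of \Cref{prop:CKhI-dual}. Perfectness is exactly the statement that $\dual$ is an isomorphism. The reduced pairing $\langle \cdot, \cdot \rangle_r$ is defined analogously from the induced isomorphism $\dual \colon \rCKhI_\tau(D^*) \xrightarrow{\isom} \rCKhI_\tau(D)^*$ supplied by the leftmost square of the preceding proposition.

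For the commuting square, the leftmost square gives $\dual(i(b)) = (p')^{*}\dual(b)$ for $b \in \rCKhI_\tau(D^*)$, so
\[
\langle i(a), i(b) \rangle = \dual(i(b))(i(a)) = \dual(b)\bigl(p'(i(a))\bigr) = \bigl\langle p'(i(a)),\, b \bigr\rangle_r
\]
for $a \in \rCKhI_\tau(D)$. The problem therefore reduces to the identity $p' \circ i = h \cdot \id$ on $\rCKhI_\tau(D)$. I would check this on standard generators: a reduced generator has label $X$ on the pointed circle, and projecting modulo $\sigma(\rCKhI_\tau(D))$ --- whose generators carry $Y = X + h$ on the pointed circle --- replaces $X$ by $-h$, which is multiplication by $h$ in characteristic $2$ under the identification $\rCKhI_\tau(D) \isom \CKhI_\tau(D)/\sigma(\rCKhI_\tau(D))$ from \Cref{prop:sigma-swap}.

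The main potential obstacle is the bookkeeping around the cone construction and the formal variable $Q$, since we need the identity $p' \circ i = h \cdot \id$ to descend to each of the two summands of $\CKhI_\tau$ and to commute with the cone differential. However, because $\tau$ commutes with $\sigma$ and preserves the (co)reduced subcomplexes, the verification reduces componentwise to the non-involutive statement established in \cite{Sano-Sato:2023}, and I would expect the bulk of the work to be a short diagram chase rather than a substantial computation.
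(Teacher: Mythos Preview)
Your proposal is correct and follows essentially the same route as the paper: the paper's proof is the one-line ``combine \Cref{prop:CKhI-dual} with \cite[Propositions 3.33, 3.37]{Sano-Sato:2023}'', and what you have written is precisely the unpacking of that citation---transport the pairings through the duality isomorphisms, then reduce the commuting square to the identity $p' \circ i = h \cdot \id$, which is the non-involutive fact from \cite{Sano-Sato:2023} applied componentwise to the cone.
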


\begin{proof}
    Combine \Cref{prop:CKhI-dual} with \cite[Propositions 3.33, 3.37]{Sano-Sato:2023}. 
\end{proof}
    \section{Equivariant Rasmussen Invariants}
\label{sec:3}


The focus of this section is strongly invertible knots and links. We mainly consider the case where $h \neq 0$, typically
\[
    (R, h) = (\FF_2, 1),\ (\FF_2[H], H),\ (\FF_2[H^\pm], H).
\]

\subsection{Equivariant Lee classes}
\label{subsec:equiv-lee-class}

For an oriented link diagram $D$, let $O(D)$ denote the set of all orientations on the underlying unoriented diagram of $D$, which has $2^l$ elements when the corresponding link has $l$ components. Recall that in the non-involutive setting, if $h \in R$ is invertible, then the homology $\Kh(D)$ is generated by the \textit{Lee classes} $\ca(D, o)$ of $D$ with $o \in O(D)$ \cite{Lee:2005,Turner:2020}. Here we recall the construction.

\begin{figure}[t]
    \centering
    \resizebox{.9\linewidth}{!}{
    \tikzset{every picture/.style={line width=0.75pt}} 

\begin{tikzpicture}[x=0.75pt,y=0.75pt,yscale=-1,xscale=1]

\clip (0,0) rectangle (420,120);

\draw [line width=1.5]    (70.58,41.39) .. controls (63.5,-18) and (4,25.5) .. (42.36,77.01) ;
\draw [line width=1.5]    (53.15,86.58) .. controls (94,115) and (123,32) .. (35.5,45.5) ;
\draw [line width=1.5]    (25.13,46.95) .. controls (-12.3,54.35) and (15.68,141.29) .. (69.06,54.44) ;
\draw   (114.5,43.69) -- (125.5,43.69) -- (125.5,37.79) -- (137.72,49.59) -- (125.5,61.39) -- (125.5,55.49) -- (114.5,55.49) -- cycle ;
\draw  [color={rgb, 255:red, 0; green, 0; blue, 0 }  ,draw opacity=1 ][fill={rgb, 255:red, 155; green, 155; blue, 155 }  ,fill opacity=1 ] (197,11) .. controls (210,10) and (215,18) .. (217,35) .. controls (219,52) and (226.77,42.04) .. (237,53) .. controls (247.23,63.96) and (246,72) .. (244,81) .. controls (242,90) and (230,98) .. (213,89) .. controls (196,80) and (203,81) .. (188,90) .. controls (173,99) and (164,89) .. (158,82) .. controls (152,75) and (153,60) .. (160,55) .. controls (167,50) and (178,50) .. (180,35) .. controls (182,20) and (184,12) .. (197,11) -- cycle ;
\draw  [fill={rgb, 255:red, 255; green, 255; blue, 255 }  ,fill opacity=1 ] (196.5,42) .. controls (208.5,42) and (222,39) .. (215.5,61) .. controls (209,83) and (192,85) .. (182,65) .. controls (172,45) and (184.5,42) .. (196.5,42) -- cycle ;
\draw  [color={rgb, 255:red, 208; green, 2; blue, 27 }  ,draw opacity=1 ][line width=1.5]  (346.5,12) .. controls (359.5,11) and (364.5,19) .. (366.5,36) .. controls (368.5,53) and (376.27,43.04) .. (386.5,54) .. controls (396.73,64.96) and (395.5,73) .. (393.5,82) .. controls (391.5,91) and (379.5,99) .. (362.5,90) .. controls (345.5,81) and (352.5,82) .. (337.5,91) .. controls (322.5,100) and (313.5,90) .. (307.5,83) .. controls (301.5,76) and (302.5,61) .. (309.5,56) .. controls (316.5,51) and (327.5,51) .. (329.5,36) .. controls (331.5,21) and (333.5,13) .. (346.5,12) -- cycle ;
\draw  [color={rgb, 255:red, 0; green, 116; blue, 255 }  ,draw opacity=1 ][line width=1.5]  (346.5,43) .. controls (358.5,43) and (372,40) .. (365.5,62) .. controls (359,84) and (342,86) .. (332,66) .. controls (322,46) and (334.5,43) .. (346.5,43) -- cycle ;
\draw    (200.5,11.5) -- (195.5,11.5) ;
\draw [shift={(192.5,11.5)}, rotate = 360] [fill={rgb, 255:red, 0; green, 0; blue, 0 }  ][line width=0.08]  [draw opacity=0] (8.93,-4.29) -- (0,0) -- (8.93,4.29) -- cycle    ;
\draw    (54.5,13) -- (48.5,13) ;
\draw [shift={(45.5,13)}, rotate = 360] [fill={rgb, 255:red, 0; green, 0; blue, 0 }  ][line width=0.08]  [draw opacity=0] (8.93,-4.29) -- (0,0) -- (8.93,4.29) -- cycle    ;
\draw    (203.5,42) -- (197.5,42.33) ;
\draw [shift={(194.5,42.5)}, rotate = 356.82] [fill={rgb, 255:red, 0; green, 0; blue, 0 }  ][line width=0.08]  [draw opacity=0] (8.93,-4.29) -- (0,0) -- (8.93,4.29) -- cycle    ;
\draw   (267,43.69) -- (278,43.69) -- (278,37.79) -- (290.22,49.59) -- (278,61.39) -- (278,55.49) -- (267,55.49) -- cycle ;

\draw (369,4.4) node [anchor=north west][inner sep=0.75pt]    {$\textcolor[rgb]{0.82,0.01,0.11}{a}$};
\draw (367.5,57.4) node [anchor=north west][inner sep=0.75pt]  [color={rgb, 255:red, 0; green, 117; blue, 255 }  ,opacity=1 ]  {$b$};

\end{tikzpicture}
    }
    \caption{$ab$-coloring on the Seifert circles of $K$.}
    \label{fig:equiv-lee}
\end{figure}

\begin{algorithm}
\label{algo:ab-color}
    Given a link diagram $D$, the \textit{$ab$-coloring} on its Seifert circles is defined as follows: separate $\mathbb{R}^2$ into regions by the Seifert circles of $D$, and color the regions in the checkerboard fashion, with the unbounded region colored white. For each Seifert circle, let it inherit the orientation from $D$, and assign to it $a$ if it sees a black region to the left with respect to the orientation, or $b$ otherwise  (see \Cref{fig:equiv-lee}). 
\end{algorithm}

\begin{definition}
\label{def:lee-cycle}
    Let $D$ be a link diagram. There is a unique state $s$ of $D$ where the resolved diagram $D(s)$ gives the Seifert circles of $D$. With the $ab$-coloring on the Seifert circles, define an element $\ca(D)$ in $\CKh(D)$ for that state by labeling each circle by $X$ if it is colored $a$, and $Y = X + h$ if it is colored $b$. Similarly, for any orientation $o \in O(D)$, we define an element $\ca(D, o)$ by the same procedure after reorienting $D$ by $o$. These elements $\ca(D, o)$ are in fact cycles in $\CKh(D)$, and are called the \textit{Lee cycles} of $D$. The homology classes are called the \textit{Lee classes} of $D$. 
\end{definition}

That $\ca(D, o)$ is a cycle in $\CKh(D)$ can be seen from the fact that each crossing of $D$ connects two differently colored strands in the resolved diagram, and merging the strands results in $XY = 0$. Note that the automorphism $\sigma$ on $\CKh(D)$ interchanges $\ca(D, o)$ and $\ca(D, -o)$, where $-o$ is the reversed orientation of $o$. The homological gradings of $\ca(D, o)$ are all even. 

Now, let us consider the case when $(D, \tau)$ is an involutive link diagram. For each $o \in O(D)$, let $\tau(o) \in O(D)$ denote the orientation obtained by applying $\tau$ to the involutive link diagram $D$ reoriented with $o$.
\begin{lemma}
\label{lem:ca-tau-comm}
    $\ca(D, \tau(o)) = \sigma \tau (\ca(D, o))$. 
\end{lemma}

\begin{proof}
    For each crossing $c$ of $D$, let $c'$ denote the crossing corresponding to $c$ under $\tau$ (possibly $c$ = $c'$). Observe that the orientation-preserving resolution of $c$ for $o$ is equal to that of $c'$ for $\tau(o)$. Thus, from the definition of the algebraic involution $\tau$ on $\CKh(D)$, it associates the orientation-preserving state $s$ of $(D, o)$ to the orientation-preserving state $s'$ of $(D, \tau(o))$. The Seifert circles of $(D, \tau(o))$ coincide with the images of the Seifert circles of $(D, o)$ under the reflection with respect to the axis, but with the orientations reversed. 
\end{proof}

\begin{remark}
    Here, it would be more natural to redefine the algebraic involution $\tau$ by $\sigma \tau$, then \Cref{lem:ca-tau-comm} would be written more cleanly as $\ca(D, \tau(o)) = \tau(\ca(D, o))$. See also \Cref{sec:6}. 
\end{remark}

Define the subset of $O(D)$,
\[
    O^\tau(D) := \{\ o \in O(D) \mid \tau(o) = -o \ \}
\]
and the quotient set 
\[
    \overline{O}{}^\tau(D) := (O(D) \setminus O^\tau(D)) / (\tau(o) \sim -o).
\]
Here, each equivalence class $[o] \in \overline{O}{}^\tau(D)$ is the two-element set $\{o, -\tau(o)\}$, and
\[
    (1 + \tau) \ca(D, o) = \ca(D, o) + \ca(D, -\tau(o))
\]
is well-defined, independent of the representative $o$. The following proposition is immediate from \Cref{lem:ca-tau-comm}. 

\begin{proposition}%
\label{prop:lee-tau-invariant}
    $\ca(D, o)$ is $\tau$-invariant for any $o \in O^\tau(D)$, and $(1 + \tau)\ca(D, o')$ is (non-zero and) $\tau$-invariant for any $[o'] \in \overline{O}{}^\tau(D)$. 
\end{proposition}

In the involutive complex $\CKhI(D)$, for each orientation $o \in O(D)$, the two elements $\ca(D, o)$, $Q\ca(D, o)$ in $\CKhI(D)$ will be denoted hereafter by $\dca(D, o)$ and $\uca(D, o)$ respectively. Note that $\dca(D, o)$ has even homological grading whereas $\uca(D, o)$ has odd. The following proposition is immediate from \Cref{prop:lee-tau-invariant}.

\begin{proposition}
\label{prop:equiv-lee-cycles}
    $\dca(D, o)$ is a cycle for any $o \in O^\tau(D)$, and $(1 + \tau)\dca(D, o')$ is a (non-zero) cycle for any $[o'] \in \overline{O}{}^\tau(D)$. $\uca(D, o)$ is a cycle for any $o \in O(D)$.
\end{proposition}

\begin{definition}
    The cycles of \Cref{prop:equiv-lee-cycles} are called the \textit{equivariant Lee cycles} of $D$, and those homology classes the \textit{equivariant Lee classes} of $D$. 
\end{definition}

It is proved in \cite[Theorem 4.2]{Lee:2005} that the (non-involutive) \textit{$\QQ$-Lee homology} of $D$ is freely generated by the \textit{Lee classes} of $D$. More generally, the same statement holds whenever $h \in R$ is invertible \cite[Theorem 4.2]{Turner:2020}, \cite[Proposition 2.9]{Sano:2020}. The following proposition is an analogue of this statement in the involutive setting.

\begin{proposition}
\label{prop:khi-structure}
    If $h \in R$ is invertible, the involutive Khovanov homology $\KhI(D)$ is generated by the equivariant Lee classes. More strongly, a basis is given by the collection of the following classes: (i) $\dca(D, o)$ for $o \in O^\tau(D)$, (ii) $(1 + \tau)\dca(D, o')$ for $[o'] \in \overline{O}{}^\tau(D)$, (iii) $\uca(D, o)$ for $o \in O^\tau(D)$, and (iv) $\uca(D, o')$ for $[o'] \in \overline{O}{}^\tau(D)$ (one representative $o'$ chosen for each equivalence class $[o']$). 
\end{proposition}

\begin{proof}
\begin{figure}[t]
\centering
\includegraphics[width=0.5\linewidth]{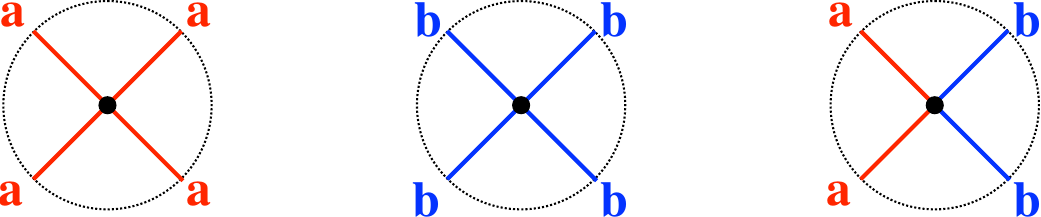}
\caption{Local picture of an admissibly colored diagram.}
\label{fig:adm-coloring}
\end{figure}
    The proof is similar to the proof for the non-involutive case given in \cite{Wehrli:2008}, also explained in detail in \cite{Lewark:2009}. 
    To explain briefly, first note that when $h$ is invertible, we may take $\{X, Y\}$ as a basis for the Frobenius algebra $A$. An \textit{admissible coloring} of $D$ is a coloring with $a$ or $b$ on the edges of $D$ such that each crossing admits a resolution that determines the colors of the two arc segments accordingly (see \Cref{fig:adm-coloring}). $\CKh(D)$ can be decomposed into subcomplexes, each corresponding to an admissible coloring of $D$, generated by the $XY$-labeled generators that match the coloring. If there is a crossing such that the four incident edges are colored the same (as in the first two pictures of \Cref{fig:adm-coloring}), then the generators can be canceled in pairs, resulting in a trivial complex. By contracting all such subcomplexes, we will be left with subcomplexes each corresponding to an admissible coloring such that the four incident edges at each crossing have two different colors (as in the third picture of \Cref{fig:adm-coloring}), which in turn corresponds one-to-one to an orientation $o$ of $D$. Such a subcomplex is generated by the single Lee cycle $\alpha(D, o)$, and thus $\CKh(D)$ is chain homotopy equivalent to a complex generated by the Lee cycles with trivial differential. 
    
    We may perform the same cancellation in both $\CKh(D)$ and $Q\CKh(D)$. The resulting complex is freely generated by the elements $\{\dca(D, o)\} \cup \{\uca(D, o)\}$. For $o \in O^\tau(D)$, the cycles $\dca(D, o)$ and $\uca(D, o)$ are both isolated. For $o' \not\in O^\tau(D)$, there are non-zero arrows of the form
    \[
\begin{tikzcd}
{\dca(D, o')} \arrow[d, "1"'] \arrow[rd] & {\tau\dca(D, o')} \arrow[ld, "\tau" description] \arrow[d, "1"] \\
{\uca(D, o')} & {\tau\uca(D, o')}
\end{tikzcd}
    \]
    By a basis change, this transforms into
    \[
\begin{tikzcd}
{(1 + \tau)\dca(D, o')} \arrow[d, "0"', dotted] \arrow[rd, dotted] & {\dca(D, o')} \arrow[ld, dotted] \arrow[d, "1 + \tau"] \\
{\uca(D, o')} & {(1 + \tau)\uca(D, o')}.
\end{tikzcd}
    \]
    For each class $[o'] \in \overline{O}{}^\tau$, choose a representative $o'$ and cancel the right vertical arrow shown above. Then we obtain a complex with trivial differential, freely generated by the cycles described in the statement. 
\end{proof}

\begin{remark}
    If $D$ has $l$ symmetric components and $2m$ asymmetric components, we have $|O(D)| = 2^{l + 2m}$, $|O^\tau(D)| = 2^{l + m}$ and $|\overline{O}{}^\tau(D)| = 2^{l + m - 1}(2^m - 1)$. Therefore, $\rank \KhI(D) = 2^{l + m}(2^m + 1) \leq 2 \rank \Kh(D) = 2^{l + 2m + 1}$.
\end{remark}

\begin{proposition}
\label{prop:ca-SES}
    The short exact sequence
    \[
    \begin{tikzcd}
        0 \arrow[r] & {\CKh(D)[1]} \arrow[r, "Q"] & \CKhI(D) \arrow[r, "q"] & \CKh(D) \arrow[r] & 0
    \end{tikzcd} 
    \]
    induces a long exact sequence
    \[
    \begin{tikzcd}
        \cdots \arrow[r] & {\Kh^{i - 1}(D)} \arrow[r, "Q_*"] & \KhI^i(D) \arrow[r, "q_*"] & \Kh^i(D) \arrow[r, "\ (1 + \tau)_*\ "] & {\Kh^i(D)} \arrow[r] & \cdots
    \end{tikzcd} 
    \]
    such that if $h$ is invertible, it decomposes into segments of isomorphisms mapping 
    \[
\begin{tikzcd}[row sep=".5em"]
{\ca(D, o)} \arrow[r, "Q_*", maps to] & {\uca(D, o)} & & \\
{\ca(D, o')} \arrow[r, maps to] & {\uca(D, o')} & & \\
 & {\dca(D, o)} \arrow[r, "q_*", maps to] & {\ca(D, o)} & \\
 & {(1 + \tau)\dca(D, o')} \arrow[r, maps to] & {(1 + \tau)\ca(D, o')} & \\
 & & {\ca(D, o')} \arrow[r, maps to] & {(1 + \tau)\ca(D, o')}
\end{tikzcd}
    \]
    where $o \in O^\tau$ and $[o'] \in \overline{O}{}^\tau$. In particular, when $D$ has no asymmetric components, the long exact sequence splits as
    \[
    \begin{tikzcd}
        0 \arrow[r] & {\Kh(D)[1]} \arrow[r] & \KhI(D) \arrow[r] & \Kh(D) \arrow[r] & 0
    \end{tikzcd} 
    \]
    mapping 
    \[
\begin{tikzcd}[row sep=".5em"]
{\ca(D, o)} \arrow[r, "Q_*", maps to] & {\uca(D, o)} & \\
 & {\dca(D, o)} \arrow[r, "q_*", maps to] & {\ca(D, o).} \\
\end{tikzcd}
    \]
\end{proposition}

Hereafter we assume that $(D, \tau)$ is a strongly invertible link diagram. Then the given orientation $o_D$ of $D$ and its reversal $-o_D$ both belong to $O^\tau(D)$. We write $\ca(D) := \ca(D, o_D)$ and $\cb(D) := \ca(D, -o_D)$ as cycles in $\CKh^0(D)$, and correspondingly $\dca(D), \dcb(D) \in \CKhI^0(D)$ and $\uca(D), \ucb(D) \in \CKhI^1(D)$.

\begin{example}
\label{ex:trefoil-lee-cycles}
    For the left-handed trefoil diagram of \Cref{fig:equiv-lee}, we have 
    \[
        \dca(D) = X \otimes Y \in \CKhI^0(D)
    \]
    and 
    \[
        \uca(D) = Q(X \otimes Y) \in \CKhI^1(D)
    \]
    where $X$ corresponds to the outer circle, and $Y$ to the inner circle. The cycles $\dcb(D), \ucb(D)$ are obtained by swapping $X$ and $Y$. 
\end{example}

In \cite[Proposition 2.13]{Sano:2020} we showed in the non-involutive setting that the behaviors of the Lee classes under the Reidemeister moves can be described explicitly. The same formula also holds in the involutive setting. Hereafter, $w(D)$ denotes the writhe of $D$, and $r(D)$ denotes the number of Seifert circles of $D$. The difference function $\delta f$ of a unary function $f$ is defined as $\delta f(x, y) = f(y) - f(x)$.  

\begin{proposition}
\label{prop:ca-reid}
    Suppose $h \in R$ is invertible. Let $D, D'$ be strongly invertible link diagrams related by an involutive Reidemeister move. Under the isomorphism
    \[
        \rho\colon \KhI(D) \rightarrow \KhI(D')
    \]
    given in \Cref{prop:invariance}, the equivariant Lee classes modulo torsions correspond as 
    \begin{align*}
        \dca(D) &\xmapsto{\ \rho\ } h^j \dca(D'),&
        \uca(D) &\xmapsto{\ \rho\ } h^j \uca(D'), \\       
        \dcb(D) &\xmapsto{\ \rho\ } h^j \dcb(D'),&
        \ucb(D) &\xmapsto{\ \rho\ } h^j \ucb(D'),
    \end{align*}
    where 
    \[ 
        j = \frac{\delta w(D, D') - \delta r(D, D')}{2}.
    \]
\end{proposition}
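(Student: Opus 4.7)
The plan is to bootstrap from the non-involutive analogue \cite[Proposition 2.13]{Sano:2020}: if $F\colon \CKh(D) \to \CKh(D')$ is the chain homotopy equivalence arising from a decomposition of the involutive move into a sequence of ordinary Reidemeister moves, then the induced map on $\Kh$ sends $\ca(D) \mapsto h^j \ca(D')$ and $\cb(D) \mapsto h^j \cb(D')$ modulo torsion, with $j = (\delta w(D,D') - \delta r(D,D'))/2$. The new input I will use is that, by \Cref{prop:invariance}, the induced isomorphism $\rho$ on $\KhI_\tau$ fits into the commutative diagram coming from \Cref{prop:ca-SES},
\[
\begin{tikzcd}
{\Kh(D)[1]} \arrow[r, "Q"] \arrow[d, "\rho"] & \KhI_\tau(D) \arrow[r, "q"] \arrow[d, "\rho"] & \Kh(D) \arrow[d, "\rho"] \\
{\Kh(D')[1]} \arrow[r, "Q"]                  & \KhI_\tau(D') \arrow[r, "q"]                   & \Kh(D'),
\end{tikzcd}
\]
where the outer vertical arrows are the non-involutive $F$ to which \cite[Proposition 2.13]{Sano:2020} applies.

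For the overlined classes I will apply naturality of $Q$ directly. Since $\uca(D) = Q\ca(D)$, commutativity yields
\[
\rho(\uca(D)) = Q\rho(\ca(D)) = Q(h^j \ca(D')) = h^j \uca(D')
\]
modulo torsion, and the identical argument gives $\rho(\ucb(D)) = h^j \ucb(D')$ modulo torsion.

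For the underlined classes I will combine naturality of $q$ with a homological grading argument. From $q(\dca(D)) = \ca(D)$ we obtain $q\rho(\dca(D)) = h^j \ca(D')$ modulo torsion. By \Cref{prop:khi-structure} the free part of $\KhI_\tau(D')$ is generated by the four equivariant Lee classes, so I can write
\[
\rho(\dca(D)) \equiv a \dca(D') + b \dcb(D') + c \uca(D') + d \ucb(D') \pmod{\Tor}
\]
for some $a,b,c,d \in R$. Applying $q$ gives $a \ca(D') + b \cb(D') \equiv h^j \ca(D')$ modulo torsion, which forces $a = h^j$ and $b = 0$ by the linear independence of $\ca(D'), \cb(D')$ in the Lee basis of $\Kh(D')$. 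Since $\dca, \dcb$ live in homological grading $0$ while $\uca, \ucb$ live in homological grading $1$ and $\rho$ preserves the homological grading, I get $c = d = 0$ as well. Hence $\rho(\dca(D)) = h^j \dca(D')$ modulo torsion, and the argument for $\dcb(D)$ is identical. The statements for a general orientation $o$ are obtained by running the same argument after reorienting both $D$ and $D'$ by $o$, which preserves the class of involutive link diagram and the computation of $j$.

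The main obstacle I anticipate is not in the algebra but in the bookkeeping: namely, verifying that the composite $\mathbf{F}$ of \Cref{prop:invariance} really does restrict, on the diagonal of its block form $\begin{pmatrix} F & 0 \\ \tau h_F & F \end{pmatrix}$, to the non-involutive equivalence to which \cite[Proposition 2.13]{Sano:2020} applies, and that the exponent $j$ is additive along the chosen decomposition into ordinary Reidemeister moves. Both points follow from the additivity of $w$ and $r$ under local modifications and are standard.
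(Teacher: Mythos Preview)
Your proof is correct and follows essentially the same approach as the paper's own proof: both use the commutative diagram from \Cref{prop:invariance} relating $\rho$ on $\KhI_\tau$ to the non-involutive $\rho$ on $\Kh$, invoke \cite[Proposition 2.13]{Sano:2020} for the outer maps, and then use the freeness from \Cref{prop:khi-structure} together with preservation of homological grading to pin down the image. The paper's version is terser (it phrases the splitting as ``the long exact sequence splits'' and draws the diagram of Lee classes directly), while you spell out the coefficient-by-coefficient argument, but the content is the same.
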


\begin{proof}
    From \Cref{prop:invariance}, the isomorphism $\rho$ induces an isomorphism between the long exact sequence of \Cref{prop:ca-SES},
    \[
    \begin{tikzcd}
        \cdots \arrow[r] & { \Kh^{i - 1}(D)} \arrow[r] \arrow[d, "\rho"] & \KhI^i(D) \arrow[r] \arrow[d, "\rho", dashed] & \Kh^i(D) \arrow[r] \arrow[d, "\rho"] & \cdots \\
        \cdots \arrow[r] & { \Kh^{i - 1}(D')} \arrow[r] & \KhI^i(D') \arrow[r] & \Kh^i(D') \arrow[r] & \cdots.
    \end{tikzcd} 
    \]
    Together with \cite[Proposition 2.13]{Sano:2020}, the non-involutive and the involutive Lee classes correspond as 
    \[
    \begin{tikzcd}[arrows=mapsto]
        {\ca(D)} \arrow[r, "Q_*"] \arrow[d, "\rho"] & \uca(D) \arrow[d, "\rho", dashed] & \dca(D) \arrow[r, "q_*"] \arrow[d, "\rho", dashed] & \ca(D) \arrow[d, "\rho"] \\
        {h^j\ca(D')} \arrow[r] & h^j\uca(D') & h^j\dca(D') \arrow[r] & h^j\ca(D').
    \end{tikzcd}
    \]
    The proof for the $\cb$-classes is similar. 
\end{proof}

\subsection{Divisibility of equivariant Lee classes}

Hereafter we assume that $R$ is a PID and $h$ is prime (hence non-zero and non-invertible), typically $(R, h) = (\FF_2[H], H)$. 

\begin{definition}
    Let $M$ be a finitely generated free $R$-module. The \textit{$h$-divisibility} of an element $z$ in $M$ is defined by
    \[
        d_h(z) = \max \{\ k \geq 0 \mid z \in h^k M \ \}.
    \]
\end{definition}

Divisibilities can be compared by homomorphisms. Suppose $M, N$ are finitely generated free $R$-modules, $f\colon M \rightarrow N$ is a homomorphism, and $z \in M$, $w \in N$ are elements such that $f(z) = h^j w$ for some $j \in \ZZ$. Then we have
\[
    d_h(z) \leq j + d_h(w).
\]
In particular when $f$ is an isomorphism, the equality holds. Divisibilities can also be compared after inverting $h$. Namely, if $z, w$ are elements in $M$ such that $z \otimes 1 = h^j (w \otimes 1)$ in $h^{-1}M = M \otimes_R (h^{-1}R)$, then we have
\[
    d_h(z) = j + d_h(w).
\]
See \cite[Lemmas 4.2 -- 4.7]{Sano-Sato:2023} for details. 

In \cite{Sano:2020}, for a link diagram $D$, we defined the \textit{$h$-divisibility} of the Lee class $\ca(D) \in \Kh(D) / \Tor{}$ as
\[
    d_h(D) = \max \{\ 
        k \geq 0 \ \mid \ 
        \ca(D) \in (h^k) (\Kh(D) / \Tor{}) 
    \ \}.
\]
The following is an involutive analogue of this definition. 

\begin{definition}
\label{def:equiv-div}
    For a strongly invertible link diagram $D$, define non-negative integers $\dd_h(D),\ \ud_h(D)$ by the \textit{$h$-divisibility} (modulo torsion) of the equivariant Lee classes $\dca(D), \uca(D) \in \KhI(D) / \Tor{}$ respectively, i.e. 
    \begin{align*}
        \dd_h(D) &= \max \{\ 
            k \geq 0 \ \mid \ 
            \dca(D) \in (h^k) (\KhI(D) / \Tor{}) 
        \ \}, \\ 
        \ud_h(D) &= \max \{\ 
            k \geq 0 \ \mid \ 
            \uca(D) \in (h^k) (\KhI(D) / \Tor{}) 
        \ \}.
    \end{align*}
\end{definition}

Since the automorphism $\sigma$ exchanges $\dca(D, o)$ with $\dcb(D, o)$, and $\uca(D, o)$ with $\ucb(D, o)$, $\dd_h$ and $\ud_h$ may be equivalently defined using the $\cb$-classes. 

\begin{example}
\label{ex:trefoil-divisibility}
    \begin{figure}
        \centering
        \tikzset{every picture/.style={line width=0.75pt}} 

\begin{tikzpicture}[x=0.75pt,y=0.75pt,yscale=-1,xscale=1]

\draw  [color={rgb, 255:red, 208; green, 2; blue, 27 }  ,draw opacity=1 ][line width=1.5]  (52.5,15) .. controls (65.5,14) and (70.5,22) .. (72.5,39) .. controls (74.5,56) and (82.27,46.04) .. (92.5,57) .. controls (102.73,67.96) and (101.5,76) .. (99.5,85) .. controls (97.5,94) and (74.2,96.8) .. (68.33,93.67) .. controls (62.47,90.53) and (62.93,87.67) .. (62.6,84) .. controls (62.27,80.33) and (65.53,77.87) .. (67.67,74.33) .. controls (69.8,70.8) and (73.09,61.71) .. (73,56.33) .. controls (72.91,50.96) and (66.33,47) .. (55,47.33) .. controls (43.67,47.67) and (32.82,48.18) .. (34.33,57.33) .. controls (35.85,66.49) and (42.2,76) .. (44.33,78) .. controls (46.47,80) and (48.6,81) .. (48.6,84) .. controls (48.6,87) and (48.4,90.8) .. (43.5,94) .. controls (38.6,97.2) and (19.5,93) .. (13.5,86) .. controls (7.5,79) and (8.5,64) .. (15.5,59) .. controls (22.5,54) and (33.5,54) .. (35.5,39) .. controls (37.5,24) and (39.5,16) .. (52.5,15) -- cycle ;
\draw  [color={rgb, 255:red, 208; green, 2; blue, 27 }  ,draw opacity=1 ][line width=1.5]  (214.5,15) .. controls (227.5,14) and (232.5,22) .. (234.5,39) .. controls (236.5,56) and (244.27,46.04) .. (254.5,57) .. controls (264.73,67.96) and (263.5,76) .. (261.5,85) .. controls (259.5,94) and (247.5,102) .. (230.5,93) .. controls (213.5,84) and (220.5,85) .. (205.5,94) .. controls (190.5,103) and (181.5,93) .. (175.5,86) .. controls (169.5,79) and (170.5,64) .. (177.5,59) .. controls (184.5,54) and (195.5,54) .. (197.5,39) .. controls (199.5,24) and (201.5,16) .. (214.5,15) -- cycle ;
\draw  [color={rgb, 255:red, 208; green, 2; blue, 27 }  ,draw opacity=1 ][line width=1.5]  (214.5,46) .. controls (226.5,46) and (240,43) .. (233.5,65) .. controls (227,87) and (210,89) .. (200,69) .. controls (190,49) and (202.5,46) .. (214.5,46) -- cycle ;
\draw  [draw opacity=0][fill={rgb, 255:red, 128; green, 128; blue, 128 }  ,fill opacity=1 ] (54.42,84.54) .. controls (54.42,83.64) and (55.14,82.92) .. (56.04,82.92) .. controls (56.94,82.92) and (57.67,83.64) .. (57.67,84.54) .. controls (57.67,85.44) and (56.94,86.17) .. (56.04,86.17) .. controls (55.14,86.17) and (54.42,85.44) .. (54.42,84.54) -- cycle ;
\draw    (115.33,70) -- (159.67,70) ;
\draw [shift={(161.67,70)}, rotate = 180] [color={rgb, 255:red, 0; green, 0; blue, 0 }  ][line width=0.75]    (10.93,-3.29) .. controls (6.95,-1.4) and (3.31,-0.3) .. (0,0) .. controls (3.31,0.3) and (6.95,1.4) .. (10.93,3.29)   ;

\draw (75,7.4) node [anchor=north west][inner sep=0.75pt]    {$\textcolor[rgb]{0.82,0.01,0.11}{X}$};
\draw (237,7.4) node [anchor=north west][inner sep=0.75pt]    {$\textcolor[rgb]{0.82,0.01,0.11}{X}$};
\draw (233.33,68.4) node [anchor=north west][inner sep=0.75pt]    {$\textcolor[rgb]{0.82,0.01,0.11}{X}$};
\draw (130.33,49.4) node [anchor=north west][inner sep=0.75pt]    {$\Delta $};

\end{tikzpicture}
        \caption{Elements $x$ and $dx$.}
        \label{fig:trefoil-divisibility}
    \end{figure}
    Consider the left-handed trefoil diagram of \Cref{fig:equiv-lee}. In \Cref{ex:trefoil-lee-cycles} we had
    \[
        \dca(D) = X \otimes Y \in \CKhI^0(D).
    \]
    Now, consider the element $x$ of homological grading $-1$ depicted in the left side of \Cref{fig:trefoil-divisibility}. Note that $x$ is $\tau$-invariant and hence $(d + 1 + \tau)x = dx = X \otimes X$. Now 
    \begin{align*}
        \dca(D) 
        &\sim X
        \otimes Y + X \otimes X \\
        &= h(X \otimes 1)
    \end{align*}
    and we have $\dd_h(D) \geq 1$. In fact in this case the equality holds. Similarly we have $\ud_h(D) = 1$. 
\end{example}

\begin{proposition}
\label{prop:div-ineq}
    \[
        \dd_h(D) \leq d_h(D) \leq \ud_h(D).
    \]
\end{proposition}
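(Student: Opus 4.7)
The plan is to deduce both inequalities from the short exact sequence of \Cref{prop:ca-SES} by observing that any $R$-module homomorphism between finitely generated modules can only increase (or preserve) $h$-divisibility modulo torsion.

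First, I would pass to homology to obtain the induced maps
\[
    q_*\colon \KhI_\tau(D) \to \Kh(D), \qquad
    Q_*\colon \Kh(D)[1] \to \KhI_\tau(D),
\]
and recall from \Cref{prop:ca-SES} that on equivariant Lee classes these satisfy $q_*(\dca(D,o)) = \ca(D,o)$ and $Q_*(\ca(D,o)) = \uca(D,o)$. Any $R$-homomorphism sends torsion to torsion (if $h^N z = 0$ then $h^N f(z) = 0$), so $q_*$ and $Q_*$ descend to homomorphisms
\[
    \bar{q}\colon \KhI_\tau(D)/\Tor \to \Kh(D)/\Tor, \qquad
    \bar{Q}\colon \Kh(D)/\Tor \to \KhI_\tau(D)/\Tor
\]
between finitely generated free $R$-modules (this uses that $R = \FF_2[H]$ is a PID), and these still send the Lee classes to the Lee classes in the same way.

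Next, I would invoke the basic fact recorded just before the definition of $d_h(D,o)$: for a homomorphism $f\colon M \to N$ of free $R$-modules and elements $z \in M$, $w \in N$ with $f(z) = w$, one has $d_h(w) \geq d_h(z)$, since $z \in h^k M$ immediately forces $w = f(z) \in h^k N$. Applying this to $\bar{q}$ with $z = \dca(D,o)$ and $w = \ca(D,o)$ yields
\[
    d_h(D,o) \;=\; d_h(\ca(D,o)) \;\geq\; d_h(\dca(D,o)) \;=\; \dd_h(D,o),
\]
and applying it to $\bar{Q}$ with $z = \ca(D,o)$ and $w = \uca(D,o)$ yields
\[
    \ud_h(D,o) \;=\; d_h(\uca(D,o)) \;\geq\; d_h(\ca(D,o)) \;=\; d_h(D,o).
\]

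Combining these two inequalities gives the claim. There is no real obstacle here; the only small point to get right is that passage to torsion-free quotients is legitimate and preserves the identifications of Lee classes provided by \Cref{prop:ca-SES}, which is immediate once one notes that $R$ is a PID and that any homomorphism preserves torsion.
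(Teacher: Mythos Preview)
Your proof is correct and is precisely the argument the paper has in mind; the paper simply writes ``Immediate from \Cref{prop:ca-SES}'' without spelling out the passage to torsion-free quotients or the divisibility-comparison lemma, all of which you have unpacked correctly. (One tiny remark: the argument works for any PID $R$ with $h$ prime, not just $R=\FF_2[H]$, but your reasoning goes through unchanged in that generality.)
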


\begin{proof}
    Immediate from \Cref{prop:ca-SES}. 
\end{proof} 

The following two propositions can be proved analogously to \cite[Proposition 3.9]{Sano:2020}. 

\begin{proposition}
    $\dd_h(\bigcirc) = \ud_h(\bigcirc) = 0$.
\end{proposition}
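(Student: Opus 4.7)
The plan is to compute $\KhI_\tau(\bigcirc)$ directly from the standard symmetric diagram of the unknot, namely a single circle crossing the axis in two points, and then read off the $h$-divisibility of the equivariant Lee classes as an algebraic computation in $A$.

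First, for this diagram there is a unique state (the empty state) and one Seifert circle, so $\CKh(\bigcirc) = A$ is concentrated in homological degree $0$. The involution $\tau$ sends this circle to itself and preserves the labels $1, X$, so it acts as the identity on $A$. Because $\fchar R = 2$, we have $1 + \tau = 0$ as a map on $\CKh(\bigcirc)$, and therefore the differential $Q(1+\tau)$ appearing in \Cref{def:1-1} vanishes. Hence
\[
    \CKhI_\tau(\bigcirc) \;=\; A \oplus QA
\]
with zero differential, and $\KhI_\tau(\bigcirc) \isom A \oplus QA$. Since $A = R[X]/(X(X+h))$ is free as an $R$-module with basis $\{1, X\}$, the homology has no $R$-torsion and the quotient by $\Tor$ is $\KhI_\tau(\bigcirc)$ itself.

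Next, I would identify the equivariant Lee cycles explicitly from \Cref{def:lee-cycle}: for the symmetric unknot diagram, the single Seifert circle is colored $a$, so $\dca(\bigcirc) = X \in A$, and correspondingly $\uca(\bigcirc) = QX \in QA$. To finish, I would verify that $X$ is not divisible by $h$ in $A$. Writing any potential divisor as $a + bX$ with $a, b \in R$ and using $X^2 = hX$, the equation $X = h(a + bX)$ forces $hb = 1$ after matching coefficients in the $R$-basis $\{1, X\}$, which is impossible since $h$ is a prime, hence a non-unit, in the PID $R$. This gives $\dd_h(\bigcirc) = 0$, and the identical argument applied in the $QA$-summand yields $\ud_h(\bigcirc) = 0$.

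There is no serious obstacle here; the only thing to be a little careful about is that $\tau$ really does act as the identity on the one-state complex (so the cone piece collapses in characteristic two), and that the identification $\dca = X$, $\uca = QX$ matches the $ab$-coloring convention of \Cref{algo:ab-color}. After that the statement is a one-line computation in the rank-two free $R$-module $A$.
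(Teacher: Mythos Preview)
Your proof is correct and is essentially the direct computation the paper has in mind: the paper states this proposition without proof, and in the later Example (following \Cref{prop:canon-classes}) it records exactly the identifications $\dca(\bigcirc)=X$, $\uca(\bigcirc)=QX$ and $\KhI_\tau(\bigcirc)=A\oplus QA$ that you verify here. There is nothing to add.
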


\begin{proposition}
\label{prop:d-positive}
    If $D$ is a positive diagram, then $\dd_h(D) = \ud_h(D) = 0$. 
\end{proposition}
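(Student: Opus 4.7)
The plan is to split the claim into two halves: $\dd_h(D) = 0$ and $\ud_h(D) = 0$, handling each separately.

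\textbf{For $\dd_h(D) = 0$:} This follows immediately from \Cref{prop:div-ineq}, which gives $\dd_h(D) \leq d_h(D)$, together with the non-involutive result of \cite{Sano-Sato:2023} that $d_h(D) = 0$ for any positive diagram (where the canonical Lee cycle sits at the bottom of the $H$-filtration of $\Kh(D)/\Tor$). Since $h$-divisibilities are nonnegative, this forces $\dd_h(D) = 0$.

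\textbf{For $\ud_h(D) = 0$:} I would argue by contradiction at the chain level. Suppose $\uca(D) = h[\eta]$ in $\KhI_\tau(D)/\Tor$. Using the direct sum description $\CKhI_\tau(D) = \CKh(D) \oplus Q\CKh(D)$ with differential $\bigl(\begin{smallmatrix} d & \\ 1+\tau & d \end{smallmatrix}\bigr)$, lift this relation to a chain-level equation $h^N \uca - h^{N+1}\eta = d\xi$ for some cycle $\eta = (\eta_1, Q\eta_2)$ and chain $\xi = (\xi_1, Q\xi_2)$. Equating components gives the two equations $d\xi_1 = -h^{N+1}\eta_1$ and $h^N\ca = h^{N+1}\eta_2 + (1+\tau)\xi_1 + d\xi_2$. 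The first forces $[\eta_1]$ to be $h$-torsion in $\Kh(D)$. In the cleanest case $[\eta_1] = 0$, one can modify $\eta$ by a coboundary to eliminate its non-$Q$ component, after which the second equation, read modulo $(1+\tau)\CKh(D) + d\CKh(D)$, becomes $[\ca] = h[\eta_2]$ in $\Kh(D)/((1+\tau)\Kh(D))$. Since $\tau$ fixes the Lee generators $\ca, \cb$ of $\Kh(D)/\Tor$, one has $(1+\tau)\Kh(D) \subseteq \Tor(\Kh(D))$, so this descends to $[\ca] = h[\eta_2]$ in $\Kh(D)/\Tor$, contradicting $d_h(D) = 0$.

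The principal obstacle is handling the case when $[\eta_1]$ is nonzero $h$-torsion: one must multiply through by $h^M$ (the torsion order of $[\eta_1]$) and replace $h^M\eta$ cohomologously by a cycle of the pure form $(0, Q\eta_2^*)$, after which the analogous derivation gives only $h^{M-1}[\ca] = [\eta_2^*]$ in $\Kh(D)/\Tor$, which is not a contradiction for $M \geq 1$. To close this gap I would appeal to the perfect pairing of \Cref{prop:perf-pairing} with the mirror $D^*$: the non-involutive pairing $\langle \ca(D), \cb(D^*)\rangle = h^{r(D)-1}$ of \cite{Sano-Sato:2023} lifts to $\langle \uca(D), \dcb(D^*)\rangle = h^{r(D)-1}$, and combining this with the vanishings of the other Lee-class pairings yields the identity $\ud_h(D) + \dd_h(D^*) = r(D)-1$. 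Applying the first half of the present proposition to $D^*$ after appropriate mirroring (using $\sigma$-symmetry and \Cref{prop:s-mirror}-style duality) gives $\dd_h(D^*) = r(D)-1$, so that $\ud_h(D) = 0$ as required.
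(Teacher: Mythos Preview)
Your first half ($\dd_h(D) = 0$) is fine: \Cref{prop:div-ineq} plus the non-involutive result suffices.

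The second half ($\ud_h(D) = 0$) has a genuine gap. You correctly identify that the chain-level argument breaks down when $[\eta_1]$ is nonzero $h$-torsion, but your proposed fix via the mirror pairing is circular. From the identity $\ud_h(D) + \dd_h(D^*) = r(D) - 1$ you would need $\dd_h(D^*) = r(D) - 1$ to conclude $\ud_h(D) = 0$. But $D^*$ is a \emph{negative} diagram, so the first half of the present proposition does not apply to it, and no amount of $\sigma$-symmetry changes this. The only route to $\dd_h(D^*) = r(D) - 1$ through the mirror formula is via $\ud_h(D) = 0$, which is precisely what you are trying to prove. (Note also that \Cref{prop:d-mirror} is established only for \emph{knot} diagrams via the reduced theory, whereas the current proposition is stated for strongly invertible link diagrams in general.)

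The paper's argument avoids all of this and treats $\dd_h$ and $\ud_h$ uniformly. Since $D$ is positive, every crossing has its $0$-resolution equal to its Seifert smoothing, so symmetrically $0$-resolving crossings produces a sequence of $\tau$-equivariant \emph{quotient} maps
\[
\CKhI_\tau(D) \to \CKhI_\tau(D_1) \to \cdots \to \CKhI_\tau(D_N)
\]
ending at a crossingless involutive diagram $D_N$. Each equivariant Lee cycle maps to the corresponding one under these quotients, so $h$-divisibility can only increase along the sequence; since $\dd_h(D_N) = \ud_h(D_N) = 0$ is immediate for a disjoint union of symmetric circles, the chain of inequalities collapses. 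No torsion bookkeeping or duality is needed.
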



In \cite{Sano:2020,Sano-Sato:2023}, for a (non-involutive) link $L$ with diagram $D$, the invariant $s_h(L)$ is defined as 
\[
    s_h(L) = 2d_h(D) + w(D) - r(D) + 1.
\]
Analogously, \Cref{prop:ca-reid,prop:ca-SES} justify the following definition. 

\begin{definition}
\label{def:equiv-ras}
    For a strongly invertible link $L$ with diagram $D$, define 
    \begin{align*}
        \ds_h(L) &= 2\dd_h(D) + w(D) - r(D) + 1, \\ 
        \us_h(L) &= 2\ud_h(D) + w(D) - r(D) + 1.
    \end{align*}
    The pair $(\ds_h(L), \us_h(L))$ is called the \textit{equivariant Rasmussen} invariant of $L$.
\end{definition}

\begin{proposition}
    \[
        \ds_h(L) \leq s_h(L) \leq \us_h(L).
    \]
\end{proposition}

\begin{proof}
    Immediate from \Cref{prop:div-ineq}.
\end{proof}

\begin{corollary}
    For a strongly invertible knot $K$, we have
    \[
        \ds_h(K) \leq s(K) \leq \us_h(K)
    \]
    where $s(K)$ is the $\FF_2$-Rasmussen invariant of $K$. 
\end{corollary}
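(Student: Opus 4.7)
The plan is to deduce the corollary as a direct specialization of the preceding proposition combined with the reformulation of the Rasmussen invariant via $H$-divisibility established in \cite{Sano:2020,Sano-Sato:2023}.

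First, I would apply the preceding proposition to the strongly invertible link $L = K$, which is a knot, to obtain the chain of inequalities
\[
    \ds_h(K) \leq s_h(K) \leq \us_h(K),
\]
where $s_h(K) = 2 d_h(D) + w(D) - r(D) + 1$ for any diagram $D$ of $K$.

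Next, I would specialize to the ground ring $(R, h) = (\FF_2[H], H)$, which is one of the three standard cases highlighted at the start of \Cref{sec:2} and corresponds to the bigraded Bar-Natan deformation over $\FF_2$. For this choice, the quantity $s_h$ is precisely the reformulation of the Rasmussen invariant given in \cite{Sano:2020,Sano-Sato:2023}: namely, for a knot $K$, the $H$-divisibility of the Lee class $\ca(D) \in \Kh(D) / \Tor{}$ recovers $s(K)$ through the formula above. Invoking that identification yields $s_h(K) = s(K)$, and substituting into the previous chain of inequalities gives the desired bound.

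The argument is essentially a citation followed by a substitution, so there is no genuine obstacle; the only point worth being careful about is that the reformulation of $s$ via divisibility from \cite{Sano-Sato:2023} is stated for knots (not links), which is exactly the hypothesis here, and that one must pass to the quotient by torsion in $\Kh(D)$ before measuring $H$-divisibility, which is already built into the definitions of $\dd_h, \ud_h$ and $d_h$ used above.
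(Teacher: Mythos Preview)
Your approach is essentially the same as the paper's: apply the preceding proposition to get $\ds_h(K)\le s_h(K)\le \us_h(K)$, then identify $s_h(K)$ with the $\FF_2$-Rasmussen invariant via \cite{Sano-Sato:2023}. One small point: your explicit specialization to $(R,h)=(\FF_2[H],H)$ is unnecessary and, taken literally, proves less than stated. The corollary is asserted under the section's standing hypotheses (a PID $R$ with $\fchar R=2$ and prime $h$), and the paper's proof simply cites \cite[Theorem~2, Proposition~4.36]{Sano-Sato:2023} to conclude $s_h(K)=s(K)$ whenever $\fchar(R)=2$; no restriction to the particular pair $(\FF_2[H],H)$ is needed. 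Drop that specialization and your argument matches the paper's exactly.
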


\begin{proof}
    It is proved in \cite[Theorem 2, Proposition 4.36]{Sano-Sato:2023} that $s_h(K)$ coincides with $s(K)$ when $\fchar(R) = 2$. 
\end{proof}

\begin{proposition}
    $\ds_h(\bigcirc) = \us_h(\bigcirc) = 0$.
\end{proposition}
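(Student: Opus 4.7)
The plan is to take the standard round-circle diagram $D$ of the unknot, equipped with the obvious strong inversion (reflection through a diameter), and simply plug into the defining formula
\[
    \ds_h(L) = 2\dd_h(D) + w(D) - r(D) + 1,\qquad
    \us_h(L) = 2\ud_h(D) + w(D) - r(D) + 1.
\]
For this diagram, $w(D) = 0$ since there are no crossings, and $r(D) = 1$ since the only Seifert circle is the diagram itself. Thus the whole computation reduces to checking that $\dd_h(D) = \ud_h(D) = 0$.

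The vanishing of the divisibilities follows immediately from the previous proposition (the positivity statement), since $D$ has no crossings and is therefore vacuously positive. Alternatively, one may verify it by direct inspection: the single Seifert circle is setwise fixed by $\tau$, so the induced involution on $\CKh(D) = A$ is the identity, hence $1 + \tau = 0$. Therefore $\CKhI_\tau(D) \isom A \oplus QA$ with zero differential, so
\[
    \KhI_\tau(D) \isom A \oplus QA
\]
is free over $R$ with $\dca(D) = X$ and $\uca(D) = QX$ as generators of the two free summands (up to the reorganization by Lee classes). Since $X$ is a generator of a free $R$-summand of $A$, it is not divisible by the prime $h$, giving $\dd_h(D) = 0$, and likewise $\ud_h(D) = 0$ for $QX$.

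Substituting these values yields
\[
    \ds_h(\bigcirc) = 2\cdot 0 + 0 - 1 + 1 = 0,\qquad
    \us_h(\bigcirc) = 2\cdot 0 + 0 - 1 + 1 = 0,
\]
as claimed. There is no real obstacle here; the only thing worth being careful about is confirming that the standard round-circle diagram is indeed a valid strongly invertible diagram on which the previously established formulas apply, which is immediate.
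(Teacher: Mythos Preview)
Your proof is correct and is essentially the argument the paper leaves implicit: the proposition is stated without proof there, as it follows immediately from plugging $w(D)=0$, $r(D)=1$, and $\dd_h(D)=\ud_h(D)=0$ (the latter being the unproved proposition just before \Cref{prop:d-positive}) into the defining formula. Your added direct computation of $\CKhI_\tau(\bigcirc)\cong A\oplus QA$ is a fine alternative to invoking positivity and matches the explicit example the paper gives later after \Cref{prop:canon-classes}.
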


\begin{proposition}
\label{prop:s-positive}
    If $K$ is positive with positive diagram $D$, then 
    \[
        \ds_h(K) = \us_h(K) = n(D) - r(D) + 1
    \]
    where $n(D)$ is the number of crossings of $D$. 
\end{proposition}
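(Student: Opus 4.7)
The plan is to derive \Cref{prop:s-positive} as an immediate corollary of \Cref{prop:d-positive} combined with the defining formulas of $\ds_h$ and $\us_h$. Concretely, let $D$ be a positive strongly invertible diagram of $K$. Since every crossing of $D$ is positive, the writhe satisfies $w(D) = n(D)$. By \Cref{prop:d-positive} applied to $D$, we have
\[
    \dd_h(D) = \ud_h(D) = 0.
\]
Substituting into
\[
    \ds_h(K) = 2\dd_h(D) + w(D) - r(D) + 1, \qquad \us_h(K) = 2\ud_h(D) + w(D) - r(D) + 1
\]
immediately yields $\ds_h(K) = \us_h(K) = n(D) - r(D) + 1$.

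The only substantive point is that \Cref{prop:d-positive} requires $D$ to be an involutive diagram, so that the intermediate diagrams $D_i$ obtained by symmetrically resolving positive crossings remain involutive and the quotient maps on $\CKhI$ are defined; this is given as part of the hypothesis (a positive strongly invertible knot with a positive diagram compatible with the involution). Nothing else is needed, and there is no real obstacle: the argument is just a substitution once \Cref{prop:d-positive} is in hand.
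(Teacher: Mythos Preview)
Your proof is correct and is exactly the intended argument: the paper states \Cref{prop:s-positive} without proof, as it follows immediately from \Cref{prop:d-positive} together with $w(D)=n(D)$ and the defining formulas for $\ds_h,\us_h$. Your remark that $D$ must be a strongly invertible (involutive) diagram so that \Cref{prop:d-positive} applies is the only point worth noting, and it is implicit in the ambient setting of the section.
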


\begin{corollary}
\label{cor:Tpq}
    For the positive $(p, q)$-torus knot $T_{p, q}$, 
    \[
        \ds_h(T_{p, q}) = \us_h(T_{p, q}) = (p - 1)(q - 1)
    \]
    with respect to the unique inverting involution $\tau$ of $T_{p, q}$.
\end{corollary}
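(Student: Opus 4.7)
The strategy is to reduce the corollary to \Cref{prop:s-positive} by exhibiting an explicit positive strongly invertible diagram of $T_{p,q}$.

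First I would construct such a diagram $D$ by taking the closure of the standard positive braid $\beta_{p,q} = (\sigma_1 \sigma_2 \cdots \sigma_{p-1})^q$. Viewing $T_{p,q}$ as lying on the standard Clifford torus in $S^3 \subset \CC^2$, the complex-conjugation involution $\tau(z_1, z_2) = (\bar z_1, \bar z_2)$ preserves $T_{p,q}$ and reverses its orientation, giving its strong inversion (which is unique up to equivalence by Sakuma \cite{Sakuma:86}). This symmetry corresponds, on the braid closure picture, to a $\pi$-rotation about a horizontal axis passing through the midpoint of the braid, and after a planar isotopy one can arrange $D$ to be symmetric about a vertical axis as required by our conventions. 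The key point is that such an equivariant positive diagram exists; the argument is insensitive to the particular equivariant positive presentation chosen.

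Next I would count the combinatorial data of $D$. The braid $\beta_{p,q}$ has $n(D) = q(p-1)$ crossings, all positive. Its Seifert resolution produces $r(D) = p$ parallel (nested) Seifert circles, one for each strand of the braid. Plugging into \Cref{prop:s-positive}, since $D$ is positive (and equivariant),
\[
    \ds_h(T_{p,q}) = \us_h(T_{p,q}) = n(D) - r(D) + 1 = q(p-1) - p + 1 = (p-1)(q-1).
\]
Since the inverting involution is unique, this value is independent of all choices made.

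The main obstacle is the first step: confirming that the braid closure can be made into an involutive link diagram in the sense of the paper (symmetric about a vertical axis), so that \Cref{prop:s-positive} applies to the strongly invertible knot $(T_{p,q}, \tau)$. Once the equivariant positive diagram is in hand, the rest is an immediate count, and everything else is packaged into \Cref{prop:s-positive}. No further input about the equivariant Lee classes or $h$-divisibility is needed beyond what is already proved.
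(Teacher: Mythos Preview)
Your proposal is correct and follows the same route as the paper, which states the corollary without proof as an immediate consequence of \Cref{prop:s-positive}. The only point worth sharpening is your count: rather than asserting that the standard braid closure is planar-isotopic to a symmetric diagram with exactly $q(p-1)$ crossings and $p$ Seifert circles, it is cleaner to note that once \emph{any} positive involutive diagram $D$ exists, \Cref{prop:s-positive} gives $\ds_h = \us_h = n(D) - r(D) + 1$, and since $\ds_h \leq s \leq \us_h$ this common value must equal $s(T_{p,q}) = (p-1)(q-1)$, independent of the particular symmetric diagram chosen.
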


\begin{example}
\label{ex:trefoil-s}
    For the left-handed trefoil $3_1$, we have $\ds_h(K) = \us_h(K) = -2$ from the computations in \Cref{ex:trefoil-divisibility}. For the right-handed trefoil $m(3_1)$, we have $\ds_h(m(3_1)) = \us_h(m(3_1)) = 2$ from \Cref{cor:Tpq}.
\end{example}

\subsection{Reduced variants}

Throughout this subsection, we assume that $D$ is a strongly invertible link diagram which is pointed and normal.
Let $O_+(D)$ denote the subset of $O(D)$ consisting of orientations $o$ whose orientation on the based component coincides with that of $D$. When  $o \in O_+(D)$, we have $-\tau(o) \in O_+(D)$, so we may define the subset
\[
    O^\tau_+(D) := \{\ o \in O_+(D) \mid o = -\tau(o) \ \}
\]
and the quotient set 
\[
    \overline{O}{}^\tau_+(D) := (O_+(D) \setminus O^\tau_+(D)) / (o \sim -\tau(o)).
\]

First, let us review the non-involutive setting, which is extensively studied in \cite{Sano-Sato:2023}. For each $o \in O_+(D)$, the cycle $\ca(D, o)$ lies in the reduced complex $\rCKh(D) \subset \CKh(D)$, which will be denoted $\ca_r(D, o)$ for the sake of distinction. The counterpart $\cb(D, o) = \ca(D, -o)$ lies in $\sigma\rCKh(D) \subset \CKh(D)$. Under the isomorphism $\sigma\rCKh(D) \isom \rCKh'(D)$, let $\cb_r(D, o)$ denote the corresponding cycle in $\rCKh'(D)$, which is obtained by replacing the label of $\cb(D, o)$ on the pointed circle from $Y$ to $1$. From \cite[Corollary 3.15]{Sano-Sato:2023}, when $h$ is invertible, $\rKh(D)$ is freely generated by the reduced Lee classes $\{\ca_r(D, o)\}$, and $\rKh'(D)$ by the coreduced Lee classes $\{\cb_r(D, o)\}$.

Now we return to the involutive setting. The \textit{(co)reduced equivariant Lee cycles} in $\rCKhI(D)$ and in $\rCKhI'(D)$ are defined as in \Cref{prop:equiv-lee-cycles}, using orientations in $O^\tau_+(D)$ and $\overline{O}{}^\tau_+(D)$. The following propositions are easy to verify. 

\begin{proposition}
    Under the maps in the short exact sequence of \Cref{prop:CKhI-ses}, for each $o \in O^\tau_+(D)$, the equivariant Lee cycles in the unreduced, reduced, and coreduced complexes correspond as
    \begin{align*}
        \dca(D, o) &\xmapsto{i} \dca_r(D, o), &
        \dcb(D, o)  &\xmapsto{p} h\dcb_r(D, o), \\
        \uca(D, o) &\xmapsto{i} \uca_r(D, o), &
        \ucb(D, o)  &\xmapsto{p} h\ucb_r(D, o).
    \end{align*}
    Similar equations hold for the cycles corresponding to $[o'] \in \overline{O}{}^\tau_+(D)$. 
\end{proposition}

\begin{proposition}
\label{prop:rkhi-structure}
    If $h \in R$ is invertible, the reduced homology $\rKhI(D)$ is generated by the reduced equivariant Lee classes, and the coreduced homology $\rKhI'(D)$ is generated by the coreduced equivariant Lee classes. More strongly, one may choose bases for these homologies as in \Cref{prop:khi-structure}. 
\end{proposition}

\begin{proposition}
\label{prop:ca-red-reideiemster}
    Suppose $h \in R$ is invertible. Let $D, D'$ be two strongly invertible link diagrams (pointed and normal) related by an involutive Reidemeister move. Under the isomorphism given in \Cref{prop:invariance-reduced}
    \[
        \rho\colon \rKhI(D) \rightarrow \rKhI(D'),
    \]
    the reduced equivariant Lee classes correspond as 
    \[
        \dca_r(D) \xmapsto{\ \rho\ } h^j \dca_r(D'),\quad
        \uca_r(D) \xmapsto{\ \rho\ } h^j \uca_r(D')
    \]
    where 
    \[ 
        j = \frac{\delta w(D, D') - \delta r(D, D')}{2}.
    \]
    Similar statements hold for the coreduced counterparts. 
\end{proposition}

Next, we state the correspondence of the unreduced and the (co)reduced equivariant Lee cycles under the splitting of \Cref{prop:CKhI-split}. First, we consider the non-involutive setting. 

\begin{proposition}
\label{prop:ca-spl}
    Under the splitting of \cite{Wigderson:2016}
    \[
        \CKh(D) \isom \rCKh'(D) \oplus  \rCKh(D),
    \]
    for each $o \in O_+(D)$, the unreduced and the (co)reduced Lee cycles correspond as 
    \[
        \ca(D, o) \mapsto (0,\ \ca_r(D, o)), \quad
        \cb(D, o) \mapsto (h\cb_r(D, o),\ \ca_r(D, o)).
    \]
\end{proposition}

\begin{proof}
    Here we only give a sketch. The isomorphism
    \[
        \CKh(D) = \Cone(f) \xlongrightarrow{\isom} \rCKh'(D) \oplus  \rCKh(D)
    \]
    is given by 
    \[
        \begin{pmatrix}
        1 & \\
        \kappa & 1
        \end{pmatrix}
    \]
    using the null-homotopy $\kappa$ described in \Cref{subsec:reduced}. From this description, it is obvious that $\ca$ maps to $(0, \ca)^T$. To see that $\cb$ maps to $(h\cb_r, \ca)^T$, put
    \[
        \cb = \underline{Y} \otimes x.
    \]
    Here the underline indicates the label corresponding to the pointed circle. With the vector notation $\cb$ is represented as $(h\underline{1} \otimes x,\ \underline{X} \otimes x)^T$. Note that $\underline{1} \otimes x$ is exactly $\cb_r$, so it remains to prove that 
    \[
        h \kappa(\underline{1} \otimes x) + \underline{X} \otimes x = \ca_r.
    \]
    If we define $\kappa_{-1}\colon \rCKh'(D) \rightarrow \rCKh(D)$ by 
    \[
        \kappa_{-1}(\underline{1} \otimes \cdots) = \underline{X} \otimes \cdots
    \]
    and
    \[
        \bar{\kappa} = \sum_{i \geq -1} h^{i + 1}\kappa_i\colon \rCKh'(D) \rightarrow \rCKh(D),
    \]
    then the aimed equation can be written as
    \[
        \bar{\kappa}(\cb_r) = \ca_r.
    \]
    This is a purely algebraic problem and can be proved by the induction on the number of the Seifert circles.
    %
    %
\end{proof}

\begin{proposition}
\label{prop:ca-spl-i}
    Under the splitting given in \Cref{prop:CKhI-split}
    \[
        \CKhI(D) \isom \rCKhI'(D) \oplus  \rCKhI(D),
    \]
    for each $o \in O^\tau_+(D)$, the equivariant Lee cycles correspond as 
    \begin{align*}
        \dca(D, o) &\mapsto (0,\ \dca_r(D, o)), &
        \dcb(D, o) &\mapsto (h\dcb_r(D, o),\ \dca_r(D, o)), \\ 
        \uca(D, o) &\mapsto (0,\ \uca_r(D, o)), &
        \ucb(D, o) &\mapsto (h\ucb_r(D, o),\ \uca_r(D, o)).
    \end{align*}
    Similar correspondences hold for the cycles corresponding to $[o'] \in \overline{O}{}^\tau_+(D)$. 
\end{proposition}

\begin{proof}
    Immediate from \Cref{prop:ca-spl}.
\end{proof}

The following lemmas will be used in the coming sections. Here, the $h$-divisibilities are considered modulo torsions. 

\begin{lemma}
\label{lem:div-ca-red}
    \begin{align*}
        d_h(\dca_r(D)) = d_h(\dcb_r(D)) &= \dd_h(D), \\
        d_h(\uca_r(D)) = d_h(\ucb_r(D)) &= \ud_h(D).
    \end{align*}
\end{lemma}

\begin{proof}
    Immediate from \Cref{prop:ca-spl-i,prop:rkhi-structure}. 
\end{proof}

\begin{lemma} 
\label{lem:div-cz}
    \begin{align*}
        d_h(\dca(D) + \dcb(D)) = \dd_h(D) + 1, \\ 
        d_h(\uca(D) + \ucb(D)) = \ud_h(D) + 1.
    \end{align*}
\end{lemma}

\begin{proof}
    Again immediate from \Cref{prop:ca-spl-i,prop:rkhi-structure}. 
\end{proof}

\Cref{lem:div-ca-red} implies that the equivariant Rasmussen invariant $(\ds_h, \us_h)$ can be equivalently defined using the $h$-divisibility of the (co)reduced equivariant Lee classes. 
    \section{Properties of the equivariant invariants}
\label{sec:4}

In this section properties of $(\ds_h, \us_h)$ stated in \Cref{sec:intro} will be proved in more generality. Throughout, we assume that $R$ is a PID and $h$ is prime.

\subsection{Mirror formula}

\begin{proposition}
\label{prop:ca-pairing}
    Consider a strongly invertible link diagram $D$ and its mirror $D^*$. With the perfect pairing of \Cref{prop:perf-pairing}, we have
    \begin{align*}
        \langle \dca(D),\ \uca(D^*) \rangle 
        &= \langle \uca(D),\ \dca(D^*) \rangle \\
        &= h^{r(D)}.
    \end{align*}
    Similarly for the reduced versions, we have 
    \begin{align*}
        \langle \dca_r(D),\ \uca_r(D^*) \rangle_r 
        &= \langle \uca_r(D),\ \dca_r(D^*) \rangle_r \\
        &= h^{r(D) - 1}.
    \end{align*}
\end{proposition}

\begin{proof}
    Obvious from the observation that the Seifert circles of $D$ and $D^*$ are identical, together with $\langle X, X \rangle = \langle Y, Y \rangle = h$.
\end{proof}

\begin{proposition}
\label{prop:d-mirror}
    For a strongly invertible knot diagram $D$ (which is pointed and normal), we have 
    \[
        \dd_h(D) + \ud_h(D^*) 
        = \ud_h(D) + \dd_h(D^*) 
        = r(D) - 1.
    \]
\end{proposition}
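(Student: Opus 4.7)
The plan is to exploit the perfect pairing on the reduced complexes from Proposition \ref{prop:perf-pairing} together with a homological grading argument. Since $D$ is a knot, $O^+(D) = \{o\}$ contains a single orientation, so the only equivariant Lee classes in $\rKhI_\tau(D)$ are $\dca(D), \uca(D)$ (and likewise for $D^*$).

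I would first read off the pairing values. Proposition \ref{prop:ca-pairing} in the reduced setting gives
\[
\langle \dca(D), \uca(D^*) \rangle_r = \langle \uca(D), \dca(D^*) \rangle_r = h^{r(D) - 1},
\]
while the remaining pairings $\langle \dca(D), \dca(D^*) \rangle_r$ and $\langle \uca(D), \uca(D^*) \rangle_r$ vanish by homological degree: the duality isomorphism of Proposition \ref{prop:CKhI-dual} carries a homological shift $[1]$, so the pairing is non-zero only when the two arguments have homological gradings summing to $1$. Using Lemma \ref{lem:div-ca-red} to identify the $h$-divisibilities with those computed in the reduced complex, bilinearity of the pairing immediately yields
\[
\dd_h(D) + \ud_h(D^*) \leq r(D) - 1, \qquad \ud_h(D) + \dd_h(D^*) \leq r(D) - 1.
\]

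To upgrade these inequalities to equalities, I would invoke perfectness. The chain-level perfect pairing of Proposition \ref{prop:perf-pairing}, combined with the universal coefficient theorem over the PID $R$, induces a perfect pairing
\[
\rKhI_\tau(D)/\Tor \otimes_R \rKhI_\tau(D^*)/\Tor \to R
\]
of finitely generated free $R$-modules. By the reduced analogue of Proposition \ref{prop:khi-structure}, its $h$-localization is free of rank $2$ concentrated in homological gradings $0$ and $1$; torsion-freeness forces each graded component in other degrees to vanish, so $\rKhI_\tau(D)/\Tor$ itself splits as a direct sum of two rank-$1$ free $R$-summands in those gradings. Dividing $\dca(D)$ and $\uca(D)$ by the maximal powers of $h$ produces primitive generators of these two summands, giving an $R$-basis of $\rKhI_\tau(D)/\Tor$, and similarly for $D^*$. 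In these bases the pairing matrix takes the off-diagonal form
\[
\begin{pmatrix} 0 & h^a \\ h^b & 0 \end{pmatrix}, \quad a = r(D) - 1 - \dd_h(D) - \ud_h(D^*), \quad b = r(D) - 1 - \ud_h(D) - \dd_h(D^*),
\]
with $a, b \geq 0$ by the above inequalities. Perfectness forces the determinant $h^{a+b}$ to be a unit of $R = \FF_2[H]$, so $a + b = 0$ and hence $a = b = 0$, which is the desired pair of equalities.

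The main obstacle is the passage from the chain-level perfect pairing to a perfect pairing on the torsion-free part of homology, together with the identification of the primitive Lee classes as an honest $R$-basis rather than merely a generating set. The latter requires the graded decomposition argument to rule out free summands in homological gradings other than $0$ and $1$, while the former follows from universal coefficients over the PID $R$; once both are in hand, the conclusion reduces to the elementary observation that a $2 \times 2$ off-diagonal matrix over $\FF_2[H]$ is invertible if and only if its off-diagonal entries are units.
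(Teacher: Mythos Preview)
Your argument is correct and follows essentially the same approach as the paper: both proofs pass to the reduced perfect pairing of Proposition~\ref{prop:perf-pairing}, use the pairing values from Proposition~\ref{prop:ca-pairing}, and exploit that $\rKhI_\tau(D)/\Tor \cong R \oplus R[1]$ to conclude via perfectness. The paper is slightly more direct---it writes $\dca(D) = a h^{d} z$ for a generator $z$ and lets the pairing equation $ab\,h^{d+d'}\langle z,w\rangle_r = h^{r(D)-1}$ force $a,b$ to be units---whereas you package the same information in a $2\times 2$ matrix and invoke invertibility of its determinant; you are also more explicit than the paper about why the torsion-free part has exactly one free summand in each of degrees $0$ and $1$. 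One small point worth making explicit in your write-up: the claim that dividing $\dca(D)$ by its maximal $h$-power yields an actual \emph{generator} (not merely an $h$-indivisible element) uses that $\dca(D)$ becomes a basis element after inverting $h$, so its image in $R$ is a unit times a power of $h$.
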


\begin{proof}
    The formula is proved using the reduced Lee classes. Take a generator $z$ of $\rKhI^0(D) \isom R$ and put
    \[
        \dca_r(D) = a h^{d} z
    \]
    where $d = \dd_h(D)$ and $h \nmid a$. Similarly take a generator $w$ of $\rKhI^1(D^*) \isom R[1]$ and put
    \[
        \uca_r(D^*) = b h^{d'} w
    \]
    where $d = \ud_h(D^*)$ and $h \nmid b$. The perfect pairing of \Cref{prop:perf-pairing} induces a perfect pairing
    \[
        \langle \cdot, \cdot \rangle_r\colon (\rKhI(D) / \Tor{}) \otimes (\rKhI(D^*) / \Tor{} ) \rightarrow R,
    \]
    and from \Cref{prop:ca-pairing} we have 
    \[
        \langle \dca_r(D), \uca_r(D^*) \rangle_r = ab h^{d + d'} \langle z, w \rangle_r = h^{r(D) - 1}. 
    \]
    Now $\langle z, w \rangle_r$ must be a unit of $R$, and since $h$ is assumed to be prime, we must have that $a, b \in R$ are both units and
    \[
        \dd_h(D) + \ud_h(D^*) = r(D) - 1.
    \]
    The other equation follows from a similar argument. 
\end{proof}

\begin{proposition}
\label{prop:s-mirror}
    For a strongly invertible knot $K$, 
    \[
        \ds_h(K^*) = -\us_h(K).
    \]
\end{proposition}

\begin{proof}
    Immediate from \Cref{prop:d-mirror}. 
\end{proof}

\begin{example}
    Compare \Cref{ex:trefoil-s}. 
\end{example}

The proof of \Cref{prop:d-mirror} also shows that the following two elements
\begin{align*}
    \dcx_r(D) &= h^{-\dd_h(D)} \dca_r(D),\\
    \ucx_r(D) &= h^{-\ud_h(D)} \uca_r(D)
\end{align*}
form a basis of $\rKhI(D) / \Tor{} \isom R \oplus R[1]$. Similarly,
\begin{align*}
    \dcz_r(D) &= h^{-\dd_h(D)} \dcb_r(D),\\
    \ucz_r(D) &= h^{-\ud_h(D)} \ucb_r(D)
\end{align*}
form a basis of $\rKhI'(D) / \Tor{} \isom R \oplus R[1]$. Under the identification of \Cref{prop:ca-spl-i}, the elements corresponding to $\dcx_r(D), \ucx_r(D)$ are
\begin{align*}
    \dcx(D) &= h^{-\dd_h(D)} \dca(D),\\
    \ucx(D) &= h^{-\ud_h(D)} \uca(D)
\end{align*}
and the elements corresponding to $\dcz_r(D), \ucz_r(D)$ are
\begin{align*}
    \dcz(D) &= h^{-\dd_h(D) - 1}(\dca(D) + \dcb(D)),\\
    \ucz(D) &= h^{-\ud_h(D) - 1}(\uca(D) + \ucb(D)).    
\end{align*}
Thus the four elements $\dcz(D), \dcx(D), \ucz(D), \ucx(D)$ form a basis of $\KhI(D) / \Tor{} \isom R^2 \oplus R[1]^2$. \Cref{prop:ca-reid,prop:ca-red-reideiemster} implies that all of these classes are invariant under the Reidemeister moves. 

In particular when $R$ is graded and $\deg(h) = -2$, we see that 
\begin{align*}
    \ds_h(K) &= \gr_q(\dcx_r(K)),\\
    \us_h(K) &= \gr_q(\ucx_r(K)).
\end{align*}
Thus when $(R, h) = (\FF_2[H], H)$ the definition of $(\ds, \us)$ for strongly invertible knots given in \Cref{sec:intro} coincides with \Cref{def:equiv-ras}. We summarize, 

\begin{proposition}
\label{prop:canon-classes}
    \begin{align*}
        \KhI(K) &= 
        R \langle \dcz(K), \dcx(K), \ucz(K), \ucx(K) \rangle
        \oplus
        (\Tor),\\
        \rKhI(K) &= 
        R \langle \dcx_r(K), \ucx_r(K) \rangle
        \oplus
        (\Tor),\\
        \rKhI'(K) &= 
        R \langle \dcz_r(K), \ucz_r(K) \rangle
        \oplus
        (\Tor).
    \end{align*}
\end{proposition}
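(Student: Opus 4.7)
The plan is to assemble the statement directly from the three bases already constructed in the paragraph preceding the proposition, and then to record that each of the cited generators is a well-defined invariant of the knot $K$ (not merely of a chosen diagram).

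First I would handle the reduced case. By Proposition \ref{prop:d-mirror}, the $h$-divisibilities $\dd_h(D)$ and $\ud_h(D)$ of $\dca(D), \uca(D)$ are finite and complementary, so the elements $\dcx(D) = h^{-\dd_h(D)}\dca(D)$ and $\ucx(D) = h^{-\ud_h(D)}\uca(D)$ live in the degree $0$ and degree $1$ parts of $\rKhI_\tau(D)/\Tor$. The perfect pairing of Proposition \ref{prop:perf-pairing} combined with Proposition \ref{prop:ca-pairing} shows that their pairing against the corresponding classes for the mirror is a unit of $R$, so after rescaling they form a free $R$-basis of $\rKhI_\tau(D)/\Tor \cong R \oplus R[1]$. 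Applying the Frobenius automorphism $\sigma$, which commutes with $\tau$ and induces $\sigma\rCKhI_\tau(D) \cong \rCKhI'_\tau(D)$ by Proposition \ref{prop:sigma-swap}, transports this basis to $\{\dcz'(D), \ucz'(D)\}$ for $\rKhI'_\tau(D)/\Tor$.

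Next I would glue these two reduced bases into the unreduced one. Proposition \ref{prop:CKhI-split} provides a natural splitting $\CKhI_\tau(D) \cong \rCKhI'_\tau(D) \oplus \rCKhI_\tau(D)$, and Proposition \ref{prop:ca-spl-i} says that under this splitting $\dca(D) \mapsto (0, \dca(D))$ and $\dcb(D) \mapsto (h\dcb'(D), \dca(D))$, and similarly for the barred classes. Dividing by the appropriate powers of $h$ and taking the difference $h^{-\dd_h(D)-1}(\dca(D)+\dcb(D))$ (respectively for $\uca, \ucb$), which is exactly $\dcz(D)$ (resp.\ $\ucz(D)$), gives an element mapping to $(\dcz'(D), 0)$ (resp.\ $(\ucz'(D), 0)$). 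Combined with $\dcx(D), \ucx(D)$, which map to $(0, \dcx(D))$ and $(0, \ucx(D))$, this yields the desired free basis of $\KhI_\tau(D)/\Tor \cong R^2 \oplus R[1]^2$.

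Finally I would verify that all of $\dcz, \dcx, \ucz, \ucx$ are genuine knot invariants and not mere functions of the diagram. Propositions \ref{prop:ca-reid} and \ref{prop:ca-red-reideiemster} show that under the Reidemeister-induced isomorphism $\rho$ the Lee classes satisfy $\dca(D) \mapsto h^j \dca(D')$ and similarly for the others, with the same exponent $j = (\delta w - \delta r)/2$. Since $\dd_h$ and $\ud_h$ change precisely by this same $j$ under any Reidemeister move (a consequence of the isomorphism $\rho$ being grading-preserving after inverting $h$), the renormalized classes $\dcx(D) = h^{-\dd_h(D)}\dca(D)$ etc.\ are preserved up to units of $R$, hence give well-defined invariants of $K$. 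The main obstacle here is purely bookkeeping: carefully matching the $h$-power shifts from Proposition \ref{prop:ca-reid} against the changes in divisibility, and confirming that the splitting of Proposition \ref{prop:CKhI-split} commutes with the chain homotopy equivalences of Proposition \ref{prop:invariance-reduced} so that the diagonal generators $\dcz(D), \ucz(D)$ also transport correctly.
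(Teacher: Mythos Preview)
Your proposal is correct and follows essentially the same route as the paper: the proposition is presented there as a summary of the preceding paragraph, which argues exactly as you do---first using the proof of \Cref{prop:d-mirror} (and the perfect pairing) to get the reduced basis $\{\dcx,\ucx\}$, then transporting via $\sigma$ and \Cref{prop:sigma-swap} to the coreduced basis $\{\dcz',\ucz'\}$, then gluing via the splitting of \Cref{prop:CKhI-split,prop:ca-spl-i} into the unreduced basis, and finally invoking \Cref{prop:ca-reid,prop:ca-red-reideiemster} for Reidemeister invariance. Your closing remark about the splitting commuting with the Reidemeister equivalences is slightly more cautious than necessary: since $\dcz(D)$ is defined by the explicit formula $h^{-\dd_h(D)-1}(\dca(D)+\dcb(D))$ in the unreduced homology, its invariance follows directly from \Cref{prop:ca-reid} without needing naturality of the splitting.
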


\begin{example}
    For the simplest example $D = \bigcirc$, we have 
    \begin{align*}
        \dca(D) &= X, &\dcb(D) &= Y,\\
        \uca(D) &= QX, &\ucb(D) &= QY
    \end{align*}
    and $\dd_h(D) = \ud_h(D) = 0$, so
    \begin{align*}
        \dcx(D) &= X, &\dcz(D) &= 1,\\
        \ucx(D) &= QX, &\ucz(D) &= Q1
    \end{align*}
    and we obtain 
    \[
        \KhI(D) = R\langle 1, X, Q1, QX \rangle = A \oplus QA
    \]
    as expected.
\end{example}

\subsection{Connected sum formula}

Arguments in this section are inspired by \cite{Hendricks-Manolescu-Zemke:2018}, where the connected sum formula for the $\dd$-, $\ud$-invariants in involutive Heegaard Floer homology is proved. 

For strongly invertible links $L, L'$, the (equivariant) \textit{disjoint union} $L \sqcup L'$ and the (equivariant) \textit{connected sum} $L \#_b L'$ along an equivariant band $b$ are defined in the obvious ways so that the resulting links are also strongly invertible. Note that  different choices of $b$ will in general give non-equivalent links, but here we make the choice implicit and omit $b$ from the notation%
\footnote{
    For \textit{directed} strongly invertible knots $K, K'$, there is a canonical choice of the band $b$ from $K$ to $K'$, and the equivariant connected sum is defined without ambiguity. See \cite{Sakuma:86}. 
}. 
The corresponding operations for strongly invertible link diagrams $D, D'$ are also defined. When we write $D \sqcup D'$, it is assumed that $D$ and $D'$ are disjoint as diagrams. When we write $D \# D'$, it is assumed that the band is untwisted and no crossings are produced by the surgery. 

\begin{proposition}
\label{prop:CKhI-disj}
    There is a canonical isomorphism
    \begin{align*}
        &\CKhI(D \sqcup D') \\ 
        &\isom \Cone(\ \CKh(D) \otimes \CKh(D') \xrightarrow{Q(1 \otimes 1 + \tau \otimes \tau)} Q (\CKh(D) \otimes \CKh(D'))\ ).
    \end{align*}
    Under this identification, we have 
    \[
        \dca(D \sqcup D') = \ca(D) \otimes \ca(D'),\quad
        \uca(D \sqcup D') = Q(\ca(D) \otimes \ca(D')). 
    \]
\end{proposition}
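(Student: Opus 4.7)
The plan is to leverage the standard fact that Khovanov homology is multiplicative under disjoint union, and then check that the extra structure ($\tau$-action and Lee cycles) behaves naturally. First I would recall the canonical isomorphism $\CKh(D \sqcup D') \cong \CKh(D) \otimes \CKh(D')$, which holds because the cube of resolutions of $D \sqcup D'$ is the product of the two individual cubes: the states of $D \sqcup D'$ biject with pairs $(s, s')$ and the resolved diagram $(D \sqcup D')(s, s')$ is the disjoint union $D(s) \sqcup D'(s')$, so the generators factor as tensor products and the differential is $d \otimes 1 + 1 \otimes d$ (using $\fchar R = 2$).

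Second, I would verify that the diagrammatic involution on $D \sqcup D'$ corresponds to $\tau \otimes \tau$ under this identification. Since the axis is shared and $D, D'$ are each $\tau$-invariant, the symmetry of $D \sqcup D'$ sends the state $(s, s')$ to $(\tau s, \tau s')$, and similarly acts factorwise on the labels. Plugging this into the mapping cone
\[
\CKhI_\tau(D \sqcup D') = \Cone\bigl(\CKh(D \sqcup D') \xrightarrow{Q(1 + \tau)} Q\CKh(D \sqcup D')\bigr)
\]
yields exactly the claimed cone, since $1 + \tau \otimes \tau = 1 \otimes 1 + \tau \otimes \tau$.

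Third, I would identify the Lee cycles. Assume $D$ and $D'$ are placed in the plane so that neither is enclosed by a Seifert circle of the other; then the checkerboard coloring of $\RR^2$ restricted to the Seifert circles of $D \sqcup D'$ recovers the individual $ab$-colorings of $D$ and $D'$ separately (since crossing into the region occupied by $D'$ does not flip the color). Consequently the unique orientation-preserving state for $D \sqcup D'$ factors as the product of those for $D$ and $D'$, and each Seifert circle inherits the same label. This gives $\ca(D \sqcup D') = \ca(D) \otimes \ca(D')$ in $\CKh(D \sqcup D')$, and the corresponding identifications $\dca(D \sqcup D') = \ca(D) \otimes \ca(D')$ and $\uca(D \sqcup D') = Q(\ca(D) \otimes \ca(D'))$ in $\CKhI_\tau$ follow immediately from the definition of the equivariant Lee cycles.

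The main (minor) obstacle is the bookkeeping of the $ab$-coloring under disjoint union, since the coloring is a global checkerboard and one must confirm that placing $D$ and $D'$ disjointly in the plane does not swap the colors on one of the components. Once this is settled, everything else is a formal unravelling of definitions; no homotopies or spectral sequence arguments are required.
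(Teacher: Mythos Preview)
Your proposal is correct and takes essentially the same approach as the paper, which simply says ``Obvious from the identification $\CKh(D \sqcup D') \isom \CKh(D) \otimes \CKh(D')$.'' You have just spelled out in more detail the three ingredients (tensor decomposition of the cube, $\tau$ acting as $\tau\otimes\tau$, and the factorization of the Lee cycle via the $ab$-coloring) that the paper regards as routine verifications.
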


\begin{proof}
    Obvious from the canonical isomorphism 
    \[
        \CKh(D \sqcup D') \isom \CKh(D) \otimes \CKh(D'),
    \]
    which holds for any $(R, h)$ (see \cite[Section 7.4]{Khovanov:2000}). 
\end{proof}

\begin{proposition}
\label{prop:disj-connsum-corresp}
    There are chain maps
    \[
\begin{tikzcd}
\CKhI(D \sqcup D') \arrow[r, "m", shift left] & \CKhI(D \# D') \arrow[l, "\Delta", shift left]
\end{tikzcd}
    \]
    corresponding to the band surgery from $D \sqcup D'$ to $D \# D'$ and its reverse. 
\end{proposition}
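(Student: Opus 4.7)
The plan is to construct $m$ and $\Delta$ by lifting the standard saddle cobordism maps of non-involutive Khovanov homology to the mapping cone. Recall that a band surgery on a link diagram is a single saddle move, and the resulting diagrams are locally related by the replacement of a \emph{pairing} tangle (two parallel strands) with its orthogonal pairing. Bar-Natan's formalism associates to such a saddle the obvious chain map built from a saddle cobordism (see \cite[Section 4]{BarNatan:2004}); let $m_0\colon \CKh(D \sqcup D') \to \CKh(D \# D')$ and $\Delta_0\colon \CKh(D \# D') \to \CKh(D \sqcup D')$ denote these chain maps for our band $b$ and its reverse.

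The key observation is that because the band $b$ used in the equivariant connected sum is equivariant (i.e. $\tau(b) = b$ as subsets of $S^3$), the local tangle move realizing the saddle is symmetric with respect to $\tau$. Consequently the saddle map is defined locally on a $\tau$-invariant disk, and by the same argument used in the proof of \Cref{prop:invariance} (extending maps defined on local tangles globally via the planar algebra structure) the induced maps on the full complexes commute with $\tau$:
\[
    \tau \circ m_0 = m_0 \circ \tau,\qquad \tau \circ \Delta_0 = \Delta_0 \circ \tau.
\]
Having verified this, $m_0$ and $\Delta_0$ descend to maps on the mapping cones, by setting
\[
    m = \begin{pmatrix} m_0 & 0 \\ 0 & m_0 \end{pmatrix},\qquad
    \Delta = \begin{pmatrix} \Delta_0 & 0 \\ 0 & \Delta_0 \end{pmatrix},
\]
under the decomposition $\CKhI_\tau = \CKh \oplus \CKh[1]$. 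The equivariance of $m_0, \Delta_0$ is exactly what is needed for these matrices to commute with the involutive differential $\begin{pmatrix} d & 0 \\ 1+\tau & d \end{pmatrix}$.

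The main obstacle is verifying the equivariance of the local saddle map. This is essentially automatic because the axis either passes through the band (in which case the band tangle is itself symmetric) or the band lies off-axis (in which case there are two symmetric bands and we surger both simultaneously, giving a $\tau$-invariant total cobordism). In either case the local cobordism is $\tau$-equivariant, and the Bar-Natan saddle map depends only on the cobordism, so equivariance of the chain map follows. The remaining verification that $m$ and $\Delta$ are chain maps is then purely a matter of expanding the matrix products.
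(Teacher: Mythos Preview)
Your proof is correct and follows the same approach as the paper, which simply states that the non-involutive saddle maps are $\tau$-invariant and hence lift to the cone. Your treatment is just a more explicit unpacking of that one line; note, however, that for the equivariant connected sum the single band $b$ is necessarily on-axis (since $\tau(b)=b$), so your off-axis case is not actually needed here.
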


\begin{proof}
    The corresponding maps in the non-involutive setting are $\tau$-invariant. 
\end{proof}

\begin{proposition}
\label{prop:disj-connsum-ineq}
    For strongly invertible links $L, L'$, we have
    \[
        \ds_h(L \# L')  - 1 \leq \ds_h(L \sqcup L') \leq \ds_h(L \# L') + 1
    \]
    and similarly for $\us_h$.
\end{proposition}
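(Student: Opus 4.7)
The plan is to use the chain maps $m$ and $\Delta$ from \Cref{prop:disj-connsum-corresp} for the equivariant band surgery, track how they act on the equivariant Lee classes, and translate the resulting divisibility inequalities into the desired bounds on $\ds_h$ and $\us_h$.

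First I would record the diagrammatic data. Choose strongly invertible diagrams $D$ and $D'$ for $L$ and $L'$, and form $D \sqcup D'$ and $D \# D'$ using an untwisted equivariant band that introduces no new crossings. Then
\[
    w(D \sqcup D') = w(D \# D') = w(D) + w(D'),
\]
while the band merges two Seifert circles into a single one, giving $r(D \sqcup D') = r(D \# D') + 1$.

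Next I would compute the action of the two saddles on the equivariant Lee cycles. In Bar-Natan's Frobenius algebra one has $X^2 = hX$, $Y^2 = hY$, $XY = 0$, and dually $\Delta(X) = X \otimes X$, $\Delta(Y) = Y \otimes Y$. Because the band is $\tau$-invariant, the non-involutive saddle maps lift diagonally through the decomposition $\CKhI_\tau = \CKh \oplus Q\CKh$ using the identification of \Cref{prop:CKhI-disj} together with $\dca(D \sqcup D') = \ca(D) \otimes \ca(D')$ and $\uca(D \sqcup D') = Q(\ca(D) \otimes \ca(D'))$. This should yield
\[
    m(\dca(D \sqcup D')) = h \cdot \dca(D \# D'), \qquad
    \Delta(\dca(D \# D')) = \dca(D \sqcup D'),
\]
and likewise with $\uca$ replacing $\dca$. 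The general fact that $d_h(f(z)) \geq d_h(z)$ for any $R$-linear map $f$ (applied modulo torsion on the free parts) then gives
\[
    \dd_h(L \# L') \leq \dd_h(L \sqcup L') \leq \dd_h(L \# L') + 1,
\]
together with the same bounds for $\ud_h$. Feeding these into $\ds_h = 2\dd_h + w - r + 1$ and using the writhe/Seifert-circle comparison above, one finds
\[
    \ds_h(L \sqcup L') - \ds_h(L \# L') = 2\bigl(\dd_h(L \sqcup L') - \dd_h(L \# L')\bigr) - 1 \in \{-1, 1\},
\]
which is exactly the stated inequality. The calculation for $\us_h$ is verbatim.

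The main obstacle I anticipate is justifying the identity $m(\dca(D \sqcup D')) = h \cdot \dca(D \# D')$. Since $XY = 0$, this requires that the two Seifert circles merged by the band carry the \emph{same} $ab$-label in $D \sqcup D'$, and that the merged circle in $D \# D'$ then inherits that common label. This is a purely local check near the band: because the band is equivariant, untwisted, and introduces no crossings, it locally realizes a standard oriented saddle that respects the checkerboard shading of \Cref{algo:ab-color}, so the labels on both sides match and the product is nonzero. For $\Delta$ the corresponding statement is immediate, since $\Delta(X) = X \otimes X$ and the single label on the merged Seifert circle simply splits into two copies of itself.
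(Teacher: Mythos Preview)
Your proof is correct and follows essentially the same approach as the paper: use the $\tau$-equivariant saddle maps $m,\Delta$ of \Cref{prop:disj-connsum-corresp}, check that $m(\dca)=h\,\dca$ and $\Delta(\dca)=\dca$ (and likewise for $\uca$), extract the divisibility inequalities $\dd_h(D\# D')\leq \dd_h(D\sqcup D')\leq \dd_h(D\# D')+1$, and feed these into the definition of $\ds_h,\us_h$. Your write-up is actually more explicit than the paper's, which simply asserts the two Lee-class identities and the resulting divisibility bounds without spelling out the Frobenius-algebra computation, the Seifert-circle count $r(D\sqcup D')=r(D\# D')+1$, or the label-matching issue you flag at the end.
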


\begin{proof}
    Under the maps of \Cref{prop:disj-connsum-corresp}, we have 
    \[
        m(\ca(D) \otimes \ca(D')) = h \ca(D \# D'), \quad
        \Delta \ca(D \# D') = \ca(D) \otimes \ca(D')
    \]
    hence
    \[
        \dd_h(D \# D') \leq \dd_h(D \sqcup D') \leq \dd_h(D \# D') + 1.
    \]
    This gives the desired inequality. 
\end{proof}

\begin{lemma}
\label{lem:tensor}
    Suppose $C, C'$ are $\tau$-complexes over $\FF_2$. Put 
    \[
        C_\tau = \Cone(C \xrightarrow{1 + \tau} C), \quad
        C'_\tau = \Cone(C' \xrightarrow{1 + \tau} C')
    \]
    and 
    \[
        C^\otimes_\tau = \Cone(C \otimes C' \xrightarrow{1 \otimes 1 + \tau \otimes \tau} C \otimes C').
    \]
    Let $z \in C$ and $z' \in C'$ be $\tau$-invariant cycles. In the following, $\sim$ denotes homologous.

    \begin{enumerate}[leftmargin=*]
        \item Let $x, y \in C$ and $x', y' \in C'$ be elements such that 
        \[
            \begin{pmatrix}z \\ 0\end{pmatrix} \sim \begin{pmatrix}x \\ y\end{pmatrix},\quad 
            \begin{pmatrix}z' \\ 0\end{pmatrix} \sim \begin{pmatrix}x' \\ y'\end{pmatrix}
        \]
        in $C_\tau$ and in $C'_\tau$ respectively. Then in $C^\otimes_\tau$, 
        \[
            \begin{pmatrix}z \otimes z' \\ 0\end{pmatrix} \sim
            \begin{pmatrix}x \otimes x' \\ x \otimes y' + y \otimes \tau x' \end{pmatrix}.
        \]

        \item Let $x, y \in C$ and $x', y' \in C'$ be elements such that
        \[
            \begin{pmatrix}z \\ 0\end{pmatrix} \sim \begin{pmatrix}x \\ y\end{pmatrix},\quad 
            \begin{pmatrix}0 \\ z'\end{pmatrix} \sim \begin{pmatrix}x' \\ y'\end{pmatrix}
        \]
        in $C_\tau$ and in $C'_\tau$ respectively. Then in $C^\otimes_\tau$, 
        \[
            \begin{pmatrix}0 \\ z \otimes z'\end{pmatrix} \sim 
            \begin{pmatrix}x \otimes x' \\ y \otimes x' + \tau x \otimes y' \end{pmatrix}.
        \]
    \end{enumerate}
\end{lemma}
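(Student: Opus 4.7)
The lemma is a direct chain-level computation inside the mapping cone $\tilde{C}^{\otimes}$, working throughout over $\FF_2$. I will focus on part~(1); part~(2) is entirely analogous.

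\textbf{Step 1: unpack the hypothesis.} The relation $\begin{pmatrix}z\\0\end{pmatrix}\sim\begin{pmatrix}x\\y\end{pmatrix}$ in $\tilde{C}$ is equivalent to the existence of $u,v\in C$ with $x=z+du$ and $y=(1+\tau)u+dv$, and similarly there are $u',v'\in C'$ with $x'=z'+du'$ and $y'=(1+\tau')u'+dv'$. Two derived identities I will use repeatedly are $dx=dx'=0$ (from $dz=dz'=0$ and $d^{2}=0$) and $(1+\tau)x=dy$, $(1+\tau')x'=dy'$ (obtained by applying $1+\tau$ to $x=z+du$ and using $\tau$-invariance of $z$, and symmetrically for $x'$).

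\textbf{Step 2: construct an explicit null-homology.} The natural ansatz is
\[
    U \;=\; u\otimes z' + x\otimes u', \qquad
    V \;=\; v\otimes z' + x\otimes v' + y\otimes\tau' u'.
\]
Using $dx=dz=dz'=0$, a short computation gives $dU = z\otimes z' + x\otimes x'$, which matches the difference of the first components. For the second component, expand $(1\otimes1+\tau\otimes\tau')U + dV$, substitute $(1+\tau)u=y+dv$ and $(1+\tau')u'=y'+dv'$, and rewrite the mixed term $(x+\tau x)\otimes\tau'u' = dy\otimes\tau'u'$ as $d(y\otimes\tau'u') + y\otimes\tau' du' = d(y\otimes\tau'u') + y\otimes z' + y\otimes\tau'x'$ (using $du'=z'+x'$ and $\tau'z'=z'$). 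In characteristic~$2$ the auxiliary terms cancel in pairs, leaving precisely the displayed second entry.

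\textbf{Step 3: part~(2).} The hypothesis now reads $x'=du'$ and $z'=y'+(1+\tau')u'+dv'$, so that $x'$ is itself a coboundary. The analogous ansatz $U = u\otimes x' + z\otimes u'$ gives $dU=x\otimes x'$ (using $dz=0$ and $du'=x'$), and after choosing $V$ by an analogous recipe — absorbing the terms produced by $(1\otimes1+\tau\otimes\tau')U$ together with $(1+\tau')u' = z'+y'+dv'$ and $(1+\tau)x=dy$ — the remaining expression in characteristic~$2$ is exactly the claimed second component.

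\textbf{Main obstacle.} There is no conceptual subtlety: the proof reduces to pure chain-level algebra, and over $\FF_2$ all signs vanish. The only real work is bookkeeping — there are many cross terms to track, and identifying the corrective summand $y\otimes\tau'u'$ in $V$ (and its counterpart in part~(2)) requires systematically substituting the defining relations before the desired cancellations become transparent.
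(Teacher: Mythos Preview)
Your approach is essentially the same as the paper's: both proofs exhibit an explicit bounding element $(U,V)$ in $\tilde{C}^\otimes$ and verify the identity by direct substitution. Your $U$ for part~(1) agrees with the paper's after simplifying $a\otimes x'+a\otimes da'=a\otimes z'$, and your $V$ differs from the paper's choice only by the exact term $d(v\otimes\tau'u')$, so both are valid; for part~(2) the paper writes out the full second component explicitly (it is somewhat longer than in part~(1)), whereas your treatment leaves $V$ implicit, but the method and all the key identities you invoke are correct.
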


\begin{proof}
    \begin{enumerate}[leftmargin=*]
        \item Put 
        \[
            \begin{pmatrix}z \\ 0\end{pmatrix} - \begin{pmatrix}x \\ y\end{pmatrix} = 
            \begin{pmatrix}d & \\ 1 + \tau & d\end{pmatrix} \begin{pmatrix}a \\ b\end{pmatrix}
        \]
        and
        \[
            \begin{pmatrix}z' \\ 0\end{pmatrix} - \begin{pmatrix}x' \\ y'\end{pmatrix} = 
            \begin{pmatrix}d & \\ 1 + \tau & d\end{pmatrix} \begin{pmatrix}a' \\ b'\end{pmatrix}.
        \]
        Then the boundary of
        \[
            \begin{pmatrix}
                x \otimes a' + a \otimes x' + a \otimes da' \\ 
                x \otimes b' + b \otimes \tau x' + (1 + \tau)a \otimes \tau a'
            \end{pmatrix}
        \]
        gives the desired relation. 

        \item Put 
        \[
            \begin{pmatrix}z \\ 0\end{pmatrix} - \begin{pmatrix}x \\ y\end{pmatrix} = 
            \begin{pmatrix}d & \\ 1 + \tau & d\end{pmatrix} \begin{pmatrix}a \\ b\end{pmatrix}
        \]
        and
        \[
            \begin{pmatrix}0 \\ z'\end{pmatrix} - \begin{pmatrix}x' \\ y'\end{pmatrix} = 
            \begin{pmatrix}d & \\ 1 + \tau & d\end{pmatrix} \begin{pmatrix}a' \\ b'\end{pmatrix}.
        \]
        Then the boundary of 
        \[
            \begin{pmatrix}
                x \otimes a'\\ 
                x \otimes b' + \tau a \otimes y' + (1 \otimes 1 + \tau \otimes \tau )(a \otimes a') + da \otimes b' + b \otimes da'
            \end{pmatrix}
        \]
        gives the desired relation. 
    \end{enumerate}
\end{proof}

\begin{proposition}
\label{prop:s-conn-sum}
    For strongly invertible knots $K, K'$, 
    \[
        \ds_h(K) + \ds_h(K') 
        \leq \ds_h(K \# K') 
        \leq \ds_h(K) + \us_h(K') 
        \leq \us_h(K \# K') 
        \leq \us_h(K) + \us_h(K').
    \]
\end{proposition}

\begin{proof}
    We will prove the first and the third inequalities, which is sufficient to prove \Cref{prop:s-conn-sum} from the mirror formula \Cref{prop:s-mirror}.
    Take $z = \ca(D) + \cb(D)$ in $\CKh(D)$ and $z' = \ca(D') + \cb(D')$ in $\CKh(D')$. Observe that under the chain map $m$ of \Cref{prop:disj-connsum-corresp}, we have 
    \[
        \begin{pmatrix}z \otimes z' \\ 0\end{pmatrix}
        \xmapsto{m}
        h\begin{pmatrix}\ca(D \# D') + \cb(D \# D') \\ 0\end{pmatrix} = h (\dca(D \# D') + \dcb(D \# D'))
    \]
    and 
    \[
        \begin{pmatrix}0 \\ z \otimes z'\end{pmatrix}
        \xmapsto{m}
        h\begin{pmatrix}0 \\ \ca(D \# D') + \cb(D \# D')\end{pmatrix} = h (\uca(D \# D') + \ucb(D \# D')).
    \]
    
    From \Cref{lem:div-cz}, there are elements $x, y \in \CKh(D)$ such that,  
    \[
        \begin{pmatrix}z \\ 0\end{pmatrix} \sim h^{\dd_h(D) + 1} \begin{pmatrix}x \\ y\end{pmatrix}
    \]
    in $\CKhI(D)$, modulo torsion in homology. By inverting $h$, we may assume that they are strictly homologous in $h^{-1}\CKhI(D)$. Similarly, there are elements $x', y' \in \CKh(D')$ such that 
    \[
        \begin{pmatrix}z' \\ 0\end{pmatrix} \sim h^{\dd_h(D') + 1} \begin{pmatrix}x' \\ y'\end{pmatrix}
    \]
    in $h^{-1}\CKhI(D')$. From \Cref{lem:tensor} (1), we have 
    \[
        \begin{pmatrix}z \otimes z' \\ 0\end{pmatrix} \sim
        h^{\dd_h(D) + \dd_h(D') + 2}\begin{pmatrix}x \otimes x' \\ x \otimes y' + y \otimes \tau x' \end{pmatrix}
    \]
    in $h^{-1}\CKhI(D \sqcup D')$. Under the map $m$ of \Cref{prop:disj-connsum-corresp}, the left-hand side maps to 
    \[
        h (\dca(D \# D') + \dcb(D \# D'))
    \]
    in $h^{-1}\CKhI(D \# D')$. Its homology class in $h^{-1}\KhI(D \# D')$ is the $h^{\dd_h(D \# D') + 2}$ multiple of the class $\dcz(D \# D')$ of \Cref{prop:canon-classes}. The homology class of 
    \[
        m\begin{pmatrix}x \otimes x' \\ x \otimes y' + y \otimes \tau x' \end{pmatrix}
    \]
    is also some $h^e$ multiple of $\dcz(D \# D')$ for some $e \geq 0$ (because $x, y, x'$ and $y'$ were taken before inverting $h$). Thus it follows that 
    \[
        \dd_h(D) + \dd_h(D') \leq \dd_h(D \# D').
    \]
    This implies the first inequality. Similarly for the third inequality, there are elements $x'', y'' \in \CKh(D')$ such that 
    \[
        \begin{pmatrix}
            0 \\ 
            z'
        \end{pmatrix}
        \sim 
        h^{\ud_h(D') + 1} \begin{pmatrix}
            x'' \\ 
            y''
        \end{pmatrix}
    \]
    in $h^{-1}\CKhI(D')$. From \Cref{lem:tensor} (2), we have 
    \[
        \begin{pmatrix}
            0 \\ 
            z \otimes z'
        \end{pmatrix} 
        \sim 
        h^{\dd_h(D) + \ud_h(D') + 2}
        \begin{pmatrix}
            x \otimes x'' \\ 
            y \otimes x'' + \tau x \otimes y''
        \end{pmatrix}.
    \]
    By a similar argument, we obtain 
    \[
        \dd_h(D) + \ud_h(D') \leq \ud_h(D \# D')
    \]
    which implies the third inequality.
\end{proof}

\subsection{Behavior under crossing changes}


\begin{proposition}
\label{prop:x-ch}
    Let $D^+$ be a link diagram with at least one positive crossing, and $D^-$ be a diagram obtained from $D^+$ by applying a negative crossing change to one of the positive crossings of $D^+$. There are homological grading preserving chain maps
    \[
    \begin{tikzcd}
\CKh(D^+) \arrow[r, "\Phi^-", shift left] & \CKh(D^-) \arrow[l, "\Phi^+", shift left]
    \end{tikzcd}
    \]
    such that the Lee cycles correspond as 
    \[
        \ca(D^+) \xmapsto{\Phi^-} \ca(D^-),\quad
        \ca(D^-) \xmapsto{\Phi^+} h\ca(D^+).
    \]
\end{proposition}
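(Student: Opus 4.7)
The plan is to express $\CKh(D)$ and $\CKh(D^-)$ as mapping cones at the distinguished crossing $\ast$ and to construct $f^-, f^+$ explicitly using this decomposition. Let $D_0, D_1$ denote the two local smoothings at $\ast$, with $D_0$ the oriented (Seifert) smoothing. Since the oriented smoothing depends only on strand orientations and not on over/under information, $D_0$ is the same for $D$ and $D^-$; in particular the Seifert circles of $D$ and of $D^-$ coincide and carry the same $ab$-coloring. Writing $C_i = \CKh(D_i)$, the cube decomposition gives
\[
\CKh(D) \cong C_0 \oplus C_1[-1], \qquad \CKh(D^-) \cong C_0 \oplus C_1[+1]
\]
as graded modules, with $C_0$ sitting at the same homological degree in both complexes. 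The differentials involve a saddle map $s \colon C_0 \to C_1$ in $\CKh(D)$ and its reverse $s' \colon C_1 \to C_0$ in $\CKh(D^-)$; these satisfy the handle relations $s' \circ s = h \cdot \id_{C_0}$ and $s \circ s' = h \cdot \id_{C_1}$ coming from $\mu \circ \Delta = h$ in the Bar-Natan Frobenius algebra.

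I would first define $f^-$ as the identity on the common $C_0$ summand and zero on $C_1$. In matrix form both $d_{\CKh(D^-)} \circ f^-$ and $f^- \circ d_{\CKh(D)}$ reduce to $\begin{pmatrix} d_{C_0} & 0 \\ 0 & 0 \end{pmatrix}$ acting on $C_0 \oplus C_1[-1]$, so $f^-$ is a chain map of homological degree zero. The Lee cycle $\ca(D)$ lies in $C_0$, and coincides with $\ca(D^-) \in C_0 \subset \CKh(D^-)$ as the same labeling of the common Seifert circles by the common $ab$-coloring. Hence $f^-(\ca(D)) = \ca(D^-)$ directly.

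For $f^+$, the naive map ``$h$ on $C_0$, zero on $C_1$'' fails the chain map condition because the defect involves the non-vanishing terms $h s$ and $h s'$. The plan is to construct $f^+$ as the Khovanov cobordism map associated to the reverse crossing-change cobordism, which is realised as a genus-one surface factoring into two saddles through an intermediate smoothed diagram; equivalently one corrects the naive $h$-multiplication by extra components chosen so that the defect is absorbed via the handle relations $s' s = s s' = h$. The factor of $h$ in $f^+(\ca(D^-)) = h\,\ca(D)$ then reflects the genus-one contribution of the reverse cobordism, algebraically realised by $\mu \circ \Delta = h$. The main obstacle is exhibiting these correction terms explicitly and verifying the resulting action on the Lee cycle; the construction of $f^-$, by contrast, is essentially a formality once the cone decomposition is in place.
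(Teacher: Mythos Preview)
Your construction of $f^-$ is correct and coincides with the paper's: take the identity on $C_0$ and zero on $C_1$, and observe that $\ca(D)$ and $\ca(D^-)$ are literally the same element of $C_0$.

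For $f^+$ there is a genuine gap. First, the handle relation $s' \circ s = h \cdot \id_{C_0}$ that you invoke is false. The composite $s' s$ is the identity cobordism on $D_0$ with a tube connecting the two arcs at the crossing; at any state of $D_0$ where those arcs lie on \emph{distinct} circles---in particular at the Seifert state carrying $\ca$---this tube acts as $\Delta \circ \mu$, and $\Delta\mu(1 \otimes 1) = 1 \otimes X + X \otimes 1 + h(1 \otimes 1) \neq h(1 \otimes 1)$. Only $\mu\Delta$ equals $h\cdot\id$, and which of $s's$, $ss'$ realises which composite depends on the state, so neither is globally $h\cdot\id$. The algebraic mechanism you propose for absorbing the defect therefore does not exist. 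Second, you never actually produce $f^+$; the appeal to a genus-one crossing-change cobordism is not a construction, and as the paper remarks immediately after this proposition, the maps $f^\pm$ cannot be induced by any embedded cobordism in $S^3 \times I$.

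The paper's $f^+$ is an explicit dotted map in Bar-Natan's category rather than a correction of $h\cdot\id$: on $C_0$ it is the sum of two identity cobordisms, one with a dot (multiplication by $X$) placed on the left arc and one with a dot on the right arc; it is zero on $C_1$. The chain-map conditions $e \circ f^+ = 0$ and $f^+ \circ e' = 0$ hold because after composing with the saddle the two arcs lie on a common component, the two dotted summands coincide, and they cancel in characteristic $2$. For the Lee cycle, the two arcs in the Seifert resolution at a crossing always carry opposite labels (one $X$, one $Y$, since they see the intermediate region from opposite sides), so dotting the $X$-arc contributes $X^2 = hX$ while dotting the $Y$-arc contributes $XY = 0$, giving $f^+(\ca(D^-)) = h\,\ca(D)$. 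This explicit two-dot formula is the missing ingredient in your proposal.
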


\begin{figure}[t]
    \centering
    \resizebox{!}{14em}{
    \tikzset{every picture/.style={line width=0.75pt}} 

\begin{tikzpicture}[x=0.75pt,y=0.75pt,yscale=-1,xscale=1]

\draw  [dash pattern={on 4.5pt off 4.5pt}] (18,112) .. controls (18,92.12) and (34.12,76) .. (54,76) .. controls (73.88,76) and (90,92.12) .. (90,112) .. controls (90,131.88) and (73.88,148) .. (54,148) .. controls (34.12,148) and (18,131.88) .. (18,112) -- cycle ;
\draw [color={rgb, 255:red, 0; green, 0; blue, 0 }  ,draw opacity=1 ][line width=1.5]    (30.19,88.12) -- (78.07,136) ;
\draw [shift={(28.07,86)}, rotate = 45] [color={rgb, 255:red, 0; green, 0; blue, 0 }  ,draw opacity=1 ][line width=1.5]    (8.53,-2.57) .. controls (5.42,-1.09) and (2.58,-0.23) .. (0,0) .. controls (2.58,0.23) and (5.42,1.09) .. (8.53,2.57)   ;
\draw  [color={rgb, 255:red, 255; green, 255; blue, 255 }  ,draw opacity=1 ][fill={rgb, 255:red, 255; green, 255; blue, 255 }  ,fill opacity=1 ] (47.71,111.71) .. controls (47.71,108.36) and (50.43,105.64) .. (53.79,105.64) .. controls (57.14,105.64) and (59.86,108.36) .. (59.86,111.71) .. controls (59.86,115.07) and (57.14,117.79) .. (53.79,117.79) .. controls (50.43,117.79) and (47.71,115.07) .. (47.71,111.71) -- cycle ;
\draw [color={rgb, 255:red, 0; green, 0; blue, 0 }  ,draw opacity=1 ][line width=1.5]    (76.29,88.11) -- (27.71,136) ;
\draw [shift={(78.43,86)}, rotate = 135.41] [color={rgb, 255:red, 0; green, 0; blue, 0 }  ,draw opacity=1 ][line width=1.5]    (8.53,-2.57) .. controls (5.42,-1.09) and (2.58,-0.23) .. (0,0) .. controls (2.58,0.23) and (5.42,1.09) .. (8.53,2.57)   ;
\draw [color={rgb, 255:red, 0; green, 0; blue, 0 }  ,draw opacity=1 ][line width=1.5]    (154.93,67.56) .. controls (166,54.31) and (165,28.31) .. (155.29,17) ;
\draw  [dash pattern={on 4.5pt off 4.5pt}] (145,42) .. controls (145,22.12) and (161.12,6) .. (181,6) .. controls (200.88,6) and (217,22.12) .. (217,42) .. controls (217,61.88) and (200.88,78) .. (181,78) .. controls (161.12,78) and (145,61.88) .. (145,42) -- cycle ;
\draw  [dash pattern={on 4.5pt off 4.5pt}] (283,110) .. controls (283,90.12) and (299.12,74) .. (319,74) .. controls (338.88,74) and (355,90.12) .. (355,110) .. controls (355,129.88) and (338.88,146) .. (319,146) .. controls (299.12,146) and (283,129.88) .. (283,110) -- cycle ;
\draw [color={rgb, 255:red, 0; green, 0; blue, 0 }  ,draw opacity=1 ][line width=1.5]    (341.29,86.11) -- (292.71,134) ;
\draw [shift={(343.43,84)}, rotate = 135.41] [color={rgb, 255:red, 0; green, 0; blue, 0 }  ,draw opacity=1 ][line width=1.5]    (8.53,-2.57) .. controls (5.42,-1.09) and (2.58,-0.23) .. (0,0) .. controls (2.58,0.23) and (5.42,1.09) .. (8.53,2.57)   ;
\draw  [color={rgb, 255:red, 255; green, 255; blue, 255 }  ,draw opacity=1 ][fill={rgb, 255:red, 255; green, 255; blue, 255 }  ,fill opacity=1 ] (312.71,109.71) .. controls (312.71,106.36) and (315.43,103.64) .. (318.79,103.64) .. controls (322.14,103.64) and (324.86,106.36) .. (324.86,109.71) .. controls (324.86,113.07) and (322.14,115.79) .. (318.79,115.79) .. controls (315.43,115.79) and (312.71,113.07) .. (312.71,109.71) -- cycle ;
\draw [color={rgb, 255:red, 0; green, 0; blue, 0 }  ,draw opacity=1 ][line width=1.5]    (295.19,86.12) -- (333.04,123.96) -- (343.07,134) ;
\draw [shift={(293.07,84)}, rotate = 45] [color={rgb, 255:red, 0; green, 0; blue, 0 }  ,draw opacity=1 ][line width=1.5]    (8.53,-2.57) .. controls (5.42,-1.09) and (2.58,-0.23) .. (0,0) .. controls (2.58,0.23) and (5.42,1.09) .. (8.53,2.57)   ;
\draw  [dash pattern={on 4.5pt off 4.5pt}] (147,176) .. controls (147,156.12) and (163.12,140) .. (183,140) .. controls (202.88,140) and (219,156.12) .. (219,176) .. controls (219,195.88) and (202.88,212) .. (183,212) .. controls (163.12,212) and (147,195.88) .. (147,176) -- cycle ;
\draw    (100,92.49) -- (135.42,64.83) ;
\draw [shift={(137,63.6)}, rotate = 142.01] [color={rgb, 255:red, 0; green, 0; blue, 0 }  ][line width=0.75]    (10.93,-3.29) .. controls (6.95,-1.4) and (3.31,-0.3) .. (0,0) .. controls (3.31,0.3) and (6.95,1.4) .. (10.93,3.29)   ;
\draw    (238.58,152.17) -- (274,124.51) ;
\draw [shift={(237,153.4)}, rotate = 322.01] [color={rgb, 255:red, 0; green, 0; blue, 0 }  ][line width=0.75]    (10.93,-3.29) .. controls (6.95,-1.4) and (3.31,-0.3) .. (0,0) .. controls (3.31,0.3) and (6.95,1.4) .. (10.93,3.29)   ;
\draw    (100,124.51) -- (135.42,152.17) ;
\draw [shift={(137,153.4)}, rotate = 217.99] [color={rgb, 255:red, 0; green, 0; blue, 0 }  ][line width=0.75]    (10.93,-3.29) .. controls (6.95,-1.4) and (3.31,-0.3) .. (0,0) .. controls (3.31,0.3) and (6.95,1.4) .. (10.93,3.29)   ;
\draw    (238.58,64.83) -- (274,92.49) ;
\draw [shift={(237,63.6)}, rotate = 37.99] [color={rgb, 255:red, 0; green, 0; blue, 0 }  ][line width=0.75]    (10.93,-3.29) .. controls (6.95,-1.4) and (3.31,-0.3) .. (0,0) .. controls (3.31,0.3) and (6.95,1.4) .. (10.93,3.29)   ;

\draw [color={rgb, 255:red, 208; green, 2; blue, 27 }  ,draw opacity=1 ][line width=2.25]    (163,42.5) -- (198,42.5) ;
\draw [color={rgb, 255:red, 0; green, 0; blue, 0 }  ,draw opacity=1 ][line width=1.5]    (207.07,67.56) .. controls (196,54.31) and (197,28.31) .. (206.71,17) ;
\draw [color={rgb, 255:red, 0; green, 0; blue, 0 }  ,draw opacity=1 ][line width=1.5]    (157.72,150.21) .. controls (170.97,161.28) and (196.97,160.28) .. (208.28,150.57) ;
\draw [color={rgb, 255:red, 208; green, 2; blue, 27 }  ,draw opacity=1 ][line width=2.25]    (182.78,158.28) -- (182.78,193.28) ;
\draw [color={rgb, 255:red, 0; green, 0; blue, 0 }  ,draw opacity=1 ][line width=1.5]    (157.72,202.35) .. controls (170.97,191.28) and (196.97,192.28) .. (208.28,201.99) ;

\draw (46,152.9) node [anchor=north west][inner sep=0.75pt]    {$D^{+}$};
\draw (313,152.9) node [anchor=north west][inner sep=0.75pt]    {$D^{-}$};
\draw (173,83.4) node [anchor=north west][inner sep=0.75pt]    {$D_{0}$};
\draw (176,217.4) node [anchor=north west][inner sep=0.75pt]    {$D_{1}$};
\draw (109,139.4) node [anchor=north west][inner sep=0.75pt]    {$1$};
\draw (108,62.4) node [anchor=north west][inner sep=0.75pt]    {$0$};
\draw (256,139.4) node [anchor=north west][inner sep=0.75pt]    {$0$};
\draw (257,62.4) node [anchor=north west][inner sep=0.75pt]    {$1$};
\draw (178,21.9) node [anchor=north west][inner sep=0.75pt]    {$\textcolor[rgb]{0.82,0.01,0.11}{e}$};
\draw (190.5,167.4) node [anchor=north west][inner sep=0.75pt]    {$\textcolor[rgb]{0.82,0.01,0.11}{e'}$};

\end{tikzpicture}
    }
    \caption{Diagrams $D^\pm$ and their resolutions.}
    \label{fig:xch-D01}
\end{figure}

\begin{proof}
    Let $x$ be the positive crossing of $D^+$ on which the crossing change is performed. Let $D_0, D_1$ be the $0$-, $1$-resolved diagram of $D^+$ at $x$ respectively. Then $\CKh(D^+)$ may be described as a cone of the surgery map
    \[
        \CKh(D_0) \xrightarrow{e} \CKh(D_1).
    \]
    Similarly, by $1$-resolving $D^-$ at the corresponding crossing, we see that $\CKh(D^-)$ can be described as a cone of 
    \[
        \CKh(D_1) \xrightarrow{e'} \CKh(D_0). 
    \]
    The setup is depicted in \Cref{fig:xch-D01} where the red arcs indicate the corresponding surgery maps. Note that $\ca(D^+)$ and $\ca(D^-)$ are identical, and that they both belong to $\CKh(D_0)$. 
    
    Now we define chain maps 
    \[
    \begin{tikzcd}
        \CKh(D^+) \arrow[r, "\Phi^-", shift left] & \CKh(D^-) \arrow[l, "\Phi^+", shift left]
    \end{tikzcd}
    \]
    so that they fit into the following commutative diagram 
    \[
    \begin{tikzcd}[row sep=3em, column sep=3em]
{\color{gray}0} \arrow[r, gray] \arrow[d, "0", leftrightarrow, gray, dashed] & \CKh(D_0) \arrow[d, "\Phi^-"', shift right, dashed] \arrow[r, "e"]   & \CKh(D_1) \arrow[d, "0", leftrightarrow, gray, dashed] \\
\CKh(D_1) \arrow[r, "e'"]      & \CKh(D_0) \arrow[u, "\Phi^+"', shift right, dashed] \arrow[r, gray] & {\color{gray}0}                    \end{tikzcd}
    \]
    giving the desired chain maps between the complexes. 
    First, define $\Phi^- = \id_{D_0}$, which is obviously a chain map satisfying 
    \[
        \ca(D^+) \xmapsto{\Phi^-} \ca(D^-).
    \]
    Next, we define $\Phi^+$ as 
    \begin{center}
        \tikzset{every picture/.style={line width=0.75pt}} 

\begin{tikzpicture}[x=0.75pt,y=0.75pt,yscale=-.7,xscale=.7]

\draw  [dash pattern={on 4.5pt off 4.5pt}] (9,45) .. controls (9,25.12) and (25.12,9) .. (45,9) .. controls (64.88,9) and (81,25.12) .. (81,45) .. controls (81,64.88) and (64.88,81) .. (45,81) .. controls (25.12,81) and (9,64.88) .. (9,45) -- cycle ;
\draw  [dash pattern={on 4.5pt off 4.5pt}] (128,45) .. controls (128,25.12) and (144.12,9) .. (164,9) .. controls (183.88,9) and (200,25.12) .. (200,45) .. controls (200,64.88) and (183.88,81) .. (164,81) .. controls (144.12,81) and (128,64.88) .. (128,45) -- cycle ;
\draw  [fill={rgb, 255:red, 0; green, 0; blue, 0 }  ,fill opacity=1 ] (177,45) .. controls (177,42.51) and (179.01,40.5) .. (181.5,40.5) .. controls (183.99,40.5) and (186,42.51) .. (186,45) .. controls (186,47.49) and (183.99,49.5) .. (181.5,49.5) .. controls (179.01,49.5) and (177,47.49) .. (177,45) -- cycle ;
\draw  [fill={rgb, 255:red, 0; green, 0; blue, 0 }  ,fill opacity=1 ] (22,45) .. controls (22,42.51) and (24.01,40.5) .. (26.5,40.5) .. controls (28.99,40.5) and (31,42.51) .. (31,45) .. controls (31,47.49) and (28.99,49.5) .. (26.5,49.5) .. controls (24.01,49.5) and (22,47.49) .. (22,45) -- cycle ;
\draw [color={rgb, 255:red, 0; green, 0; blue, 0 }  ,draw opacity=1 ][line width=1.5]    (18.93,70) .. controls (30,56.76) and (29,30.76) .. (19.29,19.44) ;
\draw [color={rgb, 255:red, 0; green, 0; blue, 0 }  ,draw opacity=1 ][line width=1.5]    (71.07,70) .. controls (60,56.76) and (61,30.76) .. (70.71,19.44) ;
\draw [color={rgb, 255:red, 0; green, 0; blue, 0 }  ,draw opacity=1 ][line width=1.5]    (137.93,70) .. controls (149,56.76) and (148,30.76) .. (138.29,19.44) ;
\draw [color={rgb, 255:red, 0; green, 0; blue, 0 }  ,draw opacity=1 ][line width=1.5]    (190.07,70) .. controls (179,56.76) and (180,30.76) .. (189.71,19.44) ;

\draw (100,35.4) node [anchor=north west][inner sep=0.75pt]    {$+$};

\end{tikzpicture}
    \end{center}
    where each dot represents the multiplication by $X$ on the circle it is drawn on. To verify that $\Phi^+$ is a chain map, it suffices to show that $e \Phi^+ = 0 = \Phi^+ e'$. The first equation can be described pictorially as
    \begin{center}
        \tikzset{every picture/.style={line width=0.75pt}} 

\begin{tikzpicture}[x=0.75pt,y=0.75pt,yscale=-.7,xscale=.7]

\draw [color={rgb, 255:red, 208; green, 2; blue, 27 }  ,draw opacity=1 ][line width=2.25]    (150.25,46) -- (187.75,46) ;
\draw [color={rgb, 255:red, 208; green, 2; blue, 27 }  ,draw opacity=1 ][line width=2.25]    (31.5,46) -- (69,46) ;
\draw  [dash pattern={on 4.5pt off 4.5pt}] (14,46) .. controls (14,26.12) and (30.12,10) .. (50,10) .. controls (69.88,10) and (86,26.12) .. (86,46) .. controls (86,65.88) and (69.88,82) .. (50,82) .. controls (30.12,82) and (14,65.88) .. (14,46) -- cycle ;
\draw  [dash pattern={on 4.5pt off 4.5pt}] (133,46) .. controls (133,26.12) and (149.12,10) .. (169,10) .. controls (188.88,10) and (205,26.12) .. (205,46) .. controls (205,65.88) and (188.88,82) .. (169,82) .. controls (149.12,82) and (133,65.88) .. (133,46) -- cycle ;
\draw  [fill={rgb, 255:red, 0; green, 0; blue, 0 }  ,fill opacity=1 ] (182,46) .. controls (182,43.51) and (184.01,41.5) .. (186.5,41.5) .. controls (188.99,41.5) and (191,43.51) .. (191,46) .. controls (191,48.49) and (188.99,50.5) .. (186.5,50.5) .. controls (184.01,50.5) and (182,48.49) .. (182,46) -- cycle ;
\draw  [fill={rgb, 255:red, 0; green, 0; blue, 0 }  ,fill opacity=1 ] (27,46) .. controls (27,43.51) and (29.01,41.5) .. (31.5,41.5) .. controls (33.99,41.5) and (36,43.51) .. (36,46) .. controls (36,48.49) and (33.99,50.5) .. (31.5,50.5) .. controls (29.01,50.5) and (27,48.49) .. (27,46) -- cycle ;
\draw [color={rgb, 255:red, 0; green, 0; blue, 0 }  ,draw opacity=1 ][line width=1.5]    (23.93,71) .. controls (35,57.76) and (34,31.76) .. (24.29,20.44) ;
\draw [color={rgb, 255:red, 0; green, 0; blue, 0 }  ,draw opacity=1 ][line width=1.5]    (76.07,71) .. controls (65,57.76) and (66,31.76) .. (75.71,20.44) ;
\draw [color={rgb, 255:red, 0; green, 0; blue, 0 }  ,draw opacity=1 ][line width=1.5]    (142.93,71) .. controls (154,57.76) and (153,31.76) .. (143.29,20.44) ;
\draw [color={rgb, 255:red, 0; green, 0; blue, 0 }  ,draw opacity=1 ][line width=1.5]    (195.07,71) .. controls (184,57.76) and (185,31.76) .. (194.71,20.44) ;

\draw (223,33.4) node [anchor=north west][inner sep=0.75pt]  [font=\large]  {$=\ 0$};
\draw (105,36.4) node [anchor=north west][inner sep=0.75pt]    {$+$};

\end{tikzpicture}
    \end{center}
    which obviously holds, since dots can move freely within their connected components. The second equation can be proved similarly. Finally, we see that 
    \[
        \ca(D^-) \xmapsto{\Phi^+} h\ca(D^+)
    \]
    from the local description of $\ca(D^+)$ together with $X^2 = hX$ and $XY = 0$. 
\end{proof}

\begin{remark}
    The crossing change maps $\Phi^\pm$ of \Cref{prop:x-ch} partially appear in \cite[Figures 3,4]{Alishahi:2017}. $\Phi^+$ also appears in \cite[Section 3]{Ito-Yoshida:2021} in the form of a morphism in the category $\Cob^3$.
\end{remark}

\begin{proposition}
\label{prop:s-xch}
    Let $L^+, L^-$ be links such that $L^-$ is obtained by applying a negative crossing change to $L^+$. Then
    \[
        s_h(L^-) \leq s_h(L^+) \leq s_h(L^-) + 2. 
    \]
\end{proposition}

\begin{proof}
    Let $D^+, D^-$ be diagrams of $L^+, L^-$ respectively, such that $D^-$ is obtained by applying a negative crossing change to a single crossing of $D^+$. From \Cref{prop:x-ch}, we have 
    \[
        d_h(D^+) \leq d_h(D^-) \leq d_h(D^+) + 1.
    \]
    Thus,
    \begin{align*}
        s_h(L^-) &= 2d_h(D^-) + w(D^-) - r(D^-) + 1\\
        &\leq 2(d_h(D^+) + 1) + (w(D^+) - 2) - r(D^+) + 1\\
        &= s_h(L^+) \\
        &\leq 2d_h(D^-) + (w(D^-) + 2) - r(D^-) + 1\\
        &= s_h(L^-) + 2.
    \end{align*}
\end{proof}

\begin{definition}
\label{def:equiv-x-ch}
    An \textit{equivariant negative crossing change} on a strongly invertible link $L$ is an operation that is either a single negative crossing change on a crossing lying on the axis of $L$, or two negative crossing changes on crossings $x$ and $\tau(x)$ lying off the axis. 
\end{definition}

\begin{proposition}
\label{prop:si-xch}
    Let $L^+, L^-$ be strongly invertible links such that $L^-$ is obtained by applying an equivariant negative crossing change to $L^+$. Then
    \[
        \ds_h(L^-) \leq \ds_h(L^+) \leq \ds_h(L^-) + 2a,
    \]
    where $a = 1$ if the move is performed on-axis, and $a = 2$ if performed off-axis. The same statement holds for $\us_h$.
\end{proposition}

\begin{proof}
    If the move is performed on-axis, the maps $\Phi^\pm$ are strictly $\tau$-equivariant and hence induce maps between the involutive complexes. If the move is performed off-axis, then we may define equivariant crossing change maps
    \[
        \Phi^\pm = \Phi^\pm_2 \Phi^\pm_1
    \]
    where $\Phi^\pm_1$ and $\Phi^\pm_2$ are the non-equivariant crossing change maps corresponding to the off-axis moves. By an argument similar to the proof of \Cref{prop:invariance} for moves IR1 -- IR3, we see that $\Phi^\pm$ is strictly $\tau$-equivariant and hence induce maps between the involutive complexes. 
\end{proof}

\subsection{Behavior under generalized crossing changes}

Next, we extend the results in the previous section to \textit{generalized crossing changes}, introduced by Cochran and Tweedy in \cite{Cochran-Tweedy:2014}.

\begin{definition}
\label{def:gen-x-ch}
    For $n \geq 1$, a \textit{$2n$-strand generalized negative (resp. positive) crossing change} on a link $L$ is a modification of $L$ by adding a \textit{positive} (resp. \textit{negative}) full twist on $2n$ parallel strands of $L$, where $n$ strands are oriented one way and the other are oriented the other (see \Cref{fig:gen-x-ch}). 
\end{definition}

\begin{figure}[t]
    \centering
    \tikzset{every picture/.style={line width=0.75pt}} 

\begin{tikzpicture}[x=0.75pt,y=0.75pt,yscale=-1,xscale=1]

\draw    (20,10) -- (20,99) ;
\draw [shift={(20,59.5)}, rotate = 270] [fill={rgb, 255:red, 0; green, 0; blue, 0 }  ][line width=0.08]  [draw opacity=0] (8.93,-4.29) -- (0,0) -- (8.93,4.29) -- cycle    ;
\draw    (30,10) -- (30,99) ;
\draw [shift={(30,48)}, rotate = 90] [fill={rgb, 255:red, 0; green, 0; blue, 0 }  ][line width=0.08]  [draw opacity=0] (8.93,-4.29) -- (0,0) -- (8.93,4.29) -- cycle    ;
\draw    (60,10) -- (60,99) ;
\draw [shift={(60,59.5)}, rotate = 270] [fill={rgb, 255:red, 0; green, 0; blue, 0 }  ][line width=0.08]  [draw opacity=0] (8.93,-4.29) -- (0,0) -- (8.93,4.29) -- cycle    ;
\draw    (70,10) -- (70,99) ;
\draw [shift={(70,48)}, rotate = 90] [fill={rgb, 255:red, 0; green, 0; blue, 0 }  ][line width=0.08]  [draw opacity=0] (8.93,-4.29) -- (0,0) -- (8.93,4.29) -- cycle    ;
\draw   (15,106) .. controls (15,110.67) and (17.33,113) .. (22,113) -- (35.5,113) .. controls (42.17,113) and (45.5,115.33) .. (45.5,120) .. controls (45.5,115.33) and (48.83,113) .. (55.5,113)(52.5,113) -- (69,113) .. controls (73.67,113) and (76,110.67) .. (76,106) ;
\draw   (98.5,48.69) -- (109.5,48.69) -- (109.5,42.79) -- (121.72,54.59) -- (109.5,66.39) -- (109.5,60.49) -- (98.5,60.49) -- cycle ;
\draw    (145,43) -- (145,99) ;
\draw [shift={(145,76)}, rotate = 270] [fill={rgb, 255:red, 0; green, 0; blue, 0 }  ][line width=0.08]  [draw opacity=0] (8.93,-4.29) -- (0,0) -- (8.93,4.29) -- cycle    ;
\draw    (155,43) -- (155,99) ;
\draw [shift={(155,64.5)}, rotate = 90] [fill={rgb, 255:red, 0; green, 0; blue, 0 }  ][line width=0.08]  [draw opacity=0] (8.93,-4.29) -- (0,0) -- (8.93,4.29) -- cycle    ;
\draw    (185,43) -- (185,99) ;
\draw [shift={(185,76)}, rotate = 270] [fill={rgb, 255:red, 0; green, 0; blue, 0 }  ][line width=0.08]  [draw opacity=0] (8.93,-4.29) -- (0,0) -- (8.93,4.29) -- cycle    ;
\draw    (195,43) -- (195,99) ;
\draw [shift={(195,64.5)}, rotate = 90] [fill={rgb, 255:red, 0; green, 0; blue, 0 }  ][line width=0.08]  [draw opacity=0] (8.93,-4.29) -- (0,0) -- (8.93,4.29) -- cycle    ;

\draw    (145,11) -- (145,41) ;
\draw    (155,11) -- (155,41) ;
\draw    (185,11) -- (185,41) ;
\draw    (195,11) -- (195,41) ;

\draw  [fill={rgb, 255:red, 255; green, 255; blue, 255 }  ,fill opacity=1 ] (137,27) -- (204,27) -- (204,51) -- (137,51) -- cycle ;

\draw (36,46.9) node [anchor=north west][inner sep=0.75pt]    {$\cdots $};
\draw (38,121.4) node [anchor=north west][inner sep=0.75pt]    {$2n$};
\draw (161,63.4) node [anchor=north west][inner sep=0.75pt]    {$\cdots $};
\draw (160,33.4) node [anchor=north west][inner sep=0.75pt]    {$+1$};

\end{tikzpicture}
    \caption{A $2n$-strand generalized negative crossing change}
    \label{fig:gen-x-ch}
    \vspace{2em}
    \resizebox{\textwidth}{!}{
        \centering
        \tikzset{
    every picture/.style={line width=0.75pt},
    every node/.style={font=\large}
}

\begin{tikzpicture}[x=1pt,y=1pt,yscale=-1,xscale=1]

\draw    (50,71.5) -- (50,101) ;
\draw [shift={(50,91.25)}, rotate = 270] [fill={rgb, 255:red, 0; green, 0; blue, 0 }  ][line width=0.08]  [draw opacity=0] (8.93,-4.29) -- (0,0) -- (8.93,4.29) -- cycle    ;
\draw    (65,71.5) -- (65,101) ;
\draw [shift={(65,79.75)}, rotate = 90] [fill={rgb, 255:red, 0; green, 0; blue, 0 }  ][line width=0.08]  [draw opacity=0] (8.93,-4.29) -- (0,0) -- (8.93,4.29) -- cycle    ;
\draw    (95,71.5) -- (95,101) ;
\draw [shift={(95,91.25)}, rotate = 270] [fill={rgb, 255:red, 0; green, 0; blue, 0 }  ][line width=0.08]  [draw opacity=0] (8.93,-4.29) -- (0,0) -- (8.93,4.29) -- cycle    ;
\draw    (109,71.5) -- (109,101) ;
\draw [shift={(109,79.75)}, rotate = 90] [fill={rgb, 255:red, 0; green, 0; blue, 0 }  ][line width=0.08]  [draw opacity=0] (8.93,-4.29) -- (0,0) -- (8.93,4.29) -- cycle    ;
\draw    (50,10) -- (50,40) ;
\draw    (65,10) -- (65,40) ;
\draw    (95,10) -- (95,40) ;
\draw    (109,10) -- (109,40) ;
\draw  [fill={rgb, 255:red, 255; green, 255; blue, 255 }  ,fill opacity=1 ] (42,21.5) -- (117,21.5) -- (117,71.5) -- (42,71.5) -- cycle ;
\draw    (291.5,72) -- (291.5,101.5) ;
\draw [shift={(291.5,91.75)}, rotate = 270] [fill={rgb, 255:red, 0; green, 0; blue, 0 }  ][line width=0.08]  [draw opacity=0] (8.93,-4.29) -- (0,0) -- (8.93,4.29) -- cycle    ;
\draw    (306.5,72) -- (306.5,101.5) ;
\draw [shift={(306.5,80.25)}, rotate = 90] [fill={rgb, 255:red, 0; green, 0; blue, 0 }  ][line width=0.08]  [draw opacity=0] (8.93,-4.29) -- (0,0) -- (8.93,4.29) -- cycle    ;
\draw    (336.5,72) -- (336.5,101.5) ;
\draw [shift={(336.5,91.75)}, rotate = 270] [fill={rgb, 255:red, 0; green, 0; blue, 0 }  ][line width=0.08]  [draw opacity=0] (8.93,-4.29) -- (0,0) -- (8.93,4.29) -- cycle    ;
\draw    (350.5,72) -- (350.5,101.5) ;
\draw [shift={(350.5,80.25)}, rotate = 90] [fill={rgb, 255:red, 0; green, 0; blue, 0 }  ][line width=0.08]  [draw opacity=0] (8.93,-4.29) -- (0,0) -- (8.93,4.29) -- cycle    ;

\draw    (291.5,10.5) -- (291.5,40.5) ;
\draw    (306.5,10.5) -- (306.5,40.5) ;
\draw    (336.5,10.5) -- (336.5,40.5) ;
\draw    (350.5,10.5) -- (350.5,40.5) ;
\draw  [fill={rgb, 255:red, 255; green, 255; blue, 255 }  ,fill opacity=1 ] (283.5,22) -- (358.5,22) -- (358.5,72) -- (283.5,72) -- cycle ;
\draw    (150,71) -- (150,100.5) ;
\draw [shift={(150,90.75)}, rotate = 270] [fill={rgb, 255:red, 0; green, 0; blue, 0 }  ][line width=0.08]  [draw opacity=0] (8.93,-4.29) -- (0,0) -- (8.93,4.29) -- cycle    ;
\draw    (164,71) -- (164,100.5) ;
\draw [shift={(164,79.25)}, rotate = 90] [fill={rgb, 255:red, 0; green, 0; blue, 0 }  ][line width=0.08]  [draw opacity=0] (8.93,-4.29) -- (0,0) -- (8.93,4.29) -- cycle    ;
\draw    (194,71) -- (194,100.5) ;
\draw [shift={(194,90.75)}, rotate = 270] [fill={rgb, 255:red, 0; green, 0; blue, 0 }  ][line width=0.08]  [draw opacity=0] (8.93,-4.29) -- (0,0) -- (8.93,4.29) -- cycle    ;
\draw    (209,71) -- (209,100.5) ;
\draw [shift={(209,79.25)}, rotate = 90] [fill={rgb, 255:red, 0; green, 0; blue, 0 }  ][line width=0.08]  [draw opacity=0] (8.93,-4.29) -- (0,0) -- (8.93,4.29) -- cycle    ;

\begin{knot}[
  clip width=2.5pt,
  end tolerance=1pt
]

\strand    (209,71) .. controls (209,58.7) and (182.91,62.09) .. (165.58,48.4) .. controls (148.24,34.71) and (149.83,30.77) .. (150,11) ;
\strand    (194,71) .. controls (194.18,60.52) and (194.5,49) .. (194.5,41) .. controls (194.5,33) and (166,31) .. (165,12) ;
\strand    (164,71) .. controls (164.51,60.93) and (164.5,49.5) .. (165,41) .. controls (165.5,32.5) and (195,30) .. (194.5,12.5) ;
\strand    (150,71) .. controls (150,58.21) and (178.5,59.5) .. (193.22,49.81) .. controls (207.93,40.11) and (209.5,32.3) .. (209.5,12) ;

\end{knot}

\draw    (393,71.5) -- (393,101) ;
\draw [shift={(393,91.25)}, rotate = 270] [fill={rgb, 255:red, 0; green, 0; blue, 0 }  ][line width=0.08]  [draw opacity=0] (8.93,-4.29) -- (0,0) -- (8.93,4.29) -- cycle    ;
\draw    (407,71.5) -- (407,101) ;
\draw [shift={(407,79.75)}, rotate = 90] [fill={rgb, 255:red, 0; green, 0; blue, 0 }  ][line width=0.08]  [draw opacity=0] (8.93,-4.29) -- (0,0) -- (8.93,4.29) -- cycle    ;
\draw    (437,71.5) -- (437,101) ;
\draw [shift={(437,91.25)}, rotate = 270] [fill={rgb, 255:red, 0; green, 0; blue, 0 }  ][line width=0.08]  [draw opacity=0] (8.93,-4.29) -- (0,0) -- (8.93,4.29) -- cycle    ;
\draw    (452,71.5) -- (452,101) ;
\draw [shift={(452,79.75)}, rotate = 90] [fill={rgb, 255:red, 0; green, 0; blue, 0 }  ][line width=0.08]  [draw opacity=0] (8.93,-4.29) -- (0,0) -- (8.93,4.29) -- cycle    ;

\begin{knot}[
  clip width=2.5pt,
  end tolerance=1pt
]

\strand    (393,71.5) .. controls (393,58.71) and (421.5,60) .. (436.22,50.31) .. controls (450.93,40.61) and (452.5,32.8) .. (452.5,12.5) ;
\strand    (407,71.5) .. controls (407.51,61.43) and (407.5,50) .. (408,41.5) .. controls (408.5,33) and (438,30.5) .. (437.5,13) ;
\strand    (437,71.5) .. controls (437.18,61.02) and (437.5,49.5) .. (437.5,41.5) .. controls (437.5,33.5) and (409,31.5) .. (408,12.5) ;
\strand    (452,71.5) .. controls (452,59.2) and (425.91,62.59) .. (408.58,48.9) .. controls (391.24,35.21) and (392.83,31.27) .. (393,11.5) ;

\end{knot}

\draw (63.5,40.31) node [anchor=north west][inner sep=0.75pt]    {$-1/2$};
\draw (12.5,42.9) node [anchor=north west][inner sep=0.75pt]    {$D^+$};
\draw (124.5,41.9) node [anchor=north west][inner sep=0.75pt]    {$=$};
\draw (304,40.81) node [anchor=north west][inner sep=0.75pt]    {$+1/2$};
\draw (250,39.9) node [anchor=north west][inner sep=0.75pt]    {$D^{-}$};
\draw (366,42.9) node [anchor=north west][inner sep=0.75pt]    {$=$};

\end{tikzpicture}
    }
    \caption{Diagrams $D^+, D^-$}
    \label{fig:gxch-half}
\end{figure}

Note that the case $n = 1$ gives the ordinary negative (resp. positive) crossing change. Hereafter we only consider the case $n = 2$. 

\begin{proposition}
\label{prop:gx-ch}
    Let $D^+, D^-$ be diagrams that differ locally as in \Cref{fig:gxch-half}. Then there are homological grading preserving chain maps
    \[
    \begin{tikzcd}
\CKh(D^+) \arrow[r, "\Phi^-", shift left] & \CKh(D^-) \arrow[l, "\Phi^+", shift left]
    \end{tikzcd}
    \]
    such that the Lee classes modulo torsions correspond as
    \[
        \ca(D^+) \xmapsto{\Phi^-} \ca(D^-),\quad
        \ca(D^-) \xmapsto{\Phi^+} h^2\ca(D^+).
    \]
\end{proposition}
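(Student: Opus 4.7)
The plan is to generalize the construction in Proposition \ref{prop:x-ch} from the ordinary negative crossing change to the 4-strand generalized crossing change. First, I would set up the local picture: $D$ and $D^-$ agree outside a small ball $B$, while inside $B$, $D$ contains a positive full twist on 4 parallel strands (with a 2-up, 2-down orientation) and $D^-$ contains the corresponding trivial 4-strand tangle. Using the planar algebra structure of Bar-Natan's formal Khovanov complex from \cite{BarNatan:2004}, it suffices to build chain maps between the local tangle complexes and then extend to the global diagrams.

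Next, I would construct $f^-$ by assembling the natural saddle maps that resolve the crossings of the full twist toward the oriented resolution. Since the oriented resolution of the full twist agrees (as an unoriented tangle, up to a predictable number of collapsed circles) with the trivial tangle of $D^-$, and the Lee cycle is constructed coherently from the $ab$-coloring of the Seifert circles, we obtain $\alpha(D) \mapsto \alpha(D^-)$ without any extra scalar factor.

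For $f^+$, I would construct a sum of dotted cobordisms from $\CKh(D^-)$ to $\CKh(D)$, generalizing the two-term dotted map of Proposition \ref{prop:x-ch}. The construction places a total of two dots on appropriate Seifert-type circles inside $B$ so that: (a) $f^+$ is a chain map, i.e.\ its compositions with the surgery differentials of the cube-of-resolutions decompositions vanish because dots slide freely across saddles in Bar-Natan's category; (b) $f^+$ is homological and quantum grading preserving; and (c) $f^+(\alpha(D^-)) = h^2 \alpha(D)$. The factor $h^2$ arises from applying $X^2 = hX$ twice on $X$-labeled Seifert circles in the Lee state.

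The main obstacle lies in the third step: pinpointing the correct dot placement. The Seifert-circle pattern of a 4-strand full twist with 2+2 orientation is considerably more intricate than for a single crossing, so one must track carefully which circles receive label $X$ versus $Y = X+h$ in the Lee cycle $\alpha(D^-)$ and ensure that both dots land on $X$-labeled circles — otherwise the relation $XY = 0$ forces the map to vanish instead of producing $h^2 \alpha(D)$. Once the correct configuration of dots is identified, verifying the chain map condition and the composition identity reduces to a routine local calculation using $X^2 = hX$, $Y^2 = hY$, and $XY = 0$, following the blueprint of Proposition \ref{prop:x-ch}.
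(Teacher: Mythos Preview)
Your proposal has a genuine gap in both setup and strategy. First, your local picture is not what the paper uses: in \Cref{fig:gxch-half} neither tangle is trivial --- $D$ carries a negative half-twist on four strands and $D^-$ carries a positive half-twist. Both have six crossings, and their oriented resolutions coincide (four nested cups-and-caps). So your description of $f^-$ as ``saddle maps resolving the full twist toward the oriented resolution, landing in the trivial tangle of $D^-$'' does not match the situation; there is no trivial tangle on either side to land in, and saddle maps between resolutions are pieces of the differential, not chain maps between the two link complexes.

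More seriously, you are missing the key technical step that makes the construction tractable. The paper does not attempt to write down $f^\pm$ directly on the $2^6$-vertex cubes of $D$ and $D^-$. Instead it observes that after $0$-resolving (respectively $1$-resolving) a suitable bottom crossing, two R2 simplifications become available; using the fact that the R2 map is a strong deformation retract, together with the cone-compatibility lemma (\Cref{lem:cone-rtr}), one collapses $\CKh(D)$ and $\CKh(D^-)$ to four-term complexes $E$ and $E'$ indexed by a $2\times 2$ square. Only on these small complexes are the maps defined: $f_0^- = \id$ on the common corner $E_{00}$, and $f_0^+$ is a sum of \emph{four} two-dot pictures on $E_{00}$ (not a single two-dot cobordism). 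Verifying the chain-map condition then reduces to checking three incoming and three outgoing edges, where pairs of the four terms cancel. Your sketch of placing ``two dots on appropriate Seifert circles'' and checking that ``dots slide across saddles'' would, without the retract step, require controlling a vastly larger system of edges in the full cube, and you give no indication of how to do that.
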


The idea of the proof is similar to that of \Cref{prop:x-ch} but we need some preparations. First recall from \cite[Section 4]{BarNatan:2004} the definition of \textit{strong deformation retracts}.

\begin{definition}
    A chain map $r\colon C \rightarrow C'$ between chain complexes (in any additive category) is a \textit{strong deformation retract}\footnote{%
        Conditions (iii) -- (v) are called the \textit{side conditions}. In \cite[Definition 4.3]{BarNatan:2004} only conditions (i) -- (iii) are imposed, but we can easily check that the remaining two conditions also hold for the R1 and the R2 maps. 
    }
    if there is a chain map $i\colon C' \rightarrow C$ (called the \textit{inclusion}) and a homotopy $h$ on $C$, satisfying (i) $ri = 1$, (ii) $ir - 1 = dh + hd$, (iii) $hi = 0$, (iv) $rh = 0$ and (v) $h^2 = 0$. 
\end{definition}

The following lemma is a generalization of \cite[Lemma 4.5]{BarNatan:2004}.

\begin{lemma}
\label{lem:cone-rtr}
    Suppose $X, Y, Z, W$ are chain complexes (in any additive category), and there are maps $f, g, k, l$ (not necessarily chain maps) such that the following square of complexes
    \[
\begin{tikzcd}[row sep=3em, column sep=3em]
X \arrow[r, "f"] \arrow[d, "k"'] & Y \arrow[d, "g"] \\
Z \arrow[r, "l"']                & W               
\end{tikzcd}
    \]
    form a chain complex $C$. Furthermore suppose $Y$ has a strong deformation retract
    $r: Y \rightarrow Y'$ with inclusion $i$ and homotopy $h$. Then the following square of complexes:
    \[
\begin{tikzcd}[row sep=3em, column sep=3em]
X \arrow[r, "rf"] \arrow[d, "k"'] \arrow[rd, "ghf"] & Y' \arrow[d, "gi"] \\
Z \arrow[r, "l"']                & W               
\end{tikzcd}
    \]
    forms a chain complex $C'$ and is a strong deformation retract of $C$. 
\end{lemma}

\begin{proof}
    The differentials of $C$ and $C'$ may be expressed by matrices
    \[
        D = \begin{pmatrix}
            d_X & & & \\
            f & d_Y & & \\
            k & & d_Z & \\
            & g & l & d_W
        \end{pmatrix},
        \
        D' = \begin{pmatrix}
            d_X & & & \\
            rf & d_{Y'} & & \\
            k & & d_Z & \\
            ghf & gi & l & d_W
        \end{pmatrix}
    \]
    and one can see that $D^2 = 0$ implies $(D')^2 = 0$. Furthermore, one can check that 
    \[
        R = \begin{pmatrix}
            1 & & & \\
            & r & & \\
            & & 1 & \\
            & gh & & 1
        \end{pmatrix},\ 
        I = \begin{pmatrix}
            1 & & & \\
            hf & i & & \\
            & & 1 & \\
            & & & 1
        \end{pmatrix},\ 
        H = \begin{pmatrix}
            0 & & & \\
            & h & & \\
            & & 0 & \\
            & & & 0
        \end{pmatrix}
    \]
    gives a strong deformation retract $R: C \rightarrow C'$ with inclusion $I$ and homotopy $H$. 
\end{proof}

\begin{figure}[t]
    \centering
    \begin{subfigure}{0.48\textwidth}
        \centering
        \tikzset{every picture/.style={line width=0.75pt}} 

\begin{tikzpicture}[x=0.75pt,y=0.75pt,yscale=-1,xscale=1]

\draw [color={rgb, 255:red, 160; green, 160; blue, 160 }][line width=1.5]    (42.58,52.9) .. controls (65.66,49.93) and (73.15,33.89) .. (77.05,22.78) ;
\draw [shift={(78,20)}, rotate = 108.43] [color={rgb, 255:red, 160; green, 160; blue, 160 }][line width=1.5]    (14.21,-4.28) .. controls (9.04,-1.82) and (4.3,-0.39) .. (0,0) .. controls (4.3,0.39) and (9.04,1.82) .. (14.21,4.28)   ;
\draw [color={rgb, 255:red, 160; green, 160; blue, 160 }][line width=1.5]    (69.22,53.31) .. controls (46.13,50.33) and (38.64,34.29) .. (34.74,23.18) ;
\draw [shift={(33.79,20.4)}, rotate = 71.57] [color={rgb, 255:red, 160; green, 160; blue, 160 }][line width=1.5]    (14.21,-4.28) .. controls (9.04,-1.82) and (4.3,-0.39) .. (0,0) .. controls (4.3,0.39) and (9.04,1.82) .. (14.21,4.28)   ;

\draw    (26,75.84) -- (26,100) ;
\draw    (40,75.84) -- (40,100) ;
\draw    (70,75.84) -- (70,100) ;
\draw    (85,75.84) -- (85,100) ;
\draw    (40,75.84) .. controls (40.33,68.05) and (26.33,68.05) .. (26,75.84) ;
\draw    (85,75.84) .. controls (85.33,69.69) and (70.33,68.05) .. (70,75.84) ;

\draw    (70.33,56) .. controls (70.33,62) and (68.33,61) .. (63.33,59) .. controls (58.33,57) and (50.92,55.28) .. (41.58,47.9) .. controls (32.23,40.52) and (25.83,30.27) .. (26,10.5) ;
\draw    (70.33,56) .. controls (70.51,45.52) and (70.5,48.5) .. (70.5,40.5) .. controls (70.5,32.5) and (42,30.5) .. (41,11.5) ;
\draw    (40.34,58.24) .. controls (40.47,52.69) and (40.77,43.41) .. (41,39.5) .. controls (41.23,35.59) and (71,28.5) .. (70.5,11) ;
\draw    (40.34,58.24) .. controls (40.67,60.67) and (42.17,60.67) .. (45.67,59.17) .. controls (49.17,57.67) and (61.37,53.47) .. (69.22,48.31) .. controls (77.06,43.14) and (85.5,30.8) .. (85.5,10.5) ;
\draw [color={rgb, 255:red, 208; green, 2; blue, 27 }  ,draw opacity=1 ][line width=1.5]    (36,70.67) -- (41.34,60.24) ;
\draw [color={rgb, 255:red, 208; green, 2; blue, 27 }  ,draw opacity=1 ][line width=1.5]    (76.8,70.29) -- (69.8,60.29) ;

\draw (28,57) node [anchor=north west][inner sep=0.75pt]    {$\textcolor[rgb]{0.82,0.01,0.11}{e}$};
\draw (78,52.9) node [anchor=north west][inner sep=0.75pt]    {$\textcolor[rgb]{0.82,0.01,0.11}{e'}$};
\draw (85,18.9) node [anchor=north west][inner sep=0.75pt]    {$\textcolor[rgb]{0.5,0.5,0.5}{G}$};
\draw (8,18.4) node [anchor=north west][inner sep=0.75pt]    {$\textcolor[rgb]{0.5,0.5,0.5}{G'}$};

\end{tikzpicture}
        \caption{$D^+$}
        \label{subfig:simplify-A}
        \vspace{2em}
    \end{subfigure}
    \begin{subfigure}{0.48\textwidth}
        \centering
        \tikzset{every picture/.style={line width=0.75pt}} 

\begin{tikzpicture}[x=0.75pt,y=0.75pt,yscale=-1,xscale=1]

\draw [color={rgb, 255:red, 160; green, 160; blue, 160 }][line width=1.5]    (42.58,52.9) .. controls (65.66,49.93) and (73.15,33.89) .. (77.05,22.78) ;
\draw [shift={(78,20)}, rotate = 108.43] [color={rgb, 255:red, 160; green, 160; blue, 160 }][line width=1.5]    (14.21,-4.28) .. controls (9.04,-1.82) and (4.3,-0.39) .. (0,0) .. controls (4.3,0.39) and (9.04,1.82) .. (14.21,4.28)   ;
\draw [color={rgb, 255:red, 160; green, 160; blue, 160 }][line width=1.5]    (69.22,53.31) .. controls (46.13,50.33) and (38.64,34.29) .. (34.74,23.18) ;
\draw [shift={(33.79,20.4)}, rotate = 71.57] [color={rgb, 255:red, 160; green, 160; blue, 160 }][line width=1.5]    (14.21,-4.28) .. controls (9.04,-1.82) and (4.3,-0.39) .. (0,0) .. controls (4.3,0.39) and (9.04,1.82) .. (14.21,4.28)   ;

\draw    (26,75.84) -- (26,100) ;
\draw    (40,75.84) -- (40,100) ;
\draw    (70,75.84) -- (70,100) ;
\draw    (85,75.84) -- (85,100) ;
\draw    (40.34,58.24) .. controls (40.02,61.44) and (26.33,68.05) .. (26,75.84) ;
\draw    (85,75.84) .. controls (85.33,69.69) and (72.02,63.44) .. (70.33,56) ;
\draw    (70,75.84) .. controls (70.52,69.94) and (68.33,61) .. (63.33,59) .. controls (58.33,57) and (50.92,55.28) .. (41.58,47.9) .. controls (32.23,40.52) and (25.83,30.27) .. (26,10.5) ;
\draw    (70.33,56) .. controls (70.51,45.52) and (70.5,48.5) .. (70.5,40.5) .. controls (70.5,32.5) and (42,30.5) .. (41,11.5) ;
\draw    (40.34,58.24) .. controls (40.47,52.69) and (40.77,43.41) .. (41,39.5) .. controls (41.23,35.59) and (71,28.5) .. (70.5,11) ;
\draw    (40,75.84) .. controls (40.02,70.94) and (39.87,68.71) .. (44.52,64.44) .. controls (49.17,60.17) and (61.37,53.47) .. (69.22,48.31) .. controls (77.06,43.14) and (85.5,30.8) .. (85.5,10.5) ;
\draw [color={rgb, 255:red, 208; green, 2; blue, 27 }  ,draw opacity=1 ][line width=1.5]    (35.5,62.67) -- (41.52,67.94) ;
\draw [color={rgb, 255:red, 208; green, 2; blue, 27 }  ,draw opacity=1 ][line width=1.5]    (75.52,63.94) -- (69.3,68.29) ;

\draw (27.5,54) node [anchor=north west][inner sep=0.75pt]    {$\textcolor[rgb]{0.82,0.01,0.11}{e}$};
\draw (79,50.4) node [anchor=north west][inner sep=0.75pt]    {$\textcolor[rgb]{0.82,0.01,0.11}{e'}$};
\draw (85,18.9) node [anchor=north west][inner sep=0.75pt]    {$\textcolor[rgb]{0.5,0.5,0.5}{G}$};
\draw (8,18.4) node [anchor=north west][inner sep=0.75pt]    {$\textcolor[rgb]{0.5,0.5,0.5}{G'}$};

\end{tikzpicture}
        \caption{$D^-$}
        \label{subfig:simplify-B}
        \vspace{2em}
    \end{subfigure}

    \begin{subfigure}{0.48\textwidth}
        \centering
        \tikzset{every picture/.style={line width=0.75pt}} 

\begin{tikzpicture}[x=0.75pt,y=0.75pt,yscale=-1,xscale=1]

\draw    (78.75,11.39) .. controls (79,23.39) and (70,23.89) .. (70,11.64) ;
\draw    (42.01,49.61) .. controls (41.76,37.61) and (50.76,37.11) .. (50.76,49.36) ;
\draw    (42.01,11.17) .. controls (41.76,23.17) and (50.76,23.67) .. (50.76,11.42) ;
\draw    (78.75,49.39) .. controls (79,37.39) and (70,36.89) .. (70,49.14) ;

\draw    (197.5,11.13) -- (168.5,49.98) ;
\draw    (188.5,10.8) -- (159.5,49.65) ;
\draw    (160.25,11.23) .. controls (160,23.23) and (169,23.73) .. (169,11.48) ;
\draw    (196.99,49.45) .. controls (197.24,37.45) and (188.24,36.95) .. (188.24,49.2) ;

\draw    (41.5,121.03) -- (70.5,159.88) ;
\draw    (50.5,120.7) -- (79.5,159.55) ;
\draw    (78.75,121.13) .. controls (79,133.13) and (70,133.63) .. (70,121.38) ;
\draw    (42.01,159.35) .. controls (41.76,147.35) and (50.76,146.85) .. (50.76,159.1) ;

\draw    (197.5,121.53) -- (168.5,160.38) ;
\draw    (188.5,121.2) -- (159.5,160.05) ;
\draw  [color={rgb, 255:red, 255; green, 255; blue, 255 }  ,draw opacity=1 ][fill={rgb, 255:red, 255; green, 255; blue, 255 }  ,fill opacity=1 ] (176.5,133.38) .. controls (176.5,131.72) and (177.84,130.38) .. (179.5,130.38) .. controls (181.16,130.38) and (182.5,131.72) .. (182.5,133.38) .. controls (182.5,135.03) and (181.16,136.38) .. (179.5,136.38) .. controls (177.84,136.38) and (176.5,135.03) .. (176.5,133.38) -- cycle ;
\draw  [color={rgb, 255:red, 255; green, 255; blue, 255 }  ,draw opacity=1 ][fill={rgb, 255:red, 255; green, 255; blue, 255 }  ,fill opacity=1 ] (171,140.63) .. controls (171,138.97) and (172.34,137.63) .. (174,137.63) .. controls (175.66,137.63) and (177,138.97) .. (177,140.63) .. controls (177,142.28) and (175.66,143.63) .. (174,143.63) .. controls (172.34,143.63) and (171,142.28) .. (171,140.63) -- cycle ;
\draw  [color={rgb, 255:red, 255; green, 255; blue, 255 }  ,draw opacity=1 ][fill={rgb, 255:red, 255; green, 255; blue, 255 }  ,fill opacity=1 ] (181,140.3) .. controls (181,138.64) and (182.34,137.3) .. (184,137.3) .. controls (185.66,137.3) and (187,138.64) .. (187,140.3) .. controls (187,141.96) and (185.66,143.3) .. (184,143.3) .. controls (182.34,143.3) and (181,141.96) .. (181,140.3) -- cycle ;
\draw  [color={rgb, 255:red, 255; green, 255; blue, 255 }  ,draw opacity=1 ][fill={rgb, 255:red, 255; green, 255; blue, 255 }  ,fill opacity=1 ] (176,146.38) .. controls (176,144.72) and (177.34,143.38) .. (179,143.38) .. controls (180.66,143.38) and (182,144.72) .. (182,146.38) .. controls (182,148.03) and (180.66,149.38) .. (179,149.38) .. controls (177.34,149.38) and (176,148.03) .. (176,146.38) -- cycle ;
\draw    (160,120.53) -- (189,159.38) ;
\draw    (169,120.2) -- (198,159.05) ;
\draw    (93.5,41) -- (144.5,41) ;
\draw [shift={(146.5,41)}, rotate = 180] [color={rgb, 255:red, 0; green, 0; blue, 0 }  ][line width=0.75]    (10.93,-4.9) .. controls (6.95,-2.3) and (3.31,-0.67) .. (0,0) .. controls (3.31,0.67) and (6.95,2.3) .. (10.93,4.9)   ;
\draw [color={rgb, 255:red, 128; green, 128; blue, 128 }  ,draw opacity=1 ]   (93.5,141) -- (144.5,141) ;
\draw [shift={(146.5,141)}, rotate = 180] [color={rgb, 255:red, 128; green, 128; blue, 128 }  ,draw opacity=1 ][line width=0.75]    (10.93,-4.9) .. controls (6.95,-2.3) and (3.31,-0.67) .. (0,0) .. controls (3.31,0.67) and (6.95,2.3) .. (10.93,4.9)   ;
\draw    (61.5,58) -- (61.5,107) ;
\draw [shift={(61.5,109)}, rotate = 270] [color={rgb, 255:red, 0; green, 0; blue, 0 }  ][line width=0.75]    (10.93,-4.9) .. controls (6.95,-2.3) and (3.31,-0.67) .. (0,0) .. controls (3.31,0.67) and (6.95,2.3) .. (10.93,4.9)   ;
\draw [color={rgb, 255:red, 128; green, 128; blue, 128 }  ,draw opacity=1 ]   (186,58) -- (186,105.75) -- (186,107) ;
\draw [shift={(186,109)}, rotate = 270] [color={rgb, 255:red, 128; green, 128; blue, 128 }  ,draw opacity=1 ][line width=0.75]    (10.93,-4.9) .. controls (6.95,-2.3) and (3.31,-0.67) .. (0,0) .. controls (3.31,0.67) and (6.95,2.3) .. (10.93,4.9)   ;
\draw    (128.91,11.31) .. controls (129.07,18.98) and (123.32,19.3) .. (123.32,11.47) ;
\draw    (105.42,35.75) .. controls (105.26,28.08) and (111.01,27.76) .. (111.01,35.59) ;
\draw    (105.42,11.17) .. controls (105.26,18.84) and (111.01,19.16) .. (111.01,11.33) ;
\draw    (128.91,35.61) .. controls (129.07,27.94) and (123.32,27.62) .. (123.32,35.45) ;

\draw [color={rgb, 255:red, 208; green, 2; blue, 27 }  ,draw opacity=1 ][line width=1.5]    (110.21,30.88) -- (125.11,15.97) ;

\draw    (54.43,67.8) .. controls (54.59,75.17) and (49.07,75.47) .. (49.07,67.96) ;
\draw    (31.89,91.25) .. controls (31.74,83.89) and (37.26,83.58) .. (37.26,91.1) ;
\draw    (31.89,67.67) .. controls (31.74,75.03) and (37.26,75.34) .. (37.26,67.82) ;
\draw    (54.43,91.11) .. controls (54.59,83.75) and (49.07,83.45) .. (49.07,90.96) ;

\draw [color={rgb, 255:red, 208; green, 2; blue, 27 }  ,draw opacity=1 ][line width=1.5]    (35.33,72.85) -- (49.22,86.73) ;

\draw    (86.5,57) -- (150.34,120.84) ;
\draw [shift={(151.75,122.25)}, rotate = 225] [color={rgb, 255:red, 0; green, 0; blue, 0 }  ][line width=0.75]    (10.93,-4.9) .. controls (6.95,-2.3) and (3.31,-0.67) .. (0,0) .. controls (3.31,0.67) and (6.95,2.3) .. (10.93,4.9)   ;
\draw  [color={rgb, 255:red, 255; green, 255; blue, 255 }  ,draw opacity=1 ][fill={rgb, 255:red, 255; green, 255; blue, 255 }  ,fill opacity=1 ] (90.38,70) -- (156.62,70) -- (156.62,100.75) -- (90.38,100.75) -- cycle ;
\draw    (115.61,72.87) .. controls (115.77,80.54) and (110.01,80.86) .. (110.01,73.03) ;
\draw    (92.11,97.31) .. controls (91.95,89.64) and (97.7,89.32) .. (97.7,97.15) ;
\draw    (92.11,72.73) .. controls (91.95,80.4) and (97.7,80.72) .. (97.7,72.89) ;
\draw    (115.61,97.17) .. controls (115.77,89.49) and (110.01,89.17) .. (110.01,97.01) ;

\draw    (153.98,72.87) .. controls (154.14,80.54) and (148.38,80.86) .. (148.38,73.03) ;
\draw    (130.48,97.31) .. controls (130.32,89.64) and (136.07,89.32) .. (136.07,97.15) ;
\draw    (130.48,72.73) .. controls (130.32,80.4) and (136.07,80.72) .. (136.07,72.89) ;
\draw    (153.98,97.17) .. controls (154.14,89.49) and (148.38,89.17) .. (148.38,97.01) ;

\draw [color={rgb, 255:red, 208; green, 2; blue, 27 }  ,draw opacity=1 ][line width=1.5]    (94.98,78.37) .. controls (101.7,83.97) and (108.41,82.69) .. (112.57,78.85) ;
\draw [color={rgb, 255:red, 208; green, 2; blue, 27 }  ,draw opacity=1 ][line width=1.5]    (133.35,91.16) .. controls (141.03,86.53) and (144.23,86.21) .. (150.94,91.64) ;

\draw (116.58,75.35) node [anchor=north west][inner sep=0.75pt]  [font=\large]  {$+$};

\end{tikzpicture}
        \caption{$E^+$}
        \label{subfig:simplify-A-cpx}
        \vspace{1em}
    \end{subfigure}
    \begin{subfigure}{0.48\textwidth}
        \centering
        \tikzset{every picture/.style={line width=0.75pt}} 

\begin{tikzpicture}[x=0.75pt,y=0.75pt,yscale=-1,xscale=1]

\draw    (198.25,120.39) .. controls (198.5,132.39) and (189.5,132.89) .. (189.5,120.64) ;
\draw    (161.51,158.61) .. controls (161.26,146.61) and (170.26,146.11) .. (170.26,158.36) ;
\draw    (161.51,120.17) .. controls (161.26,132.17) and (170.26,132.67) .. (170.26,120.42) ;
\draw    (198.25,158.39) .. controls (198.5,146.39) and (189.5,145.89) .. (189.5,158.14) ;

\draw    (80,120.63) -- (51,159.48) ;
\draw    (71,120.3) -- (42,159.15) ;
\draw    (42.75,120.73) .. controls (42.5,132.73) and (51.5,133.23) .. (51.5,120.98) ;
\draw    (79.49,158.95) .. controls (79.74,146.95) and (70.74,146.45) .. (70.74,158.7) ;

\draw    (161.5,11.53) -- (190.5,50.38) ;
\draw    (170.5,11.2) -- (199.5,50.05) ;
\draw    (198.75,11.63) .. controls (199,23.63) and (190,24.13) .. (190,11.88) ;
\draw    (162.01,49.85) .. controls (161.76,37.85) and (170.76,37.35) .. (170.76,49.6) ;

\draw    (78.5,11.53) -- (49.5,50.38) ;
\draw    (69.5,11.2) -- (40.5,50.05) ;
\draw  [color={rgb, 255:red, 255; green, 255; blue, 255 }  ,draw opacity=1 ][fill={rgb, 255:red, 255; green, 255; blue, 255 }  ,fill opacity=1 ] (57.5,23.38) .. controls (57.5,21.72) and (58.84,20.38) .. (60.5,20.38) .. controls (62.16,20.38) and (63.5,21.72) .. (63.5,23.38) .. controls (63.5,25.03) and (62.16,26.38) .. (60.5,26.38) .. controls (58.84,26.38) and (57.5,25.03) .. (57.5,23.38) -- cycle ;
\draw  [color={rgb, 255:red, 255; green, 255; blue, 255 }  ,draw opacity=1 ][fill={rgb, 255:red, 255; green, 255; blue, 255 }  ,fill opacity=1 ] (52,30.63) .. controls (52,28.97) and (53.34,27.63) .. (55,27.63) .. controls (56.66,27.63) and (58,28.97) .. (58,30.63) .. controls (58,32.28) and (56.66,33.63) .. (55,33.63) .. controls (53.34,33.63) and (52,32.28) .. (52,30.63) -- cycle ;
\draw  [color={rgb, 255:red, 255; green, 255; blue, 255 }  ,draw opacity=1 ][fill={rgb, 255:red, 255; green, 255; blue, 255 }  ,fill opacity=1 ] (62,30.3) .. controls (62,28.64) and (63.34,27.3) .. (65,27.3) .. controls (66.66,27.3) and (68,28.64) .. (68,30.3) .. controls (68,31.96) and (66.66,33.3) .. (65,33.3) .. controls (63.34,33.3) and (62,31.96) .. (62,30.3) -- cycle ;
\draw  [color={rgb, 255:red, 255; green, 255; blue, 255 }  ,draw opacity=1 ][fill={rgb, 255:red, 255; green, 255; blue, 255 }  ,fill opacity=1 ] (57,36.38) .. controls (57,34.72) and (58.34,33.38) .. (60,33.38) .. controls (61.66,33.38) and (63,34.72) .. (63,36.38) .. controls (63,38.03) and (61.66,39.38) .. (60,39.38) .. controls (58.34,39.38) and (57,38.03) .. (57,36.38) -- cycle ;
\draw    (41,10.53) -- (70,49.38) ;
\draw    (50,10.2) -- (79,49.05) ;
\draw [color={rgb, 255:red, 128; green, 128; blue, 128 }  ,draw opacity=1 ]   (93.5,41) -- (144.5,41) ;
\draw [shift={(146.5,41)}, rotate = 180] [color={rgb, 255:red, 128; green, 128; blue, 128 }  ,draw opacity=1 ][line width=0.75]    (10.93,-4.9) .. controls (6.95,-2.3) and (3.31,-0.67) .. (0,0) .. controls (3.31,0.67) and (6.95,2.3) .. (10.93,4.9)   ;
\draw [color={rgb, 255:red, 0; green, 0; blue, 0 }  ,draw opacity=1 ]   (95.5,133.5) -- (146.5,133.5) ;
\draw [shift={(148.5,133.5)}, rotate = 180] [color={rgb, 255:red, 0; green, 0; blue, 0 }  ,draw opacity=1 ][line width=0.75]    (10.93,-4.9) .. controls (6.95,-2.3) and (3.31,-0.67) .. (0,0) .. controls (3.31,0.67) and (6.95,2.3) .. (10.93,4.9)   ;
\draw [color={rgb, 255:red, 128; green, 128; blue, 128 }  ,draw opacity=1 ]   (61.5,58) -- (61.5,107) ;
\draw [shift={(61.5,109)}, rotate = 270] [color={rgb, 255:red, 128; green, 128; blue, 128 }  ,draw opacity=1 ][line width=0.75]    (10.93,-4.9) .. controls (6.95,-2.3) and (3.31,-0.67) .. (0,0) .. controls (3.31,0.67) and (6.95,2.3) .. (10.93,4.9)   ;
\draw [color={rgb, 255:red, 0; green, 0; blue, 0 }  ,draw opacity=1 ]   (172,58) -- (172,105.75) -- (172,107) ;
\draw [shift={(172,109)}, rotate = 270] [color={rgb, 255:red, 0; green, 0; blue, 0 }  ,draw opacity=1 ][line width=0.75]    (10.93,-4.9) .. controls (6.95,-2.3) and (3.31,-0.67) .. (0,0) .. controls (3.31,0.67) and (6.95,2.3) .. (10.93,4.9)   ;
\draw [color={rgb, 255:red, 208; green, 2; blue, 27 }  ,draw opacity=1 ][line width=1.5]    (189.29,79.81) -- (192.5,76.6) ;
\draw [color={rgb, 255:red, 208; green, 2; blue, 27 }  ,draw opacity=1 ][line width=1.5]    (118.38,150.19) -- (120.94,152.75) ;
\draw    (86.5,57) -- (150.34,120.84) ;
\draw [shift={(151.75,122.25)}, rotate = 225] [color={rgb, 255:red, 0; green, 0; blue, 0 }  ][line width=0.75]    (10.93,-4.9) .. controls (6.95,-2.3) and (3.31,-0.67) .. (0,0) .. controls (3.31,0.67) and (6.95,2.3) .. (10.93,4.9)   ;
\draw  [color={rgb, 255:red, 255; green, 255; blue, 255 }  ,draw opacity=1 ][fill={rgb, 255:red, 255; green, 255; blue, 255 }  ,fill opacity=1 ] (88.38,71.5) -- (154.62,71.5) -- (154.62,102.25) -- (88.38,102.25) -- cycle ;
\draw    (113.61,74.37) .. controls (113.77,82.04) and (108.01,82.36) .. (108.01,74.53) ;
\draw    (90.11,98.81) .. controls (93,87.25) and (111,87.75) .. (113.61,98.67) ;
\draw    (90.11,74.23) .. controls (89.95,81.9) and (95.7,82.22) .. (95.7,74.39) ;
\draw    (108.5,98.75) .. controls (106.5,94.75) and (99.5,93.25) .. (95.51,98.51) ;
\draw    (146.5,75.25) .. controls (147.5,81.75) and (134.5,82.25) .. (133.88,74.53) ;
\draw    (128.48,98.81) .. controls (128.32,91.14) and (134.07,90.82) .. (134.07,98.65) ;
\draw    (128.48,74.23) .. controls (128.5,86.75) and (152,88.25) .. (151.98,74.37) ;
\draw    (151.98,98.67) .. controls (152.14,90.99) and (146.38,90.67) .. (146.38,98.51) ;
\draw [color={rgb, 255:red, 208; green, 2; blue, 27 }  ,draw opacity=1 ][line width=1.5]    (102,91) -- (102,95.25) ;
\draw [color={rgb, 255:red, 208; green, 2; blue, 27 }  ,draw opacity=1 ][line width=1.5]    (140.5,79.5) -- (140.5,83.75) ;
\draw    (181,68.38) -- (196.58,89.25) ;
\draw    (185.84,68.2) -- (201.42,89.07) ;
\draw    (201.01,68.43) .. controls (201.15,74.88) and (196.31,75.15) .. (196.31,68.56) ;
\draw    (181.27,88.97) .. controls (181.14,82.52) and (185.97,82.25) .. (185.97,88.83) ;

\draw    (129.35,141.47) -- (114.58,161.25) ;
\draw    (124.77,141.3) -- (110,161.08) ;
\draw    (110.38,141.52) .. controls (110.25,147.63) and (114.84,147.88) .. (114.84,141.65) ;
\draw    (129.09,160.98) .. controls (129.22,154.87) and (124.64,154.62) .. (124.64,160.85) ;

\draw (114.58,74.85) node [anchor=north west][inner sep=0.75pt]  [font=\large]  {$+$};

\end{tikzpicture}
        \caption{$E^-$}
        \label{subfig:simplify-B-cpx}
        \vspace{1em}
    \end{subfigure}
    \caption{Simplifying $\CKh(D^+)$ and $\CKh(D^-)$}
    \label{fig:cone-simplify}
\end{figure}

\begin{lemma}
\label{lem:simplify-D}
    Let $D^+, D^-$ be diagrams that differ locally as in \Cref{fig:gxch-half}. The complex $\CKh(D^+)$ strongly deformation retracts onto the complex $E$ described in \Cref{subfig:simplify-A-cpx}. Similarly, the complex $\CKh(D^-)$ strongly deformation retracts onto the complex $E^-$ described in \Cref{subfig:simplify-B-cpx}. (Descriptions for some arrows are omitted, since we will not use them.)
\end{lemma}

\begin{proof}
    By considering the $0$ and $1$ resolutions of the bottom left and the bottom right crossings of $D^+$, we see that the complex $\CKh(D^+)$ can be expressed as a square of complexes
    \[  
\begin{tikzcd}
\CKh(D^+_{00}) \arrow[r, "e"] \arrow[d, "e'"] & \CKh(D^+_{10}) \arrow[d, "e'"] \\
\CKh(D^+_{01}) \arrow[r, "e"] & \CKh(D^+_{11})
\end{tikzcd}
    \]
    where $e, e'$ denotes the surgery map corresponding to the bottom left and the bottom right crossings respectively. Observe that the resolved diagrams $D^+_{00}$, $D^+_{10}$, $D^+_{01}$ can be simplified by performing R2 moves (see \Cref{subfig:simplify-A}). 
    Since the map $G$ for the R2-move defined in \cite[Section 4.2]{BarNatan:2004} is a strong deformation retract, we may apply \Cref{lem:cone-rtr} repeatedly and obtain a strong deformation retract $E^+$ of $\CKh(D^+)$ of the form
    \begin{center}
\begin{tikzcd}[row sep=4em, column sep=4em]
E^+_{00} \arrow[r, "G'eF"] \arrow[d, "e'"] \arrow[rd, "e'H'eF"] & E^+_{10} \arrow[d, "e'F'"] \\
E^+_{01} \arrow[r, "eF"] & E^+_{11}
\end{tikzcd}
    \end{center}
    Here, $G, G'$ are the maps for the R2-moves, $F, F'$ and $H, H'$ are the corresponding inclusions and homotopies. By unraveling the explicit maps, one can check that the maps are given as in \Cref{subfig:simplify-A-cpx}. Similarly $\CKh(D^-)$ can be expressed as a square of complexes
    \[  
\begin{tikzcd}
\CKh(D^-_{00}) \arrow[r, "e"] \arrow[d, "e'"] & \CKh(D^-_{10}) \arrow[d, "e'"] \\
\CKh(D^-_{01}) \arrow[r, "e"] & \CKh(D^-_{11})
\end{tikzcd}
    \]
    and the resolved diagrams $D^-_{10}$, $D^-_{01}$, $D^-_{11}$ can be simplified by performing R2 moves. Thus we obtain a strong deformation retract $E^-$ of $\CKh(D^-)$ of the form
    \begin{center}
\begin{tikzcd}[row sep=4em, column sep=4em]
E^-_{00} \arrow[r, "Ge"] \arrow[d, "G'e'"] \arrow[rd, "GeH'e'"] & E^-_{10} \arrow[d, "e'"] \\
E^-_{01} \arrow[r, "GeF'"] & E^-_{11}
\end{tikzcd}
    \end{center}
    and again one can check that the maps are given as in \Cref{subfig:simplify-B-cpx}.
\end{proof}

\begin{proof}[Proof of \Cref{prop:gx-ch}]
    First we define chain maps
    \[
\begin{tikzcd}
E^+ \arrow[r, "\phi^-", shift left] & E^- \arrow[l, "\phi^+", shift left]
\end{tikzcd}
    \]
    between the two simplified complexes $E^+, E^-$ of \Cref{lem:simplify-D}. For $\phi^-$, since $E^+_{00}$ and $E^-_{11}$ are identical, we may define $\phi^- = \id$ on $E^+_{00}$ and $0$ elsewhere. Obviously this is a chain map, since we have $\phi^-d^+ = 0 = d^-\phi^-$. Next, $\phi^+$ is defined by the sum of the following four maps $\phi^+_1, \phi^+_2, \phi^+_3, \phi^+_4$:
\[
\begin{tikzcd}[row sep=2em, column sep=2em]
 & & \underline{E^+_{00}} \arrow[r] \arrow[rd] \arrow[rrd] & E^+_{10} \arrow[rd] & \\
 & & & E^+_{01} \arrow[r] & E^+_{11} \\
E^-_{00} \arrow[rd] \arrow[r] \arrow[rrd] \arrow[rrrru, "\phi^+_4", pos=.4, dashed] \arrow[rruu, "\phi^+_3", dashed] & E^-_{10} \arrow[rd] & & & \\
 & E^-_{01} \arrow[r] & \underline{E^-_{11}} \arrow[uuu, "\phi^+_1"', pos=.4, dashed] \arrow[rruu, "\phi^+_2", dashed] &                   &       
\end{tikzcd}
\]
    with explicit descriptions:
    \begin{center}
        \vspace{1em}
        \input{tikzpictures/gxch-map}
        \vspace{1em}
    \end{center}
    Note that $E^+_{00}$ and $E^-_{11}$ only have homological grading $0$ (within the displayed area), whereas $E^+_{11}$ has homological grading in range $[0, 4]$ and $E^-_{00}$ has homological grading in range $[-4, 0]$. One can see that each $\phi^+_i$ has domain and codomain in the homological grading $0$ parts. 
    
    \begin{figure}[t]
        \centering
        \begin{tabular}{ccc}
\begin{tikzcd}
E^-_{10}(-1) \arrow[r, "d"] \arrow[d] & E^-_{11}(0) \arrow[d, "\phi^+_1"] \\
0 \arrow[r] & E^+_{00}(0)                   
\end{tikzcd}
        & 
\begin{tikzcd}
E^-_{01}(-1) \arrow[r, "d"] \arrow[d] & E^-_{11}(0) \arrow[d, "\phi^+_1"] \\
0 \arrow[r] & E^+_{00}(0)                   
\end{tikzcd}
        &
\begin{tikzcd}
E^-_{00}(-1) \arrow[r, "d"] \arrow[d, "d"] & E^-_{11}(0) \arrow[d, "\phi^+_1"] \\
E^-_{00}(0) \arrow[r, "\phi^+_3"] & E^+_{00}(0)                   
\end{tikzcd}
        \vspace{1em}
        \\
\begin{tikzcd}
E^-_{00}(-1) \arrow[r, "d"] \arrow[d, "d"] & E^-_{11}(0) \arrow[d, "\phi^+_2"] \\
E^-_{00}(0) \arrow[r, "\phi^+_4"] & E^+_{11}(0)                   
\end{tikzcd}
        &
\begin{tikzcd}
E^-_{00}(0) \arrow[r, "\phi^+_3"] \arrow[d] & E^+_{00}(0) \arrow[d, "d"] \\
0 \arrow[r] & E^+_{10}(1)                   
\end{tikzcd} 
        &
\begin{tikzcd}
E^-_{00}(0) \arrow[r, "\phi^+_3"] \arrow[d] & E^+_{00}(0) \arrow[d, "d"] \\
0 \arrow[r] & E^+_{01}(1)                   
\end{tikzcd} 
        \vspace{1em}
        \\
\begin{tikzcd}
E^-_{00}(0) \arrow[r, "\phi^+_3"] \arrow[d, "\phi^+_4"] & E^+_{00}(0) \arrow[d, "d"] \\
E^+_{11}(0) \arrow[r, "d"] & E^+_{11}(1)                   
\end{tikzcd} 
        &
\begin{tikzcd}
E^-_{11}(0) \arrow[r, "\phi^+_1"] \arrow[d] & E^+_{00}(0) \arrow[d, "d"] \\
0 \arrow[r] & E^+_{10}(1)                   
\end{tikzcd} 
        &
\begin{tikzcd}
E^-_{11}(0) \arrow[r, "\phi^+_1"] \arrow[d] & E^+_{00}(0) \arrow[d, "d"] \\
0 \arrow[r] & E^+_{01}(1)                   
\end{tikzcd} 
        \vspace{1em}
        \\
\begin{tikzcd}
E^-_{11}(0) \arrow[r, "\phi^+_1"] \arrow[d, "\phi^+_2"] & E^+_{00}(0) \arrow[d, "d"] \\
E^+_{11}(0) \arrow[r, "d"] & E^+_{11}(1)                   
\end{tikzcd} 
    \end{tabular}
    \caption{}
    \label{fig:phi-comm}
\end{figure}

    To show that $\phi^+ = \sum_i \phi^+_i$ actually defines a chain map, it suffices to verify that all the diagrams displayed in \Cref{fig:phi-comm} commute. Here each number in the parenthesis indicates the homological grading. This can be checked by the explicit descriptions of the maps $\phi^+_i$ and the differentials of $E^+, E^-$ given in \Cref{fig:cone-simplify}. For example, the commutativity of the third diagram is equivalent to
    \begin{center}
        \vspace{1em}
        \tikzset{every picture/.style={line width=0.75pt}} 

\begin{tikzpicture}[x=0.75pt,y=0.75pt,yscale=-1,xscale=1]

\draw    (39.43,10.88) .. controls (39.62,19.92) and (32.84,20.3) .. (32.84,11.07) ;
\draw    (11.76,39.67) .. controls (15.17,26.06) and (36.36,26.64) .. (39.43,39.5) ;
\draw    (11.76,10.72) .. controls (11.57,19.76) and (18.35,20.13) .. (18.35,10.91) ;
\draw    (33.42,39.6) .. controls (31.06,34.89) and (22.82,33.12) .. (18.12,39.31) ;
\draw [color={rgb, 255:red, 208; green, 2; blue, 27 }  ,draw opacity=1 ][line width=1.5]    (25.76,29.77) -- (25.76,34.77) ;

\draw    (100.43,9.88) .. controls (100.62,18.92) and (93.84,19.3) .. (93.84,10.07) ;
\draw    (72.76,38.67) .. controls (76.17,25.06) and (97.36,25.64) .. (100.43,38.5) ;
\draw    (72.76,9.72) .. controls (72.57,18.76) and (79.35,19.13) .. (79.35,9.91) ;
\draw    (94.42,38.6) .. controls (92.06,33.89) and (83.82,32.12) .. (79.12,38.31) ;
\draw [color={rgb, 255:red, 208; green, 2; blue, 27 }  ,draw opacity=1 ][line width=1.5]    (86.76,28.77) -- (86.76,33.77) ;

\draw    (158.43,10.88) .. controls (158.62,19.92) and (151.84,20.3) .. (151.84,11.07) ;
\draw    (130.76,39.67) .. controls (134.17,26.06) and (155.36,26.64) .. (158.43,39.5) ;
\draw    (130.76,10.72) .. controls (130.57,19.76) and (137.35,20.13) .. (137.35,10.91) ;
\draw    (152.42,39.6) .. controls (150.06,34.89) and (141.82,33.12) .. (137.12,39.31) ;
\draw [color={rgb, 255:red, 208; green, 2; blue, 27 }  ,draw opacity=1 ][line width=1.5]    (144.76,29.77) -- (144.76,34.77) ;

\draw    (217.43,10.88) .. controls (217.62,19.92) and (210.84,20.3) .. (210.84,11.07) ;
\draw    (189.76,39.67) .. controls (193.17,26.06) and (214.36,26.64) .. (217.43,39.5) ;
\draw    (189.76,10.72) .. controls (189.57,19.76) and (196.35,20.13) .. (196.35,10.91) ;
\draw    (211.42,39.6) .. controls (209.06,34.89) and (200.82,33.12) .. (196.12,39.31) ;
\draw [color={rgb, 255:red, 208; green, 2; blue, 27 }  ,draw opacity=1 ][line width=1.5]    (203.76,29.77) -- (203.76,34.77) ;

\draw  [fill={rgb, 255:red, 0; green, 0; blue, 0 }  ,fill opacity=1 ] (16,30.63) .. controls (16,29.31) and (17.06,28.25) .. (18.38,28.25) .. controls (19.69,28.25) and (20.75,29.31) .. (20.75,30.63) .. controls (20.75,31.94) and (19.69,33) .. (18.38,33) .. controls (17.06,33) and (16,31.94) .. (16,30.63) -- cycle ;
\draw  [fill={rgb, 255:red, 0; green, 0; blue, 0 }  ,fill opacity=1 ] (31.25,30.63) .. controls (31.25,29.31) and (32.31,28.25) .. (33.63,28.25) .. controls (34.94,28.25) and (36,29.31) .. (36,30.63) .. controls (36,31.94) and (34.94,33) .. (33.63,33) .. controls (32.31,33) and (31.25,31.94) .. (31.25,30.63) -- cycle ;
\draw  [fill={rgb, 255:red, 0; green, 0; blue, 0 }  ,fill opacity=1 ] (91.7,30.13) .. controls (91.7,28.81) and (92.76,27.75) .. (94.07,27.75) .. controls (95.39,27.75) and (96.45,28.81) .. (96.45,30.13) .. controls (96.45,31.44) and (95.39,32.5) .. (94.07,32.5) .. controls (92.76,32.5) and (91.7,31.44) .. (91.7,30.13) -- cycle ;
\draw  [fill={rgb, 255:red, 0; green, 0; blue, 0 }  ,fill opacity=1 ] (94.7,17.63) .. controls (94.7,16.31) and (95.76,15.25) .. (97.07,15.25) .. controls (98.39,15.25) and (99.45,16.31) .. (99.45,17.63) .. controls (99.45,18.94) and (98.39,20) .. (97.07,20) .. controls (95.76,20) and (94.7,18.94) .. (94.7,17.63) -- cycle ;
\draw  [fill={rgb, 255:red, 0; green, 0; blue, 0 }  ,fill opacity=1 ] (131.25,18.38) .. controls (131.25,17.06) and (132.31,16) .. (133.63,16) .. controls (134.94,16) and (136,17.06) .. (136,18.38) .. controls (136,19.69) and (134.94,20.75) .. (133.63,20.75) .. controls (132.31,20.75) and (131.25,19.69) .. (131.25,18.38) -- cycle ;
\draw  [fill={rgb, 255:red, 0; green, 0; blue, 0 }  ,fill opacity=1 ] (135.25,30.88) .. controls (135.25,29.56) and (136.31,28.5) .. (137.63,28.5) .. controls (138.94,28.5) and (140,29.56) .. (140,30.88) .. controls (140,32.19) and (138.94,33.25) .. (137.63,33.25) .. controls (136.31,33.25) and (135.25,32.19) .. (135.25,30.88) -- cycle ;
\draw  [fill={rgb, 255:red, 0; green, 0; blue, 0 }  ,fill opacity=1 ] (190.5,17.88) .. controls (190.5,16.56) and (191.56,15.5) .. (192.88,15.5) .. controls (194.19,15.5) and (195.25,16.56) .. (195.25,17.88) .. controls (195.25,19.19) and (194.19,20.25) .. (192.88,20.25) .. controls (191.56,20.25) and (190.5,19.19) .. (190.5,17.88) -- cycle ;
\draw  [fill={rgb, 255:red, 0; green, 0; blue, 0 }  ,fill opacity=1 ] (211.75,17.88) .. controls (211.75,16.56) and (212.81,15.5) .. (214.13,15.5) .. controls (215.44,15.5) and (216.5,16.56) .. (216.5,17.88) .. controls (216.5,19.19) and (215.44,20.25) .. (214.13,20.25) .. controls (212.81,20.25) and (211.75,19.19) .. (211.75,17.88) -- cycle ;
\draw    (308.33,10.08) .. controls (308.53,19.85) and (301.21,20.25) .. (301.21,10.28) ;
\draw    (278.43,9.9) .. controls (278.23,19.67) and (285.55,20.07) .. (285.55,10.11) ;
\draw [color={rgb, 255:red, 208; green, 2; blue, 27 }  ,draw opacity=1 ][line width=0.75]    (282.09,17.09) .. controls (290.63,24.21) and (299.18,22.58) .. (304.46,17.7) ;
\draw [color={rgb, 255:red, 208; green, 2; blue, 27 }  ,draw opacity=1 ][line width=0.75]    (283.67,15.67) .. controls (291,21.33) and (296.67,21.33) .. (303,16.33) ;
\draw    (279.43,40) .. controls (282.83,26.39) and (304.03,26.98) .. (307.1,39.83) ;
\draw    (301.08,39.93) .. controls (298.73,35.22) and (290.49,33.45) .. (285.79,39.65) ;
\draw [color={rgb, 255:red, 208; green, 2; blue, 27 }  ,draw opacity=1 ][line width=1.5]    (293.43,30.1) -- (293.43,35.1) ;
\draw    (367.66,9.75) .. controls (367.87,19.51) and (360.54,19.92) .. (360.54,9.95) ;
\draw    (337.76,9.57) .. controls (337.56,19.33) and (344.88,19.74) .. (344.88,9.77) ;
\draw [color={rgb, 255:red, 208; green, 2; blue, 27 }  ,draw opacity=1 ][line width=0.75]    (341.42,16.75) .. controls (349.97,23.87) and (358.51,22.25) .. (363.8,17.36) ;
\draw [color={rgb, 255:red, 208; green, 2; blue, 27 }  ,draw opacity=1 ][line width=0.75]    (343,15.33) .. controls (350.33,21) and (356,21) .. (362.33,16) ;
\draw  [fill={rgb, 255:red, 0; green, 0; blue, 0 }  ,fill opacity=1 ] (350.73,20.46) .. controls (350.73,19.15) and (351.8,18.08) .. (353.11,18.08) .. controls (354.42,18.08) and (355.48,19.15) .. (355.48,20.46) .. controls (355.48,21.77) and (354.42,22.83) .. (353.11,22.83) .. controls (351.8,22.83) and (350.73,21.77) .. (350.73,20.46) -- cycle ;
\draw    (338.76,39.67) .. controls (342.17,26.06) and (363.36,26.64) .. (366.43,39.5) ;
\draw    (360.42,39.6) .. controls (358.06,34.89) and (349.82,33.12) .. (345.12,39.31) ;
\draw [color={rgb, 255:red, 208; green, 2; blue, 27 }  ,draw opacity=1 ][line width=1.5]    (352.76,29.77) -- (352.76,34.77) ;
\draw  [fill={rgb, 255:red, 0; green, 0; blue, 0 }  ,fill opacity=1 ] (291.06,29.1) .. controls (291.06,27.79) and (292.12,26.72) .. (293.43,26.72) .. controls (294.74,26.72) and (295.81,27.79) .. (295.81,29.1) .. controls (295.81,30.41) and (294.74,31.47) .. (293.43,31.47) .. controls (292.12,31.47) and (291.06,30.41) .. (291.06,29.1) -- cycle ;

\draw (48.75,16.4) node [anchor=north west][inner sep=0.75pt]    {$+$};
\draw (109.75,15.9) node [anchor=north west][inner sep=0.75pt]    {$+$};
\draw (170.25,15.4) node [anchor=north west][inner sep=0.75pt]    {$+$};
\draw (237.92,15.57) node [anchor=north west][inner sep=0.75pt]    {$=$};
\draw (316.25,18.4) node [anchor=north west][inner sep=0.75pt]    {$+$};

\end{tikzpicture}
    \end{center}
    Here, a doubled arc in the right hand side represents a handle attachment (or a saddle move applied twice), and the equation can be checked using the \textit{neck cutting relation}:
    \begin{center}
        \tikzset{every picture/.style={line width=0.75pt}} 

\begin{tikzpicture}[x=0.75pt,y=0.75pt,yscale=-.7,xscale=.7]

\draw [color={rgb, 255:red, 208; green, 2; blue, 27 }  ,draw opacity=1 ][line width=1.5]    (18,33) -- (54,33) ;
\draw [color={rgb, 255:red, 208; green, 2; blue, 27 }  ,draw opacity=1 ][line width=1.5]    (18,39) -- (54,39) ;
\draw  [fill={rgb, 255:red, 0; green, 0; blue, 0 }  ,fill opacity=1 ] (109.7,37.9) .. controls (109.7,35.06) and (112.01,32.75) .. (114.85,32.75) .. controls (117.69,32.75) and (120,35.06) .. (120,37.9) .. controls (120,40.74) and (117.69,43.05) .. (114.85,43.05) .. controls (112.01,43.05) and (109.7,40.74) .. (109.7,37.9) -- cycle ;
\draw  [fill={rgb, 255:red, 0; green, 0; blue, 0 }  ,fill opacity=1 ] (231.7,37.9) .. controls (231.7,35.06) and (234.01,32.75) .. (236.85,32.75) .. controls (239.69,32.75) and (242,35.06) .. (242,37.9) .. controls (242,40.74) and (239.69,43.05) .. (236.85,43.05) .. controls (234.01,43.05) and (231.7,40.74) .. (231.7,37.9) -- cycle ;
\draw [color={rgb, 255:red, 0; green, 0; blue, 0 }  ,draw opacity=1 ][line width=1.5]    (10.53,62.28) .. controls (21.6,49.03) and (20.6,23.03) .. (10.89,11.72) ;
\draw [color={rgb, 255:red, 0; green, 0; blue, 0 }  ,draw opacity=1 ][line width=1.5]    (62.67,62.28) .. controls (51.6,49.03) and (52.6,23.03) .. (62.31,11.72) ;

\draw [color={rgb, 255:red, 0; green, 0; blue, 0 }  ,draw opacity=1 ][line width=1.5]    (108.53,62.28) .. controls (119.6,49.03) and (118.6,23.03) .. (108.89,11.72) ;
\draw [color={rgb, 255:red, 0; green, 0; blue, 0 }  ,draw opacity=1 ][line width=1.5]    (160.67,62.28) .. controls (149.6,49.03) and (150.6,23.03) .. (160.31,11.72) ;

\draw [color={rgb, 255:red, 0; green, 0; blue, 0 }  ,draw opacity=1 ][line width=1.5]    (194.53,62.28) .. controls (205.6,49.03) and (204.6,23.03) .. (194.89,11.72) ;
\draw [color={rgb, 255:red, 0; green, 0; blue, 0 }  ,draw opacity=1 ][line width=1.5]    (246.67,62.28) .. controls (235.6,49.03) and (236.6,23.03) .. (246.31,11.72) ;

\draw [color={rgb, 255:red, 0; green, 0; blue, 0 }  ,draw opacity=1 ][line width=1.5]    (288.53,62.28) .. controls (299.6,49.03) and (298.6,23.03) .. (288.89,11.72) ;
\draw [color={rgb, 255:red, 0; green, 0; blue, 0 }  ,draw opacity=1 ][line width=1.5]    (340.67,62.28) .. controls (329.6,49.03) and (330.6,23.03) .. (340.31,11.72) ;

\draw (77,27.4) node [anchor=north west][inner sep=0.75pt]    {$=$};
\draw (168,27.4) node [anchor=north west][inner sep=0.75pt]    {$+$};
\draw (252,27.4) node [anchor=north west][inner sep=0.75pt]    {$+$};
\draw (275,30.4) node [anchor=north west][inner sep=0.75pt]    {$h$};

\end{tikzpicture}
    \end{center}
    Verifications are left to the reader. Now the desired maps $\Phi^\pm$ between $\CKh(D^+)$ and $\CKh(D^-)$ are defined as 
    \[
    \begin{tikzcd}
\CKh(D^+) 
    \arrow[r, "R^+", shift left] & 
E^+ 
    \arrow[r, "\phi^-", shift left] 
    \arrow[l, "I^+", shift left] & 
E^- 
    \arrow[r, "I^-", shift left]
    \arrow[l, "\phi^+", shift left] & 
\CKh(D^-).
    \arrow[l, "R^-", shift left]
    \end{tikzcd}    
    \]
    where $R^\pm$ and $I^\pm$ are the retractions and inclusions respectively. 

    Finally, the correspondence between the Lee classes can be checked by comparing those images in $E^+$ and $E^-$ under the retractions $R^\pm$. The retractions are given by the following matrices
    \[
        R^+ = \begin{pmatrix}
            G & & & \\
            G'eH & G' & & \\
            & & G & \\
            e'H'eH & e'H' & eH & 1
        \end{pmatrix},\ 
        R^- = \begin{pmatrix}
            1 & & & \\
            & G & & \\
            & & G' & \\
            & & GeH' & G
        \end{pmatrix}.
    \]
    The image of $\ca(D^+) \in \CKh(D^+_{00})$ can be directly computed as 
    \begin{center}
        \tikzset{every picture/.style={line width=0.75pt}} 

\begin{tikzpicture}[x=0.75pt,y=0.75pt,yscale=-1,xscale=1]

\draw [color={rgb, 255:red, 208; green, 2; blue, 27 }  ,draw opacity=1 ][line width=1.5]    (82.5,86.5) .. controls (82.33,77.13) and (82.75,80.5) .. (83.25,71.75) .. controls (83.75,63) and (95,62) .. (96.5,62.5) .. controls (98,63) and (97.25,63.75) .. (97.25,68) .. controls (97.25,72.25) and (97.58,76.52) .. (97.5,84.5) ;
\draw [color={rgb, 255:red, 208; green, 2; blue, 27 }  ,draw opacity=1 ][line width=1.5]    (144.5,87.5) .. controls (144.67,78.13) and (144.75,80.5) .. (144.25,71.75) .. controls (143.75,63) and (132.5,62) .. (131,62.5) .. controls (129.5,63) and (130.25,63.75) .. (130.25,68) .. controls (130.25,72.25) and (130.42,77.52) .. (130.5,85.5) ;
\draw [color={rgb, 255:red, 74; green, 144; blue, 226 }  ,draw opacity=1 ][line width=1.5]    (83.25,10.75) .. controls (83.03,16.58) and (84.04,29.82) .. (84.5,32.5) .. controls (84.96,35.18) and (92.63,41.72) .. (96.56,47.11) .. controls (100.5,52.5) and (97.14,60.37) .. (99.25,59.5) .. controls (101.36,58.63) and (104.75,55.25) .. (112.75,55.75) ;
\draw [color={rgb, 255:red, 208; green, 2; blue, 27 }  ,draw opacity=1 ][line width=1.5]    (114.75,32.25) .. controls (110,32.25) and (100.02,38.16) .. (99.75,43) .. controls (99.48,47.84) and (107.85,53.4) .. (114,53) ;
\draw [color={rgb, 255:red, 74; green, 144; blue, 226 }  ,draw opacity=1 ][line width=1.5]    (98.72,11.75) .. controls (98.72,21.75) and (111.47,30) .. (114.22,30) ;
\draw [color={rgb, 255:red, 74; green, 144; blue, 226 }  ,draw opacity=1 ][line width=1.5]    (144.72,10.75) .. controls (144.94,16.58) and (143.93,29.82) .. (143.47,32.5) .. controls (143.01,35.18) and (135.34,41.72) .. (131.4,47.11) .. controls (127.47,52.5) and (130.83,60.37) .. (128.72,59.5) .. controls (126.6,58.63) and (120.75,55.25) .. (112.75,55.75) ;
\draw [color={rgb, 255:red, 208; green, 2; blue, 27 }  ,draw opacity=1 ][line width=1.5]    (114.75,32.25) .. controls (119.5,32.25) and (128.98,38.16) .. (129.25,43) .. controls (129.52,47.84) and (120.15,53.4) .. (114,53) ;
\draw [color={rgb, 255:red, 74; green, 144; blue, 226 }  ,draw opacity=1 ][line width=1.5]    (129.72,11.75) .. controls (129.72,21.75) and (116.97,30) .. (114.22,30) ;
\draw    (162,48) -- (202.5,48) ;
\draw [shift={(204.5,48)}, rotate = 180] [color={rgb, 255:red, 0; green, 0; blue, 0 }  ][line width=0.75]    (10.93,-4.9) .. controls (6.95,-2.3) and (3.31,-0.67) .. (0,0) .. controls (3.31,0.67) and (6.95,2.3) .. (10.93,4.9)   ;
\draw    (162.5,44) -- (162.5,52) ;

\draw [color={rgb, 255:red, 74; green, 144; blue, 226 }  ,draw opacity=1 ][line width=1.5]    (298.77,14.5) .. controls (300.19,36.67) and (283.21,39.97) .. (284.83,15.21) ;
\draw [color={rgb, 255:red, 74; green, 144; blue, 226 }  ,draw opacity=1 ][line width=1.5]    (255.59,15.21) .. controls (256.33,34.31) and (242.18,39.02) .. (240.79,14.5) ;
\draw [color={rgb, 255:red, 208; green, 2; blue, 27 }  ,draw opacity=1 ][line width=1.5]    (299.24,81) .. controls (300.66,58.98) and (283.68,55.71) .. (285.3,80.3) ;
\draw [color={rgb, 255:red, 208; green, 2; blue, 27 }  ,draw opacity=1 ][line width=1.5]    (256.06,80.3) .. controls (256.8,61.33) and (242.65,56.64) .. (241.26,81) ;

\draw [color={rgb, 255:red, 74; green, 144; blue, 226 }  ,draw opacity=1 ][line width=1.5]    (406.98,15.65) .. controls (400.8,29.45) and (376.06,29.93) .. (368.65,13.99) ;
\draw [color={rgb, 255:red, 74; green, 144; blue, 226 }  ,draw opacity=1 ][line width=1.5]    (416.02,15.18) .. controls (416.02,41.82) and (358.93,39.92) .. (357.53,15.18) ;
\draw [color={rgb, 255:red, 208; green, 2; blue, 27 }  ,draw opacity=1 ][line width=1.5]    (406.98,80.84) .. controls (404.13,67.99) and (376.06,64.18) .. (368.18,81.08) ;
\draw [color={rgb, 255:red, 208; green, 2; blue, 27 }  ,draw opacity=1 ][line width=1.5]    (416.5,82.26) .. controls (415.55,55.62) and (359.4,57.69) .. (358.01,82.26) ;

\draw (311,36.4) node [anchor=north west][inner sep=0.75pt]    {$+$};
\draw (220.5,40.9) node [anchor=north west][inner sep=0.75pt]    {$h^{\epsilon }$};
\draw (335.5,38.9) node [anchor=north west][inner sep=0.75pt]    {$h^{\epsilon '}$};
\draw (8.5,40.4) node [anchor=north west][inner sep=0.75pt]    {$\alpha (D^+) \ =\ $};
\draw (176,27.4) node [anchor=north west][inner sep=0.75pt]    {$R^+$};

\end{tikzpicture}
    \end{center}
    Here $\epsilon, \epsilon' \in \{0, 1\}$ are determined by how the arcs are connected outside the displayed area. Similarly, the image of $\ca(D^-) \in \CKh(D^-_{11})$ is
    \begin{center}
        \tikzset{every picture/.style={line width=0.75pt}} 

\begin{tikzpicture}[x=0.75pt,y=0.75pt,yscale=-1,xscale=1]

\draw [color={rgb, 255:red, 208; green, 2; blue, 27 }  ,draw opacity=1 ][line width=1.5]    (82.5,86.5) .. controls (82.33,77.13) and (82.75,80.5) .. (83.25,71.75) .. controls (83.75,63) and (95,62) .. (96.5,62.5) .. controls (98,63) and (97.25,63.75) .. (97.25,68) .. controls (97.25,72.25) and (97.58,76.52) .. (97.5,84.5) ;
\draw [color={rgb, 255:red, 208; green, 2; blue, 27 }  ,draw opacity=1 ][line width=1.5]    (144.5,87.5) .. controls (144.67,78.13) and (144.75,80.5) .. (144.25,71.75) .. controls (143.75,63) and (132.5,62) .. (131,62.5) .. controls (129.5,63) and (130.25,63.75) .. (130.25,68) .. controls (130.25,72.25) and (130.42,77.52) .. (130.5,85.5) ;
\draw [color={rgb, 255:red, 74; green, 144; blue, 226 }  ,draw opacity=1 ][line width=1.5]    (83.25,10.75) .. controls (83.03,16.58) and (84.04,29.82) .. (84.5,32.5) .. controls (84.96,35.18) and (92.63,41.72) .. (96.56,47.11) .. controls (100.5,52.5) and (97.14,60.37) .. (99.25,59.5) .. controls (101.36,58.63) and (104.75,55.25) .. (112.75,55.75) ;
\draw [color={rgb, 255:red, 208; green, 2; blue, 27 }  ,draw opacity=1 ][line width=1.5]    (114.75,32.25) .. controls (110,32.25) and (100.02,38.16) .. (99.75,43) .. controls (99.48,47.84) and (107.85,53.4) .. (114,53) ;
\draw [color={rgb, 255:red, 74; green, 144; blue, 226 }  ,draw opacity=1 ][line width=1.5]    (98.72,11.75) .. controls (98.72,21.75) and (111.47,30) .. (114.22,30) ;
\draw [color={rgb, 255:red, 74; green, 144; blue, 226 }  ,draw opacity=1 ][line width=1.5]    (144.72,10.75) .. controls (144.94,16.58) and (143.93,29.82) .. (143.47,32.5) .. controls (143.01,35.18) and (135.34,41.72) .. (131.4,47.11) .. controls (127.47,52.5) and (130.83,60.37) .. (128.72,59.5) .. controls (126.6,58.63) and (120.75,55.25) .. (112.75,55.75) ;
\draw [color={rgb, 255:red, 208; green, 2; blue, 27 }  ,draw opacity=1 ][line width=1.5]    (114.75,32.25) .. controls (119.5,32.25) and (128.98,38.16) .. (129.25,43) .. controls (129.52,47.84) and (120.15,53.4) .. (114,53) ;
\draw [color={rgb, 255:red, 74; green, 144; blue, 226 }  ,draw opacity=1 ][line width=1.5]    (129.72,11.75) .. controls (129.72,21.75) and (116.97,30) .. (114.22,30) ;
\draw    (162,48) -- (202.5,48) ;
\draw [shift={(204.5,48)}, rotate = 180] [color={rgb, 255:red, 0; green, 0; blue, 0 }  ][line width=0.75]    (10.93,-4.9) .. controls (6.95,-2.3) and (3.31,-0.67) .. (0,0) .. controls (3.31,0.67) and (6.95,2.3) .. (10.93,4.9)   ;
\draw    (162.5,44) -- (162.5,52) ;

\draw [color={rgb, 255:red, 74; green, 144; blue, 226 }  ,draw opacity=1 ][line width=1.5]    (298.77,14.5) .. controls (300.19,36.67) and (283.21,39.97) .. (284.83,15.21) ;
\draw [color={rgb, 255:red, 74; green, 144; blue, 226 }  ,draw opacity=1 ][line width=1.5]    (255.59,15.21) .. controls (256.33,34.31) and (242.18,39.02) .. (240.79,14.5) ;
\draw [color={rgb, 255:red, 208; green, 2; blue, 27 }  ,draw opacity=1 ][line width=1.5]    (299.24,81) .. controls (300.66,58.98) and (283.68,55.71) .. (285.3,80.3) ;
\draw [color={rgb, 255:red, 208; green, 2; blue, 27 }  ,draw opacity=1 ][line width=1.5]    (256.06,80.3) .. controls (256.8,61.33) and (242.65,56.64) .. (241.26,81) ;

\draw (220.5,40.9) node [anchor=north west][inner sep=0.75pt]    {$h^{\epsilon }$};
\draw (5.5,40.4) node [anchor=north west][inner sep=0.75pt]    {$\alpha \left( D^{-}\right) \ =\ $};
\draw (176,27.4) node [anchor=north west][inner sep=0.75pt]    {$R^{-}$};

\end{tikzpicture}
    \end{center}
    First,
    \[
        \phi^-R^+(\ca(D^+)) = R^-(\ca(D^-))
    \]
    is obvious by definition. Next, from the explicit description of $\phi^+_1$ and $\phi^+_2$, 
    \begin{center}
        \tikzset{every picture/.style={line width=0.75pt}} 

\begin{tikzpicture}[x=0.75pt,y=0.75pt,yscale=-1,xscale=1]

\draw [color={rgb, 255:red, 74; green, 144; blue, 226 }  ,draw opacity=1 ][line width=1.5]    (55.59,16.5) .. controls (56.63,32.86) and (44.1,35.3) .. (45.3,17.02) ;
\draw [color={rgb, 255:red, 74; green, 144; blue, 226 }  ,draw opacity=1 ][line width=1.5]    (23.71,17.02) .. controls (24.26,31.12) and (13.81,34.6) .. (12.79,16.5) ;
\draw [color={rgb, 255:red, 208; green, 2; blue, 27 }  ,draw opacity=1 ][line width=1.5]    (55.94,65.59) .. controls (56.98,49.34) and (44.45,46.92) .. (45.65,65.07) ;
\draw [color={rgb, 255:red, 208; green, 2; blue, 27 }  ,draw opacity=1 ][line width=1.5]    (24.06,65.07) .. controls (24.61,51.06) and (14.16,47.61) .. (13.14,65.59) ;

\draw    (71,40) -- (111.5,40) ;
\draw [shift={(113.5,40)}, rotate = 180] [color={rgb, 255:red, 0; green, 0; blue, 0 }  ][line width=0.75]    (10.93,-4.9) .. controls (6.95,-2.3) and (3.31,-0.67) .. (0,0) .. controls (3.31,0.67) and (6.95,2.3) .. (10.93,4.9)   ;
\draw    (71.5,36) -- (71.5,44) ;

\draw [color={rgb, 255:red, 74; green, 144; blue, 226 }  ,draw opacity=1 ][line width=1.5]    (190.59,16.5) .. controls (191.63,32.86) and (179.1,35.3) .. (180.3,17.02) ;
\draw [color={rgb, 255:red, 74; green, 144; blue, 226 }  ,draw opacity=1 ][line width=1.5]    (158.71,17.02) .. controls (159.26,31.12) and (148.81,34.6) .. (147.79,16.5) ;
\draw [color={rgb, 255:red, 208; green, 2; blue, 27 }  ,draw opacity=1 ][line width=1.5]    (190.94,65.59) .. controls (191.98,49.34) and (179.45,46.92) .. (180.65,65.07) ;
\draw [color={rgb, 255:red, 208; green, 2; blue, 27 }  ,draw opacity=1 ][line width=1.5]    (159.06,65.07) .. controls (159.61,51.06) and (149.16,47.61) .. (148.14,65.59) ;

\draw (81,17.4) node [anchor=north west][inner sep=0.75pt]    {$f_{1}^{+}$};
\draw (123.5,30.9) node [anchor=north west][inner sep=0.75pt]    {$h^{2}$};

\end{tikzpicture}
    \end{center}
    and 
    \begin{center}
        \tikzset{every picture/.style={line width=0.75pt}} 

\begin{tikzpicture}[x=0.75pt,y=0.75pt,yscale=-1,xscale=1]

\draw [color={rgb, 255:red, 74; green, 144; blue, 226 }  ,draw opacity=1 ][line width=1.5]    (57.59,14.5) .. controls (58.63,30.86) and (46.1,33.3) .. (47.3,15.02) ;
\draw [color={rgb, 255:red, 74; green, 144; blue, 226 }  ,draw opacity=1 ][line width=1.5]    (25.71,15.02) .. controls (26.26,29.12) and (15.81,32.6) .. (14.79,14.5) ;
\draw [color={rgb, 255:red, 208; green, 2; blue, 27 }  ,draw opacity=1 ][line width=1.5]    (57.94,63.59) .. controls (58.98,47.34) and (46.45,44.92) .. (47.65,63.07) ;
\draw [color={rgb, 255:red, 208; green, 2; blue, 27 }  ,draw opacity=1 ][line width=1.5]    (26.06,63.07) .. controls (26.61,49.06) and (16.16,45.61) .. (15.14,63.59) ;

\draw    (73,38) -- (113.5,38) ;
\draw [shift={(115.5,38)}, rotate = 180] [color={rgb, 255:red, 0; green, 0; blue, 0 }  ][line width=0.75]    (10.93,-4.9) .. controls (6.95,-2.3) and (3.31,-0.67) .. (0,0) .. controls (3.31,0.67) and (6.95,2.3) .. (10.93,4.9)   ;
\draw    (73.5,34) -- (73.5,42) ;

\draw [color={rgb, 255:red, 74; green, 144; blue, 226 }  ,draw opacity=1 ][line width=1.5]    (248.82,13.98) .. controls (244.16,24.38) and (225.5,24.74) .. (219.92,12.72) ;
\draw [color={rgb, 255:red, 74; green, 144; blue, 226 }  ,draw opacity=1 ][line width=1.5]    (255.64,13.62) .. controls (255.64,33.71) and (212.59,32.27) .. (211.53,13.62) ;
\draw [color={rgb, 255:red, 208; green, 2; blue, 27 }  ,draw opacity=1 ][line width=1.5]    (248.82,63.13) .. controls (246.67,53.44) and (225.5,50.57) .. (219.56,63.31) ;
\draw [color={rgb, 255:red, 208; green, 2; blue, 27 }  ,draw opacity=1 ][line width=1.5]    (256,64.21) .. controls (255.28,44.12) and (212.94,45.68) .. (211.89,64.21) ;

\draw (83,15.4) node [anchor=north west][inner sep=0.75pt]    {$f_{2}^{+}$};
\draw (130.5,28.9) node [anchor=north west][inner sep=0.75pt]    {$h^{( 1 - \epsilon ) + \epsilon' + 1}$};

\end{tikzpicture}
    \end{center}
    Thus
    \[
        \phi^+R^-(\ca(D^-)) = h^2 R^+(\ca(D^+))
    \]
    and the proof is complete. 
\end{proof}

\begin{proposition}
\label{prop:s-gxch}
    Let $L^+, L^-$ be links such that $L^-$ is obtained from $L^+$ by applying a $4$-strand generalized negative crossing change on $L^+$. Then
    \[
        s_h(L^-) \leq s_h(L^+) \leq s_h(L^-) + 4. 
    \]
\end{proposition}

\begin{proof}
    Note that a generalized negative crossing change can be realized by first applying Reidemeister moves on the four parallel strands and then modifying the negative half-twist part to a positive half-twist. Thus the result follows from \Cref{prop:gx-ch} by an argument similar to the proof of \Cref{prop:s-xch}. 
\end{proof}

\begin{remark}
    In \cite[Theorem 1.11]{MMSSW:2023}, the lower bound
    \[
        s^\QQ(L^-) \leq s^\QQ(L^+)
    \]
    is given for the $\QQ$-Rasmussen invariant (with no restrictions on the number of strands). It is obtained by realizing the move as a connected genus-$0$ cobordism from $L^+$ to $L^-$ in $\overline{\CC P^2} \setminus (B^4 \sqcup B^4)$. We expect that our map $\Phi^+$ gives (a part of) the combinatorial description of this geometric map. 
\end{remark}

Now we prove the result for the equivariant case.

\begin{definition}
\label{def:equiv-gen-x-ch}
    An \textit{equivariant generalized negative crossing change} on a strongly invertible link $L$ is an operation that is either a generalized negative crossing change on a set of strands lying on the axis, or two generalized negative crossing changes on two sets of strands that correspond by $\tau$, lying off the axis. 
\end{definition}

\begin{proposition}
\label{prop:si-gxch}
    Let $L^+, L^-$ be strongly invertible links such that $L^-$ is obtained by applying an equivariant $4$-strand generalized negative crossing change to $L^+$. Then
    \[
        \ds_h(L^-) \leq \ds_h(L^+) \leq \ds_h(L^-) + 4a 
    \]
    where $a = 1$ if the move is performed on-axis, and $a = 2$ if performed off-axis. The same statement holds for $\us_h$. 
\end{proposition}

\begin{proof}
    Let $D^+, D^-$ be diagrams of $L^+, L^-$ respectively such that $D^+$ and $D^-$ are locally related by an equivariant generalized crossing change. First suppose the generalized negative crossing change is performed on-axis. Recall the definition of $\Phi^\pm$
    \[
    \begin{tikzcd}
\CKh(D^+) 
    \arrow[r, "R^+", shift left] & 
E^+ 
    \arrow[r, "\phi^-", shift left] 
    \arrow[l, "I^+", shift left] & 
E^- 
    \arrow[r, "I^-", shift left]
    \arrow[l, "\phi^+", shift left] & 
\CKh(D^-).
    \arrow[l, "R^-", shift left]
    \end{tikzcd}    
    \]
    The middle maps $\phi^\pm$ are strictly $\tau$-equivariant. Moreover, since the tangle diagrams appearing in the moves are $\Kh$-simple, from \Cref{lem:htpy-1} it follows that the chain homotopy equivalences $R^\pm$ and $I^\pm$ are homotopy $\tau$-equivariant. Thus the composite maps $\Phi^\pm$ are homotopy $\tau$-equivariant and hence induce maps between the involutive complexes. 
    
    The case when the move is performed off-axis can be proved similarly as in the proof of \Cref{prop:si-xch}.
\end{proof}

\subsection{Behavior under cobordisms}

First recall that in the non-involutive setting, given an oriented cobordism $S$ between links $L, L'$ in $\RR^3$, there is a map between the corresponding complexes
\[
    \phi_S\colon \CKh(D^+) \rightarrow \CKh(D'),
\]
where $D, D'$ are diagrams of $L, L'$ under a fixed projection $\RR^3 \rightarrow \RR^2$. The map $\phi_S$ is obtained by decomposing $S$ into elementary cobordisms and composing the corresponding maps between the intermediate complexes. It is proved in \cite[Theorem 4]{BarNatan:2004} that $\phi_S$ is invariant up to chain homotopy under isotopies of $S$ rel boundary. 
We prove an analogous statement in the involutive setting.

\begin{definition}   
\label{def:simple-eq-cob}
    Let $L, L'$ be two involutive links sharing the same involution $\tau$ of $S^3$. A \textit{simple equivariant cobordism} between $L$ and $L'$ is an oriented cobordism $S$ in $S^3 \times I$ between $L, L'$ satisfying $(\tau \times \id)(S) = S$. A \textit{simple isotopy-equivariant cobordism} between $L, L'$ is defined similarly, except that $(\tau \times \id)(S)$ is only required to be isotopic to $S$ rel boundary. 
\end{definition}

Hereafter we simply write $\tau S$ for $(\tau \times \id)(S)$. 

\begin{lemma}
\label{prop:tau-S}
    Let $\tau$ be an involution on $S^3$ and $S$ be a cobordism between links $L, L'$. Then the following diagram commutes up to homotopy
    \[
\begin{tikzcd}
\CKh(D) \arrow[d, "\tau"] \arrow[r, "\phi_S"] & \CKh(D') \arrow[d, "\tau"] \\
\CKh(\tau D) \arrow[r, "\phi_{\tau S}"]       & \CKh(\tau D').             
\end{tikzcd}
    \]
\end{lemma}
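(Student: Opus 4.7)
The plan is to decompose $S$ into a movie of elementary cobordisms and verify the intertwining relation piece by piece. By a standard movie presentation, I would write
\[
    S = S_N \circ \cdots \circ S_1
\]
as a composition of elementary cobordisms between intermediate diagrams $D_i, D_{i+1}$, where each $S_i$ realizes either a Reidemeister move (isotopy) or an elementary Morse move (birth, death, or saddle). Applying $\tau \times \id$ termwise gives a corresponding decomposition of $\tau S$ performed on the image diagrams $\tau D_i$, and the cobordism maps factor accordingly. It therefore suffices to establish $\tau \phi_{S_i} \htpy \phi_{\tau S_i} \tau$ for each elementary piece and then paste the homotopies together.

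For the Morse pieces, I expect the intertwining relation to hold strictly. The map $\phi_{S_i}$ is defined locally using the Frobenius-algebraic operations (unit, counit, multiplication, comultiplication) on the circles of the resolved diagrams that are involved in the move, while the induced map $\tau\colon\CKh(D_i) \to \CKh(\tau D_i)$ permutes standard generators by transporting states and labels along the rigid symmetry $\tau$. Since $\phi_{\tau S_i}$ is the same local Frobenius operation performed on the $\tau$-image of the Morse feature, a direct check on standard generators yields $\tau \phi_{S_i} = \phi_{\tau S_i} \tau$.

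The Reidemeister pieces will be the main obstacle, since $\tau$ need not preserve the disk in which the local move occurs and so strict equality cannot be expected. Here I would argue along the lines of the proof of \Cref{cl:htpy-1}: both $\tau \phi_{S_i}$ and $\phi_{\tau S_i} \tau$ are chain homotopy equivalences between $\CKh(D_i)$ and $\CKh(\tau D_{i+1})$ that agree away from the local disk and are both built from the same underlying Reidemeister tangle (transported by $\tau$ on one side). Since every Reidemeister tangle is $\Kh$-simple and $\Kh$-simplicity is an intrinsic property of the tangle that is preserved under the rigid symmetry $\tau$, \Cref{lem:htpy-1} furnishes the desired chain homotopy. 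Composing the elementary homotopies inductively by means of \Cref{lem:htpy-3} then yields the global relation $\tau \phi_S \htpy \phi_{\tau S} \tau$.
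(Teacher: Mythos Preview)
Your proposal is correct and follows essentially the same approach as the paper: decompose $S$ into elementary pieces, verify strict commutativity for the Morse moves, invoke $\Kh$-simplicity for the Reidemeister moves, and then compose. Two small refinements: the paper observes that R1 and R2 (not just the Morse moves) commute strictly from the explicit formulas, so $\Kh$-simplicity is only needed for R3; and your appeal to \Cref{lem:htpy-3} for the composition step is misplaced, since that lemma carries extra hypotheses (two-sided homotopy inverses) that Morse pieces do not satisfy---all you need here is the elementary fact that chain homotopy is compatible with composition, which is exactly what the paper uses.
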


\begin{proof}
    First, assume that $S$ is an elementary cobordism. For the Reidemeister moves R1, R2 and the three Morse moves M1 -- M3, one can check from the explicit descriptions that the square strictly commutes. For the R3 move, although it does not commute strictly, it follows from $\Kh$-simplicity that $(\phi_S)^\tau = \tau \phi_S \tau$ and $\phi_{\tau S}$ are chain homotopic, so the square commutes up to homotopy.

    For a general cobordism $S$, decompose it into elementary cobordisms as $S = S_1 \cup S_2 \cup \cdots \cup S_N$. Then $\tau S_1 \cup \tau S_2 \cup \cdots \cup \tau S_N$ gives an elementary cobordism decomposition of $\tau S$, and we obtain a homotopy commutative diagram
    \[
    \begin{tikzcd}
C(D) \arrow[d, "\tau"] \arrow[r, "\phi_{S_1}"] & C(D_1) \arrow[d, "\tau"] \arrow[r] & \cdots \arrow[r] & C(D_{N-1}) \arrow[d, "\tau"] \arrow[r, "\phi_{S_N}"] & C(D') \arrow[d, "\tau"] \\
C(\tau D) \arrow[r, "\phi_{\tau S_1}"]         & C(\tau D_1) \arrow[r]                & \cdots \arrow[r] & C(\tau D_{N-1}) \arrow[r, "\phi_{\tau S_N}"]          & C(\tau D').           \end{tikzcd}
    \]
\end{proof}

\begin{proposition}
\label{prop:cob-map}
    Let $S$ be a simple isotopy-equivariant cobordism between involutive links $L$ and $L'$. Then there is a cobordism map
    \[
        \phi_S\colon \CKhI(D) \rightarrow \CKhI(D')
    \]
    such that the following diagram commutes
    \[ 
\begin{tikzcd}
{Q\CKh(D)[1]} \arrow[r, hook] \arrow[d, "\phi_S"] & \CKhI(D) \arrow[r, two heads] \arrow[d, "\phi_S"] & \CKh(D) \arrow[d, "\phi_S"] \\
{Q\CKh(D')[1]} \arrow[r, hook]                   & \CKhI(D') \arrow[r, two heads]                            & \CKh(D').
\end{tikzcd} 
    \]
\end{proposition}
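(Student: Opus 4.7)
The plan is to construct the involutive cobordism map by the same block-matrix recipe used in the proof of invariance in Section 2.2 (for $\mathbf{F}, \mathbf{G}$): produce a chain homotopy between $\phi_S$ and its $\tau$-conjugate $\tau \phi_S \tau$, and use this homotopy as the off-diagonal entry of a block matrix on the mapping cone $\CKhI_\tau = \CKh \oplus \CKh[1]$.

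The key homotopy is produced as follows. By Lemma \ref{prop:tau-S} applied to the given involution $\tau$ (so that $\tau D = D$ and $\tau D' = D'$), we have $\tau \phi_S \htpy \phi_{\tau S} \tau$; post-composing with $\tau$ yields $\tau \phi_S \tau \htpy \phi_{\tau S}$. Since $S$ is simple isotopy-equivariant, $\tau S$ is isotopic to $S$ rel boundary, so Bar-Natan's non-equivariant cobordism invariance \cite[Theorem 4]{BarNatan:2004} gives $\phi_{\tau S} \htpy \phi_S$. Composing, $\tau \phi_S \tau \htpy \phi_S$, i.e.\ there is a chain homotopy $k$ with $\phi_S - \tau \phi_S \tau = dk + kd$. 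Now define
\[
    \bm{\phi}_S = \begin{pmatrix} \phi_S & \\ \tau k & \phi_S \end{pmatrix} \colon \CKhI_\tau(D) \rightarrow \CKhI_\tau(D')
\]
with respect to the decomposition $\CKhI_\tau = \CKh \oplus \CKh[1]$. Multiplying the homotopy identity on the left by $\tau$ and using $\tau d = d \tau$ gives $\tau \phi_S - \phi_S \tau = d(\tau k) + (\tau k) d$; a direct block-matrix expansion of $d \bm{\phi}_S - \bm{\phi}_S d$ shows this is precisely what is required for $\bm{\phi}_S$ to be a chain map (in characteristic $2$). The two squares in the statement commute strictly by inspection of the block form: the inclusion sends $b \mapsto (0, b) \mapsto (0, \phi_S b)$, while the composition with the quotient sends $(a, b) \mapsto a \mapsto \phi_S a$, matching $\phi_S$ post- and pre-composed with the respective inclusion/quotient for $D'$.

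No serious obstacle arises, since the essential content has been delegated to Lemma \ref{prop:tau-S} (already established) and to Bar-Natan's invariance theorem. The closest thing to a subtlety is the need to exploit \emph{both} ingredients in tandem: Lemma \ref{prop:tau-S} alone only intertwines $\phi_S$ with $\phi_{\tau S}$, and only the hypothesis that $\tau S$ is isotopic to $S$ rel boundary allows us to identify $\phi_{\tau S}$ with $\phi_S$ up to homotopy. I do not claim here that $\bm{\phi}_S$ is itself an invariant of $S$ up to chain homotopy; such a statement would require further analysis of the dependence on the choices of $k$ and of the elementary cobordism decomposition, which is not needed for the existence statement above.
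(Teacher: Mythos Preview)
Your proof is correct and follows essentially the same approach as the paper: combine Lemma~\ref{prop:tau-S} with Bar-Natan's invariance under isotopy rel boundary to obtain a homotopy $\phi_S \htpy (\phi_S)^\tau$, then place $\tau$ times this homotopy in the off-diagonal of the block matrix defining the involutive cobordism map. You are somewhat more explicit than the paper in verifying the chain map condition and the commutativity of the two squares, but the argument is otherwise identical.
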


\begin{proof}
    By definition $S$ and $\tau S$ are isotopic rel boundary, so from \cite[Theorem 4]{BarNatan:2004} there is a homotopy $\phi_S \htpy \phi_{\tau S}$. Together with \Cref{prop:tau-S}, we have 
    \[
        \phi_S \htpy \phi_{\tau S} \htpy (\phi_S)^\tau.
    \]
    Thus the result follows from \Cref{lem:induced-ch-map}. 
\end{proof}

Now we restrict to strongly invertible links, and prove the behavior of the equivariant Lee classes under simple isotopy-equivariant cobordisms. 

\begin{proposition}
\label{prop:ca-cob}
    Suppose $S$ is a simple isotopy-equivariant cobordism between strongly invertible links $L, L'$, such that every component of $S$ has boundary in $L$. Then under the map $\phi_S$ of \Cref{prop:cob-map}, the equivariant Lee classes modulo torsions for the given orientations correspond as
    \begin{align*}
        \dca(D) &\xmapsto{\ \phi_S\ } h^j \dca(D'),&
        \dcb(D) &\xmapsto{\ \phi_S\ } h^j \dcb(D'),\\
        \uca(D) &\xmapsto{\ \phi_S\ } h^j \uca(D'),&
        \ucb(D) &\xmapsto{\ \phi_S\ } h^j \ucb(D')\\
    \end{align*}
    where 
    \[ 
        j = \frac{\delta w(D, D') - \delta r(D, D') - \chi(S)}{2}.
    \]
\end{proposition}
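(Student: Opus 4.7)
The strategy is to reduce everything to the non-equivariant analogue of the statement (established in \cite{Sano:2020,Sano-Sato:2023}) by exploiting the naturality of the short exact sequence of \Cref{prop:ca-SES} under $\phi_S$.

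First I would invoke \Cref{prop:cob-map} to get the involutive cobordism map $\phi_S\colon\CKhI_\tau(D)\to\CKhI_\tau(D')$ fitting into a commutative diagram with the non-equivariant $\phi_S$ on $\CKh$ via the short exact sequence. Passing to homology modulo torsion yields
\[
\begin{tikzcd}
{Q\Kh(D)[1]} \arrow[r, hook] \arrow[d, "\phi_S"] & \KhI_\tau(D) \arrow[r, two heads] \arrow[d, "\phi_S"] & \Kh(D) \arrow[d, "\phi_S"] \\
{Q\Kh(D')[1]} \arrow[r, hook]                    & \KhI_\tau(D') \arrow[r, two heads]                    & \Kh(D').
\end{tikzcd}
\]
The hypothesis that every component of $S$ has boundary in $L$ ensures that the orientation of $L'$ transported by $S$ from that of $L$ agrees with the given one, so the non-equivariant result applies to give $\phi_S(\ca(D))=h^j\ca(D')$ and $\phi_S(\cb(D))=h^j\cb(D')$ modulo torsion.

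For the $\uca$-classes, the left vertical square plus $\ca(D)\mapsto \uca(D)$ immediately yields $\phi_S(\uca(D))=h^j\uca(D')$ modulo torsion. For $\dca$, note that $\phi_S$ preserves homological grading; by \Cref{prop:khi-structure}, after inverting $h$ the module $\KhI_\tau(D')\otimes F[H^\pm]$ is freely generated by the four equivariant Lee classes, and only $\dca(D'),\dcb(D')$ lie in homological degree $0$. Hence
\[
    \phi_S(\dca(D))=\alpha\,\dca(D')+\beta\,\dcb(D')
\]
for some $\alpha,\beta\in F[H^{\pm}]$. Applying the right-hand square and comparing with $\phi_S(\ca(D))=h^j\ca(D')$ gives $\alpha\ca(D')+\beta\cb(D')=h^j\ca(D')$, and the $F[H^\pm]$-linear independence of $\ca(D'),\cb(D')$ in $\Kh(D')\otimes F[H^\pm]$ forces $\alpha=h^j$, $\beta=0$. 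The $\cb$-classes are handled identically after reversing the orientation.

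The main obstacle is not in the involutive step — that step is essentially formal — but in the underlying non-equivariant statement, whose proof must rule out a $\cb(D')$-contribution in $\phi_S(\ca(D))$ by tracking orientations along the cobordism. Once that result is invoked, the connectedness assumption on the components of $S$ guarantees that the orientations match up as stated, and the commutative diagram together with the homological-degree constraint pins down the image of each equivariant Lee class uniquely.
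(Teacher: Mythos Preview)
Your proposal is correct and follows essentially the same approach as the paper: the paper's proof simply says that ``by an argument similar to the proof of \Cref{prop:ca-reid}, the assertions are immediate from \cite[Proposition 3.4]{Sano:2020},'' and the argument in \Cref{prop:ca-reid} is exactly the one you spell out (naturality of the short exact sequence of \Cref{prop:ca-SES}, the non-equivariant Lee-class formula, and the homological-grading constraint together with \Cref{prop:khi-structure}). You have in fact written out more detail than the paper provides.
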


\begin{proof}
    By an argument similar to the proof of \Cref{prop:ca-reid}, the assertions are immediate from \cite[Proposition 3.4]{Sano:2020}.
\end{proof}

\begin{proposition}
    Under the assumption of \Cref{prop:ca-cob}, we have 
    \begin{align*}
        \ds_h(L) &\leq \ds_h(L') - \chi(S), \\ 
        \us_h(L) &\leq \us_h(L') - \chi(S).
    \end{align*}
    Moreover if every component of $S$ has boundary in both $L$ and $L'$, then we have 
    \begin{align*}
        |\ds_h(L) - \ds_h(L')| \leq -\chi(S), \\ 
        |\us_h(L) - \us_h(L')| \leq -\chi(S).
    \end{align*}
    In particular, both $\ds$ and $\us$ are invariant under \textit{simple isotopy-equivariant link concordances}.
\end{proposition}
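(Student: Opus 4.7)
The plan is to transfer the divisibility data of the equivariant Lee classes, supplied by \Cref{prop:ca-cob}, into the stated inequalities via the defining formulas for $\ds_h$ and $\us_h$; all the heavy lifting is already done in \Cref{prop:cob-map,prop:ca-cob}, so what remains is essentially a bookkeeping calculation in the spirit of the proof of \Cref{prop:ca-reid}.

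First I would pick normal equivariant diagrams $D, D'$ of $L, L'$. By \Cref{prop:cob-map} the cobordism $S$ induces a chain map $\phi_S\colon \CKhI_\tau(D) \to \CKhI_\tau(D')$, and by \Cref{prop:ca-cob} its action on the equivariant Lee classes for the given orientations, modulo torsion, is
\[
    \phi_S(\dca(D)) = h^j \dca(D'), \qquad \phi_S(\uca(D)) = h^j \uca(D'),
\]
with $j = (\delta w(D, D') - \delta r(D, D') - \chi(S))/2$. Passing to the free quotients $\KhI_\tau(D)/\Tor{}$, which are free $R$-modules (since $h^{-1}\KhI_\tau$ is free by \Cref{prop:khi-structure} and $R$ is a PID), the divisibility comparison recalled in \Cref{sec:3} yields $\dd_h(D) \leq j + \dd_h(D')$ and $\ud_h(D) \leq j + \ud_h(D')$. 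Plugging the first inequality into $\ds_h(L) = 2\dd_h(D) + w(D) - r(D) + 1$ and using the formula for $j$ to cancel the $w(D) - r(D)$ terms yields exactly $\ds_h(L) \leq \ds_h(L') - \chi(S)$, and the same computation with $\ud_h$ in place of $\dd_h$ gives the analogous bound for $\us_h$.

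For the second assertion I would apply the same argument to the reversed cobordism $\bar S$ from $L'$ to $L$ (with $\chi(\bar S) = \chi(S)$): under the stronger hypothesis that every component of $S$ has boundary in both $L$ and $L'$, every component of $\bar S$ has boundary in $L'$, so the first part applies in the opposite direction and gives $\ds_h(L') \leq \ds_h(L) - \chi(S)$ and $\us_h(L') \leq \us_h(L) - \chi(S)$. Combining these with the previous inequalities yields $|\ds_h(L) - \ds_h(L')| \leq -\chi(S)$ and analogously for $\us_h$. The concordance statement is then immediate, since each component of a simple isotopy-equivariant concordance is an annulus with $\chi = 0$ and boundary in both links, so the two-sided bound forces equality.

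The only point that needs care, rather than being a genuine obstacle, is verifying that the divisibility comparison still applies in the involutive setting and that the hypothesis of \Cref{prop:ca-cob} is met for our $S$ and for $\bar S$; both are handled above. Nothing new is required beyond the cobordism-induced map and its effect on the distinguished classes, so once \Cref{prop:cob-map,prop:ca-cob} are in hand the proof reduces to the algebraic manipulation sketched in the second paragraph.
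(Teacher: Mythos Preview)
Your proposal is correct and is exactly the natural expansion of the paper's one-line proof ``Immediate from \Cref{prop:ca-cob}.'' The only superfluous detail is requiring the diagrams to be \emph{normal}; that condition is only needed for the reduced complexes, whereas $\ds_h,\us_h$ are defined via the unreduced $\KhI_\tau$, but this does not affect the argument.
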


\begin{proof}
    Immediate from \Cref{prop:ca-cob}.
\end{proof}

\begin{corollary}
\label{cor:s-bound-g}
    For a strongly invertible knot $K$, both $|\ds_h(K)|$ and $|\us_h(K)|$ bounds $2\widetilde{sig}_4(K)$ from below. 
\end{corollary}

\begin{corollary}
    If either $|\ds_h(K)|$ or $|\us_h(K)|$ is greater than $2g_4(K)$, then no slice surface $S$ of $K$ realizing $g(S) = g_4(K)$ are simple isotopy-equivariant.
\end{corollary}
    \section{The main theorem}
\label{sec:5}

In this section specialize to $(R, h) = (\FF_2[H], H)$. The invariants $\ds_H, \us_H$ will be denoted $\ds, \us$. 


\begin{table}[t]
    \centering
    \small
    
    \begin{tabular}{r|llllllll}
$12$ & $.$ & $.$ & $.$ & $.$ & $.$ & $.$ & $\FF$ & $\FF$ \\
$10$ & $.$ & $.$ & $.$ & $.$ & $.$ & $.$ & $.$ & $.$ \\
$8$ & $.$ & $.$ & $.$ & $.$ & $\FF$ & $\FF$ & $.$ & $.$ \\
$6$ & $.$ & $.$ & $.$ & $\FF$ & $\FF$ & $.$ & $.$ & $.$ \\
$4$ & $.$ & $.$ & $.$ & $.$ & $.$ & $.$ & $.$ & $.$ \\
${\color{orange} 2}$ & $.$ & ${\color{orange} \FF[H]}$ & $\FF$ & $.$ & $.$ & $.$ & $.$ & $.$ \\
${\color{orange} 0}$ & ${\color{orange} \FF[H]}$ & $.$ & $.$ & $.$ & $.$ & $.$ & $.$ & $.$ \\
\hline
$ $ & ${\color{orange} 0}$ & ${\color{orange} 1}$ & $2$ & $3$ & $4$ & $5$ & $6$ & $7$ \\
\end{tabular}
    \caption{$\rBNI(m(9_{46}))$}
    \label{tab:m9_46}
    
    \vspace{1em}
    
    \begin{tabular}{r|llllllllllll}
$18$ & $.$ & $.$ & $.$ & $.$ & $.$ & $.$ & $.$ & $.$ & $.$ & $.$ & $\FF$ & $\FF$ \\
$16$ & $.$ & $.$ & $.$ & $.$ & $.$ & $.$ & $.$ & $.$ & $.$ & $.$ & $.$ & $.$ \\
$14$ & $.$ & $.$ & $.$ & $.$ & $.$ & $.$ & $.$ & $.$ & $\FF$ & $\FF$ & $.$ & $.$ \\
$12$ & $.$ & $.$ & $.$ & $.$ & $.$ & $.$ & $\FF$ & $\FF^{2}$ & $\FF$ & $.$ & $.$ & $.$ \\
$10$ & $.$ & $.$ & $.$ & $.$ & $.$ & $.$ & $.$ & $.$ & $.$ & $.$ & $.$ & $.$ \\
$8$ & $.$ & $.$ & $.$ & $.$ & $\FF$ & $\FF^{2}$ & $\FF$ & $.$ & $.$ & $.$ & $.$ & $.$ \\
$6$ & $.$ & $.$ & $.$ & $\FF$ & $\FF$ & $.$ & $.$ & $.$ & $.$ & $.$ & $.$ & $.$ \\
$4$ & $.$ & $.$ & $.$ & $.$ & $.$ & $.$ & $.$ & $.$ & $.$ & $.$ & $.$ & $.$ \\
${\color{orange} 2}$ & $.$ & ${\color{orange} \FF[H]}$ & $\FF$ & $.$ & $.$ & $.$ & $.$ & $.$ & $.$ & $.$ & $.$ & $.$ \\
${\color{orange} 0}$ & ${\color{orange} \FF[H]}$ & $.$ & $.$ & $.$ & $.$ & $.$ & $.$ & $.$ & $.$ & $.$ & $.$ & $.$ \\
\hline
$ $ & ${\color{orange} 0}$ & ${\color{orange} 1}$ & $2$ & $3$ & $4$ & $5$ & $6$ & $7$ & $8$ & $9$ & $10$ & $11$ \\
\end{tabular}
    \caption{$\rBNI(15n_{103488})$}
    \label{tab:15n}
\end{table}

\begin{restate-proposition}[prop:calc]
    The three strongly invertible slice knots $K = m(9_{46})$, $15n_{103488}$, $17nh_{73}$ have 
    \[
        \ds(K) = 0 < 2 = \us(K).
    \]
\end{restate-proposition}

\begin{proof}
    Proved by direct computations of $\rBNI$ for the three knots. The result for $J_0 = 17nh_{73}$ is given in \Cref{tab:knotJ}. The results for $m(9_{46})$ and $15n_{103488}$ are given in \Cref{tab:m9_46,tab:15n}. 
\end{proof}

\begin{restate-theorem}[thm:1]
    The strongly invertible knot $J_n$ of \Cref{fig:knotJn} has
    \[
        \ds(J_n) = 0 < 2 \leq \us(J_n).
    \]
\end{restate-theorem}

\begin{proof}
    Immediate from \Cref{thm:s-inv} and \Cref{prop:calc,prop:intro-neg-gxch}.
\end{proof}

\begin{figure}[t]
    \begin{subfigure}{0.3\textwidth}
        \centering
        \tikzset{every picture/.style={line width=0.75pt}} 

\begin{tikzpicture}[x=0.75pt,y=0.75pt,yscale=-.85,xscale=.85]

\begin{knot}[
  clip width=3pt,
  end tolerance=1pt,
]

\strand    (39.05,30.83) .. controls (38.61,45.83) and (19.06,44.83) .. (19.33,60.16) .. controls (19.61,75.49) and (39.33,76.16) .. (39.33,90.83) .. controls (39.33,105.49) and (19.33,104.83) .. (18.67,120.16) ;
\strand    (20.24,30.16) .. controls (20.64,45.16) and (39.59,44.83) .. (39.33,60.16) .. controls (39.08,75.49) and (19.98,75.49) .. (19.98,90.16) .. controls (19.98,104.83) and (38.05,105.49) .. (38.67,120.83) ;

\strand    (80.05,30.83) .. controls (79.6,45.83) and (60.06,44.83) .. (60.33,60.16) .. controls (60.61,75.49) and (80.33,76.16) .. (80.33,90.83) .. controls (80.33,105.49) and (60.33,104.83) .. (59.66,120.16) ;
\strand    (61.23,30.16) .. controls (61.64,45.16) and (80.59,44.83) .. (80.33,60.16) .. controls (80.08,75.49) and (60.98,75.49) .. (60.98,90.16) .. controls (60.98,104.83) and (79.05,105.49) .. (79.66,120.83) ;
\strand    (121.05,30.83) .. controls (120.61,45.83) and (101.06,44.83) .. (101.33,60.16) .. controls (101.61,75.49) and (121.33,76.16) .. (121.33,90.83) .. controls (121.33,105.49) and (101.33,104.83) .. (100.67,120.16) ;
\strand    (102.24,30.16) .. controls (102.64,45.16) and (121.59,44.83) .. (121.33,60.16) .. controls (121.08,75.49) and (101.98,75.49) .. (101.98,90.16) .. controls (101.98,104.83) and (120.05,105.49) .. (120.67,120.83) ;

\flipcrossings{1,3,5,7,9}

\end{knot}

\draw    (20.24,30.16) .. controls (20.36,22.23) and (20.2,6.6) .. (69.8,6.6) .. controls (119.4,6.6) and (122.31,18.8) .. (121.05,30.83) ;
\draw    (120.67,120.83) .. controls (120.54,128.75) and (121,143.4) .. (71.4,143.4) .. controls (21.8,143.4) and (18.6,131) .. (18.67,120.16) ;
\draw    (39.05,30.83) .. controls (38.6,22.2) and (61.8,21.8) .. (61.23,30.16) ;
\draw    (80.05,30.83) .. controls (79.6,22.2) and (102.2,22.2) .. (102.24,30.16) ;
\draw    (38.67,120.83) .. controls (38.21,129.45) and (59.8,130.6) .. (59.66,120.16) ;
\draw    (79.66,120.83) .. controls (79.21,129.45) and (103.4,131) .. (100.67,120.16) ;

\end{tikzpicture}
        \caption{}
        \label{fig:m9_46}
    \end{subfigure}
    \begin{subfigure}{0.3\textwidth}
        \centering
        \tikzset{every picture/.style={line width=0.75pt}} 

\begin{tikzpicture}[x=0.75pt,y=0.75pt,yscale=-.85,xscale=.85]

\draw [color={rgb, 255:red, 208; green, 2; blue, 27 }  ,draw opacity=1 ][line width=1.5]    (13.44,30.16) .. controls (13.56,22.23) and (13.4,6.6) .. (63,6.6) .. controls (112.6,6.6) and (115.51,18.8) .. (114.25,30.83) ;
\draw [color={rgb, 255:red, 208; green, 2; blue, 27 }  ,draw opacity=1 ][line width=1.5]    (13.44,30.16) .. controls (13.8,36.67) and (19.8,43.6) .. (23.4,43.2) .. controls (27,42.8) and (30.2,39.2) .. (31.4,34) .. controls (32.6,28.8) and (32.6,24) .. (43.8,24) .. controls (55,24) and (53.8,32) .. (55.8,36.4) .. controls (57.8,40.8) and (61.26,43.18) .. (63.8,42.8) .. controls (66.34,42.42) and (71.4,42.4) .. (73,35.2) .. controls (74.6,28) and (73,24.4) .. (85,24) .. controls (97,23.6) and (95.4,32.8) .. (97.4,36.8) .. controls (99.4,40.8) and (101.8,43.2) .. (104.6,43.2) .. controls (107.4,43.2) and (114.2,38) .. (114.25,30.83) ;
\draw [color={rgb, 255:red, 74; green, 144; blue, 226 }  ,draw opacity=1 ][line width=1.5]    (12.44,120.56) .. controls (12.56,128.49) and (12.4,144.12) .. (62,144.12) .. controls (111.6,144.12) and (114.51,131.92) .. (113.25,119.9) ;
\draw [color={rgb, 255:red, 74; green, 144; blue, 226 }  ,draw opacity=1 ][line width=1.5]    (12.44,120.56) .. controls (12.8,114.06) and (18.8,107.12) .. (22.4,107.52) .. controls (26,107.92) and (29.2,111.52) .. (30.4,116.72) .. controls (31.6,121.92) and (31.6,126.72) .. (42.8,126.72) .. controls (54,126.72) and (52.8,118.72) .. (54.8,114.32) .. controls (56.8,109.92) and (59.2,106.82) .. (63.4,107.2) .. controls (67.6,107.58) and (70.4,108.32) .. (72,115.52) .. controls (73.6,122.72) and (72,126.32) .. (84,126.72) .. controls (96,127.12) and (94.4,117.92) .. (96.4,113.92) .. controls (98.4,109.92) and (99.6,107.12) .. (104.6,107.2) .. controls (109.6,107.28) and (113.2,112.72) .. (113.25,119.9) ;
\draw  [color={rgb, 255:red, 74; green, 144; blue, 226 }  ,draw opacity=1 ][line width=1.5]  (13.19,60.16) .. controls (13.19,52.56) and (17.51,46.4) .. (22.83,46.4) .. controls (28.16,46.4) and (32.47,52.56) .. (32.47,60.16) .. controls (32.47,67.76) and (28.16,73.92) .. (22.83,73.92) .. controls (17.51,73.92) and (13.19,67.76) .. (13.19,60.16) -- cycle ;
\draw  [color={rgb, 255:red, 74; green, 144; blue, 226 }  ,draw opacity=1 ][line width=1.5]  (54.25,60.16) .. controls (54.25,52.56) and (58.56,46.4) .. (63.89,46.4) .. controls (69.21,46.4) and (73.53,52.56) .. (73.53,60.16) .. controls (73.53,67.76) and (69.21,73.92) .. (63.89,73.92) .. controls (58.56,73.92) and (54.25,67.76) .. (54.25,60.16) -- cycle ;
\draw  [color={rgb, 255:red, 74; green, 144; blue, 226 }  ,draw opacity=1 ][line width=1.5]  (94.53,60.16) .. controls (94.53,52.56) and (98.85,46.4) .. (104.17,46.4) .. controls (109.5,46.4) and (113.82,52.56) .. (113.82,60.16) .. controls (113.82,67.76) and (109.5,73.92) .. (104.17,73.92) .. controls (98.85,73.92) and (94.53,67.76) .. (94.53,60.16) -- cycle ;
\draw  [color={rgb, 255:red, 208; green, 2; blue, 27 }  ,draw opacity=1 ][line width=1.5]  (12.39,90.56) .. controls (12.39,82.96) and (16.71,76.8) .. (22.03,76.8) .. controls (27.36,76.8) and (31.67,82.96) .. (31.67,90.56) .. controls (31.67,98.16) and (27.36,104.32) .. (22.03,104.32) .. controls (16.71,104.32) and (12.39,98.16) .. (12.39,90.56) -- cycle ;
\draw  [color={rgb, 255:red, 208; green, 2; blue, 27 }  ,draw opacity=1 ][line width=1.5]  (53.45,90.56) .. controls (53.45,82.96) and (57.76,76.8) .. (63.09,76.8) .. controls (68.41,76.8) and (72.73,82.96) .. (72.73,90.56) .. controls (72.73,98.16) and (68.41,104.32) .. (63.09,104.32) .. controls (57.76,104.32) and (53.45,98.16) .. (53.45,90.56) -- cycle ;
\draw  [color={rgb, 255:red, 208; green, 2; blue, 27 }  ,draw opacity=1 ][line width=1.5]  (93.73,90.56) .. controls (93.73,82.96) and (98.05,76.8) .. (103.37,76.8) .. controls (108.7,76.8) and (113.02,82.96) .. (113.02,90.56) .. controls (113.02,98.16) and (108.7,104.32) .. (103.37,104.32) .. controls (98.05,104.32) and (93.73,98.16) .. (93.73,90.56) -- cycle ;

\draw (57.2,12) node [anchor=north west][inner sep=0.75pt]    {$\textcolor[rgb]{0.82,0.01,0.11}{X}$};
\draw (14.8,81.4) node [anchor=north west][inner sep=0.75pt]    {$\textcolor[rgb]{0.82,0.01,0.11}{X}$};
\draw (56,81.8) node [anchor=north west][inner sep=0.75pt]    {$\textcolor[rgb]{0.82,0.01,0.11}{X}$};
\draw (95.6,82.2) node [anchor=north west][inner sep=0.75pt]    {$\textcolor[rgb]{0.82,0.01,0.11}{X}$};
\draw (16.6,51.8) node [anchor=north west][inner sep=0.75pt]    {$\textcolor[rgb]{0.29,0.56,0.89}{Y}$};
\draw (57.2,52.2) node [anchor=north west][inner sep=0.75pt]    {$\textcolor[rgb]{0.29,0.56,0.89}{Y}$};
\draw (97.6,52.2) node [anchor=north west][inner sep=0.75pt]    {$\textcolor[rgb]{0.29,0.56,0.89}{Y}$};
\draw (56.8,117.72) node [anchor=north west][inner sep=0.75pt]    {$\textcolor[rgb]{0.29,0.56,0.89}{Y}$};
\draw (5.6,39.6) node [anchor=north west][inner sep=0.75pt]  [font=\scriptsize,color={rgb, 255:red, 74; green, 74; blue, 74 }  ,opacity=1 ]  {$0$};
\draw (5.6,71) node [anchor=north west][inner sep=0.75pt]  [font=\scriptsize,color={rgb, 255:red, 74; green, 74; blue, 74 }  ,opacity=1 ]  {$0$};
\draw (5.6,102.4) node [anchor=north west][inner sep=0.75pt]  [font=\scriptsize,color={rgb, 255:red, 74; green, 74; blue, 74 }  ,opacity=1 ]  {$0$};
\draw (116,40.8) node [anchor=north west][inner sep=0.75pt]  [font=\scriptsize,color={rgb, 255:red, 74; green, 74; blue, 74 }  ,opacity=1 ]  {$0$};
\draw (116,72.2) node [anchor=north west][inner sep=0.75pt]  [font=\scriptsize,color={rgb, 255:red, 74; green, 74; blue, 74 }  ,opacity=1 ]  {$0$};
\draw (116,103.6) node [anchor=north west][inner sep=0.75pt]  [font=\scriptsize,color={rgb, 255:red, 74; green, 74; blue, 74 }  ,opacity=1 ]  {$0$};
\draw (74,40) node [anchor=north west][inner sep=0.75pt]  [font=\scriptsize,color={rgb, 255:red, 74; green, 74; blue, 74 }  ,opacity=1 ]  {$1$};
\draw (74.4,71.4) node [anchor=north west][inner sep=0.75pt]  [font=\scriptsize,color={rgb, 255:red, 74; green, 74; blue, 74 }  ,opacity=1 ]  {$1$};
\draw (74.4,102.8) node [anchor=north west][inner sep=0.75pt]  [font=\scriptsize,color={rgb, 255:red, 74; green, 74; blue, 74 }  ,opacity=1 ]  {$1$};

\end{tikzpicture}
        \caption{}
        \label{fig:m9_46-lee}
    \end{subfigure}
    \begin{subfigure}{0.3\textwidth}
        \centering
        \tikzset{every picture/.style={line width=0.75pt}} 

\begin{tikzpicture}[x=0.75pt,y=0.75pt,yscale=-.85,xscale=.85]

\draw [color={rgb, 255:red, 208; green, 2; blue, 27 }  ,draw opacity=1 ][line width=1.5]    (13.44,30.16) .. controls (13.56,22.23) and (13.4,6.6) .. (63,6.6) .. controls (112.6,6.6) and (115.51,18.8) .. (114.25,30.83) ;
\draw [color={rgb, 255:red, 208; green, 2; blue, 27 }  ,draw opacity=1 ][line width=1.5]    (13.44,30.16) .. controls (13.8,36.67) and (19.8,43.6) .. (23.4,43.2) .. controls (27,42.8) and (30.2,39.2) .. (31.4,34) .. controls (32.6,28.8) and (32.6,24) .. (43.8,24) .. controls (55,24) and (53.8,32) .. (55.8,36.4) .. controls (57.8,40.8) and (61.26,43.18) .. (63.8,42.8) .. controls (66.34,42.42) and (71.4,42.4) .. (73,35.2) .. controls (74.6,28) and (73,24.4) .. (85,24) .. controls (97,23.6) and (95.4,32.8) .. (97.4,36.8) .. controls (99.4,40.8) and (101.8,43.2) .. (104.6,43.2) .. controls (107.4,43.2) and (114.2,38) .. (114.25,30.83) ;
\draw [color={rgb, 255:red, 74; green, 144; blue, 226 }  ,draw opacity=1 ][line width=1.5]    (12.44,120.56) .. controls (12.56,128.49) and (12.4,144.12) .. (62,144.12) .. controls (111.6,144.12) and (114.51,131.92) .. (113.25,119.9) ;
\draw [color={rgb, 255:red, 74; green, 144; blue, 226 }  ,draw opacity=1 ][line width=1.5]    (12.44,120.56) .. controls (12.8,114.06) and (18.8,107.12) .. (22.4,107.52) .. controls (26,107.92) and (29.2,111.52) .. (30.4,116.72) .. controls (31.6,121.92) and (31.6,126.72) .. (42.8,126.72) .. controls (54,126.72) and (52.8,118.72) .. (54.8,114.32) .. controls (56.8,109.92) and (59.2,106.82) .. (63.4,107.2) .. controls (67.6,107.58) and (70.4,108.32) .. (72,115.52) .. controls (73.6,122.72) and (72,126.32) .. (84,126.72) .. controls (96,127.12) and (94.4,117.92) .. (96.4,113.92) .. controls (98.4,109.92) and (99.6,107.12) .. (104.6,107.2) .. controls (109.6,107.28) and (113.2,112.72) .. (113.25,119.9) ;
\draw  [color={rgb, 255:red, 74; green, 144; blue, 226 }  ,draw opacity=1 ][line width=1.5]  (13.19,60.16) .. controls (13.19,52.56) and (17.51,46.4) .. (22.83,46.4) .. controls (28.16,46.4) and (32.47,52.56) .. (32.47,60.16) .. controls (32.47,67.76) and (28.16,73.92) .. (22.83,73.92) .. controls (17.51,73.92) and (13.19,67.76) .. (13.19,60.16) -- cycle ;
\draw  [color={rgb, 255:red, 155; green, 155; blue, 155 }  ,draw opacity=1 ][line width=1.5]  (54.25,60.16) .. controls (54.25,52.56) and (58.56,46.4) .. (63.89,46.4) .. controls (69.21,46.4) and (73.53,52.56) .. (73.53,60.16) .. controls (73.53,67.76) and (69.21,73.92) .. (63.89,73.92) .. controls (58.56,73.92) and (54.25,67.76) .. (54.25,60.16) -- cycle ;
\draw  [color={rgb, 255:red, 74; green, 144; blue, 226 }  ,draw opacity=1 ][line width=1.5]  (94.53,60.16) .. controls (94.53,52.56) and (98.85,46.4) .. (104.17,46.4) .. controls (109.5,46.4) and (113.82,52.56) .. (113.82,60.16) .. controls (113.82,67.76) and (109.5,73.92) .. (104.17,73.92) .. controls (98.85,73.92) and (94.53,67.76) .. (94.53,60.16) -- cycle ;
\draw  [color={rgb, 255:red, 208; green, 2; blue, 27 }  ,draw opacity=1 ][line width=1.5]  (12.39,90.56) .. controls (12.39,82.96) and (16.71,76.8) .. (22.03,76.8) .. controls (27.36,76.8) and (31.67,82.96) .. (31.67,90.56) .. controls (31.67,98.16) and (27.36,104.32) .. (22.03,104.32) .. controls (16.71,104.32) and (12.39,98.16) .. (12.39,90.56) -- cycle ;
\draw  [color={rgb, 255:red, 155; green, 155; blue, 155 }  ,draw opacity=1 ][line width=1.5]  (53.45,90.56) .. controls (53.45,82.96) and (57.76,76.8) .. (63.09,76.8) .. controls (68.41,76.8) and (72.73,82.96) .. (72.73,90.56) .. controls (72.73,98.16) and (68.41,104.32) .. (63.09,104.32) .. controls (57.76,104.32) and (53.45,98.16) .. (53.45,90.56) -- cycle ;
\draw  [color={rgb, 255:red, 208; green, 2; blue, 27 }  ,draw opacity=1 ][line width=1.5]  (93.73,90.56) .. controls (93.73,82.96) and (98.05,76.8) .. (103.37,76.8) .. controls (108.7,76.8) and (113.02,82.96) .. (113.02,90.56) .. controls (113.02,98.16) and (108.7,104.32) .. (103.37,104.32) .. controls (98.05,104.32) and (93.73,98.16) .. (93.73,90.56) -- cycle ;

\draw (57.2,12) node [anchor=north west][inner sep=0.75pt]    {$\textcolor[rgb]{0.82,0.01,0.11}{X}$};
\draw (14.8,81.4) node [anchor=north west][inner sep=0.75pt]    {$\textcolor[rgb]{0.82,0.01,0.11}{X}$};
\draw (59,82.8) node [anchor=north west][inner sep=0.75pt]  [color={rgb, 255:red, 155; green, 155; blue, 155 }  ,opacity=1 ]  {$1$};
\draw (95.6,82.2) node [anchor=north west][inner sep=0.75pt]    {$\textcolor[rgb]{0.82,0.01,0.11}{X}$};
\draw (16.6,51.8) node [anchor=north west][inner sep=0.75pt]    {$\textcolor[rgb]{0.29,0.56,0.89}{Y}$};
\draw (60.2,51.2) node [anchor=north west][inner sep=0.75pt]  [color={rgb, 255:red, 155; green, 155; blue, 155 }  ,opacity=1 ]  {$1$};
\draw (97.6,52.2) node [anchor=north west][inner sep=0.75pt]    {$\textcolor[rgb]{0.29,0.56,0.89}{Y}$};
\draw (56.8,117.72) node [anchor=north west][inner sep=0.75pt]    {$\textcolor[rgb]{0.29,0.56,0.89}{Y}$};
\draw (5.6,39.6) node [anchor=north west][inner sep=0.75pt]  [font=\scriptsize,color={rgb, 255:red, 74; green, 74; blue, 74 }  ,opacity=1 ]  {$0$};
\draw (5.6,71) node [anchor=north west][inner sep=0.75pt]  [font=\scriptsize,color={rgb, 255:red, 74; green, 74; blue, 74 }  ,opacity=1 ]  {$0$};
\draw (5.6,102.4) node [anchor=north west][inner sep=0.75pt]  [font=\scriptsize,color={rgb, 255:red, 74; green, 74; blue, 74 }  ,opacity=1 ]  {$0$};
\draw (116,40.8) node [anchor=north west][inner sep=0.75pt]  [font=\scriptsize,color={rgb, 255:red, 74; green, 74; blue, 74 }  ,opacity=1 ]  {$0$};
\draw (116,72.2) node [anchor=north west][inner sep=0.75pt]  [font=\scriptsize,color={rgb, 255:red, 74; green, 74; blue, 74 }  ,opacity=1 ]  {$0$};
\draw (116,103.6) node [anchor=north west][inner sep=0.75pt]  [font=\scriptsize,color={rgb, 255:red, 74; green, 74; blue, 74 }  ,opacity=1 ]  {$0$};
\draw (74,40) node [anchor=north west][inner sep=0.75pt]  [font=\scriptsize,color={rgb, 255:red, 74; green, 74; blue, 74 }  ,opacity=1 ]  {$1$};
\draw (74.4,71.4) node [anchor=north west][inner sep=0.75pt]  [font=\scriptsize,color={rgb, 255:red, 74; green, 74; blue, 74 }  ,opacity=1 ]  {$1$};
\draw (74.4,102.8) node [anchor=north west][inner sep=0.75pt]  [font=\scriptsize,color={rgb, 255:red, 74; green, 74; blue, 74 }  ,opacity=1 ]  {$1$};

\end{tikzpicture}
        \caption{}
        \label{fig:m9_46-lee-red}
    \end{subfigure}    \centering
    \caption{$m(9_{46})$}
\end{figure}

To give some intuition for the inequality $\ds < \us$, we give a human readable proof of a weaker inequality
\[
    \ds(K) = 0 < 2 \leq \us(K)
\]
specifically for $K = m(9_{46})$, inspired by the arguments given in \cite{Abe:2012}. \Cref{fig:m9_46} depicts a diagram $D$ of $m(9_{46})$ in the standard form of the pretzel knot $P(-3, 3, -3)$. Given $w(D) = 3$, $r(D) = 8$, it suffices to show that 
\[
    \dd_h(D) = 2 < 3 \leq \ud_h(D).
\]
\Cref{fig:m9_46-lee} depicts the Lee cycle $\ca(D)$ of $D$ and \Cref{fig:m9_46-lee-red} depicts another $\tau$-invariant cycle $z$, which is obtained from $\ca(D)$ by altering the labels of the two center circles to $1$. By an argument similar to \Cref{ex:trefoil-divisibility}, we may find some $\tau$-invariant element $x \in \CKh^{-1}(D)$ that gives
\[
    \ca(D) \sim h^2 z
\]
where $\sim$ denotes homologous. Using $x \in \CKhI^{-1}(D)$ and $Qx \in \CKhI^{0}(D)$, we obtain
\[
    \dca(D) \sim h^2 z,\ \uca(D) \sim h^2 Qz
\]
and hence
\[
    \dd_h(D),\ \ud_h(D) \geq 2.
\]
Here, it is necessary that $x$ is $\tau$-invariant, otherwise $Q(1 + \tau)x$ will produce some non-zero term in $Q\CKh(D)$. 

Since $K$ is slice, we have $s_h(K) = 0$ and hence $d_h(D) = 2$. Thus from \Cref{prop:div-ineq} we obtain $\dd_h(D) = 2$. It remains to show that the cycle $Qz \in \CKhI(D)$ is at least once more $h$-divisible. This is equivalent to $Qz$ being null-homologous \textit{modulo $h$} in $\CKhI(D)$. Thus it suffices to set $h = 0$ and prove that there exists elements $a, b \in \CKh(D)$ such that
\[
\begin{tikzcd}
 & b \arrow[d, "Q(1 + \tau)", maps to] \arrow[r, "d", maps to] & 0 \\
Qa \arrow[r, "d", maps to] & Qz. &  
\end{tikzcd}    
\]
\[
    d(Qa) + Q(1 + \tau)b = Qz.
\]
Here it is necessary that $a \in \CKh(D)$ is \textit{not} $\tau$-invariant, otherwise we would also have $z \sim 0$ modulo $h$ in $\CKhI(D)$ and $\dd_h(D) \geq 3$. We leave it to the reader to find explicit elements $a, b$ satisfying the above relation. Hence we conclude that $\ud_h(D) \geq 3$. 

This observation demonstrates the following facts:
\begin{itemize}
    \item A cycle $z \in \CKh(D)$ must be symmetric (i.e.\ $\tau$-invariant) to be a cycle in $\CKhI(D)$, whereas $Qz \in \CKhI(D)$ is always a cycle. 
    \item A non-trivial boundary $dx \in \CKh(D)$ must be symmetric to be a boundary in $\CKhI(D)$, whereas $Q(dx) \in \CKhI(D)$ is always a boundary. 
    \item A non-symmetric cycle $z \in \CKh(D)$ gives a non-trivial boundary $Q(1 + \tau)z$ in $Q\CKh(D) \subset \CKhI(D)$.
\end{itemize}
Therefore there are `less cycles' in $\CKh(D) \subset \CKhI(D)$ and `more boundaries' in $Q\CKh(D) \subset \CKhI(D)$, which allow $\ds < \us$ to happen. A more simplified proof of \Cref{prop:calc} using reduction techniques is to be presented in a future paper. 
    \section{On $2$-periodic links}
\label{sec:6}

Analogous constructions for $2$-periodic links are possible by using $\rho = \sigma\tau$ instead of $\tau$, where $\sigma$ is the involution given in \Cref{def:sigma}. Since $\sigma$ is induced from a Frobenius algebra isomorphism, it commutes with any map coming from Bar-Natan's category $\Cob^3_{/l}(B)$, and many of the arguments in \Cref{sec:2,sec:3} run in parallel for $2$-periodic knots and links. The exceptions are (i) equivariant connected sums are not defined, and (ii) the reduced complexes cannot be defined for $2$-periodic links. Here we only state some of the basic definitions and results. 

\begin{definition}
\label{def:ckhi-sigma}
    Given an involutive link $(D, \tau)$, define
    \[
        \CKhI(D, \rho) = \Cone(\ \CKh(D) \xrightarrow{Q(1 + \rho)} Q\CKh(D)\ ).
    \]
    The homology of $\CKhI(D, \rho)$ is denoted $\KhI(D, \rho)$.
\end{definition}

\begin{proposition}
    The chain homotopy type of $\CKhI(D, \rho)$ is an invariant of the involutive link. 
\end{proposition}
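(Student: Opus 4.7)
The plan is to mimic the proof of \Cref{thm:kh-inv} step by step, substituting $\sigma\tau$ for $\tau$ throughout. First I would verify that $\sigma\tau$ is genuinely an involution on $\CKh(D)$: since $\sigma^2 = \tau^2 = \id$ and $\sigma$ commutes with $\tau$ (as established just above \Cref{prop:sigma-swap}), one has $(\sigma\tau)^2 = \sigma^2\tau^2 = \id$, so the mapping cone $\CKhI_{\sigma\tau}(D)$ of \Cref{def:ckhi-sigma} is well-defined.

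The key observation that makes the whole strategy go through with essentially no new work is the following. Because $\sigma$ is induced by a Frobenius algebra automorphism of $A$, it commutes on the nose with every chain map and every chain homotopy built from pieces of Bar-Natan's category $\Cob^3_{/l}(B)$. In particular, for every Bar-Natan chain map $F\colon \CKh(D)\to\CKh(D')$ appearing in the proof of \Cref{prop:invariance}, the $\sigma\tau$-conjugate equals the $\tau$-conjugate:
\[
    (\sigma\tau)\,F\,(\sigma\tau) \;=\; \sigma(\tau F\tau)\sigma \;=\; \sigma F'\sigma \;=\; F',
\]
where $F'$ denotes the $\tau$-conjugate. The analogous identity holds for any Bar-Natan homotopy $H$.

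Given this, the entire apparatus of \Cref{cl:htpy-1,cl:htpy-2}, together with \Cref{lem:htpy-1,lem:htpy-2,lem:htpy-3}, transports verbatim: the $\Kh$-simplicity of the relevant tangle pieces produces homotopies $h_F, h_G, K, \bar K$ relating $F$ with $F'$ and $H$ with $H'$, exactly as in \Cref{prop:invariance}. I would then assemble a chain homotopy equivalence between $\CKhI_{\sigma\tau}(D)$ and $\CKhI_{\sigma\tau}(D')$ by
\[
    \mathbf{F} \;=\; \begin{pmatrix} F & 0 \\ \sigma\tau\, h_F & F \end{pmatrix}, \qquad
    \mathbf{G} \;=\; \begin{pmatrix} G & 0 \\ \sigma\tau\, h_G & G \end{pmatrix},
\]
together with analogous $\mathbf{H}, \bar{\mathbf{H}}$ obtained by placing $\sigma\tau K$ and $\sigma\tau \bar K$ in the off-diagonal entries. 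The verification that $\mathbf{F}, \mathbf{G}$ are chain maps and satisfy $\mathbf{G}\mathbf{F}\htpy_{\mathbf{H}}\mathbf{I}$ and $\mathbf{F}\mathbf{G}\htpy_{\bar{\mathbf{H}}}\mathbf{I}$ is the same calculation as in \Cref{prop:invariance}; the only line that differs uses the identity $F\sigma\tau = \sigma\tau F'$, which follows from $\sigma F = F\sigma$ and $\tau F = F'\tau$.

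The main obstacle is really just bookkeeping: confirming that $\sigma$ genuinely commutes with every ingredient used in the invariance proof (saddle cobordism maps, the Reidemeister and Morse move maps, and their homotopies). Since each such map is built from elementary Bar-Natan cobordisms whose induced linear maps are $R$-algebra maps compatible with the Frobenius structure of $A$, this commutation is automatic, and no additional geometric input is needed beyond what appears in \Cref{sec:2}. Hence the invariance of $\CKhI_{\sigma\tau}(D)$ up to chain homotopy follows.
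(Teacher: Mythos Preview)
Your proposal is correct and matches the paper's approach exactly. The paper does not spell out a proof for this proposition; it simply states at the opening of \Cref{sec:4} that since $\sigma$ is induced from a Frobenius algebra automorphism it commutes with every map coming from $\Cob^3_{/l}(B)$, so the arguments of \Cref{sec:2} run in parallel with $\sigma\tau$ in place of $\tau$---which is precisely the mechanism you describe and carry out in detail.
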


Hereafter we assume that $(D, \tau)$ is $2$-periodic. Recall that $O(D)$ denotes the set of all orientations on the underlying unoriented diagram of $D$. Define
\begin{align*}
    O^\rho(D) 
        &:= \{\ o \in O(D) \mid \tau(o) = o \ \},\\
    \overline{O}{}^\rho(D) 
        &:= (O(D) \setminus O^\rho(D)) / (\tau(o) \sim o).
\end{align*}

\begin{proposition}
\label{prop:lee-sigma-tau-invariant}
    $\ca(D, o)$ is $\rho$-invariant for any $o \in O^\rho(D)$, and $(1 + \rho)\ca(D, o')$ is (non-zero and) $\rho$-invariant for any $[o'] \in \overline{O}{}^\rho(D)$. 
\end{proposition}

\begin{proof}
    Immediate from \Cref{lem:ca-tau-comm}.
\end{proof}

\begin{definition}
    The cycles $\dca(D, o) = \ca(D, o)$ for $o \in O^\rho(D)$, $(1 + \rho)\dca(D, o')$ for $[o'] \in \overline{O}{}^\rho(D)$, and $\uca(D, o) = Q\ca(D, o)$ for $o \in O(D)$, regarded in $\CKhI(D, \rho)$, are called the \textit{$\rho$-equivariant Lee cycles} of $D$ (cf.\ \Cref{prop:equiv-lee-cycles}).
\end{definition}
    
\begin{proposition}
    If $h \in R$ is invertible, the involutive Khovanov homology $\KhI(D, \rho)$ is generated by the $\rho$-equivariant Lee classes. More strongly, one may choose an explicit basis as in \Cref{prop:khi-structure}.
\end{proposition}

Hereafter we additionally assume that $R$ is a PID and $h$ is prime. 

\begin{proposition}
    Let $\dd_h(D)$ and $\ud_h(D)$ be the $h$-divisibility (modulo torsion) of the $\rho$-equivariant Lee classes $\dca(D), \uca(D)$ in $\KhI(D, \rho)$. Then the following quantities
    \begin{align*}
        \ds_h(L) &= 2\dd_h(D) + w(D) - r(D) + 1, \\ 
        \us_h(L) &= 2\ud_h(D) + w(D) - r(D) + 1
    \end{align*}
    are invariants of the corresponding $2$-periodic link $L$, satisfying
    \[
        \ds_h(L) \leq s_h(L) \leq \us_h(L).
    \]
    In particular when $K$ is a $2$-periodic knot, then 
    \[
        \ds_h(K) \leq s(K) \leq \us_h(K)
    \]
    where $s(K)$ is the $\FF_2$-Rasmussen invariant. 
\end{proposition}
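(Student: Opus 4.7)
The plan is to run each step of the strongly invertible proof (Section 3) in parallel, replacing $\tau$ by $\sigma\tau$ throughout. The key preliminary observation, already noted at the start of Section 4, is that $\sigma$ arises from a Frobenius algebra automorphism and therefore strictly commutes with every chain map and homotopy induced from Bar-Natan's cobordism category $\Cob^3_{/l}$. Consequently, every $\tau$-equivariance statement up to homotopy established in Section 2 upgrades immediately to the corresponding $\sigma\tau$-equivariance statement, by composing with $\sigma$.

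First I would establish the $2$-periodic analogue of \Cref{prop:ca-reid}. For each involutive Reidemeister move $D \to D'$, the observation above together with the cone construction of \Cref{def:ckhi-sigma} yields a chain homotopy equivalence $\rho\colon \CKhI_{\sigma\tau}(D) \to \CKhI_{\sigma\tau}(D')$ fitting into the commutative diagram analogous to that of \Cref{prop:invariance}. Combined with \cite[Proposition 2.13]{Sano:2020} on the non-involutive behavior of the Lee cycles and the short exact sequence
\[
    \CKh(D)[1] \xrightarrow{Q} \CKhI_{\sigma\tau}(D) \xrightarrow{q} \CKh(D)
\]
(which sends $\ca(D) \mapsto \uca(D)$ and $\dca(D) \mapsto \ca(D)$), this gives
\[
    \dca(D) \mapsto h^j\, \dca(D'), \qquad \uca(D) \mapsto h^j\, \uca(D') \pmod{\Tor}
\]
under $\rho$, where $j = (\delta w(D,D') - \delta r(D,D'))/2$.

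Invariance of $\ds_h$ and $\us_h$ is then immediate: after inverting $h$, the map $\rho$ becomes an isomorphism and preserves divisibility, so $\dd_h(D) + j = \dd_h(D')$, whence $2\dd_h(D) + w(D) - r(D) = 2\dd_h(D') + w(D') - r(D')$, and similarly for $\ud_h$. The inequality $\ds_h(L) \leq s_h(L) \leq \us_h(L)$ follows from the same short exact sequence, which yields $\dd_h(D) \leq d_h(D) \leq \ud_h(D)$ exactly as in the proof of \Cref{prop:div-ineq}. The final statement for $2$-periodic knots is an immediate consequence of the identification $s(K) = s_h(K)$ in characteristic $2$, proved in \cite[Theorem 2, Proposition 4.36]{Sano-Sato:2023}.

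The whole argument is thus a routine parallel to Section 3 once \Cref{prop:lee-sigma-tau-invariant} is in place. The one place where the parallelism is genuinely non-trivial is precisely the $\sigma\tau$-invariance of the Lee cycles themselves: for $2$-periodic diagrams, symmetric pairs of Seifert circles receive \emph{opposite} $ab$-labels (since $\tau$ preserves rather than reverses the orientation of $L$), and postcomposing with $\sigma$, which swaps $X \leftrightarrow Y = X+h$, is exactly what restores invariance. With that point already settled in the excerpt, no further obstacles arise.
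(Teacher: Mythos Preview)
Your proposal is correct and follows exactly the approach the paper indicates: the paper does not give an explicit proof of this proposition, but instead states at the outset of Section~4 that the arguments of Sections~2--3 run in parallel once $\tau$ is replaced by $\sigma\tau$, and your write-up is a faithful unpacking of that parallel (invariance via the $\sigma\tau$-analogue of \Cref{prop:ca-reid}, the inequality via the short exact sequence as in \Cref{prop:div-ineq}, and the knot case via \cite[Theorem~2, Proposition~4.36]{Sano-Sato:2023}).
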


\begin{proposition}
    Let $S$ be a simple isotopy-equivariant cobordism between $2$-periodic links $L$ and $L'$. Then we have 
    \begin{align*}
        \ds_h(L) &\leq \ds_h(L') - \chi(S), \\ 
        \us_h(L) &\leq \us_h(L') - \chi(S).
    \end{align*}
    Moreover if every component of $S$ has boundary in both $L$ and $L'$, then we have 
    \begin{align*}
        |\ds_h(L) - \ds_h(L')| \leq -\chi(S), \\ 
        |\us_h(L) - \us_h(L')| \leq -\chi(S).
    \end{align*}
    In particular, both $\ds$ and $\us$ are invariant under \textit{simple isotopy-equivariant link concordances}.
\end{proposition}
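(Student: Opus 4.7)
The plan is to run the argument for strongly invertible links from Section 3 in parallel, replacing $\tau$ by $\sigma\tau$ throughout, and leaning on the observation made at the start of Section 4 that $\sigma$ commutes strictly with every cobordism map in Bar-Natan's category $\Cob^3_{/l}(B)$ (since it is induced from a Frobenius algebra automorphism).

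First, I would establish the $\sigma\tau$-analogue of \Cref{prop:cob-map}: given a simple isotopy-equivariant cobordism $S$ between 2-periodic links, there is a cobordism map $\phi_S\colon \CKhI_{\sigma\tau}(D) \to \CKhI_{\sigma\tau}(D')$ fitting into a short exact sequence diagram analogous to the one in \Cref{prop:cob-map}. The construction proceeds as follows. The proof of \Cref{prop:tau-S} depended only on abstract features of the involution on $S^3$ and applies verbatim with $\tau$ replaced by any involution, so $\phi_S \htpy (\phi_{\tau S})^\tau$ via conjugation by $\tau$. Conjugating further by $\sigma$ and using that $\sigma$ commutes strictly with every Bar-Natan cobordism map (hence $\sigma \phi_S \sigma = \phi_S$ and $\sigma \phi_{\tau S} \sigma = \phi_{\tau S}$), we obtain $\phi_S \htpy (\phi_{\tau S})^{\sigma\tau}$. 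Combining with the isotopy-equivariance $S \simeq \tau S$ rel boundary (which by \cite[Theorem 4]{BarNatan:2004} gives $\phi_S \htpy \phi_{\tau S}$), we conclude $\phi_S \htpy (\phi_S)^{\sigma\tau}$. Letting $h_S$ be a witnessing homotopy, the cone map
\[
\begin{pmatrix} \phi_S & \\ \sigma\tau\, h_S & \phi_S \end{pmatrix}\colon \CKhI_{\sigma\tau}(D) \longrightarrow \CKhI_{\sigma\tau}(D')
\]
gives the desired chain map.

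Next, I would verify the $\sigma\tau$-analogue of \Cref{prop:ca-cob}: under $\phi_S$, the equivariant Lee classes (modulo torsion) correspond as $\dca(D) \mapsto h^j \dca(D')$ and similarly for $\uca, \dcb, \ucb$, with $j = \tfrac{1}{2}(\delta w(D,D') - \delta r(D,D') - \chi(S))$. By \Cref{prop:lee-sigma-tau-invariant} the ordinary Lee cycles are $\sigma\tau$-invariant, so they do give honest cycles in $\CKhI_{\sigma\tau}$; the correspondence then follows from \cite[Proposition 3.4]{Sano:2020} together with the cone description and the argument already given in the proof of \Cref{prop:ca-reid} (grading considerations and the splitting of the induced long exact sequence after inverting $h$).

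Finally, the divisibility inequality follows: if $\phi_S$ sends $\dca(D)$ to $h^j \dca(D')$ modulo torsion, then $\dd_h(D) + j \leq \dd_h(D')$ after inverting $h$. Substituting into the formula $\ds_h(L) = 2\dd_h(D) + w(D) - r(D) + 1$ yields $\ds_h(L) \leq \ds_h(L') - \chi(S)$, and identically for $\us_h$. Under the additional hypothesis that every component of $S$ has boundary in both $L$ and $L'$, the reversed cobordism $\bar{S}$ from $L'$ to $L$ also satisfies this hypothesis, giving the matching inequality in the opposite direction and hence the absolute value bound. Concordance invariance follows since $\chi(S) = 0$ in that case. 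I expect no genuine obstacle: the only subtlety is confirming that $\sigma$ commutes strictly (not merely up to homotopy) with $\phi_S$ for every elementary cobordism, which is immediate on the nose for the merge, split, and cup/cap maps because $\sigma$ is an algebra automorphism of $A$, and extends to general $S$ by composition.
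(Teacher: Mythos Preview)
Your proposal is correct and follows exactly the route the paper intends: Section 4 gives no explicit proof, stating only that the arguments of Sections 2 and 3 run in parallel with $\sigma\tau$ in place of $\tau$, using that $\sigma$ (being induced from a Frobenius algebra automorphism) commutes strictly with every map in $\Cob^3_{/l}(B)$. Your reconstruction of the $\sigma\tau$-analogues of \Cref{prop:cob-map} and \Cref{prop:ca-cob} is precisely this parallel.

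One small slip: the divisibility inequality you wrote, $\dd_h(D) + j \leq \dd_h(D')$, has the sign of $j$ on the wrong side; from $\phi_S(\dca(D)) = h^j \dca(D')$ one gets $\dd_h(D) \leq j + \dd_h(D')$ (cf.\ the discussion preceding the definition of $\dd_h, \ud_h$). Substituting this corrected inequality into $\ds_h(L) = 2\dd_h(D) + w(D) - r(D) + 1$ together with $2j = \delta w - \delta r - \chi(S)$ does yield $\ds_h(L) \leq \ds_h(L') - \chi(S)$ as you claim, so the conclusion is unaffected.
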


\begin{corollary}
    For a $2$-periodic knot $K$, both $|\ds_h(K)|$ and $|\us_h(K)|$ bound $2\widetilde{sig}_4(K)$ from below.
\end{corollary}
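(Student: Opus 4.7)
The plan is to follow the template of \Cref{cor:s-bound-g}. Set $g = \widetilde{sig}_4(K)$ and choose a simple isotopy-equivariant surface $\Sigma \subset B^4$ with $\partial \Sigma = K$ and $g(\Sigma) = g$. The aim is to puncture $\Sigma$ equivariantly to produce a simple isotopy-equivariant cobordism $S$ from $K$ to a $2$-periodic unknot $U$ with $\chi(S) = -2g$ and $\ds_h(U) = \us_h(U) = 0$, and then apply the preceding proposition both to $S$ and to the time-reversed cobordism to conclude
\[
    |\ds_h(K)|,\ |\us_h(K)| \leq -\chi(S) = 2g = 2\widetilde{sig}_4(K).
\]

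To construct $S$, I would extend $\tau$ radially to an involution $\tilde\tau$ on $B^4$ whose fixed set $\tilde A$ is a smooth $2$-disk. By equivariant transversality, after a small equivariant perturbation of $\Sigma$ that stays within its simple isotopy-equivariance class, I may assume $\Sigma$ meets $\tilde A$ at finitely many interior points. Picking one such point $p$, removing a small equivariant $4$-ball $B_p$ around it, and setting $S = \Sigma \setminus B_p \subset S^3 \times I$ gives $\chi(S) = \chi(\Sigma) - 1 = -2g$; the new boundary circle $U = \partial(\Sigma \cap B_p) \subset \partial B_p$ is a small unknot on which $\tilde\tau$ acts as rotation by $\pi$, hence a $2$-periodic unknot. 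Since $U$ bounds the equivariant disk $\Sigma \cap B_p$ inside the small $4$-ball $B_p$, the equivariant Lee classes of $U$ are primitive and hence $\ds_h(U) = \us_h(U) = 0$.

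The main obstacle is the degenerate case $\mathrm{lk}(K,\Fix(\tau)) = 0$, in which $\Sigma$ can be isotoped to be entirely disjoint from $\tilde A$ and the interior-axis puncture is unavailable. In that situation the fallback is to puncture $\Sigma$ at a symmetric pair of interior points $p,\tau(p)$, obtaining a cobordism to a $2$-periodic $2$-component unlink whose equivariant invariants again vanish (by a direct computation of $\CKhI_{\sigma\tau}$ of the symmetric unlink), and then to attach an equivariant $1$-handle between the two punctures to absorb the extra $-1$ in Euler characteristic and recover a cobordism to a single $2$-periodic unknot with $\chi(S) = -2g$. Verifying that each of these modifications preserves the \emph{simple isotopy-equivariance} — rather than merely strict equivariance — is the subtlest technical point.
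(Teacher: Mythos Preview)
The paper gives no proof here; the corollary is stated as an immediate consequence of the preceding cobordism inequality, exactly as \Cref{cor:s-bound-g} is in the strongly invertible case. Your strategy---puncture a minimal-genus slice surface to obtain a cobordism to an equivariant unknot and then apply the proposition---is the natural way to make that passage explicit.

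There is, however, a genuine gap: throughout the construction you treat $\Sigma$ as if it were \emph{strictly} $\tilde\tau$-equivariant, whereas by hypothesis it is only \emph{isotopy}-equivariant ($\tilde\tau(\Sigma)$ is isotopic to $\Sigma$ rel $K$, not equal to it). When you pick $p\in\Sigma\cap\tilde A$ and remove $B_p$, the circle $U=\Sigma\cap\partial B_p$ need not satisfy $\tilde\tau(U)=U$: both $\Sigma$ and $\tilde\tau(\Sigma)$ pass through the fixed point $p$, but possibly with different tangent planes, so they meet $\partial B_p$ in different circles. Thus $U$ is not automatically a $2$-periodic unknot, and $S=\Sigma\setminus B_p$ is not automatically simple isotopy-equivariant rel its new boundary. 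The same problem afflicts your fallback: the point $\tau(p)$ lies on $\tilde\tau(\Sigma)$, not necessarily on $\Sigma$, so ``puncture $\Sigma$ at $p$ and $\tau(p)$'' is ill-posed as stated. Your final sentence acknowledges this difficulty but does not resolve it. A repair is available---first isotope $\Sigma$ locally near $p$ so that a small disk about $p$ is $\tilde\tau$-invariant, then argue that the given rel-$K$ isotopy $\Sigma\simeq\tilde\tau(\Sigma)$ can be upgraded to one fixing that disk pointwise (using that any two embedded interior disks in a connected surface are ambiently isotopic)---but this step is the actual content needed and is missing from the proposal.
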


    \clearpage
    \appendix
\section{Computations for small prime knots}
\label{appendix}

Here, the reduced involutive Khovanov homologies for prime knots with up to $7$ crossings are given, computed by a program\footnote{
    The program is available at \url{https://github.com/taketo1024/yui}. 
}
developed by the author, with the diagrams given in \cite[Section 6.5]{Lobb-Watson:2021} as the input data. 

Before describing the results, we review some of the basic facts on strongly invertible knots. See \cite[Section 3]{Sakuma:86} for the references. 

\begin{proposition}
\label{prop:basic-fact}
    $ $
    \begin{enumerate}
        \item Every torus knot admits exactly one inverting involution.
        \item An invertible hyperbolic knot admits exactly one or two inverting involutions; two when it has (cyclic or free) period $2$, or one otherwise. 
        \item For a fully amphichiral hyperbolic knot $K$, 
        \begin{enumerate}
            \item if $K$ admits a unique inverting involution $\tau$, then $(K, \tau) \isom (K, \tau)^*$,
            \item if $K$ admits two inverting involutions $\tau, \tau'$, then $(K, \tau) \isom (K, \tau')^*$. 
        \end{enumerate}
    \end{enumerate}
\end{proposition}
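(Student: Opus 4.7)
The three statements are classical results about symmetries of knots, essentially due to Sakuma (and going back to Schreier for torus knots). My proposal is therefore not to reprove them from scratch, but rather to organize a clean citation-driven argument that splits the work into the torus and hyperbolic cases and invokes the relevant rigidity theorems.

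For part (1), I would first reduce to analyzing the mapping class group of the pair $(S^3, T_{p,q})$. Since the complement of a non-trivial torus knot is Seifert fibered over an orbifold of the form $S^2(p,q,\infty)$, every self-homeomorphism of the complement is isotopic to a fiber-preserving one, and the full self-homeomorphism group of $(S^3, T_{p,q})$ is known to be generated by the inversion and (where applicable) the strong amphichirality for $T_{2,2n+1}$-style cases. Among these, exactly one involution reverses the orientation of $T_{p,q}$ while fixing $S^3$ setwise with a circle of fixed points; this is the rotation of $\pi$ about the axis orthogonal to the axis of the Seifert fibering. Schreier's classification of automorphisms of torus knot groups makes this rigorous.

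For part (2), I would invoke Mostow rigidity: for a hyperbolic knot $K$ in $S^3$, the group $\mathrm{Sym}(S^3,K)$ of isotopy classes of orientation-preserving diffeomorphisms of $(S^3,K)$ is isomorphic to the finite group of isometries of the hyperbolic structure on $S^3\setminus K$, and inverting involutions correspond to order-$2$ elements acting as $-1$ on $H_1$ of the complement. The classification of such symmetry groups by Sakuma shows that the set of conjugacy classes of inverting involutions has size $1$ if $K$ has no period $2$, and size $2$ precisely when $K$ admits a period-$2$ symmetry (cyclic or free), the second involution being obtained as the product of the first with the periodic symmetry. Because two inverting involutions that are conjugate in $\mathrm{Diff}^+(S^3,K)$ give equivalent strongly invertible knots, the count of equivalence classes matches the count of conjugacy classes.

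For part (3), I would argue as follows. Full amphichirality gives an orientation-reversing diffeomorphism $\phi$ of $(S^3,K)$, and if $\tau$ is an inverting involution then $\phi\tau\phi^{-1}$ is another inverting involution for the same knot. Composing with $\phi$ realizes an equivalence between $(K,\tau)^{*}$ and $(K,\phi\tau\phi^{-1})$. Hence: when the inverting involution is unique, $\phi\tau\phi^{-1}$ must be conjugate to $\tau$ in $\mathrm{Diff}^+(S^3,K)$, giving $(K,\tau)\isom(K,\tau)^{*}$; and when there are exactly two, the non-trivial action of $\phi$ on the two conjugacy classes must swap them (otherwise $\phi$ descends to the period-$2$ quotient and yields a further symmetry, contradicting the count from (2)), which is precisely $(K,\tau)\isom(K,\tau')^{*}$.

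The main potential obstacle is that each of these arguments rests on a nontrivial input — Seifert fibered structure on torus knot complements for (1), Mostow rigidity together with Sakuma's explicit tabulation of finite symmetry groups of hyperbolic knots for (2) and (3) — so the cleanest presentation is to state the proposition as a summary of \cite{Sakuma:86} and refer the reader there, spelling out only the short deduction of part (3) from the explicit description of the symmetry group provided in parts (1)--(2).
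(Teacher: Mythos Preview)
The paper does not actually prove this proposition: it is presented as a list of recalled facts with the parenthetical ``see \cite[Section 3]{Sakuma:86} for references'' and no further argument. Your proposal---to treat it as a citation-driven summary pointing to Sakuma, with brief indications of the underlying rigidity inputs---is therefore in the same spirit as the paper but considerably more detailed than what the paper itself provides.

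One small gap in your sketch of (3b): the justification that conjugation by the amphichirality $\phi$ must \emph{swap} the two conjugacy classes of inverting involutions is not quite right as stated. Your parenthetical (``otherwise $\phi$ descends to the period-$2$ quotient and yields a further symmetry, contradicting the count from (2)'') does not work, since part (2) only counts inverting involutions and says nothing that would be contradicted by an extra orientation-reversing symmetry on a quotient. A cleaner route is to use the explicit structure of the full (including orientation-reversing) isometry group of the hyperbolic complement, which Sakuma tabulates; from that one reads off directly that the orientation-reversing elements interchange the two conjugacy classes of strong inversions. Since you are already deferring to \cite{Sakuma:86} for the group structure, it is simplest to defer this step there as well rather than attempt an ad hoc argument.
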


Thus for a torus knot (T) and a fully amphichiral hyperbolic knot (HA), it suffices to perform the computation for only one of its involutions. For a non-amphichiral hyperbolic knot (HN), there are possibly two non-equivalent involutions. In fact, all non-amphichiral hyperbolic knots with up to $7$ crossings have exactly two non-equivalent involutions, and are distinguished by Sakuma's $\eta$-polynomial (see \cite[Appendix]{Sakuma:86}). 

The following list shows the computation results for strongly invertible prime knots with up to $7$ crossings. Each item displays the name of the strongly invertible knot $K$, together with its type: (T), (HA) or (HN). For a (HN) type knot, the two distinct strongly invertible knots are distinguished by suffixes $a, b$, such as $5_{2a}$ and $5_{2b}$. Its reduced involutive Khovanov homology $(R, h) = (\FF_2, 0)$ is displayed on the left and its reduced involutive Bar-Natan homology $(R, h) = (\FF_2[H], H)$ on the right. Note that both theories are bigraded, and the bigrading of each summand can be extracted from its computed generator. For the sake of readability, $\FF_2$ is simply written as $\FF$ and also $\FF_2[H]/(H)$ is replaced with $\FF$. From the latter table, one can read off the values of the equivariant Rasmussen invariants $(\ds, \us)$, however all knots in this list have $\ds = \us = s^{\FF_2}$. One can observe that $\rBNI$ is generally not a direct sum of two shifted copies of $\rBN$, as can be seen in $7_{4b}$ and $7_{7b}$. It is also notable that these two have order two $H$-torsions in $\rBNI$.

For the purpose of distinguishing non-equivalent involutions on the same knot, we see that the only successful ones are $7_4$ and $7_7$, so our invariants are not as strong as Sakuma's $\eta$-polynomial and Lobb--Watson's triply graded invariant $\CKh_\tau$ (\cite{Lobb-Watson:2021}). Some additional structures, such as filtrations, might be given on $\KhI$ to further strengthen the invariant. 

\newgeometry{top=2cm, bottom=2.5cm, left=1.5cm, right=1.5cm} 
\begin{itemize}[leftmargin=*]
\setlength\itemsep{3em}

\item $3_1$ (T)\\\\
\noindent
\begin{minipage}{0.5\textwidth}
\begin{tabular}{r|lllll}
$8$ & $.$ & $.$ & $.$ & $\FF$ & $\FF$ \\
$6$ & $.$ & $.$ & $\FF$ & $\FF$ & $.$ \\
$4$ & $.$ & $.$ & $.$ & $.$ & $.$ \\
$2$ & $\FF$ & $\FF$ & $.$ & $.$ & $.$ \\
\hline
$ $ & $0$ & $1$ & $2$ & $3$ & $4$ \\
\end{tabular}
\end{minipage}
\begin{minipage}{0.5\textwidth}
\begin{tabular}{r|lllll}
$8$ & $.$ & $.$ & $.$ & $\FF$ & $\FF$ \\
$6$ & $.$ & $.$ & $.$ & $.$ & $.$ \\
$4$ & $.$ & $.$ & $.$ & $.$ & $.$ \\
$2$ & $\FF[H]$ & $\FF[H]$ & $.$ & $.$ & $.$ \\
\hline
$ $ & $0$ & $1$ & $2$ & $3$ & $4$ \\
\end{tabular}
\end{minipage}

\item $4_1$ (HA) \\\\
\begin{minipage}{0.5\textwidth}
\begin{tabular}{r|llllll}
$4$ & $.$ & $.$ & $.$ & $.$ & $\FF$ & $\FF$ \\
$2$ & $.$ & $.$ & $.$ & $\FF$ & $\FF$ & $.$ \\
$0$ & $.$ & $.$ & $\FF$ & $\FF$ & $.$ & $.$ \\
$-2$ & $.$ & $\FF$ & $\FF$ & $.$ & $.$ & $.$ \\
$-4$ & $\FF$ & $\FF$ & $.$ & $.$ & $.$ & $.$ \\
\hline
$ $ & $-2$ & $-1$ & $0$ & $1$ & $2$ & $3$ \\
\end{tabular}
\end{minipage}
\begin{minipage}{0.5\textwidth}
\begin{tabular}{r|llllll}
$4$ & $.$ & $.$ & $.$ & $.$ & $\FF$ & $\FF$ \\
$2$ & $.$ & $.$ & $.$ & $.$ & $.$ & $.$ \\
$0$ & $.$ & $.$ & $\FF[H]$ & $\FF[H]$ & $.$ & $.$ \\
$-2$ & $.$ & $\FF$ & $\FF$ & $.$ & $.$ & $.$ \\
$-4$ & $.$ & $.$ & $.$ & $.$ & $.$ & $.$ \\
\hline
$ $ & $-2$ & $-1$ & $0$ & $1$ & $2$ & $3$ \\
\end{tabular}
\end{minipage}

\item $5_1$ (T)\\\\
\noindent
\begin{minipage}{0.5\textwidth}
\begin{tabular}{r|lllllll}
$14$ & $.$ & $.$ & $.$ & $.$ & $.$ & $\FF$ & $\FF$ \\
$12$ & $.$ & $.$ & $.$ & $.$ & $\FF$ & $\FF$ & $.$ \\
$10$ & $.$ & $.$ & $.$ & $\FF$ & $\FF$ & $.$ & $.$ \\
$8$ & $.$ & $.$ & $\FF$ & $\FF$ & $.$ & $.$ & $.$ \\
$6$ & $.$ & $.$ & $.$ & $.$ & $.$ & $.$ & $.$ \\
$4$ & $\FF$ & $\FF$ & $.$ & $.$ & $.$ & $.$ & $.$ \\
\hline
$ $ & $0$ & $1$ & $2$ & $3$ & $4$ & $5$ & $6$ \\
\end{tabular}
\end{minipage}
\begin{minipage}{0.5\textwidth}
\begin{tabular}{r|lllllll}
$14$ & $.$ & $.$ & $.$ & $.$ & $.$ & $\FF$ & $\FF$ \\
$12$ & $.$ & $.$ & $.$ & $.$ & $.$ & $.$ & $.$ \\
$10$ & $.$ & $.$ & $.$ & $\FF$ & $\FF$ & $.$ & $.$ \\
$8$ & $.$ & $.$ & $.$ & $.$ & $.$ & $.$ & $.$ \\
$6$ & $.$ & $.$ & $.$ & $.$ & $.$ & $.$ & $.$ \\
$4$ & $\FF[H]$ & $\FF[H]$ & $.$ & $.$ & $.$ & $.$ & $.$ \\
\hline
$ $ & $0$ & $1$ & $2$ & $3$ & $4$ & $5$ & $6$ \\
\end{tabular}
\end{minipage}

\item $5_{2a}$, $5_{2b}$ (HN)\\\\
\noindent
\begin{minipage}{0.5\textwidth}
\begin{tabular}{r|lllllll}
$12$ & $.$ & $.$ & $.$ & $.$ & $.$ & $\FF$ & $\FF$ \\
$10$ & $.$ & $.$ & $.$ & $.$ & $\FF$ & $\FF$ & $.$ \\
$8$ & $.$ & $.$ & $.$ & $\FF$ & $\FF$ & $.$ & $.$ \\
$6$ & $.$ & $.$ & $\FF^{2}$ & $\FF^{2}$ & $.$ & $.$ & $.$ \\
$4$ & $.$ & $\FF$ & $\FF$ & $.$ & $.$ & $.$ & $.$ \\
$2$ & $\FF$ & $\FF$ & $.$ & $.$ & $.$ & $.$ & $.$ \\
\hline
$ $ & $0$ & $1$ & $2$ & $3$ & $4$ & $5$ & $6$ \\
\end{tabular}

\end{minipage}
\begin{minipage}{0.5\textwidth}
\begin{tabular}{r|lllllll}
$12$ & $.$ & $.$ & $.$ & $.$ & $.$ & $\FF$ & $\FF$ \\
$10$ & $.$ & $.$ & $.$ & $.$ & $.$ & $.$ & $.$ \\
$8$ & $.$ & $.$ & $.$ & $\FF$ & $\FF$ & $.$ & $.$ \\
$6$ & $.$ & $.$ & $\FF$ & $\FF$ & $.$ & $.$ & $.$ \\
$4$ & $.$ & $.$ & $.$ & $.$ & $.$ & $.$ & $.$ \\
$2$ & $\FF[H]$ & $\FF[H]$ & $.$ & $.$ & $.$ & $.$ & $.$ \\
\hline
$ $ & $0$ & $1$ & $2$ & $3$ & $4$ & $5$ & $6$ \\
\end{tabular}
\end{minipage}

\item $6_{1a}, 6_{1b}$ (HN)\\\\
\noindent
\begin{minipage}{0.5\textwidth}
\begin{tabular}{r|llllllll}
$8$ & $.$ & $.$ & $.$ & $.$ & $.$ & $.$ & $\FF$ & $\FF$ \\
$6$ & $.$ & $.$ & $.$ & $.$ & $.$ & $\FF$ & $\FF$ & $.$ \\
$4$ & $.$ & $.$ & $.$ & $.$ & $\FF$ & $\FF$ & $.$ & $.$ \\
$2$ & $.$ & $.$ & $.$ & $\FF^{2}$ & $\FF^{2}$ & $.$ & $.$ & $.$ \\
$0$ & $.$ & $.$ & $\FF^{2}$ & $\FF^{2}$ & $.$ & $.$ & $.$ & $.$ \\
$-2$ & $.$ & $\FF$ & $\FF$ & $.$ & $.$ & $.$ & $.$ & $.$ \\
$-4$ & $\FF$ & $\FF$ & $.$ & $.$ & $.$ & $.$ & $.$ & $.$ \\
\hline
$ $ & $-2$ & $-1$ & $0$ & $1$ & $2$ & $3$ & $4$ & $5$ \\
\end{tabular}
\end{minipage}
\begin{minipage}{0.5\textwidth}
\begin{tabular}{r|llllllll}
$8$ & $.$ & $.$ & $.$ & $.$ & $.$ & $.$ & $\FF$ & $\FF$ \\
$6$ & $.$ & $.$ & $.$ & $.$ & $.$ & $.$ & $.$ & $.$ \\
$4$ & $.$ & $.$ & $.$ & $.$ & $\FF$ & $\FF$ & $.$ & $.$ \\
$2$ & $.$ & $.$ & $.$ & $\FF$ & $\FF$ & $.$ & $.$ & $.$ \\
$0$ & $.$ & $.$ & $\FF[H]$ & $\FF[H]$ & $.$ & $.$ & $.$ & $.$ \\
$-2$ & $.$ & $\FF$ & $\FF$ & $.$ & $.$ & $.$ & $.$ & $.$ \\
$-4$ & $.$ & $.$ & $.$ & $.$ & $.$ & $.$ & $.$ & $.$ \\
\hline
$ $ & $-2$ & $-1$ & $0$ & $1$ & $2$ & $3$ & $4$ & $5$ \\
\end{tabular}
\end{minipage}

\pagebreak

\item $6_{2a}, 6_{2b}$ (HN)\\\\
\noindent
\begin{minipage}{0.5\textwidth}
\begin{tabular}{r|llllllll}
$10$ & $.$ & $.$ & $.$ & $.$ & $.$ & $.$ & $\FF$ & $\FF$ \\
$8$ & $.$ & $.$ & $.$ & $.$ & $.$ & $\FF^{2}$ & $\FF^{2}$ & $.$ \\
$6$ & $.$ & $.$ & $.$ & $.$ & $\FF^{2}$ & $\FF^{2}$ & $.$ & $.$ \\
$4$ & $.$ & $.$ & $.$ & $\FF^{2}$ & $\FF^{2}$ & $.$ & $.$ & $.$ \\
$2$ & $.$ & $.$ & $\FF^{2}$ & $\FF^{2}$ & $.$ & $.$ & $.$ & $.$ \\
$0$ & $.$ & $\FF$ & $\FF$ & $.$ & $.$ & $.$ & $.$ & $.$ \\
$-2$ & $\FF$ & $\FF$ & $.$ & $.$ & $.$ & $.$ & $.$ & $.$ \\
\hline
$ $ & $-2$ & $-1$ & $0$ & $1$ & $2$ & $3$ & $4$ & $5$ \\
\end{tabular}
\end{minipage}
\begin{minipage}{0.5\textwidth}
\begin{tabular}{r|llllllll}
$10$ & $.$ & $.$ & $.$ & $.$ & $.$ & $.$ & $\FF$ & $\FF$ \\
$8$ & $.$ & $.$ & $.$ & $.$ & $.$ & $\FF$ & $\FF$ & $.$ \\
$6$ & $.$ & $.$ & $.$ & $.$ & $\FF$ & $\FF$ & $.$ & $.$ \\
$4$ & $.$ & $.$ & $.$ & $\FF$ & $\FF$ & $.$ & $.$ & $.$ \\
$2$ & $.$ & $.$ & $\FF[H]$ & $\FF[H]$ & $.$ & $.$ & $.$ & $.$ \\
$0$ & $.$ & $\FF$ & $\FF$ & $.$ & $.$ & $.$ & $.$ & $.$ \\
$-2$ & $.$ & $.$ & $.$ & $.$ & $.$ & $.$ & $.$ & $.$ \\
\hline
$ $ & $-2$ & $-1$ & $0$ & $1$ & $2$ & $3$ & $4$ & $5$ \\
\end{tabular}
\end{minipage}

\item $6_3$ (HA)\\\\
\noindent
\begin{minipage}{0.5\textwidth}
\small
\begin{tabular}{r|llllllll}
$6$ & $.$ & $.$ & $.$ & $.$ & $.$ & $.$ & $\FF$ & $\FF$ \\
$4$ & $.$ & $.$ & $.$ & $.$ & $.$ & $\FF^{2}$ & $\FF^{2}$ & $.$ \\
$2$ & $.$ & $.$ & $.$ & $.$ & $\FF^{2}$ & $\FF^{2}$ & $.$ & $.$ \\
$0$ & $.$ & $.$ & $.$ & $\FF^{3}$ & $\FF^{3}$ & $.$ & $.$ & $.$ \\
$-2$ & $.$ & $.$ & $\FF^{2}$ & $\FF^{2}$ & $.$ & $.$ & $.$ & $.$ \\
$-4$ & $.$ & $\FF^{2}$ & $\FF^{2}$ & $.$ & $.$ & $.$ & $.$ & $.$ \\
$-6$ & $\FF$ & $\FF$ & $.$ & $.$ & $.$ & $.$ & $.$ & $.$ \\
\hline
$ $ & $-3$ & $-2$ & $-1$ & $0$ & $1$ & $2$ & $3$ & $4$ \\
\end{tabular}
\end{minipage}
\begin{minipage}{0.5\textwidth}
\small
\begin{tabular}{r|llllllll}
$6$ & $.$ & $.$ & $.$ & $.$ & $.$ & $.$ & $\FF$ & $\FF$ \\
$4$ & $.$ & $.$ & $.$ & $.$ & $.$ & $\FF$ & $\FF$ & $.$ \\
$2$ & $.$ & $.$ & $.$ & $.$ & $\FF$ & $\FF$ & $.$ & $.$ \\
$0$ & $.$ & $.$ & $.$ & $\FF[H] \oplus \FF$ & $\FF[H] \oplus \FF$ & $.$ & $.$ & $.$ \\
$-2$ & $.$ & $.$ & $\FF$ & $\FF$ & $.$ & $.$ & $.$ & $.$ \\
$-4$ & $.$ & $\FF$ & $\FF$ & $.$ & $.$ & $.$ & $.$ & $.$ \\
$-6$ & $.$ & $.$ & $.$ & $.$ & $.$ & $.$ & $.$ & $.$ \\
\hline
$ $ & $-3$ & $-2$ & $-1$ & $0$ & $1$ & $2$ & $3$ & $4$ \\
\end{tabular}
\end{minipage}

\item $7_1$ (T)\\\\
\noindent
\begin{minipage}{0.5\textwidth}
\small
\begin{tabular}{r|lllllllll}
$20$ & $.$ & $.$ & $.$ & $.$ & $.$ & $.$ & $.$ & $\FF$ & $\FF$ \\
$18$ & $.$ & $.$ & $.$ & $.$ & $.$ & $.$ & $\FF$ & $\FF$ & $.$ \\
$16$ & $.$ & $.$ & $.$ & $.$ & $.$ & $\FF$ & $\FF$ & $.$ & $.$ \\
$14$ & $.$ & $.$ & $.$ & $.$ & $\FF$ & $\FF$ & $.$ & $.$ & $.$ \\
$12$ & $.$ & $.$ & $.$ & $\FF$ & $\FF$ & $.$ & $.$ & $.$ & $.$ \\
$10$ & $.$ & $.$ & $\FF$ & $\FF$ & $.$ & $.$ & $.$ & $.$ & $.$ \\
$8$ & $.$ & $.$ & $.$ & $.$ & $.$ & $.$ & $.$ & $.$ & $.$ \\
$6$ & $\FF$ & $\FF$ & $.$ & $.$ & $.$ & $.$ & $.$ & $.$ & $.$ \\
\hline
$ $ & $0$ & $1$ & $2$ & $3$ & $4$ & $5$ & $6$ & $7$ & $8$ \\
\end{tabular}
\end{minipage}
\begin{minipage}{0.5\textwidth}
\small
\begin{tabular}{r|lllllllll}
$20$ & $.$ & $.$ & $.$ & $.$ & $.$ & $.$ & $.$ & $\FF$ & $\FF$ \\
$18$ & $.$ & $.$ & $.$ & $.$ & $.$ & $.$ & $.$ & $.$ & $.$ \\
$16$ & $.$ & $.$ & $.$ & $.$ & $.$ & $\FF$ & $\FF$ & $.$ & $.$ \\
$14$ & $.$ & $.$ & $.$ & $.$ & $.$ & $.$ & $.$ & $.$ & $.$ \\
$12$ & $.$ & $.$ & $.$ & $\FF$ & $\FF$ & $.$ & $.$ & $.$ & $.$ \\
$10$ & $.$ & $.$ & $.$ & $.$ & $.$ & $.$ & $.$ & $.$ & $.$ \\
$8$ & $.$ & $.$ & $.$ & $.$ & $.$ & $.$ & $.$ & $.$ & $.$ \\
$6$ & $\FF[H]$ & $\FF[H]$ & $.$ & $.$ & $.$ & $.$ & $.$ & $.$ & $.$ \\
\hline
$ $ & $0$ & $1$ & $2$ & $3$ & $4$ & $5$ & $6$ & $7$ & $8$ \\
\end{tabular}
\end{minipage}

\item $7_{2a}, 7_{2b}$ (HN)\\\\
\noindent
\begin{minipage}{0.5\textwidth}
\small
\begin{tabular}{r|lllllllll}
$16$ & $.$ & $.$ & $.$ & $.$ & $.$ & $.$ & $.$ & $\FF$ & $\FF$ \\
$14$ & $.$ & $.$ & $.$ & $.$ & $.$ & $.$ & $\FF$ & $\FF$ & $.$ \\
$12$ & $.$ & $.$ & $.$ & $.$ & $.$ & $\FF$ & $\FF$ & $.$ & $.$ \\
$10$ & $.$ & $.$ & $.$ & $.$ & $\FF^{2}$ & $\FF^{2}$ & $.$ & $.$ & $.$ \\
$8$ & $.$ & $.$ & $.$ & $\FF^{2}$ & $\FF^{2}$ & $.$ & $.$ & $.$ & $.$ \\
$6$ & $.$ & $.$ & $\FF^{2}$ & $\FF^{2}$ & $.$ & $.$ & $.$ & $.$ & $.$ \\
$4$ & $.$ & $\FF$ & $\FF$ & $.$ & $.$ & $.$ & $.$ & $.$ & $.$ \\
$2$ & $\FF$ & $\FF$ & $.$ & $.$ & $.$ & $.$ & $.$ & $.$ & $.$ \\
\hline
$ $ & $0$ & $1$ & $2$ & $3$ & $4$ & $5$ & $6$ & $7$ & $8$ \\
\end{tabular}
\end{minipage}
\begin{minipage}{0.5\textwidth}
\small
\begin{tabular}{r|lllllllll}
$16$ & $.$ & $.$ & $.$ & $.$ & $.$ & $.$ & $.$ & $\FF$ & $\FF$ \\
$14$ & $.$ & $.$ & $.$ & $.$ & $.$ & $.$ & $.$ & $.$ & $.$ \\
$12$ & $.$ & $.$ & $.$ & $.$ & $.$ & $\FF$ & $\FF$ & $.$ & $.$ \\
$10$ & $.$ & $.$ & $.$ & $.$ & $\FF$ & $\FF$ & $.$ & $.$ & $.$ \\
$8$ & $.$ & $.$ & $.$ & $\FF$ & $\FF$ & $.$ & $.$ & $.$ & $.$ \\
$6$ & $.$ & $.$ & $\FF$ & $\FF$ & $.$ & $.$ & $.$ & $.$ & $.$ \\
$4$ & $.$ & $.$ & $.$ & $.$ & $.$ & $.$ & $.$ & $.$ & $.$ \\
$2$ & $\FF[H]$ & $\FF[H]$ & $.$ & $.$ & $.$ & $.$ & $.$ & $.$ & $.$ \\
\hline
$ $ & $0$ & $1$ & $2$ & $3$ & $4$ & $5$ & $6$ & $7$ & $8$ \\
\end{tabular}
\end{minipage}

\pagebreak

\item $7_{3a}, 7_{3b}$ (HN)\\\\
\noindent
\begin{minipage}{0.5\textwidth}
\small
\begin{tabular}{r|lllllllll}
$18$ & $.$ & $.$ & $.$ & $.$ & $.$ & $.$ & $.$ & $\FF$ & $\FF$ \\
$16$ & $.$ & $.$ & $.$ & $.$ & $.$ & $.$ & $\FF$ & $\FF$ & $.$ \\
$14$ & $.$ & $.$ & $.$ & $.$ & $.$ & $\FF^{2}$ & $\FF^{2}$ & $.$ & $.$ \\
$12$ & $.$ & $.$ & $.$ & $.$ & $\FF^{3}$ & $\FF^{3}$ & $.$ & $.$ & $.$ \\
$10$ & $.$ & $.$ & $.$ & $\FF^{2}$ & $\FF^{2}$ & $.$ & $.$ & $.$ & $.$ \\
$8$ & $.$ & $.$ & $\FF^{2}$ & $\FF^{2}$ & $.$ & $.$ & $.$ & $.$ & $.$ \\
$6$ & $.$ & $\FF$ & $\FF$ & $.$ & $.$ & $.$ & $.$ & $.$ & $.$ \\
$4$ & $\FF$ & $\FF$ & $.$ & $.$ & $.$ & $.$ & $.$ & $.$ & $.$ \\
\hline
$ $ & $0$ & $1$ & $2$ & $3$ & $4$ & $5$ & $6$ & $7$ & $8$ \\
\end{tabular}
\end{minipage}
\begin{minipage}{0.5\textwidth}
\small
\begin{tabular}{r|lllllllll}
$18$ & $.$ & $.$ & $.$ & $.$ & $.$ & $.$ & $.$ & $\FF$ & $\FF$ \\
$16$ & $.$ & $.$ & $.$ & $.$ & $.$ & $.$ & $.$ & $.$ & $.$ \\
$14$ & $.$ & $.$ & $.$ & $.$ & $.$ & $\FF^{2}$ & $\FF^{2}$ & $.$ & $.$ \\
$12$ & $.$ & $.$ & $.$ & $.$ & $\FF$ & $\FF$ & $.$ & $.$ & $.$ \\
$10$ & $.$ & $.$ & $.$ & $\FF$ & $\FF$ & $.$ & $.$ & $.$ & $.$ \\
$8$ & $.$ & $.$ & $\FF$ & $\FF$ & $.$ & $.$ & $.$ & $.$ & $.$ \\
$6$ & $.$ & $.$ & $.$ & $.$ & $.$ & $.$ & $.$ & $.$ & $.$ \\
$4$ & $\FF[H]$ & $\FF[H]$ & $.$ & $.$ & $.$ & $.$ & $.$ & $.$ & $.$ \\
\hline
$ $ & $0$ & $1$ & $2$ & $3$ & $4$ & $5$ & $6$ & $7$ & $8$ \\
\end{tabular}
\end{minipage}

\item $7_{4a}$ (HN)\\\\
\noindent
\begin{minipage}{0.5\textwidth}
\small
\begin{tabular}{r|lllllllll}
$16$ & $.$ & $.$ & $.$ & $.$ & $.$ & $.$ & $.$ & $\FF$ & $\FF$ \\
$14$ & $.$ & $.$ & $.$ & $.$ & $.$ & $.$ & $\FF$ & $\FF$ & $.$ \\
$12$ & $.$ & $.$ & $.$ & $.$ & $.$ & $\FF^{2}$ & $\FF^{2}$ & $.$ & $.$ \\
$10$ & $.$ & $.$ & $.$ & $.$ & $\FF^{3}$ & $\FF^{3}$ & $.$ & $.$ & $.$ \\
$8$ & $.$ & $.$ & $.$ & $\FF^{2}$ & $\FF^{2}$ & $.$ & $.$ & $.$ & $.$ \\
$6$ & $.$ & $.$ & $\FF^{3}$ & $\FF^{3}$ & $.$ & $.$ & $.$ & $.$ & $.$ \\
$4$ & $.$ & $\FF^{2}$ & $\FF^{2}$ & $.$ & $.$ & $.$ & $.$ & $.$ & $.$ \\
$2$ & $\FF$ & $\FF$ & $.$ & $.$ & $.$ & $.$ & $.$ & $.$ & $.$ \\
\hline
$ $ & $0$ & $1$ & $2$ & $3$ & $4$ & $5$ & $6$ & $7$ & $8$ \\
\end{tabular}
\end{minipage}
\begin{minipage}{0.5\textwidth}
\small
\begin{tabular}{r|lllllllll}
$16$ & $.$ & $.$ & $.$ & $.$ & $.$ & $.$ & $.$ & $\FF$ & $\FF$ \\
$14$ & $.$ & $.$ & $.$ & $.$ & $.$ & $.$ & $.$ & $.$ & $.$ \\
$12$ & $.$ & $.$ & $.$ & $.$ & $.$ & $\FF^{2}$ & $\FF^{2}$ & $.$ & $.$ \\
$10$ & $.$ & $.$ & $.$ & $.$ & $\FF$ & $\FF$ & $.$ & $.$ & $.$ \\
$8$ & $.$ & $.$ & $.$ & $\FF$ & $\FF$ & $.$ & $.$ & $.$ & $.$ \\
$6$ & $.$ & $.$ & $\FF^{2}$ & $\FF^{2}$ & $.$ & $.$ & $.$ & $.$ & $.$ \\
$4$ & $.$ & $.$ & $.$ & $.$ & $.$ & $.$ & $.$ & $.$ & $.$ \\
$2$ & $\FF[H]$ & $\FF[H]$ & $.$ & $.$ & $.$ & $.$ & $.$ & $.$ & $.$ \\
\hline
$ $ & $0$ & $1$ & $2$ & $3$ & $4$ & $5$ & $6$ & $7$ & $8$ \\
\end{tabular}
\end{minipage}

\item $7_{4b}$ (HN)\\\\
\noindent
\begin{minipage}{0.5\textwidth}
\small
\begin{tabular}{r|lllllllll}
$16$ & $.$ & $.$ & $.$ & $.$ & $.$ & $.$ & $.$ & $\FF$ & $\FF$ \\
$14$ & $.$ & $.$ & $.$ & $.$ & $.$ & $.$ & $\FF$ & $\FF$ & $.$ \\
$12$ & $.$ & $.$ & $.$ & $.$ & $.$ & $\FF$ & $\FF$ & $.$ & $.$ \\
$10$ & $.$ & $.$ & $.$ & $.$ & $\FF^{2}$ & $\FF^{2}$ & $.$ & $.$ & $.$ \\
$8$ & $.$ & $.$ & $.$ & $\FF$ & $\FF$ & $.$ & $.$ & $.$ & $.$ \\
$6$ & $.$ & $.$ & $\FF^{2}$ & $\FF^{2}$ & $.$ & $.$ & $.$ & $.$ & $.$ \\
$4$ & $.$ & $\FF$ & $\FF$ & $.$ & $.$ & $.$ & $.$ & $.$ & $.$ \\
$2$ & $\FF$ & $\FF$ & $.$ & $.$ & $.$ & $.$ & $.$ & $.$ & $.$ \\
\hline
$ $ & $0$ & $1$ & $2$ & $3$ & $4$ & $5$ & $6$ & $7$ & $8$ \\
\end{tabular}
\end{minipage}
\begin{minipage}{0.5\textwidth}
\small
\begin{tabular}{r|lllllllll}
$16$ & $.$ & $.$ & $.$ & $.$ & $.$ & $.$ & $.$ & $\FF$ & $\FF$ \\
$14$ & $.$ & $.$ & $.$ & $.$ & $.$ & $.$ & $.$ & $.$ & $.$ \\
$12$ & $.$ & $.$ & $.$ & $.$ & $.$ & $\FF$ & $\FF$ & $.$ & $.$ \\
$10$ & $.$ & $.$ & $.$ & $.$ & $\FF[H]/(H^2)$ & $\FF$ & $.$ & $.$ & $.$ \\
$8$ & $.$ & $.$ & $.$ & $\FF$ & $.$ & $.$ & $.$ & $.$ & $.$ \\
$6$ & $.$ & $.$ & $\FF$ & $\FF$ & $.$ & $.$ & $.$ & $.$ & $.$ \\
$4$ & $.$ & $.$ & $.$ & $.$ & $.$ & $.$ & $.$ & $.$ & $.$ \\
$2$ & $\FF[H]$ & $\FF[H]$ & $.$ & $.$ & $.$ & $.$ & $.$ & $.$ & $.$ \\
\hline
$ $ & $0$ & $1$ & $2$ & $3$ & $4$ & $5$ & $6$ & $7$ & $8$ \\
\end{tabular}
\end{minipage}

\item $7_{5a}, 7_{5b}$ (HN)\\\\
\noindent
\begin{minipage}{0.5\textwidth}
\small
\begin{tabular}{r|lllllllll}
$18$ & $.$ & $.$ & $.$ & $.$ & $.$ & $.$ & $.$ & $\FF$ & $\FF$ \\
$16$ & $.$ & $.$ & $.$ & $.$ & $.$ & $.$ & $\FF^{2}$ & $\FF^{2}$ & $.$ \\
$14$ & $.$ & $.$ & $.$ & $.$ & $.$ & $\FF^{3}$ & $\FF^{3}$ & $.$ & $.$ \\
$12$ & $.$ & $.$ & $.$ & $.$ & $\FF^{3}$ & $\FF^{3}$ & $.$ & $.$ & $.$ \\
$10$ & $.$ & $.$ & $.$ & $\FF^{3}$ & $\FF^{3}$ & $.$ & $.$ & $.$ & $.$ \\
$8$ & $.$ & $.$ & $\FF^{3}$ & $\FF^{3}$ & $.$ & $.$ & $.$ & $.$ & $.$ \\
$6$ & $.$ & $\FF$ & $\FF$ & $.$ & $.$ & $.$ & $.$ & $.$ & $.$ \\
$4$ & $\FF$ & $\FF$ & $.$ & $.$ & $.$ & $.$ & $.$ & $.$ & $.$ \\
\hline
$ $ & $0$ & $1$ & $2$ & $3$ & $4$ & $5$ & $6$ & $7$ & $8$ \\
\end{tabular}
\end{minipage}
\begin{minipage}{0.5\textwidth}
\small
\begin{tabular}{r|lllllllll}
$18$ & $.$ & $.$ & $.$ & $.$ & $.$ & $.$ & $.$ & $\FF$ & $\FF$ \\
$16$ & $.$ & $.$ & $.$ & $.$ & $.$ & $.$ & $\FF$ & $\FF$ & $.$ \\
$14$ & $.$ & $.$ & $.$ & $.$ & $.$ & $\FF^{2}$ & $\FF^{2}$ & $.$ & $.$ \\
$12$ & $.$ & $.$ & $.$ & $.$ & $\FF$ & $\FF$ & $.$ & $.$ & $.$ \\
$10$ & $.$ & $.$ & $.$ & $\FF^{2}$ & $\FF^{2}$ & $.$ & $.$ & $.$ & $.$ \\
$8$ & $.$ & $.$ & $\FF$ & $\FF$ & $.$ & $.$ & $.$ & $.$ & $.$ \\
$6$ & $.$ & $.$ & $.$ & $.$ & $.$ & $.$ & $.$ & $.$ & $.$ \\
$4$ & $\FF[H]$ & $\FF[H]$ & $.$ & $.$ & $.$ & $.$ & $.$ & $.$ & $.$ \\
\hline
$ $ & $0$ & $1$ & $2$ & $3$ & $4$ & $5$ & $6$ & $7$ & $8$ \\
\end{tabular}
\end{minipage}

\pagebreak

\item $7_{6a}, 7_{6b}$ (HN)\\\\
\noindent
\begin{minipage}{0.45\textwidth}
\footnotesize
\begin{tabular}{r|lllllllll}
$2$ & $.$ & $.$ & $.$ & $.$ & $.$ & $.$ & $.$ & $\FF$ & $\FF$ \\
$0$ & $.$ & $.$ & $.$ & $.$ & $.$ & $.$ & $\FF^{2}$ & $\FF^{2}$ & $.$ \\
$-2$ & $.$ & $.$ & $.$ & $.$ & $.$ & $\FF^{3}$ & $\FF^{3}$ & $.$ & $.$ \\
$-4$ & $.$ & $.$ & $.$ & $.$ & $\FF^{3}$ & $\FF^{3}$ & $.$ & $.$ & $.$ \\
$-6$ & $.$ & $.$ & $.$ & $\FF^{4}$ & $\FF^{4}$ & $.$ & $.$ & $.$ & $.$ \\
$-8$ & $.$ & $.$ & $\FF^{3}$ & $\FF^{3}$ & $.$ & $.$ & $.$ & $.$ & $.$ \\
$-10$ & $.$ & $\FF^{2}$ & $\FF^{2}$ & $.$ & $.$ & $.$ & $.$ & $.$ & $.$ \\
$-12$ & $\FF$ & $\FF$ & $.$ & $.$ & $.$ & $.$ & $.$ & $.$ & $.$ \\
\hline
$ $ & $-5$ & $-4$ & $-3$ & $-2$ & $-1$ & $0$ & $1$ & $2$ & $3$ \\
\end{tabular}
\end{minipage}
\begin{minipage}{0.55\textwidth}
\footnotesize
\begin{tabular}{r|lllllllll}
$2$ & $.$ & $.$ & $.$ & $.$ & $.$ & $.$ & $.$ & $\FF$ & $\FF$ \\
$0$ & $.$ & $.$ & $.$ & $.$ & $.$ & $.$ & $\FF$ & $\FF$ & $.$ \\
$-2$ & $.$ & $.$ & $.$ & $.$ & $.$ & $\FF[H] \oplus \FF$ & $\FF[H] \oplus \FF$ & $.$ & $.$ \\
$-4$ & $.$ & $.$ & $.$ & $.$ & $\FF^{2}$ & $\FF^{2}$ & $.$ & $.$ & $.$ \\
$-6$ & $.$ & $.$ & $.$ & $\FF^{2}$ & $\FF^{2}$ & $.$ & $.$ & $.$ & $.$ \\
$-8$ & $.$ & $.$ & $\FF$ & $\FF$ & $.$ & $.$ & $.$ & $.$ & $.$ \\
$-10$ & $.$ & $\FF$ & $\FF$ & $.$ & $.$ & $.$ & $.$ & $.$ & $.$ \\
$-12$ & $.$ & $.$ & $.$ & $.$ & $.$ & $.$ & $.$ & $.$ & $.$ \\
\hline
$ $ & $-5$ & $-4$ & $-3$ & $-2$ & $-1$ & $0$ & $1$ & $2$ & $3$ \\
\end{tabular}
\end{minipage}

\item $7_{7a}$ (HN)\\\\
\noindent
\begin{minipage}{0.45\textwidth}
\footnotesize
\begin{tabular}{r|lllllllll}
$6$ & $.$ & $.$ & $.$ & $.$ & $.$ & $.$ & $.$ & $\FF$ & $\FF$ \\
$4$ & $.$ & $.$ & $.$ & $.$ & $.$ & $.$ & $\FF^{3}$ & $\FF^{3}$ & $.$ \\
$2$ & $.$ & $.$ & $.$ & $.$ & $.$ & $\FF^{3}$ & $\FF^{3}$ & $.$ & $.$ \\
$0$ & $.$ & $.$ & $.$ & $.$ & $\FF^{4}$ & $\FF^{4}$ & $.$ & $.$ & $.$ \\
$-2$ & $.$ & $.$ & $.$ & $\FF^{4}$ & $\FF^{4}$ & $.$ & $.$ & $.$ & $.$ \\
$-4$ & $.$ & $.$ & $\FF^{3}$ & $\FF^{3}$ & $.$ & $.$ & $.$ & $.$ & $.$ \\
$-6$ & $.$ & $\FF^{2}$ & $\FF^{2}$ & $.$ & $.$ & $.$ & $.$ & $.$ & $.$ \\
$-8$ & $\FF$ & $\FF$ & $.$ & $.$ & $.$ & $.$ & $.$ & $.$ & $.$ \\
\hline
$ $ & $-4$ & $-3$ & $-2$ & $-1$ & $0$ & $1$ & $2$ & $3$ & $4$ \\
\end{tabular}
\end{minipage}
\begin{minipage}{0.55\textwidth}
\footnotesize
\begin{tabular}{r|lllllllll}
$6$ & $.$ & $.$ & $.$ & $.$ & $.$ & $.$ & $.$ & $\FF$ & $\FF$ \\
$4$ & $.$ & $.$ & $.$ & $.$ & $.$ & $.$ & $\FF^{2}$ & $\FF^{2}$ & $.$ \\
$2$ & $.$ & $.$ & $.$ & $.$ & $.$ & $\FF$ & $\FF$ & $.$ & $.$ \\
$0$ & $.$ & $.$ & $.$ & $.$ & $\FF[H] \oplus \FF^{2}$ & $\FF[H] \oplus \FF^{2}$ & $.$ & $.$ & $.$ \\
$-2$ & $.$ & $.$ & $.$ & $\FF^{2}$ & $\FF^{2}$ & $.$ & $.$ & $.$ & $.$ \\
$-4$ & $.$ & $.$ & $\FF$ & $\FF$ & $.$ & $.$ & $.$ & $.$ & $.$ \\
$-6$ & $.$ & $\FF$ & $\FF$ & $.$ & $.$ & $.$ & $.$ & $.$ & $.$ \\
$-8$ & $.$ & $.$ & $.$ & $.$ & $.$ & $.$ & $.$ & $.$ & $.$ \\
\hline
$ $ & $-4$ & $-3$ & $-2$ & $-1$ & $0$ & $1$ & $2$ & $3$ & $4$ \\
\end{tabular}
\end{minipage}

\item $7_{7b}$ (HN)\\\\
\noindent
\begin{minipage}{0.45\textwidth}
\footnotesize
\begin{tabular}{r|lllllllll}
$6$ & $.$ & $.$ & $.$ & $.$ & $.$ & $.$ & $.$ & $\FF$ & $\FF$ \\
$4$ & $.$ & $.$ & $.$ & $.$ & $.$ & $.$ & $\FF^{2}$ & $\FF^{2}$ & $.$ \\
$2$ & $.$ & $.$ & $.$ & $.$ & $.$ & $\FF^{2}$ & $\FF^{2}$ & $.$ & $.$ \\
$0$ & $.$ & $.$ & $.$ & $.$ & $\FF^{3}$ & $\FF^{3}$ & $.$ & $.$ & $.$ \\
$-2$ & $.$ & $.$ & $.$ & $\FF^{2}$ & $\FF^{2}$ & $.$ & $.$ & $.$ & $.$ \\
$-4$ & $.$ & $.$ & $\FF^{2}$ & $\FF^{2}$ & $.$ & $.$ & $.$ & $.$ & $.$ \\
$-6$ & $.$ & $\FF$ & $\FF$ & $.$ & $.$ & $.$ & $.$ & $.$ & $.$ \\
$-8$ & $\FF$ & $\FF$ & $.$ & $.$ & $.$ & $.$ & $.$ & $.$ & $.$ \\
\hline
$ $ & $-4$ & $-3$ & $-2$ & $-1$ & $0$ & $1$ & $2$ & $3$ & $4$ \\
\end{tabular}
\end{minipage}
\begin{minipage}{0.55\textwidth}
\footnotesize
\begin{tabular}{r|lllllllll}
$6$ & $.$ & $.$ & $.$ & $.$ & $.$ & $.$ & $.$ & $\FF$ & $\FF$ \\
$4$ & $.$ & $.$ & $.$ & $.$ & $.$ & $.$ & $\FF$ & $\FF$ & $.$ \\
$2$ & $.$ & $.$ & $.$ & $.$ & $.$ & $\FF$ & $\FF$ & $.$ & $.$ \\
$0$ & $.$ & $.$ & $.$ & $.$ & $\FF[H] \oplus \FF$ & $\FF[H] \oplus \FF$ & $.$ & $.$ & $.$ \\
$-2$ & $.$ & $.$ & $.$ & $\FF$ & $\FF$ & $.$ & $.$ & $.$ & $.$ \\
$-4$ & $.$ & $.$ & $\FF[H]/(H^2)$ & $\FF$ & $.$ & $.$ & $.$ & $.$ & $.$ \\
$-6$ & $.$ & $\FF$ & $.$ & $.$ & $.$ & $.$ & $.$ & $.$ & $.$ \\
$-8$ & $.$ & $.$ & $.$ & $.$ & $.$ & $.$ & $.$ & $.$ & $.$ \\
\hline
$ $ & $-4$ & $-3$ & $-2$ & $-1$ & $0$ & $1$ & $2$ & $3$ & $4$ \\
\end{tabular}
\end{minipage}

\end{itemize}
\restoregeometry
    
    \clearpage
    \printbibliography
    \addresses
\end{document}